\documentclass[a4paper]{article}

\usepackage{amsmath,amssymb,amsthm}
\usepackage{graphicx}
\usepackage[english]{babel}
\usepackage[utf8]{inputenc}
\usepackage[T1]{fontenc}
\usepackage{mathtools}
\usepackage[margin=1cm]{caption}
\usepackage{bbm}
\usepackage[margin=3cm]{geometry}
\usepackage{xcolor}
\usepackage{hyperref}
\usepackage{tikz}
\usetikzlibrary{patterns}
\usetikzlibrary{intersections}
\allowdisplaybreaks

\newcommand{\N}{\mathbb{N}}
\newcommand{\Z}{\mathbb{Z}}
\newcommand{\Q}{\mathbb{Q}}
\newcommand{\R}{\mathbb{R}}

\newcommand{\E}{\mathbb{E}}
\newcommand{\ud}{\,\mathrm{d}}
\newcommand{\dd}{\mathsf{d}}

\newcommand{\PP}{\mathbb{P}}

\newcommand{\A}{\mathcal{A}}
\newcommand{\I}{\mathbbm{1}}
\newcommand{\e}{\varepsilon}
\newcommand{\Pow}{\mathfrak{P}}
\newcommand{\Sphere}{\mathbb{S}}
\DeclareMathOperator{\Rea}{Re}

\newcommand{\lmic}{\lambda_{\mathrm{mic}}}
\newcommand{\lmac}{\lambda_{\mathrm{mac}}}
\newcommand{\Qbox}{\mathcal{Q}}
\newcommand{\Pbox}{\mathcal{P}}
\newcommand{\Qmic}{\Qbox_{\lmic}}

\DeclareMathOperator{\supp}{supp}

\DeclareMathOperator{\diam}{diam}
\DeclareMathOperator{\dist}{dist}

\numberwithin{equation}{section}
\numberwithin{figure}{section}

\renewcommand{\notin}{\not\in}
\newcommand{\notni}{\not\ni}

\newtheorem{theorem}{Theorem}[section]
\newtheorem{corollary}[theorem]{Corollary}

\newtheorem{lemma}[theorem]{Lemma}
\theoremstyle{remark}
\newtheorem{remark}[theorem]{Remark}
\theoremstyle{definition}
\newtheorem{definition}[theorem]{Definition}

\begin{document}

\title{Pinning for the critical and supercritical membrane model}
\author{Florian Schweiger\footnote{Institut für angewandte Mathematik, Universität Bonn, Endenicher Allee 60, 53115 Bonn, Germany, E-Mail: \texttt{schweiger@iam.uni-bonn.de}}}
\date{\today}
\maketitle
\begin{abstract}
The membrane model is a Gaussian interface model with a Hamiltonian involving second derivatives of the interface height. We consider the model in dimension $\dd\ge4$ under the influence of $\delta$-pinning of strength $\e$. It is known that this pinning potential manages to localize the interface for any $\e>0$. We refine this result by establishing the $\e$-dependence of the variance and of the exponential decay rate of the covariances for small $\e$ (similar to the corresponding results for the discrete Gaussian free field by Bolthausen-Velenik). We also show the existence of a thermodynamic limit of the field. These conclusions improve upon earlier works by Bolthausen-Cipriani-Kurt and by Sakagawa.

The problem has similarities to the homogenization of elliptic operators in randomly perforated domains, and our proof takes inspiration from this connection. The main new ideas are a correlation inequality for the set of pinned points, and a probabilistic Widman hole filler argument which relies on a discrete multipolar Hardy-Rellich inequality and on a multi-scale argument to construct suitable test functions.
\end{abstract}

\section{Introduction}
\subsection{Setting and overview}

In this work we focus on discrete stochastic interface models describing random height functions. Given a Hamiltonian $H$ describing the energy of the interface and a finite subset $\Lambda$ of $\Z^\dd$, one obtains such an interface model by considering the probability measure
\[\PP_H(\mathrm{d}\psi)=\frac{1}{Z_{\beta,H,\Lambda}}\exp(-\beta H(\psi))\prod_{x\in \Lambda}\mathrm{d}\psi_x\prod_{x\in\Z^\dd\setminus \Lambda}\delta_0(\mathrm{d}\psi_x)\]
where $\beta>0$ is the inverse temperature. The choices $H(\psi)=\frac12\sum_{x\in\Z^\dd}|\nabla_1\psi_x|^2$ (where $\nabla_1\psi_x:=(\psi_{x+e_i}-\psi_x)_{i=1}^\dd$ is the discrete gradient) and $\beta=1$ lead to the discrete Gaussian free field (or gradient model). This is the most important example of a discrete stochastic interface model. It describes a Gaussian measure with zero mean, and covariance given as the Green's function of the discrete Laplacian. Its critical dimension (where covariances decay logarithmically) is given by $\dd=2$. This model is by now very well understood. We cannot survey the literature here, but refer the reader to \cite{Funaki2005,Velenik2006} for reviews on the gradient model and discrete stochastic interface models in general.

The present work, however, concerns a different model, the so-called membrane model, first studied in \cite{Sakagawa2003,Kurt2007}. Here one chooses $H(\psi)=\frac12\sum_{x\in\Z^\dd}|\Delta_1\psi_x|^2$ where $\Delta_1 \psi_x:=\sum_{i=1}^\dd\psi_{x+e_i}-2\psi_x+\psi_{x-e_i}$ is the discrete Laplacian, and $\beta=1$. This leads to the probability measure 
\begin{equation}\label{e:mem_mod}
\PP_\Lambda(\mathrm{d}\psi)=\frac1{Z_\Lambda}\exp\left(-\frac12\sum_{x\in \Z^\dd}|\Delta_1\psi_x|^2\right)\prod_{x\in \Lambda}\mathrm{d}\psi_x\prod_{x\in\Z^\dd\setminus \Lambda}\delta_0(\mathrm{d}\psi_x)\,.
\end{equation}
This is again a centred Gaussian measure, with covariance given by the Green's function $G_\Lambda$ of the discrete Bilaplacian. Its critical dimension is $\dd=4$, and one expects the $\dd$-dimensional membrane model to behave similarly to the $(\dd-2)$-dimensional gradient model. Making this intuition rigorous, however, is often a challenging problem. This is due to the fact that some of the most useful tools used in the study of the gradient model cannot be applied to the membrane model. In particular, there is no random walk representation of the field, and the field does not satisfy the FKG inequality.

Nonetheless, in recent years several results that were already known for the gradient model could be established also for the membrane model. Let us mention the scaling limit of the membrane model \cite{Cipriani2019}, the maximum of the field \cite{Chiarini2016,Cipriani2019,Schweiger2020}, and its behaviour under entropic repulsion \cite{Sakagawa2003,Kurt2007,Kurt2009,Buchholz2019}.

In this work we shall continue this line of research by investigating the problem of pinning. This means that one adds a small attractive potential which rewards the field for being equal to (or close to) 0, thereby breaking the continuous symmetry of the field. The physical motivation for this is mainly that it serves as a stepping stone for understanding the phenomenon of wetting, where one considers the competition between pinning and entropic repulsion, i.e. between the attractive potential near 0 and a hard wall that forces the field to be nonnegative. Depending on the strength of the pinning one or the other factor might win. In fact, for the gradient model it is known that for $\dd\le2$ there is a phase transition, while for $\dd\ge3$ pinning always wins \cite{Bolthausen2000,Caputo2000}. For the membrane model there are no rigorous results on that problem beyond the case $\dd=1$ \cite{Caravenna2008,Caravenna2009}.

From now on we focus on pinning. We consider the pinning potential $\e\delta_0$ with $\delta_0$ a point-mass at 0. That is, instead of \eqref{e:mem_mod} we consider the probability measure 
\begin{equation}\label{e:mem_mod_pinned}
\PP_\Lambda^\e(\mathrm{d}\psi)=\frac1{Z_\Lambda^\e}\exp\left(-\frac12\sum_{x\in \Z^\dd}|\Delta_1\psi_x|^2\right)\prod_{x\in \Lambda}(\mathrm{d}\psi_x+\e\delta_0(\mathrm{d}\psi_x))\prod_{x\in\Z^\dd\setminus \Lambda}\delta_0(\mathrm{d}\psi_x)\,.
\end{equation}
Each sample of $\PP_\Lambda^\e$ will contain some $x\in\Lambda$ where $\psi_x=0$. We call those $x$ the pinned points.
As for wetting, one can ask whether there is a phase transition as the pinning strength $\e$ varies. It turns out that the answer is yes in dimension 1, and no in $\dd\ge2$. More precisely, if $\dd\ge2$ or $\dd=1$ and $\e>\e_c$ for some $\e_c>0$ the expected fraction of points in $\Lambda$ that are pinned is bounded below by a constant (uniformly in $\Lambda$) \cite{Caravenna2008,Sakagawa2012,Sakagawa2018}.

Thus, the pinning effect manages to localize the field in the sense that it touches the 0-plane on a positive fraction of $\Lambda$. It is natural to ask whether this localization also manifests itself in some other ways. One of the main results of this paper is that this is the case, at least in the critical and supercritical cases $\dd\ge4$: for any $\e>0$ the variance of the field is bounded, and the covariance decays exponentially in the distance (i.e. a mass is generated). Physically speaking, this corresponds to a finite transverse and longitudinal correlation length, respectively.

Actually, our results go beyond this: We study the field at the onset of pinning (i.e. for $\e>0$ small), and establish the dependence of the variance on $\e$. We also prove lower bounds on the dependence of the mass on $\e$ that we believe to be optimal. This is not the first result in that direction: in \cite{Bolthausen2016} Bolthausen, Cipriani and Kurt proved stretched-exponential decay of the covariance in $\dd\ge4$, and in \cite{Bolthausen2017} they improved this to exponential decay, if $\dd\ge5$. Their rate of decay is far from optimal, though. Thus, our results are novel in two regards: We can prove the exponential decay also in the critical case $\dd=4$, and we can establish the (presumably) optimal lower bound on that rate of decay in the critical and the supercritical dimensions. As a corollary of these bounds we can also show that a thermodynamic limit of the field exists if $\dd\ge4$.

Our results leave open the question whether similar results on mass and variance hold in dimension $\dd\le3$. For $\dd=1$ and $\e>\e_c$ these should follow from the renewal theory methods in \cite{Caravenna2008,Caravenna2009}. For $\dd=2,3$, however, there are no known results. In fact, in \cite{Bolthausen2017} the authors wrote "it is well possible that exponential decay of correlations is true also for lower dimensions $\dd=2,3$, but we do not know of a method which could successfully be applied", and we have nothing to add to this statement.

For the gradient model it is known that there is no phase transition for any $\dd\ge1$. This and much finer results were established in a series of papers \cite{Dunlop1992,Bolthausen1999,Deuschel2000,Ioffe2000} culminating in \cite{Bolthausen2001}. There, Bolthausen and Velenik actually studied a more general class of gradient models in dimension $\dd\ge2$, and proved bounds for variance and mass similar to ours.

Our proof follows closely \cite{Bolthausen2001} to prove estimates on the set of pinned points, and uses some important ideas from \cite{Bolthausen2017} to prove exponential decay of the covariances. The main novelties are the following: we notice that the Gaussian correlation inequality \cite{Royen2014} implies a FKG inequality for the set of pinned points. This allows us to prove the existence of the thermodynamic limit of the set of pinned points (and later also the existence of the thermodynamic limit of the field). We also use a certain monotonicity property of the variances that allows us to adapt the proofs from \cite{Bolthausen2001} to show estimates on the set of pinned points. Regarding the exponential decay of the covariances, we take from \cite{Bolthausen2017} the idea to use a Widman hole filler argument \cite{Widman1971} on random annuli. The details, however, are rather different. We use a multipolar Hardy-Rellich inequality for second derivatives (inspired by similar inequalities for first derivatives as e.g. in \cite{Cazacu2013}) to estimate the local effect of the pinned points. We also use a rather subtle multiscale construction to construct the required cut-off functions, and we prove that this construction can be done with sufficiently high probability. This is the most technical part of the present paper, and it is novel to the best of our knowledge.

\subsection{Main results}
Let us describe our results in detail. First of all, expanding the bracket in \eqref{e:mem_mod_pinned}, we see that for $f\colon\R^{\Z^d}\to\R$ we have
\begin{equation}\label{e:pinnedmeasureexpansion}
\begin{aligned}
	\E_\Lambda^\e(f)&=\frac1{Z_\Lambda^\e}\int\exp\left(-\frac12\sum_{x\in \Z^\dd}|\Delta_1\psi_x|^2\right)f(\psi)\prod_{x\in \Lambda}(\mathrm{d}\psi_x+\e\delta_0(\mathrm{d}\psi_x))\prod_{x\in\Z^\dd\setminus \Lambda}\delta_0(\mathrm{d}\psi_x)\\
	&=\frac1{Z_\Lambda^\e}\sum_{A\subset\Lambda}\int\exp\left(-\frac12\sum_{v\in \Z^\dd}|\Delta_1\psi_x|^2\right)f(\psi)\e^{|A|}\prod_{x\in \Lambda\setminus A}\mathrm{d}\psi_v\prod_{x\in\Z^\dd\setminus (\Lambda\setminus A)}\delta_0(\mathrm{d}\psi_x)\\
	&=\sum_{A\subset\Lambda}\frac{\e^{|A|}Z_{\Lambda\setminus A}}{Z_\Lambda^\e}\E_{\Lambda\setminus A}(f)
\end{aligned}
\end{equation}
where $\E_\Lambda^\e$ and $\E_{\Lambda\setminus A}$ denote the expectation with respect to $\PP_\Lambda^\e$ and $\PP_{\Lambda\setminus A}$, respectively. Thus, we have
\begin{equation}\label{e:pinnedmeasureasamixture}
	\PP_\Lambda^\e(\mathrm{d}\psi)=\sum_{A\subset\Lambda}\zeta^\e_\Lambda(A)\PP_{\Lambda\setminus A}(\mathrm{d}\psi)
\end{equation}
where
\[\zeta^\e_\Lambda(A)=\frac{\e^{|A|}Z_{\Lambda\setminus A}}{Z_\Lambda^\e}\,,\]
so that $\zeta^\e_\Lambda$ is a probability measure on $\Pow(\Lambda)$, the powerset of $\Lambda$. It describes the set of pinned points. In fact, one easily sees that for any $A\subset\Lambda$ we have
\[A=\{x\in\Lambda\colon \psi_x=0\}\quad\text{$\PP_{\Lambda\setminus A}$-almost surely}\,.\]

By \eqref{e:pinnedmeasureasamixture}, $\PP_\Lambda^\e$ is a mixture of the Gaussian measures $\PP_{\Lambda\setminus A}$ for $A\subset\Lambda$. Our first goal will therefore be to understand the weights of this mixture, i.e. the measure $\zeta^\e_\Lambda$. We write $\zeta^\e_\Lambda(f)$ for $\sum_{A\subset\Lambda}f(A)\zeta^\e_\Lambda(A)$.  The first result is that the measure $\zeta^\e_\Lambda$ satisfies a correlation inequality.
\begin{theorem}\label{t:fkg}
The measure $\zeta^\e_\Lambda$ satisfies the FKG inequality, i.e.
\[\zeta^\e_\Lambda(fg)\ge\zeta^\e_\Lambda(f)\zeta^\e_\Lambda(g)\]
for any pair of increasing functions $f,g\colon \Pow(\Lambda)\to\R$.
\end{theorem}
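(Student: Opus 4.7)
The plan is to verify the FKG lattice condition
\[\zeta^\e_\Lambda(A\cup B)\,\zeta^\e_\Lambda(A\cap B)\ \ge\ \zeta^\e_\Lambda(A)\,\zeta^\e_\Lambda(B)\qquad\text{for all }A,B\subset\Lambda.\]
Because $\zeta^\e_\Lambda$ is strictly positive on the finite distributive lattice $\Pow(\Lambda)$, the classical Fortuin--Kasteleyn--Ginibre theorem will then deliver the FKG inequality for increasing functions. Substituting $\zeta^\e_\Lambda(A)=\e^{|A|}Z_{\Lambda\setminus A}/Z_\Lambda^\e$ and using $|A\cup B|+|A\cap B|=|A|+|B|$, the $\e$-powers and the $Z_\Lambda^\e$ factors cancel, and the lattice condition becomes the log-supermodularity of the partition function,
\[Z_{\Lambda\setminus(A\cup B)}\,Z_{\Lambda\setminus(A\cap B)}\ \ge\ Z_{\Lambda\setminus A}\,Z_{\Lambda\setminus B}.\]

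To prove this supermodularity I would apply Royen's Gaussian correlation inequality inside the Gaussian measure $\PP_{\Lambda\setminus(A\cap B)}$. With $S_A:=A\setminus B$ and $S_B:=B\setminus A$, the sets $S_A,S_B\subset\Lambda\setminus(A\cap B)$ are disjoint, and for every $\delta>0$ the sets $K_A^\delta:=\{\psi\colon|\psi_x|\le\delta\ \forall x\in S_A\}$ and $K_B^\delta:=\{\psi\colon|\psi_x|\le\delta\ \forall x\in S_B\}$ are centred symmetric convex subsets of $\R^{\Lambda\setminus(A\cap B)}$. Royen's theorem then gives
\[\PP_{\Lambda\setminus(A\cap B)}(K_A^\delta\cap K_B^\delta)\ \ge\ \PP_{\Lambda\setminus(A\cap B)}(K_A^\delta)\,\PP_{\Lambda\setminus(A\cap B)}(K_B^\delta).\]
Dividing by $(2\delta)^{|S_A|+|S_B|}=(2\delta)^{|S_A\cup S_B|}$ (where the equality uses $S_A\cap S_B=\emptyset$) and sending $\delta\to0$, continuity of the Gaussian marginal densities at the origin implies that each ratio converges to the corresponding Lebesgue density of the marginal at $0$; the same Fubini expansion as in \eqref{e:pinnedmeasureexpansion} identifies each such density with $Z_{(\Lambda\setminus(A\cap B))\setminus S}/Z_{\Lambda\setminus(A\cap B)}$. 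Taking $S=S_A\cup S_B$, $S_A$ and $S_B$ in turn, and using the identities $(\Lambda\setminus(A\cap B))\setminus S_A=\Lambda\setminus A$, $(\Lambda\setminus(A\cap B))\setminus S_B=\Lambda\setminus B$ and $(\Lambda\setminus(A\cap B))\setminus(S_A\cup S_B)=\Lambda\setminus(A\cup B)$, the desired supermodularity follows after clearing the common denominator.

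The only nontrivial input is Royen's inequality itself; the rest is bookkeeping. The main conceptual point---and the step I would expect to be the trickiest to see---is the decision to work inside $\PP_{\Lambda\setminus(A\cap B)}$ rather than $\PP_\Lambda$: absorbing the constraints on $A\cap B$ into the Gaussian base measure (rather than into the events) is what renders $S_A$ and $S_B$ disjoint, so that the Lebesgue volumes $(2\delta)^{|S_A|}(2\delta)^{|S_B|}$ and $(2\delta)^{|S_A\cup S_B|}$ cancel exactly in the $\delta\to0$ limit and Royen's inequality sharpens into the multiplicative identity one needs.
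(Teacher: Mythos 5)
Your proposal is correct and follows essentially the same route as the paper: reduce to the FKG lattice condition, cancel the $\e$-powers to get log-supermodularity of $A\mapsto Z_{\Lambda\setminus A}$, express the ratios $Z_{\Lambda\setminus A}/Z_{\Lambda\setminus(A\cap B)}$ as small-ball Gaussian probabilities under $\PP_{\Lambda\setminus(A\cap B)}$ via the identity \eqref{e:ratiopartsums}, and then invoke Royen's Gaussian correlation inequality for the two symmetric convex slabs indexed by the disjoint sets $A\setminus B$ and $B\setminus A$. The key move you single out---absorbing the $A\cap B$ constraints into the base Gaussian measure so the remaining events are on disjoint coordinate sets---is exactly the normalization step (dividing by $(Z_{\Lambda\setminus(A\cap B)})^2$) in the paper's argument.
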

This FKG inequality allows us to prove directly that a thermodynamic limit of the $\zeta^\e_\Lambda$ exists. We can also prove that a thermodynamic limit of the $\PP^\e_\Lambda$ exists. That result relies on the estimates for the Green's function which we state in Theorem \ref{t:estimatesfield} below.
\begin{theorem}\label{t:thermolimit}
If $\dd\ge4$, the thermodynamic limits
\begin{align*}
	\zeta^\e&:=\lim_{\Lambda\nearrow\Z^\dd}\zeta^\e_\Lambda\,,\\
	\PP^\e&:=\lim_{\Lambda\nearrow\Z^\dd}\PP_\Lambda^\e
\end{align*}
exist and are translation invariant.
\end{theorem}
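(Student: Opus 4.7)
I would first prove existence of the limit $\zeta^\e$ and then deduce the limit for the field. Viewing each $\zeta^\e_\Lambda$ as a measure on the compact space $\{0,1\}^{\Z^\dd}$ (by identifying subsets with their indicators, extended by $0$ outside $\Lambda$), the key is a monotonicity in $\Lambda$: for $\Lambda_1\subset\Lambda_2$ and any bounded increasing function $f$ depending only on the configuration in $\Lambda_1$,
\[\zeta^\e_{\Lambda_1}(f)\ge\zeta^\e_{\Lambda_2}(f).\]
To prove this, observe that conditioning $\zeta^\e_{\Lambda_2}$ on the increasing event $E:=\{A\supset\Lambda_2\setminus\Lambda_1\}$ forces every site outside $\Lambda_1$ to be pinned; a direct inspection of the weights $\zeta^\e_\Lambda(A)\propto \e^{|A|}Z_{\Lambda\setminus A}$ then shows that the resulting conditional measure on $\Lambda_1$ is exactly $\zeta^\e_{\Lambda_1}$. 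Since $E$ is increasing, Theorem \ref{t:fkg} gives $\zeta^\e_{\Lambda_1}(f)=\zeta^\e_{\Lambda_2}(f\mid E)\ge\zeta^\e_{\Lambda_2}(f)$. Consequently $\Lambda\mapsto\zeta^\e_\Lambda(A\supset B)$ is eventually decreasing in $\Lambda$ for each fixed finite $B$, hence convergent; inclusion–exclusion extends convergence to arbitrary cylinder events, and compactness of $\{0,1\}^{\Z^\dd}$ produces the limit $\zeta^\e$. Because the monotonicity shows the limit is the same along every increasing exhaustion, comparing $\{\Lambda_n\}$ with $\{\tau\Lambda_n\}$ via the translation covariance $\zeta^\e_{\tau\Lambda}(\tau E)=\zeta^\e_\Lambda(E)$ yields translation invariance of $\zeta^\e$.

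For the field I would use the mixture representation \eqref{e:pinnedmeasureasamixture},
\[\E^\e_\Lambda(f)=\sum_{A\subset\Lambda}\zeta^\e_\Lambda(A)\,\E_{\Lambda\setminus A}(f),\]
and define a candidate limit $\PP^\e$ via $\E^\e(f):=\int\E_{\Z^\dd\setminus A}(f)\,\zeta^\e(\mathrm{d}A)$, where for $\zeta^\e$-a.e.\ $A$ the Gaussian $\PP_{\Z^\dd\setminus A}$ is constructed as the weak limit of $\PP_{\Lambda\setminus A}$ as $\Lambda\nearrow\Z^\dd$. The plan is to establish, for any cylinder test function $f$ depending only on $(\psi_x)_{x\in B}$ for a finite $B\subset\Z^\dd$: (i) for $\zeta^\e$-typical $A$, the Bilaplacian Green's function entries $G_{\Lambda\setminus A}(x,y)$ with $x,y\in B$ converge, along an exhaustion, to finite limits $G_{\Z^\dd\setminus A}(x,y)$, so that $\PP_{\Z^\dd\setminus A}$ is a bona fide Gaussian; (ii) the uniform variance bounds supplied by Theorem \ref{t:estimatesfield} dominate $\E_{\Lambda\setminus A}(f)$ by a $\zeta^\e_\Lambda$-integrable function of $A$; (iii) the weak convergence $\zeta^\e_\Lambda\to\zeta^\e$ combined with continuity of $A\mapsto\E_{\Lambda\setminus A}(f)$ in the cylinder topology permits passage to the limit inside the mixture integral by dominated convergence. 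Translation invariance of $\PP^\e$ then descends from that of $\zeta^\e$ together with the manifest translation covariance of $A\mapsto\PP_{\Z^\dd\setminus A}$.

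The main obstacle I anticipate is step (ii): controlling the Gaussians $\PP_{\Lambda\setminus A}$ uniformly in $\Lambda$ for $\zeta^\e$-typical pinning sets $A$. The Bilaplacian admits no random walk representation, and already in the critical dimension $\dd=4$ the free Bilaplacian Green's function diverges, so the fact that a $\zeta^\e$-typical pinning set produces a uniformly bounded Green's function is not elementary. This is precisely what the quantitative estimates of Theorem \ref{t:estimatesfield} must supply; once they are in hand, Theorem \ref{t:thermolimit} reduces to the exhaustion and dominated convergence arguments sketched above.
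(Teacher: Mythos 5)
Your argument is correct and coincides with the paper's: you use FKG to show that for increasing local $f$ the map $\Lambda\mapsto\zeta^\e_\Lambda(f)$ is eventually non-increasing, then decompose arbitrary local functions into increasing ones and compare exhaustions. The conditioning identity $\zeta^\e_{\Lambda_1}(f)=\zeta^\e_{\Lambda_2}(f\mid \{A\supset\Lambda_2\setminus\Lambda_1\})$ you sketch is exactly the mechanism underlying the paper's monotonicity claim.

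\textbf{Part 2 (existence of $\PP^\e$).} Here you take a genuinely different route — constructing the quenched infinite-volume Gaussians $\PP_{\Z^\dd\setminus A}$ for $\zeta^\e$-a.e.\ $A$ and then integrating — whereas the paper works entirely with finite volumes and shows the sequence $\E^\e_\Lambda(f)$ is Cauchy. Your route is plausible but has a genuine gap at step (iii), and a related one at step (i).

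The gap: you claim that weak convergence $\zeta^\e_\Lambda\to\zeta^\e$ \emph{combined with continuity of $A\mapsto\E_{\Lambda\setminus A}(f)$ in the cylinder topology} permits passage to the limit inside the mixture. But $A\mapsto\E_{\Lambda\setminus A}(f)$ is \emph{not} a local function of $A$ — it depends on all of $A\cap\Lambda$ through the full Green's function $G_{\Lambda\setminus A}$, and as $\Lambda$ grows this dependence involves ever more sites. Weak convergence only controls integrals of \emph{fixed} local bounded functions; with both the measure and the integrand varying with $\Lambda$ (and the integrand never stabilizing to a local function), dominated convergence does not apply as stated. What you need, and what you did not supply, is a quenched stability estimate to the effect that $(G_{\Lambda\setminus A}(x,y))_{x,y\in B}$ is, up to an error that vanishes as $R\to\infty$ uniformly in $\Lambda\supset Q_R(0)$ and with high $\zeta^\e_\Lambda$-probability, a function of only $A\cap Q_R(0)$. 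In the paper this is Lemma~\ref{l:diff_covs_quenched}, proven via the exterior and interior $L^2$-decay estimates of Theorem~\ref{t:decay_high_prob}; it is then used to replace $\E_{\Lambda\setminus A}(f)$ by the local surrogate $\E_{Q_R(0)\setminus A}(f)$ before invoking weak convergence, and the three limits $\Lambda\nearrow\Z^\dd$, $R\to\infty$, $k\to\infty$ are taken in that order. Your appeal to Theorem~\ref{t:estimatesfield} does not close this gap: that theorem gives \emph{annealed} bounds on $\E^\e_\Lambda(\psi_x\psi_y)$ (after averaging over $A$), not the \emph{quenched} $\Lambda$-stability of $G_{\Lambda\setminus A}$ needed here. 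The same issue surfaces in your step (i): to show that $G_{\Lambda_n\setminus A}(x,y)$ is Cauchy for $\zeta^\e$-a.e.\ $A$, you would again need a quenched, quantitative comparison of Green's functions for nested domains, and translating the $\zeta^\e_\Lambda$-probability bound to a $\zeta^\e$-a.e.\ statement is not automatic since the relevant events are not local. Once you have the analogue of Lemma~\ref{l:diff_covs_quenched} in hand, your integration-over-$A$ strategy can be carried through and would give the slightly stronger conclusion that $\PP^\e=\int\PP_{\Z^\dd\setminus A}\,\zeta^\e(\mathrm{d}A)$, which the paper's proof does not explicitly state; but without it, the plan as written does not close.
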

The convergence here is meant as weak convergence of measures on $\R^{\Z^\dd}$ equipped with the cylinder $\sigma$-algebra, i.e. we claim that the measures integrated against any bounded local function converge.

It is easy to see that $\PP^\e$ is an infinite volume Gibbs measure for the interaction \eqref{e:mem_mod_pinned} (with appropriate boundary conditions). We write $\E^\e$ for the expectation with respect to $\PP^\e$.

We will now state a few results on $\zeta^\e_\Lambda$ and $\PP^\e_\Lambda$ that hold uniformly in $\Lambda$. Theorem \ref{t:thermolimit} then implies that they hold for $\zeta^\e$ and $\PP^\e$ as well.

We begin with precise estimates on the pinned set. The heuristic is that this set behaves like a Bernoulli point process with density $p_\dd$ depending on $\e$. It turns out that this is true in a rather strong sense if $\dd\ge5$. In $\dd=4$ this no longer holds, but fortunately we can still compare the probabilities that large sets are free of pinned points, and this is sufficient to continue with our argument. The precise result is the following. For the definition of strong stochastic domination see Definition \ref{d:stoch_dom}. We denote by $\A$ a random variable distributed according to $\zeta^\e_\Lambda$.
\begin{theorem}\label{t:estimatespinnedset} Let $\dd\ge4$. There are constants $c_\dd,C_\dd$, $\e_{\dd,*}$ depending on $\dd$ only with the following property.
\begin{itemize}
	\item[a)] If $\dd\ge5$ and $p_{\dd,-}=c_\dd\e$, then for any $\Lambda\Subset\Z^d$ and any $\e<\e_{\dd,*}$ the measure $\zeta^\e_\Lambda$ strongly dominates the Bernoulli measure on $\Pow(\Lambda)$ with parameter $p_{\dd,-}$. In particular for any $E\subset\Lambda$
\begin{equation}\label{e:lowerestpinnedset_dim5}
	(1-p_{\dd,-})^{|E|}\ge\zeta^\e_\Lambda(\A\cap E=\varnothing)\,.
\end{equation}
	\item[b)] If $\dd\ge5$ and $p_{\dd,+}=C_\dd\e$, then for any $\Lambda\Subset\Z^d$ and any $\e<\e_{\dd,*}$ the measure $\zeta^\e_\Lambda$ is strongly dominated by the Bernoulli measure on $\Pow(\Lambda)$ with parameter $p_{\dd,+}$. In particular for any $E\subset\Lambda$
\begin{equation}\label{e:upperestpinnedset_dim5}
	(1-p_{\dd,+})^{|E|}\le\zeta^\e_\Lambda(\A\cap E=\varnothing)\,.
\end{equation}
	\item[c)] If $\dd=4$ and $p_{4,-}=c_4\frac{\e}{|\log\e|^{1/2}}$, then for any $E\subset\Lambda$ and any $\e<\e_{4,*}$ we have
\begin{equation}\label{e:lowerestpinnedset_dim4}
(1-p_{4,-})^{|E|}\ge\zeta^\e_\Lambda(\A\cap E=\varnothing)\,.
\end{equation}
	\item[d)] If $\dd=4$, then there is for any $\alpha>0$ a constant $C_{4,\alpha}$ depending on $\dd$ and $\alpha$ such that with $p_{4,+,\alpha}=C_{4,\alpha}\frac{\e}{|\log\e|^{1/2}}$ for any $E\subset\Lambda$ with $d(E,\Z^\dd\setminus\Lambda)\ge\e^{-\alpha}$ and any $\e<\e_{4,*}$ we have
\begin{equation}\label{e:upperestpinnedset_dim4}
(1-p_{4,+,\alpha})^{|E|}\le\zeta^\e_\Lambda(\A\cap E=\varnothing)\,.
\end{equation}
\end{itemize}
All estimates also hold with $\zeta^\e$ in place of $\zeta^\e_\Lambda$.
\end{theorem}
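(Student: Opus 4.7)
My starting point is the explicit formula for the single-site conditional pinning probability. Integrating out $\psi_x$ in the Gaussian measure $\PP_\Omega$ gives the standard identity $Z_{\Omega \setminus \{x\}}/Z_\Omega = (2\pi G_\Omega(x,x))^{-1/2}$, and substituting into the definition of $\zeta^\e_\Lambda$ yields
\[
\zeta^\e_\Lambda\bigl(x \in \A \bigm| \A \setminus \{x\} = A'\bigr) = \frac{\e}{\sqrt{2\pi G_{\Lambda \setminus A'}(x,x)} + \e}.
\]
A key monotonicity is that $G_\Omega(x,x)$ increases with $\Omega$ (a larger unpinned region enlarges the variance of the free field), so this conditional probability is monotone increasing in $A'$; the extreme values are attained at $A' = \varnothing$ and $A' = \Lambda \setminus \{x\}$. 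A direct one-point calculation gives the uniform lower bound $G_\Omega(x,x) \ge G_{\{x\}}(x,x) = [2\dd(2\dd+1)]^{-1}$, valid in every dimension and independent of $\Lambda$ and $A'$.

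For parts (a), (b) in $\dd \ge 5$, the bilaplacian Green's function on the full lattice is finite, so $G_{\Lambda \setminus A'}(x,x) \le G_{\Z^\dd}(x,x) =: C_\dd < \infty$ uniformly. Combined with the lower bound above, this yields $c_\dd \e \le \zeta^\e_\Lambda(x \in \A \mid A') \le C_\dd \e$ for all $A'$ and all $\e$ small enough. Since $\zeta^\e_\Lambda$ satisfies FKG by Theorem \ref{t:fkg}, Holley's comparison theorem applies in both directions, giving strong stochastic domination of Bernoulli$(p_{\dd,-})$ by $\zeta^\e_\Lambda$ and of $\zeta^\e_\Lambda$ by Bernoulli$(p_{\dd,+})$. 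The hole probability estimates \eqref{e:lowerestpinnedset_dim5} and \eqref{e:upperestpinnedset_dim5} follow by testing against the decreasing indicator $\I_{\A \cap E = \varnothing}$.

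The critical dimension $\dd=4$ is substantially harder because $G_\Lambda(x,x)$ grows as $\log \dist(x, \Lambda^c)$ and Holley's theorem does not apply directly. For (c), my plan is to use the classical estimate $G_\Omega(x,x) \le C_4 \log(1 + \dist(x, \Z^4 \setminus \Omega))$ with $\Omega = \Lambda \setminus A'$: on the event that some point of $A'$ lies within distance $r$ of $x$, the conditional pinning probability is bounded below by $\e/\sqrt{2\pi C_4 \log r}$. Choosing $r = \e^{-O(1)}$ produces the target factor $|\log \e|^{-1/2}$, and the required bound follows by iterating this conditional estimate over the points of $E$ and using FKG together with an a priori density estimate to ensure that such a nearby pinned point exists. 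For (d), FKG gives $\zeta(x_i \in \A \mid x_1,\dots,x_{i-1} \notin \A) \le \zeta(x_i \in \A)$, so it suffices to upper bound the unconditional density. From the partition function identity, $\zeta(x \in \A)/\zeta(x \notin \A) = \E_\zeta\bigl[\e/\sqrt{2\pi G_{\Lambda \setminus \A}(x,x)} \bigm| x \notin \A\bigr]$, and the assumption $\dist(E, \Lambda^c) \ge \e^{-\alpha}$ combined with a self-consistent lower bound $G_{\Lambda \setminus \A}(x,x) \gtrsim |\log\e|$ on a high-probability event yields $\zeta(x \in \A) \lesssim \e/|\log\e|^{1/2}$.

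The main obstacle is the self-consistency loop in $\dd=4$: the Green's function control depends on the density of pinned points, which is precisely the quantity being estimated. I would resolve this by a short bootstrap, starting from the trivial estimate $\zeta(x \in \A) \le C\e$ (immediate from $G \ge G_{\{x\}}$) and iterating the recurrence $p_{n+1} = C\e/\sqrt{|\log p_n|}$; since $|\log p_n|$ is essentially $|\log\e|$ already after one step, one iteration suffices to reach the fixed point. Finally, the corresponding statements for $\zeta^\e$ in place of $\zeta^\e_\Lambda$ follow by passing to the thermodynamic limit provided by Theorem \ref{t:thermolimit}.
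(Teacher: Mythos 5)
Your treatment of parts (a) and (b) matches the paper: the exact formula $\zeta^\e_\Lambda(x\in\A\mid\A\setminus\{x\}=A')=(1+\sqrt{2\pi G_{\Lambda\setminus A'}(x,x)}/\e)^{-1}$ together with the uniform two-sided bound $c_\dd\le G_{\Lambda\setminus A'}(x,x)\le C_\dd$ in $\dd\ge5$ gives $c_\dd\e\le\zeta^\e_\Lambda(x\in\A\mid\cdot)\le C_\dd\e$ for all $A'$, which is by definition the claimed strong domination in both directions (Holley's theorem is not actually needed — strong domination against an i.i.d.\ measure is literally a pointwise comparison of conditional probabilities). Part (d) is also on the right track and close to the paper's proof: FKG reduces the problem to bounding $\zeta^\e_\Lambda(x\in\A)$ unconditionally, and a one-step bootstrap from the a priori $C\e$ density gives the $\e/|\log\e|^{1/2}$ bound. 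Your account is too compressed, though: one must split the conditional expectation on the event $d(x,\tilde\A)\ge\e^{-1/5}$ and control the probability of its complement (about $\e^{1/5}$) using the a priori density; your recurrence $p_{n+1}=C\e/\sqrt{|\log p_n|}$ omits this additive error term, and the paper's reorganization (splitting on $\A\cap Q=\{x\}$ versus $\A\cap Q\supsetneq\{x\}$) handles it cleanly.

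Part (c) has a genuine gap, and it is the technical heart of the theorem. You propose to iterate the conditional estimate over the points of $E$, but the chain-rule factorization $\zeta^\e_\Lambda(\A\cap E=\varnothing)=\prod_i\zeta^\e_\Lambda(x_i\notin\A\mid x_1,\dots,x_{i-1}\notin\A)$ cannot be closed by your route for two reasons. First, FKG points the wrong way: $\{x_i\in\A\}$ is increasing while $\{x_1,\dots,x_{i-1}\notin\A\}$ is decreasing, so FKG gives $\zeta^\e_\Lambda(x_i\in\A\mid\cdot)\le\zeta^\e_\Lambda(x_i\in\A)$, which is the useless direction (indeed FKG directly yields $\zeta^\e_\Lambda(\A\cap E=\varnothing)\ge\prod_{x\in E}\zeta^\e_\Lambda(x\notin\A)$, an inequality \emph{opposite} to what (c) asserts — that is exactly why the paper invokes FKG in (d) but never in (c)). Second, and more fundamentally, no ``a priori density estimate'' can survive the decreasing conditioning to guarantee a nearby pinned point: once you condition on $\A\cap(E\setminus\{x_i\})=\varnothing$ for a large $E$ with $x_i$ deep inside, $d(x_i,\tilde\A)$ may be arbitrarily large, $G_{\Lambda\setminus\A}(x_i,x_i)$ may be huge, and the conditional pinning probability may be far below $\e/|\log\e|^{1/2}$. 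The paper's proof of (c) is instead a combinatorial multiscale construction (Section 2.2, Steps 1--4): one grows $E$ to polymers $E^{(\underline k)}$, decomposes $\{\A\cap E=\varnothing\}$ according to the maximal $\underline k$ with $\tilde\A\cap E^{(\underline k)}=\varnothing$, and for each configuration in the slice enumerates the boxes of $E^{(\underline k)}$ so that each touches a box already containing a point of $\tilde A$ or a preceding box — possible because the growing polymer must eventually reach $\tilde A\supset\Z^\dd\setminus\Lambda$. One then inserts pinned points one per box, each at distance $\lesssim K\lmic$ from an existing one, gaining a factor $\sim\e/|\log\e|^{1/2}$ per insertion, and finally bootstraps through a macroscopic-scale density estimate to treat arbitrary $E$. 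None of this structure appears in your proposal.
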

Let us warn the reader that we use the notation $p_{\dd,\pm}$ in the opposite way as in \cite{Bolthausen2001}. Our convention here follows \cite{Bolthausen2017}.

Note carefully that we do not claim any domination result in case $\dd=4$. In fact, the same argument as in \cite[Section 2]{Bolthausen2001} shows that neither $\zeta^\e_\Lambda$ is strongly dominated by the Bernoulli measure on $\Pow(\Lambda)$ with parameter $p_{4,+}$, nor that $\zeta^\e_\Lambda$ strongly dominates the Bernoulli measure on $\Pow(\Lambda)$ with parameter $p_{4,-}$.

In the subcritical dimensions $\dd<4$ the set of pinned points is too correlated for any meaningful comparison with a Bernoulli measure. This is the reason why new techniques would be necessary to study the pinned membrane model in dimensions 2 and 3.

From Theorems \ref{t:thermolimit} and \ref{t:estimatespinnedset} one immediately obtains the following corollary, which strengthens Sakagawa's result \cite{Sakagawa2012} that the density of pinned points is positive for any $\e>0$.
\begin{corollary}\label{c:density_pinned_points}
Let $\dd\ge4$. Consider the density of pinned points
\[\rho_\e=\liminf_{\Lambda\nearrow\Z^\dd}\frac1{|\Lambda|}\zeta^\e_\Lambda(|\A|)=\liminf_{\Lambda\nearrow\Z^\dd}\frac1{|\Lambda|}\sum_{A\subset\Lambda}|A|\zeta^\e_\Lambda(A)\,.\]
For each $\e>0$ we have $\rho_\e\ge c_\dd p_{\dd,-}>0$.
\end{corollary}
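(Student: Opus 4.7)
The plan is straightforward: translate the set-level estimates from Theorem \ref{t:estimatespinnedset} into pointwise bounds on the single-site marginal $\zeta^\e_\Lambda(x \in \A)$, and then sum. By linearity of the expectation,
\[\zeta^\e_\Lambda(|\A|) = \sum_{A\subset\Lambda}|A|\zeta^\e_\Lambda(A) = \sum_{x\in\Lambda} \zeta^\e_\Lambda(x \in \A),\]
so a uniform lower bound $\zeta^\e_\Lambda(x \in \A) \ge p_{\dd,-}$ valid for every $x \in \Lambda$ and every finite $\Lambda$ immediately yields $\zeta^\e_\Lambda(|\A|) \ge |\Lambda|\, p_{\dd,-}$, and hence $\rho_\e \ge p_{\dd,-}$ after dividing by $|\Lambda|$ and passing to the liminf.

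To produce the marginal bound I would simply specialise Theorem \ref{t:estimatespinnedset} to $E = \{x\}$. For $\dd \ge 5$ and $\e < \e_{\dd,*}$, the estimate \eqref{e:lowerestpinnedset_dim5} reads $1 - p_{\dd,-} \ge \zeta^\e_\Lambda(x \notin \A) = 1 - \zeta^\e_\Lambda(x \in \A)$, which rearranges to $\zeta^\e_\Lambda(x \in \A) \ge p_{\dd,-}$. Exactly the same inequality can equivalently be read off the strong stochastic domination asserted in part (a) by inspecting a single-site marginal. The critical case $\dd = 4$ is handled identically: the same one-line rearrangement of \eqref{e:lowerestpinnedset_dim4} with $E = \{x\}$ gives $\zeta^\e_\Lambda(x \in \A) \ge p_{4,-}$. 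Because both bounds are uniform in $\Lambda$ and in $x$, nothing is lost when passing to the liminf, and one recovers the asserted inequality $\rho_\e \ge c_\dd p_{\dd,-}$ (the extra factor $c_\dd$ being harmless, since $p_{\dd,-}$ itself is already of the form $c_\dd\e$ or $c_4\e/|\log\e|^{1/2}$).

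The only mild caveat is that Theorem \ref{t:estimatespinnedset} applies only for $\e < \e_{\dd,*}$, whereas the corollary is claimed for every $\e > 0$. For $\e \ge \e_{\dd,*}$ one can either invoke Sakagawa's existing result \cite{Sakagawa2012} directly (which already secures $\rho_\e > 0$ in that regime), or verify monotonicity of $\rho_\e$ in $\e$ via the mixture representation \eqref{e:pinnedmeasureasamixture} and so reduce to the small-$\e$ bound. I do not foresee any serious obstacle: this corollary is essentially a bookkeeping consequence of the single-site specialisation of Theorem \ref{t:estimatespinnedset}, which is the genuinely non-trivial input.
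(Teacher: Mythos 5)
Your proof is correct and is, as far as one can tell, the argument the paper has in mind: the paper introduces the corollary with ``one immediately obtains,'' and your linearity identity
\[
\zeta^\e_\Lambda(|\A|)=\sum_{x\in\Lambda}\zeta^\e_\Lambda(x\in\A)
\]
combined with the singleton specialisation $E=\{x\}$ of \eqref{e:lowerestpinnedset_dim5} (resp.\ \eqref{e:lowerestpinnedset_dim4}) to obtain $\zeta^\e_\Lambda(x\in\A)\ge p_{\dd,-}$ is exactly the immediate deduction. Note your argument does not use Theorem~\ref{t:thermolimit} at all, only Theorem~\ref{t:estimatespinnedset}; the reference to Theorem~\ref{t:thermolimit} in the paper's surrounding text pertains to the remark after the corollary about the limit existing along $\Lambda_n$, not to the $\liminf$-bound itself.

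One small remark on your caveat about $\e\ge\e_{\dd,*}$: the literal inequality $\rho_\e\ge c_\dd p_{\dd,-}$ cannot hold for all $\e>0$ when $\dd\ge5$, since $p_{\dd,-}=c_\dd\e$ is unbounded while $\rho_\e\le 1$; and in $\dd=4$ the expression $p_{4,-}=c_4\e/|\log\e|^{1/2}$ degenerates near $\e=1$. So the corollary is best read as the quantitative bound for $\e<\e_{\dd,*}$, with positivity of $\rho_\e$ for larger $\e$ supplied by monotonicity in $\e$ (or Sakagawa), exactly as you say. Your monotonicity route would yield $\rho_\e\ge\rho_{\e'}\ge p_{\dd,-}(\e')>0$ for any fixed $\e'<\min(\e,\e_{\dd,*})$, which establishes positivity but not the formula $c_\dd p_{\dd,-}(\e)$ in that regime — which, as just noted, should not be expected there anyway. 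This is a limitation of the corollary's wording rather than of your argument.
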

It is unclear whether the limit here exists in general. However, using Theorem \ref{t:fkg} and a subadditivity argument one can show that it exists along the sequence $\Lambda_n=[-n,n]^\dd\cap\Z^\dd$, say.

\bigskip

Using the knowledge about $\zeta^\e_\Lambda$ from Theorem \ref{t:estimatespinnedset} we can now establish some more precise results on $\PP^\e$.  For any vector $\theta\in \Sphere^{\dd-1}$ (where $\Sphere^{\dd-1}$ is the unit sphere in $\R^\dd$) define the mass
\begin{equation}\label{e:def_mass}
m_\e(\theta):=-\limsup_{k\to\infty}\frac1k\log |\E^\e(\psi_0\psi_{\lfloor k\theta\rfloor})|\,.
\end{equation}
where we set $\log0=-\infty$. Note that we take the absolute value of $\E^\e(\psi_0\psi_{\lfloor k\theta\rfloor})$ in this definition. This is because for the membrane model correlations can be negative. In fact, the heuristic in Section \ref{s:heuristics} below suggests that $\E^\e(\psi_0\psi_{\lfloor k\theta\rfloor})$ behaves like an underdamped harmonic oscillator. In particular, we expect that the limit in \eqref{e:def_mass} does not exist. In contrast, for the gradient model the limit in \eqref{e:def_mass} exists, even without the absolute values, cf. \cite[Appendix A]{Bolthausen2001}.

We can show the following results on the variance, covariance and mass. Here $d(x,E)$ denotes the distance from $x$ to the set $E$.
\begin{theorem}\label{t:estimatesfield} Let $\dd\ge4$, and $\Lambda\Subset\Z^\dd$. There are constants $c_\dd,C_\dd$, $\e_{\dd,**}$ depending on $\dd$ only with the following property.
\begin{itemize}
	\item[a)] Let $x\in\Lambda$. Then for $\e<\e_{\dd,**}$ we have the following estimates on the variance:
	if $\dd\ge5$, then
	\begin{equation}\label{e:est_var_d>4}
	c_\dd\le\E^\e_\Lambda(\psi_x^2)\le C_\dd\,,
	\end{equation}
	while if $\dd=4$ and $\alpha>0$, then
	\begin{equation}\label{e:est_var_d=4}
	\frac{|\log\e|}{32\pi^2}-C_{4,\alpha}\log|\log\e|\le\E^\e_\Lambda(\psi_x^2)\le \frac{|\log\e|}{16\pi^2}+C_4\log|\log\e|\,,
	\end{equation}
	where the lower bound only holds if $d(x,\Z^\dd\setminus\Lambda)\ge \e^{-\alpha}+\e^{-1/4}$.
	The same estimates hold for $\E^\e$ instead of $\E^\e_\Lambda$ (with the condition on $d(x,\Z^d\setminus\Lambda)$ becoming vacuous). 
	\item[b)] Let $x,y\in\Lambda$. Then for $\e<\e_{\dd,**}$ we have the following estimates on the variance:
	if $\dd\ge5$, then
	\begin{equation}\label{e:est_cov_d>4}
	|\E^\e_\Lambda(\psi_x\psi_y)|\le \frac{C_\dd}{\e^{1/2}}\exp\left(-c_\dd\e^{1/4}|x-y|\right)\,,
	\end{equation}
	while if $\dd=4$, then
	\begin{equation}\label{e:est_cov_d=4}
	|\E^\e_\Lambda(\psi_x\psi_y)|\le \frac{C_4|\log\e|^{5/4}}{\e^{1/2}}\exp\left(-\frac{c_4\e^{1/4}|x-y|}{|\log\e|^{3/8}}\right)\,.
	\end{equation}
	The same estimates hold for $\E^\e$ instead of $\E^\e_\Lambda$.
	
	In particular, we have the following estimates on the mass: 	
	if $\dd\ge5$, then
	\begin{equation}\label{e:est_mass_d>4}
	c_\dd\e^{1/4}\le m_\e(\theta)\qquad\forall \theta\in \Sphere^{\dd-1}\,,
	\end{equation}
	while if $\dd=4$, then
	\begin{equation}\label{e:est_mass_d=4}
	c_4\frac{\e^{1/4}}{|\log\e|^{3/8}}\le m_\e(\theta) \qquad\forall \theta\in \Sphere^3\,.
	\end{equation}
\end{itemize}
\end{theorem}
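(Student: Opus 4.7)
The starting point is the mixture identity \eqref{e:pinnedmeasureasamixture}, which yields
\[\E^\e_\Lambda(\psi_x \psi_y) \;=\; \sum_{A \subset \Lambda} \zeta^\e_\Lambda(A)\, G_{\Lambda \setminus A}(x,y),\]
where $G_{\Lambda \setminus A}$ is the bilaplacian Green's function on $\Lambda \setminus A$. Both parts of the theorem thus reduce to controlling this random Green's function when $A$ is sampled from $\zeta^\e_\Lambda$, and Theorem \ref{t:estimatespinnedset} will be the workhorse for these estimates.

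For part a), when $\dd \ge 5$ the upper bound is immediate from domain monotonicity and the finiteness of $G_{\Z^\dd}(0,0)$, and the lower bound follows by restricting to the event $\{A \cap B_R(x)=\varnothing\}$ for a fixed large $R$: by Theorem \ref{t:estimatespinnedset} b) this event has probability at least $(1-p_{\dd,+})^{|B_R|} \ge \tfrac12$, on which $G_{\Lambda \setminus A}(x,x) \ge G_{B_R(x)}(x,x) \ge c_\dd$. When $\dd = 4$ the same philosophy applies, but with the random distance $D(A) := \dist(x, A)$ playing the role of $R$. Using that the four-dimensional bilaplacian Green's function in a ball of radius $D$ satisfies $G(x,x) = \frac{1}{8\pi^2}\log D + O(1)$, and that under $\zeta^\e_\Lambda$ the distance $D$ concentrates near $\e^{-1/4}|\log\e|^{1/8}$ by parts c)--d) of Theorem \ref{t:estimatespinnedset}, integrating $\frac{1}{8\pi^2}\log D$ against the upper and lower Bernoulli comparisons yields \eqref{e:est_var_d=4}. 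The factor of two between the leading constants of the upper and lower bounds comes from the difference between $p_{4,\pm}$ in the two directions of stochastic domination.

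For part b), the plan is to show that for $\zeta^\e_\Lambda$-typical $A$ the deterministic Green's function $G_{\Lambda \setminus A}(x,y)$ already decays exponentially at the advertised rate, with atypical $A$ absorbed into the polynomial prefactor using $|G_{\Lambda\setminus A}(x,y)| \le G_{\Lambda\setminus A}(x,x)^{1/2} G_{\Lambda\setminus A}(y,y)^{1/2}$ and the variance control from part a). The deterministic step is a Widman hole-filler iteration: one constructs a cut-off $\eta$ equal to $1$ near $x$, equal to $0$ near $y$, vanishing at every site of $A$, and with $H^2$-seminorm $\sum_z |\Delta_1 \eta_z|^2$ as small as possible. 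Inserting $\eta$ into the variational characterisation of $G_{\Lambda \setminus A}$ produces a multiplicative reduction of the Green's function on each dyadic shell around $x$, and compounding this reduction across $O(\log(|x-y|/\lambda))$ shells gives exponential decay at rate $\asymp \lambda^{-1}$, where $\lambda$ is the smallest scale on which the construction still succeeds. The discrete multipolar Hardy--Rellich inequality is the tool that keeps the per-pinning-site cost of $\eta$ bounded (rather than logarithmic in the local density), and it is responsible for the leading rate $\lambda \asymp \e^{-1/4}$ if $\dd \ge 5$ and $\lambda \asymp \e^{-1/4}|\log\e|^{3/8}$ if $\dd = 4$.

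The main obstacle is the probabilistic multi-scale construction of $\eta$. On each dyadic annulus of radius $r \in [\lambda, |x-y|/2]$ around $x$ one needs a sub-annulus in which $A$ is distributed close to its expected density, so that the local test function has bounded $H^2$ cost. Theorem \ref{t:estimatespinnedset} supplies polynomial-in-$\e$ upper bounds for the probability that no such sub-annulus exists at a given scale, and the FKG inequality of Theorem \ref{t:fkg}, together with the monotonicity of the relevant events in $A$, allows one to combine these bounds across the $O(\log|x-y|)$ scales at the cost of only a polynomial prefactor in $\e^{-1}$ and $|x-y|$. This prefactor is precisely what appears as $\e^{-1/2}$ in \eqref{e:est_cov_d>4} and as $|\log\e|^{5/4}\e^{-1/2} + \log(1+|x-y|)$ in \eqref{e:est_cov_d=4}. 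The mass lower bounds \eqref{e:est_mass_d>4}--\eqref{e:est_mass_d=4} are then immediate from definition \eqref{e:def_mass}.
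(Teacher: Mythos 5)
Your high-level outline—expand the pinned measure as a mixture over $\zeta^\e_\Lambda$, get quenched tail bounds on $G_{\Lambda\setminus\A}(x,x)$ from the Green's function asymptotics and Theorem \ref{t:estimatespinnedset}, and run a Widman hole-filler for the covariances—is the right skeleton, and your instinct to use the multipolar Hardy--Rellich inequality to control the per-shell cost is exactly where the optimal rate comes from. But there are several errors in the execution, one of which is fatal.

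The fatal one is the shell count. You propose iterating the hole-filler over \emph{dyadic} annuli of radii $r\in[\lambda, |x-y|/2]$, i.e.\ $O(\log(|x-y|/\lambda))$ annuli, and claim this gives exponential decay at rate $\lambda^{-1}$. It does not: compounding a fixed multiplicative reduction factor $\theta<1$ across $O(\log(|x-y|/\lambda))$ shells yields $\theta^{O(\log(|x-y|/\lambda))}$, which is only a \emph{power law} in $|x-y|/\lambda$. To get genuine exponential decay at rate $\lambda^{-1}$ you must iterate over $O(|x-y|/\lambda)$ annuli of \emph{constant width} $\sim\lambda\approx\lmac$; this is what the paper's Theorem \ref{t:decay_high_prob} and Lemma \ref{l:decay_many_annuli} do (each annulus has width $\sim\hat N_\dd\lmac$, and the $2^{-k}$ gain is compounded over $k\asymp|x-y|/\lmac$ of them). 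This also changes the probabilistic bookkeeping: you now need the good-annulus event to hold on at least half of $\Theta(|x-y|/\lmac)$ disjoint annuli, which is why the paper's Lemma \ref{l:existence_annuli} is a union bound over $l^\infty$-paths of macroscopic cubes rather than a bound over $O(\log)$ scales, and FKG plays no role in that step.

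Two further points. First, the cut-off in the hole-filler should not ``vanish at every site of $A$''; it is the biharmonic function $u=G_{\Lambda\setminus A}(\cdot,y)$ that vanishes there, while the cut-off $\eta$ only needs its discrete \emph{Hessian} to vanish on (a neighbourhood of) the bad microscopic boxes, so that the error terms in the hole-filler identity can be absorbed via the multipolar Hardy--Rellich inequality (Theorem \ref{t:localpoincare}) and the relaxed interpolation inequality (Lemma \ref{l:interpolationineq}). Constructing such an $\eta$ with $|\nabla_1^2\eta|\lesssim\lmac^{-2}$ is nontrivial precisely because one must make it locally affine on the bad set, which is why the paper runs a genuinely multi-scale decomposition with scales $\ell_j=M^{j^3}K\lmic$; ``a sub-annulus where $A$ has the right density'' does not capture this obstacle. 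Second, your explanation of the factor of $2$ between the leading constants in \eqref{e:est_var_d=4} is wrong: it does not come from $p_{4,\pm}$ (both are $\asymp\e/|\log\e|^{1/2}$ and differ only by constants), but from the factor-of-$2$ gap between the upper and lower bounds $\frac{1}{8\pi^2}\log d(x,\tilde A)-C\le G_{\Lambda\setminus A}(x,x)\le\frac{1}{4\pi^2}\log d(x,\tilde A)+C$ in Lemma \ref{l:estvariance}, which itself originates in the $\bigl(\log\frac{N}{1+|x-a|}\bigr)^2/\log N$ correction when you pin at a single point.
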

The estimates in Theorem \ref{t:estimatesfield} are only valid for sufficiently small $\e>0$. However, a calculation similar to \eqref{e:pinnedmeasureexpansion} reveals that for $\e'<\e$ the measure $\PP^{(\e)}_{\Lambda}$ is a mixture of the measures $\PP^{(\e')}_{\Lambda\setminus A}$, and so the theorem also implies that for any $\e>0$ the measure $\PP^{(\e)}_{\Lambda}$ has bounded variances and exponentially decaying covariances.

In the next section we describe some heuristics for the exponential decay of the correlations. These heuristics suggest that the exponential rates in \eqref{e:est_cov_d>4} and \eqref{e:est_cov_d=4} are optimal, but the prefactors are not. In fact, we have made no real effort to optimise these prefactors, as this would require further technicalities. Nonetheless, as we believe that the exponential rates in \eqref{e:est_cov_d>4} and \eqref{e:est_cov_d=4} are optimal, the same holds for the rates in \eqref{e:est_mass_d>4} and \eqref{e:est_mass_d=4}.

The results of Theorem \ref{t:estimatesfield} are a significant improvement of the results in \cite{Bolthausen2016,Bolthausen2017}. In \cite{Bolthausen2017} it is shown that for $\dd\ge5$ the mass is positive for each fixed $\e>0$. No explicit lower bound on the mass is given, but if one keeps track of the constants in their argument one can check that their proof gives the estimate
\[c_\dd\e^{\frac{(2\dd+4)(2\dd+1)}{\dd}}\le m_\e(\theta)\,.\]
For $\dd=4$ in \cite{Bolthausen2016} stretched-exponential decay of the covariances is shown, and it was unknown whether the decay is actually exponential.

\subsection{Heuristics: The continuous Bilaplace equation in a perforated domain}\label{s:heuristics}
Before we describe the proofs of our results in more detail, let us discuss a related problem that provides some heuristics. Namely we consider the continuous Bilaplace equation in a domain perforated by small holes. This is a well-studied problem, and the analogous problem for the Laplacian even more so, cf. \cite{Cioranescu1997,Marchenko2006}\footnote{Note that the original French and Russian works date back to the 70s and 80s.}. If one lets the size of the holes tend to zero while keeping their capacity density constant, the problem converges (in an appropriate sense) to a Bilaplace equation with a mass term on the whole domain. The Green's function of the associated operator decays exponentially, and so it is unsurprising that the same holds true already for the Green's function in the perforated domain.

In our context, this connection gives a hint how to deduce Theorem \ref{t:estimatesfield} if one assumes Theorem \ref{t:estimatespinnedset}. Let us explain this in detail: fix some $0<r<\frac12$. Let $\e>0$, and let $\Omega\subset\R^\dd$ be a bounded domain with smooth boundary. Let $N$ be a large parameter to be chosen later. We perforate the domain $N\Omega$ with small holes of radius $r>0$, centred at a subset of $\Z^\dd$. Theorem \ref{t:estimatespinnedset} suggests that we choose a fraction $p_{\dd,-}$ of the points in $\Z^\dd$ as the centres of these holes. For now we consider the simplest case of equally-spaced holes, i.e. we place them at $(\lmic\Z)^\dd$, where $\lmic\approx (p_{\dd,-})^{-\frac1\dd}$ is an integer. That is, we consider the equation
\begin{alignat}{2}\label{e:heuristics1}
\begin{aligned}
\Delta^2 u&=f && \text{in }N\Omega\setminus\bigcup_{x\in(\lmic\Z)^\dd}\overline{B_r(x)}\,,\\
u&=0 && \text{else}\,.
\end{aligned}
\end{alignat}
The Green's function $G$ of this problem should predict the behaviour of the covariances in Theorem \ref{t:estimatesfield}.

We can rescale \eqref{e:heuristics1} back to a unit domain by letting $\hat f(y)=\frac{1}{N^{\dd-4}}f(Ny)$, $\hat u(y)=\frac{1}{N^\dd}u(Ny)$, so that $\hat u$ and $\hat f$ solve
\begin{alignat}{2}\label{e:heuristics2}
\begin{aligned}
\Delta^2 \hat u&=\hat f && \text{in }\Omega\setminus\bigcup_{x\in((\lmic/N)\Z)^\dd}\overline{B_{r/N}(x)}\,,\\
\hat u&=0 && \text{else}\,.
\end{aligned}
\end{alignat}
In order to apply now results from \cite{Cioranescu1997}, we need to treat $\dd\ge5$ and $\dd=4$ separately. We begin with the former case. The collection of balls $\bigcup_{x\in((\lmic/N)\Z)^\dd}B_{r/N}(x)$ has capacity density
\[\mu=C_\dd\left(\frac{N}{\lmic}\right)^\dd\left(\frac{r}{N}\right)^{\dd-4}=C_\dd N^4r^{\dd-4}\lmic^{-\dd}\approx C_\dd N^4r^{\dd-4}p_{\dd,-}=C_\dd N^4r^{\dd-4}\e\,.\]
We want to consider a limit of dense small holes where $\mu$ is constant, and so we choose $N=\e^{-1/4}$. Then, according to \cite[Example 2.14]{Cioranescu1997}, the solution of \eqref{e:heuristics2} in the limit $\e\to0$ behaves like the solution of
\begin{alignat}{2}\label{e:heuristics3}
\begin{aligned}
\Delta^2 \hat u+\mu\hat u&=\hat f && \text{in }\Omega\,,\\
\hat u&=0 && \text{else}\,.
\end{aligned}
\end{alignat}
This is a Bilaplace equation with a mass term. Its Green's function $\hat G$ behaves like the Green's function $\hat G_{\R^\dd}$ of the same equation in the full space $\R^\dd$ (at least when we stay away from the boundary of $\Omega$). The latter Green's function can be computed quite explicitly using separation of variables in spherical coordinates. One finds that $\hat G_{\R^\dd}(\hat x,\hat y)=F(\mu^{1/4}|x-y|)$, where $F(t)$ is a linear combination of $\Rea\left((\zeta_8 t)^{-(\dd-2)/2}H^{(1)}_{(\dd-2)/2}(\zeta_8 t)\right)$. Here $H^{(1)}_\nu$ is the Hankel function of the first kind, and $\zeta_8$ runs through the primitive eighth roots of unity. A short calculation using the asymptotic expansion for these functions (cf. \cite[Equation 9.7.2]{Abramowitz1964}) and the fact that $\hat G_{\R^\dd}$ needs to decay at infinity reveals that
\begin{align*}
	&\hat G_{\R^\dd}(\hat x,\hat y)\\*
	&=C_\dd\left(\mu^{1/4}|\hat x-\hat y|\right)^{-(\dd-1)/2}\left(\sin\left(\frac{\mu^{1/4}|\hat x-\hat y|}{2^{1/2}}-\omega_\dd\right)+O(\mu^{-1/4}|\hat x-\hat y|^{-1})\right)\exp\left(-\frac{\mu^{1/4}|\hat x-\hat y|}{2^{1/2}}\right)
\end{align*}
where $\omega_\dd$ is a phase shift depending only on $\dd$, and we used the standard Landau notation. Neglecting the error term altogether, we thus expect
\begin{equation}\label{e:heuristics4}
\hat G(\hat x,\hat y)\approx C_\dd\left(\mu^{1/4}|\hat x-\hat y|\right)^{-(\dd-1)/2}\sin\left(\frac{\mu^{1/4}|\hat x-\hat y|}{2^{1/2}}-\omega_\dd\right)\exp\left(-\frac{\mu^{1/4}|\hat x-\hat y|}{2^{1/2}}\right)
\end{equation}
when $|\hat x-\hat y|\gg\mu^{-1/4}$, and the Green's function of \eqref{e:heuristics2} should behave similarly (at least if $\Omega$ is large enough, i.e. $\diam \Omega\gg\mu^{-1/4}$).
Rescaling back, we thus expect for the Green's function $G$ of \eqref{e:heuristics1} that
\begin{align*}
&G(x,y)\\
&\quad=\frac{1}{N^{\dd-4}}\hat G\left(\frac{x}{N},\frac{y}{N}\right)\\
&\quad\approx \frac{1}{N^{\dd-4}}C_\dd\left(\frac{\mu^{1/4}|x-y|}{2^{1/2}N}\right)^{-(\dd-1)/2}\sin\left(-\frac{\mu^{1/4}|x-y|}{2^{1/2}N}-\omega_\dd\right)\exp\left(-\frac{\mu^{1/4}|x-y|}{N}\right)\\
&\quad\approx\frac{C_\dd\e^{(\dd-7)/8}}{r^{(\dd-1)(\dd-4)/8}|x-y|^{(\dd-1)/2}}\sin\left(-C_\dd\e^{1/4}r^{(\dd-4)/4}|x-y|-\omega_\dd\right)\exp\left(-C_\dd\e^{1/4}r^{(\dd-4)/4}|x-y|\right)
\end{align*}
when $|x-y|\gg N\mu^{-1/4}=\e^{-1/4}r^{-(\dd-4)/4}$. Thus, $G$ decays exponentially, with polynomial corrections and an oscillatory term that makes $G$ change sign. While the polynomial corrections and the oscillatory term are not captured in \eqref{e:est_cov_d>4}, the exponential decay rates in both estimates are the same (up to constant factors).

If $\dd=4$, the argument is in principle the same, but we need to use extra care when defining the capacity density. Following \cite[Example 2.14]{Cioranescu1997} we define
\[\mu=C\left(\frac{N}{\lmic}\right)^4\frac{1}{|\log\frac{r}{N}|}\approx CN^4p_{4,-}\frac{1}{\log N-\log r}\approx C\frac{N^4\e}{(\log N-\log r)|\log\e|^\frac12}\,.\]
We again want $\e\to0$ while $\mu$ is constant, and so we choose $N=\frac{|\log\e|^{3/8}}{\e^{1/4}}+O_r\left(\frac{|\log\e|^{1/8}\log|\log\e|}{\e^{1/4}}\right)=\frac{|\log\e|^{3/8}+o_r(|\log\e|^{3/8})}{\e^{1/4}}$ accordingly. Then we conclude from \cite[Example 2.14]{Cioranescu1997} that the solutions of \eqref{e:heuristics2} and \eqref{e:heuristics3} are close. The Green's function of \eqref{e:heuristics3} in $\dd=4$ still behaves like \eqref{e:heuristics4}, and so, rescaling back, we find again
\begin{align*}
&G(x,y)\\
&\quad=\hat G\left(\frac{x}{N},\frac{y}{N}\right)\\
&\quad\approx C\left(\frac{\mu^{1/4}|x-y|}{N}\right)^{-3/2}\sin\left(-\frac{\mu^{1/4}|x-y|}{2^{1/2}N}-\omega_4\right)\exp\left(-\frac{\mu^{1/4}|x-y|}{2^{1/2}N}\right)\\
&\quad\approx \frac{C|\log\e|^{9/16}}{\e^{3/8}|x-y|^{3/2}}\sin\left(-\frac{C\e^{1/4}|x-y|}{|\log\e|^{3/8}+o_r(|\log\e|^{3/8}))}-\omega_4\right)\exp\left(-\frac{C\e^{1/4}|x-y|}{|\log\e|^{3/8}+o_r(|\log\e|^{3/8}))}\right)\,.
\end{align*}
when $|x-y|\gg_r N\mu^{-1/4}\approx\e^{-1/4}|\log\e|^{3/8}$.
This is again exponential decay with polynomial corrections and as oscillatory term. The exponential decay rate is again the same as in \eqref{e:est_cov_d=4} (up to constants).

In summary, our heuristic predicts the same exponential decay rates as in Theorem \ref{t:estimatesfield}. The heuristic we used is rather simplistic, though. One problem is that in the context of the membrane model a single pinned point forces the field to be zero there, but does not pose any restrictions on the gradient of the field. In contrast, in \eqref{e:heuristics1} we force the field and all its derivatives to be zero at the pinned balls. One way to improve the heuristics would thus be to only prescribe that $u$ has average zero over each $B_r(x)$ for $x\in(\lmic\Z)^\dd$, instead of it being identically zero there. This is not a serious change, though, as a modification of \cite[Example 2.14]{Cioranescu1997} or an application of the general framework in \cite{Marchenko2006} show that the convergence of \eqref{e:heuristics2} to \eqref{e:heuristics3} still holds, albeit with a different constant prefactor in $\mu$.

A more serious problem is that the pinned points are not distributed on a lattice, but following the probability distribution $\zeta^\e_\Lambda$. If this distribution were, say, a Poisson point process, then the framework from \cite{Marchenko2006} would still apply.
Our actual $\zeta^\e_\Lambda$ is possibly quite correlated (at least if $\dd=4$), though, and so it is not clear that the heuristic still applies. On the other hand, we are not actually interested in "quenched" estimates that hold for all realizations of the sets of pinned points, but rather in "annealed" estimates where we average over the randomness of the pinned points. So there is hope to retain the heuristic.

A further question is how to rigorously show that the convergence of the boundary value problem \eqref{e:heuristics2} to the boundary value problem \eqref{e:heuristics3} implies that the Green's function of \eqref{e:heuristics2} already has the predicted behaviour. There are very few results on this in the literature. One exception is \cite{Niethammer2006}, where this is proved rigorously for the case of the Laplace equation in $\dd=3$. However, that approach relies on the maximum principle, and so one cannot extend it to our situation. Instead, in \cite{Hoefer2018} a more robust approach is used: There (in another context) exponential decay of the $L^2$-norm of harmonic functions on perforated large annuli is shown, using Widman's hole filler technique \cite{Widman1971} in combination with the fact that one has a local Poincaré inequality. A similar argument is also used in \cite{Bolthausen2017}, and the authors describe that they learned it from Vladimir Maz'ya. The decay rates in \cite{Hoefer2018} are not optimal, but a small modification of their argument leads to the optimal decay rate. These arguments are the inspiration for our proof of Theorem \ref{t:estimatesfield} from Theorem \ref{t:estimatespinnedset}. We shall explain this in more detail in the next subsection, where we outline the proofs of our results.

\subsection{Main ideas of the proofs}\label{s:main_ideas}

This paper consists of two main parts. We first establish the various results on the pinned set, and then deduce from them the results on the variances and covariances. We will discuss these parts separately. Before doing so, let us remark that there are two natural lengthscales occuring. There are the average distance between pinned points
\[\lmic\approx\begin{cases}\frac{1}{\e^{1/\dd}}&\dd\ge5\\\frac{|\log\e|^{1/8}}{\e^{1/4}}&\dd=4\end{cases}\,,\]
and the lengthscale on which the correlations decay
\[\lmac\approx\begin{cases} \frac{1}{\e^{1/4}}&\dd\ge5\\\frac{|\log\e|^{3/8}}{\e^{1/4}}&\dd=4\end{cases}\,.\]
Note that we have $1\ll\lmic\ll\lmac$ as $\e\to0$ for any $\dd\ge4$.

\subsubsection{Estimates on the pinned set}
The first novel result of this paper is the FKG inequality for the pinned set, Theorem \ref{t:fkg}. As already mentioned, it follows rather directly from the Gaussian correlation inequality \cite{Royen2014}, and it is standard to deduce from the FKG inequality the existence of the thermodynamic limit of the set of pinned points, i.e. the first part of Theorem \ref{t:thermolimit}. We give these proofs in Section \ref{s:corr_ineq}. Note, however, that our proof of Theorem \ref{t:fkg} is specific to the case of $\delta$-pinning, and we conjecture that the result is not true for other pinning potentials such as a square-well potential. The point is that conditioning a Gaussian vector on being 0 at some coordinates yields another Gaussian vector, but that is no longer true if we condition on some coordinates being small instead. 
We give a more detailed explanation in Remark \ref{r:fkg_counterex}. 

For the proof of Theorem \ref{t:estimatespinnedset} in Section \ref {s:est_pinned_set} we follow \cite{Bolthausen2001} rather closely. The domination results in Theorem \ref{t:estimatespinnedset} a) and b) are actually already in \cite{Bolthausen2017}. They follow via a short calculation from the boundedness of the Green's function in $\dd\ge5$. Part d) is a little more difficult. It could be proven as in \cite[Section 3.2]{Bolthausen2001}, but we give a slightly simpler proof. The idea is that if $x\in E$ is quite far from the pinned points we have already found, the fluctuations of $\psi_x$ are quite large, and so the chance that $x$ is pinned is low.

By far the most difficult part of Theorem \ref{t:estimatespinnedset} is part c), where we again mostly follow \cite{Bolthausen2001}. There we want to control the probability that $E\subset\Lambda$ is free of pinned points from above. To do so, we need to find for any configuration of pinned points that avoids $E$ many others that intersect $E$. This is done using a two-scale argument. We first consider the case that $E$ is a union of boxes of sidelength $C\lmic$, and prove \eqref{e:lowerestpinnedset_dim4} in this case by carefully tracking how a pinned point in one of these boxes makes it likely that there are pinned points in the neighbouring boxes. Next, we pass to the larger lengthscale $C\lmac$ and deduce from the first step that for an arbitrary $E$, most points of $E$ are at a distance $\le C\lmac$ from a pinned point. Finally we use this knowledge together with an argument similar to the first step to construct many configurations of pinned points that intersect $E$.

The main difference to \cite{Bolthausen2001} is that one cannot use random walk estimates to see how pinning at some $x\in\Lambda$ influences the variance at $y\neq x$. Instead we use an explicit variance estimate (Lemma \ref{l:estvariance}) that follows from the monotonicity of the variance in the set of pinned points. We also streamline the argument from \cite{Bolthausen2001} at some points and correct a minor mistake there.

\subsubsection{Asymptotics for the variances and covariances}
The remainder of the paper is then concerned with proving Theorem \ref{t:estimatesfield} and the second part of Theorem \ref{t:thermolimit}. In \cite{Bolthausen2001} the random walk representation of the Green's function of the Laplacian is used for that purpose. In our case there is no such representation, so we need a completely new argument.

It turns out that the estimates for the variance follow quite easily from Theorem \ref{t:estimatespinnedset} and the variance estimate in Lemma \ref{l:estvariance}. We give details in Sections \ref{s:quenched} and \ref{s:annealed}.

The estimates on the covariance in Theorem \ref{t:estimatespinnedset} are much more difficult. The general strategy is the same as in \cite{Bolthausen2017}. That is, we show that the $L^2$-norm of the second derivative of the "quenched" Green's function decays exponentially on large annuli. These annuli have to be chosen adapted to the set of pinned points, and so we do not get an estimate valid for all realizations of $\A$. But our estimates hold up to an exponentially small probability, so that we control $G_{\Lambda\setminus A}$ for all but exponentially few $A$. For these we can use a rather crude estimate. Finally, we can average these quenched estimates for the Green's function over $A$ to deduce "annealed" bounds for the covariance.

The existence of the thermodynamic limit of the field in Theorem \ref{t:thermolimit} follows then from the existence of the thermodynamic limit of the set of pinned points and the quenched decay estimates on the Green's function. The somewhat technical proof is given in Section \ref{s:thermo_limit_field}.

\bigskip

For the remainder of this section, let us describe in more detail how we prove the quenched estimates on the covariance. Our main technical result used for that purpose is, roughly speaking, the following (see Theorem \ref{t:decay_high_prob} for the precise statement):
there is a constant $\hat N_\dd$ such that if $k\in\N$ and $\e$ is sufficiently small there is an event $\Omega_{U,k}$ with $\zeta^\e_\Lambda\left(\Omega_{U,k}\right)\ge1-\frac{C_U}{2^k}$ such that if $A\in\Omega_{U,k}$, and if $u\colon\Z^\dd\to\R$ is a function such that $u=0$ on $A\setminus U$ and $u\Delta_1^2u=0$ on $\Z^\dd\setminus U$, we have the estimate
\begin{equation}\label{e:outline_ext_decay}
\left\|\nabla_1^2u\right\|^2_{L^2(\Z^\dd\setminus(U+Q_{2k\hat N_\dd\lmac}(0)))}\le\frac{1}{2^k}\left\|\nabla_1^2u\right\|^2_{L^2((U+Q_{2k\hat N_\dd\lmac}(0)\setminus U)}\,.
\end{equation}
Here $Q_{2k\hat N_\dd\lmac}(0)$ denotes a cube of halfdiameter $2k\hat N_\dd\lmac$ centred at $0$, and $U+Q_{2k\hat N_\dd\lmac}(0)$ is the Minkowski sum of two sets. This is an exterior decay estimate for biharmonic functions that holds up to exponentially small probability (and we state and prove in Theorem \ref{t:decay_high_prob} also the analogous interior decay estimate). Applying \eqref{e:outline_ext_decay} with $u=G_{\Lambda\setminus\A}(\cdot,y)$ it is a bit tedious but not difficult to deduce the aforementioned quenched estimates on the Green's function, and we do so in Sections \ref{s:quenched} and \ref{s:annealed}.

Let us describe how to prove \eqref{e:outline_ext_decay}. We first outline the basic strategy that was used in \cite{Bolthausen2017} and (in another context) in \cite{Hoefer2018}, and then describe our novel ideas. For convenience we pretend in the following that $u$ is a continuous function on $\R^\dd$. Adapting the argument to the discrete setting will be somewhat technical but not hard.

We try to iterate a Widman hole filler argument \cite{Widman1971} (see, e.g., \cite[Section 4.4]{Giaquinta2012} for a modern presentation). That is, given $U\subset\R^\dd$, we want to find $U'\supset U$, so that the $L^2$-norm of $\nabla^2u$ on $\R^\dd\setminus U$ is controlled by a constant less than 1 times the $L^2$-norm of $\nabla^2u$ on $U'\setminus U$. We also want $\dist(U,\R^\dd\setminus V)\le C\lmac$. Once we have such an estimate, we can iterate it to deduce exponential decay at rate $\frac{1}{C\lmac}$, at least on the $L^2$-level. 

So suppose that $U\subset U'$ are open sets and $\eta$ is a smooth cut-off function such that
\[\{\Delta^2u\neq0\}\subset U\subset\{\eta=0\}\subset\{\eta\neq1\}\subset U'\,.\]
Then we have
\begin{equation}\label{e:intbyparts1}
0=(\Delta^2u,\eta u)=(\nabla^2u,\nabla^2(\eta u))=\int\eta|\nabla^2u|^2+2\int\nabla^2u:\nabla u\otimes\nabla\eta+\int u\nabla^2u:\nabla^2\eta
\end{equation}
and one can rewrite this using the Cauchy-Schwarz inequality as
\begin{equation}\label{e:intbyparts4}
\begin{aligned}
\int_{\R^\dd\setminus U'}|\nabla^2u|^2&\le\int\eta|\nabla^2u|^2\\
&=-2\int\nabla^2u:\nabla u\otimes\nabla\eta-\int u\nabla^2u:\nabla^2\eta\\
&\le\frac15\int_{U'\setminus U}|\nabla^2u|^2+5\int_{U'\setminus U}|\nabla u|^2|\nabla\eta|^2+\frac15\int_{U'\setminus U}|\nabla^2u|^2+\frac54\int_{U'\setminus U}|u|^2|\nabla^2\eta|^2\,.
\end{aligned}
\end{equation}
Now if we could choose $\eta$ in such a way that the second and fourth summand here are both bounded by $\frac15\int_{U'\setminus U}|\nabla^2u|^2$ we would obtain the desired decay estimate. In fact, this is what was done in \cite{Bolthausen2017}. However, in order to bound both the second and fourth summand, one needs to impose strong pointwise conditions on $\nabla \eta$ and $\nabla^2\eta$, and, in particular, both need to be near zero on mesoscopic holes in the pinned set. These conditions do not allow growth of $\eta$ at the optimal rate, and so using this argument one cannot obtain the optimal estimate for the decay rate (but is it comparably easy to construct an $\eta$ that satisfies these conditions and grows at a non-optimal rate, cf. \cite{Bolthausen2017}).

To solve this problem we first rewrite the right hand side of \eqref{e:intbyparts1} so that there are no longer any terms containing $\nabla\eta$. An integration by parts shows that
\[\int\nabla^2u:\nabla u\otimes\nabla\eta=-\int\nabla u\cdot\left(\nabla\cdot\left(\nabla u\otimes\nabla\eta\right)\right)=-\int\nabla^2u:\nabla u\otimes\nabla\eta-\int|\nabla u|^2\Delta\eta\]
and hence
\[\int\nabla^2u:\nabla u\otimes\nabla\eta=-\frac12\int|\nabla u|^2\Delta\eta\,.\]
Plugging this into \eqref{e:intbyparts1} we see that
\begin{equation}\label{e:intbyparts2}
\int\eta|\nabla^2u|^2=\frac12\int|\nabla u|^2\Delta\eta-\int u\nabla^2u:\nabla^2\eta\,.
\end{equation}
Using the assumptions on $\eta$ and the Cauchy-Schwarz inequality we can now estimate
\begin{equation}\label{e:intbyparts3}
\begin{aligned}
\int_{\R^\dd\setminus U'}|\nabla^2u|^2&\le\int\eta|\nabla^2u|^2\\
&=\frac12\int|\nabla u|^2\Delta\eta-\int_{\R^\dd\setminus U} u\nabla^2u:\nabla^2\eta\\
&\le\frac12\int|\nabla u|^2\Delta\eta+\int|u|^2|\nabla^2\eta|^2+\frac14\int_{U'\setminus U}|\nabla^2u|^2\,.
\end{aligned}
\end{equation}
If we can now arrange things in such a way that the first two summands here are each bounded by $\frac14\int_{U'\setminus U}|\nabla^2u|^2$, we see that
\begin{equation}\label{e:intbyparts5}
\int_{\R^\dd\setminus U'}|\nabla^2u|^2\le\frac34\int_{U'\setminus U}|\nabla^2u|^2
\end{equation}
and now we can try to iterate this estimate to obtain exponential decay of the $L^2$-norm of $\nabla^2u$. Note that unlike \eqref{e:intbyparts4} we now only need to impose conditions on $\nabla^2\eta$. 

As it turns out, the first summand in \eqref{e:intbyparts3} can be controlled by the second and third summand using an interpolation inequality on lengthscale $\lmic$ that we discuss in \ref{s:inter_ineq}.

\bigskip

The remaining task is thus to choose $\eta$ in such a way that it grows fast enough, but we nonetheless can bound the term $\int|u|^2|\nabla^2\eta|^2$. For that purpose we need some sort of local Poincaré inequality on scale $\lmic$. Of course, such an estimate can only hold if there are enough pinned points close to the point of interest. In \cite[Lemma 4.1]{Bolthausen2017} this was done provided there is a nearby cube of $3^\dd$ points, which are all pinned. On that small cube we then have $u=\nabla u=0$, and some version of the Hardy-Rellich inequality forces $u$ to be small near that cube as well. However, this is not optimal, as cubes of $3^\dd$ points that are all pinned are very rare. We show that is sufficient if there are $\dd+1$ pinned points somewhere nearby that are well-spread out. The number $\dd+1$ arises from the fact that we need to eliminate nonzero affine functions on $\R^\dd$. Thus, in some sense we use a multipolar Hardy-Rellich inequality instead of a unipolar one. For multipolar Hardy inequalities cf. e.g. \cite{Cazacu2013}; we could not find a detailed discussion of multipolar Hardy-Rellich inequalities in the literature.

The local Poincaré inequality result is, roughly speaking, the following (see Theorem \ref{t:localpoincare} for the precise result): 
Let $A\subset \Lambda$, and $V\subset\Lambda$ be an arbitrary subset. Let $R\in\N$, $R\ge2$ be a parameter. Then
\[\left\|u\I_{\cdot\in X_R}\right\|^2_{L^2(V)}\le C_\dd R^\dd(1+\I_{\dd=4}\log R)\left\|\nabla^2u\right\|^2_{L^2(V+Q_R(0))}
\]
where $X_R$ is the set of those points that have $\dd+1$ well-separated pinned points at distance $\le R\lmic$ around them, and we write $\I_{\cdot\in X_R}$ for the indicator function of that set.

This result makes it clear what we need to require of $\eta$. Namely we want $|\nabla^2\eta|\le C\I_{\cdot\in X_R}$ for some $R$. If we have this relation, then our multipolar Hardy-Rellich inequality allows us to control the second term on the right hand side in \eqref{e:intbyparts3}, and we can close the argument for the exponential decay estimate.

It thus remains to choose $R$ and construct $\eta$ such that $|\nabla^2\eta|\le C\I_{\cdot\in X_R}$ in such a way that $\eta$ grows fast enough, and the construction should work up to an exponentially small probability. This is the content of Sections \ref{s:const_cutoff} and \ref{s:est_prob_cutoff}. This is the technical heart of the present paper, and the arguments are novel. We can think of $X_R$ as the good set, and its complement as the bad set, and we need to construct $\eta$ such that it is locally affine on the bad set, but still grows quadratically. To execute this construction, we start with one $\eta_*$ that grows quadratically, and then try to modify it so that it becomes affine on the bad set. For such modifications it is necessary that the components of the bad set are well-separated from each other. In general this will not be the case, but we make a multiscale composition of the bad set into parts that live on lengthscale $\ell_j$ and are well-separated on that lengthscale, and then we change $\eta_*$ to be affine on those parts separately. The correct choice of the lengthscales $\ell_j$ turns out to be the rather strange looking $\ell_j=CM^{j^3}\lmic$, where $M$ is some large integer. The construction can be carried out provided the multiscale decomposition vanishes beyond some large lengthscale. Using the results from Theorem \ref{t:estimatespinnedset} we show that this is the case up to a probability that can be made arbitrarily small. 

Unfortunately "arbitrarily small" is not quite good enough, as that means that there are still exceptional pairs $(U,U')$ on which we cannot deduce \eqref{e:intbyparts5}. But such exceptional pairs are rare, and when we iterate \eqref{e:intbyparts5} to conclude \eqref{e:outline_ext_decay} it is sufficient if we can apply \eqref{e:intbyparts5} on at least half of the possible $(U,U')$, which is possible up to an exponentially small probability.

This completes the construction of $\eta$. Once we have $\eta$ at our disposal, we can complete the proof of \eqref{e:outline_ext_decay}. We refer to Section \ref{s:decay_L2} for a more detailed exposition of the argument.

\subsection{Notation and preliminaries}
Our notation mostly follows \cite{Mueller2019,Schweiger2020}, but let us review the important points.

We use the convention that $c$ and $C$ denote generic constants whose precise value can change from occurrence to occurrence. Constants that are denoted by any other latin or greek letter have some fixed value and keep it. By adding subscripts to a constant we emphasize that the precise value of that constant may depend on the variables in the subscript (and typically on no others).

We let $e_1,\ldots,e_\dd$ be the standard unit vectors in $\R^\dd$. We define the forward difference quotient $D^1_iu(x)=u(x+e_i)-u(x)$ and the backward difference quotient $D^1_{-i}u(x)=u(x)-u(x-e_i)$. The discrete gradient is the vector $\nabla_1u(x):=(D^1_iu(x))_{i=1}^\dd$, the discrete Hessian is the tuple $\nabla^2_1u(x):=(D^1_iD^1_{-j}u(x))_{i,j=1}^\dd$, the discrete Laplacian is $\Delta_1u(x):=\sum_{i=1}^\dd D^1_iD^1_{-i}u(x)$, and the discrete Bilaplacian is $\Delta_1^2:=\Delta_1\circ\Delta_1$. For a multi-index $\underline{\alpha}\in\N^\dd$ we write $D^1_{\underline{\alpha}} u(x)=(D^1_1)^{\alpha_1}\ldots(D^1_\dd)^{\alpha_\dd}u(x)$, and for $k\in\N$ with $k\ge3$ we let $\nabla^ku(x)$ be the the collection of all $D^1_{\underline{\alpha}} u(x)$ with $|\underline{\alpha}|=k$.

We also use the translation operators $\tau^1_{\pm i}$ defined by $\tau^1_{\pm i}u(x)=u(x\pm e_i)$.

We will freely use various summation by part identities. These are all shown in detail in \cite{Mueller2019}.

For $r>0$ and $x\in\Z^4$ we let $Q_r(x)=(x+[-r,r]^4)\cap\Z^4$ be the discrete cube of diameter $2r$ around $x$. We will frequently use the Minkowski-sum of sets $E,E'$ defined by $E+E'=\{e+e'\colon e\in E,e'\in E'\}$. In particular, $E+Q_r(0)$ is the set of all points at distance $\le r$ from $E$.

Given $E\subset \Z^4$ and $u,v\colon E\to\R$, we define the discrete $L^2$-scalar product as $(u,v)_{L^2(E)}^2=\sum_{x\in E}u(x)v(x)$, the discrete $L^2$-norm $\|u\|_{L^2(E)}^2=\sum_{x\in E}|u(x)|^2$ as well as the discrete $L^\infty$-norm $\|u\|_{L^\infty(E)}=\sup_{x\in E}|u(x)|$. We extend these definitions to vector-valued functions by taking the Euclidean norm of the norms of the components.

For measures $\mu$ on $\Pow(\Lambda)$ we write $\mu(f)$ for $\int f\ud\mu=\sum_{A\subset\Lambda}f(A)\mu(A)$. We denote a sample from $\zeta^\e_\Lambda$ by $\A$. We define $\tilde A=A\cup(\Z^\dd\setminus\Lambda)$ for $A\subset\Lambda$ and analogously $\tilde\A=\A\cup(\Z^\dd\setminus\Lambda)$.
We let $G_{\Lambda\setminus A}$ be the discrete Green's function of $\Delta_1^2$ on $\Lambda\setminus A$, i.e. $G_{\Lambda\setminus A}(x,y):=\E_{\Lambda\setminus A}(\psi_x\psi_y)$. 

We use the standard Euclidean norm $|\cdot|$ as well as the $l^1$-norm $|\cdot|_1$ and the maximum norm $|\cdot|_\infty$ on $\R^\dd$. For $x\in\R^\dd$ and $E,E'\subset\R^\dd$ we let $d(x,E)=\inf_{y\in E}|x-y|$ be the distance of $x$ to $E$, and $d(E,E')=\inf_{y\in E,y'\in E'}|x-y|$ be the distance of $E$ and $E'$. Here we use the convention that the infimum of the empty set is $+\infty$. We also use these distances with respect to $|\cdot|_1$ and $|\cdot|_\infty$ instead $|\cdot|$, and in that case we write $d_1$ or $d_\infty$ instead of $d$.

We will use two different length scales $\lmic$ and $\lmac$. The former describes the typical distance between two pinned points which according to Theorem \ref{t:estimatespinnedset} is of the order $\frac{1}{\e^{1/\dd}}$ if $\dd\ge5$ and $\frac{|\log\e|^{1/8}}{\e^{1/4}}$ if $\dd=4$. We hence define
\[\lmic=\begin{cases}\frac{1}{\e^{1/\dd}}+\alpha_{\textrm{mic},5}(\e)&\dd\ge5\\\frac{|\log\e|^{1/8}}{\e^{1/4}}+\alpha_{\textrm{mic},4}(\e)&\dd=4\end{cases}\,.\]
Here $\alpha_{\textrm{mic},\dd}(\e)\in[0,2)$ is chosen in such a way that $\lmic$ is an odd integer.

The latter corresponds to the length scale on which correlations decay, and so, in line with Theorem \ref{t:estimatesfield} we set
\[\lmac=\begin{cases} \frac{1}{\e^{1/4}}+\alpha_{\textrm{mac},5}(\e)&\dd\ge5\\\frac{|\log\e|^{3/8}}{\e^{1/4}}+\alpha_{\textrm{mac},4}(\e)&\dd=4\end{cases}\]
where $\alpha_{\textrm{mac},\dd}(\e)\in[0,2\lmic)$ is chosen such that $\lmac$ is an odd multiple of $\lmic$. Note that for any $\dd\ge4$ we have $1\ll\lmic\ll\lmac$ as $\e\to0$. 

Given an odd integer $l>0$, we consider 
the set of $l$-boxes (or $l$-cubes)
\[\Qbox_l=\left\{Q_{l/2}(x)\colon x\in(l\Z)^\dd\right\}=\left\{\left(x+\left[-\frac{l}{2},\frac{l}{2}\right]^\dd\right)\cap\Z^\dd\colon x\in (l\Z)^\dd\right\}\]
and the set \[\Pbox_l=\left\{\bigcup_{Q\in I}Q\colon I\subset\Qbox_l,|I|<\infty\right\}\] of $l$-polymers. We identify each box with the polymer consisting just of that box. We call polymers connected if they are connected as subgraphs of $\Z^\dd$ with nearest-neighbour edges. We say that two polymers touch if they are disjoint but their union is connected.

The boxes in $\Qbox_l$ form a partition of $\Z^\dd$. Later on we will also need boxes with some overlap. Thus, if $l>0$ is an odd multiple of 3, we define
\[\Qbox^\#_l=\left\{Q_{l/2}(x)\colon x\in \left(\frac{l}{3}\Z\right)^\dd\right\}=\left\{x+\left[-\frac{l}{2},\frac{l}{2}\right]^\dd\cap\Z^\dd\colon x\in \left(\frac{l}{3}\Z\right)^\dd\right\}\,.\]
Then every point of $\Z^\dd$ is contained in precisely $3^\dd$ boxes in $\Qbox^\#_l$.

For some statement $s$ we let $\I_s$ be the indicator function of $s$, that is $\I_s=1$ if $s$ is true, and $\I_s=0$ else.

\section{Structure of the pinned set}

In this section we prove our results on the distribution of the pinned set, i.e. Theorem \ref{t:fkg}, the first part of Theorem \ref{t:thermolimit}, as well as Theorem \ref{t:estimatespinnedset}.

\subsection{Correlation inequalities}\label{s:corr_ineq}
We want to establish the FKG inequality for the set of pinned points in Theorem \ref{t:fkg}. We begin with a useful calculation. Let $A\subset A'\subset\Lambda\Subset\Z^\dd$. Then, using that $\delta_0(\mathrm{d}\psi)$ is a weak limit of the measures $\frac1{2t}\I_{\psi\in(-t,t)}\mathrm{d}\psi$ as $t\to0$, we have
\begin{equation}
\begin{aligned}\label{e:ratiopartsums}
	&\frac{Z_{\Lambda\setminus A'}}{Z_{\Lambda\setminus A}}\\
	&\quad=\frac{1}{Z_{\Lambda\setminus A}}\int\exp\left(-\frac12\sum_{x\in \Z^\dd}|\Delta_1\psi_x|^2\right)\prod_{x\in \Lambda\setminus A'}\mathrm{d}\psi_x\prod_{x\in\Z^\dd\setminus (\Lambda\setminus A')}\delta_0(\mathrm{d}\psi_x)\\
	&\quad=\frac1{Z_{\Lambda\setminus A}}\lim_{t\to0}\int\exp\left(-\frac12\sum_{x\in \Z^\dd}|\Delta_1\psi_v|^2\right)\prod_{x\in \Lambda\setminus A'}\mathrm{d}\psi_x\prod_{x\in A'\setminus A}\frac1{2t}\I_{\psi_x\in(-t,t)}\mathrm{d}\psi_x\prod_{x\in\Z^\dd\setminus (\Lambda\setminus A)}\delta_0(\mathrm{d}\psi_x)\\
	&\quad=\lim_{t\to0}\frac1{(2t)^{|A'\setminus A|}}\PP_{\Lambda\setminus A}(|\psi_x|<t\ \forall x\in A'\setminus A)\,.
\end{aligned}
\end{equation}
We can also interpret the right hand side as the density at zero of the Gaussian vector $(\psi_x)_{x\in A'\setminus A}$ under $\PP_{\Lambda\setminus A}$ (this observation was essentially already made in \cite[p. 143]{Velenik2006}). If $A'\setminus A=\{x\}$ is a singleton, the density of $\psi_x$ at 0 is equal to $\frac{1}{\sqrt{2\pi}}$ times the inverse of its standard deviation. We thus obtain the formula
\begin{equation}\label{e:ratiopartsums2}
\frac{Z_{\Lambda\setminus (A\cup\{x\})}}{Z_{\Lambda\setminus A}}=\frac{1}{\sqrt{2\pi G_{\Lambda\setminus A}(x,x)}}\,.
\end{equation}
\begin{proof}[Proof of Theorem \ref{t:fkg}]
We will prove the FKG lattice condition 
\begin{equation}\label{e:fkglattice}
\zeta^\e_\Lambda(A\cup A')\zeta^\e_\Lambda(A\cap A')\ge\zeta^\e_\Lambda(A)\zeta^\e_\Lambda(A')\quad\forall A,A'\subset\Lambda\,.
\end{equation}
It is well-known that this is a sufficient condition for the validity of the FKG inequality.

Now \eqref{e:fkglattice} is an easy consequence of the Gaussian correlation inequality \cite{Royen2014,Latala2017}. Indeed, note that by definition of $\zeta^\e_\Lambda$ the estimate \eqref{e:fkglattice} is equivalent to
\[Z_{\Lambda\setminus(A\cup A')}Z_{\Lambda\setminus(A\cap A')}\ge Z_{\Lambda\setminus A}Z_{\Lambda\setminus A'}\]
(here we used $|A\cup A'|+|A\cap A'|=|A|+|A'|$). Dividing both sides by $(Z_{\Lambda\setminus(A\cap A')})^2$ and using \eqref{e:ratiopartsums} we only have to verify
\begin{align*}
	&\lim_{t\to0}\PP_{\Lambda\setminus (A\cap A')}(|\psi_x|<t\ \forall x\in (A'\setminus A)\cup(A\setminus A'))\\
	&\quad\ge\lim_{t\to0}\PP_{\Lambda\setminus (A\cap A')}(|\psi_x|<t\ \forall x\in A'\setminus A)\PP_{\Lambda\setminus (A\cap A')}(|\psi_x|<t\ \forall x\in A\setminus A')\,.
\end{align*}
The sets $\{\psi\colon|\psi_x|<t\ \forall x\in A'\setminus A\}$ and $\{\psi\colon|\psi_x|<t\ \forall x\in A\setminus A'\}$ are convex and symmetric around the origin, and the measure $\PP_{\Lambda\setminus (A\cap A')}$ is Gaussian. Thus, the claim follows from the Gaussian correlation inequality, applied for each $t>0$.
\end{proof}
\begin{remark}\label{r:fkg_counterex}
In \cite{Bolthausen2001} it is shown that the set of pinned points for the gradient model satisfies a FKG inequality not only in the case of $\delta$-pinning, but also in the case of pinning by a square-well potential $b\I_{|\cdot|<a}$. The proof in \cite{Bolthausen2001} uses Griffiths' inequalities (as described in detail e.g. in \cite[Appendix A]{Dunlop1992}), and thus requires that the measure describing the field is an even ferromagnetic measure. This is certainly not the case for the membrane model, and so that proof cannot be applied in our setting.

Our proof of Theorem \ref{t:fkg} only used that $\PP_\Lambda$ is a non-degenerate Gaussian measure. However, this proof would not work if we considered pinning by a square-well potential $b\I_{|\psi|<a}$ instead of $\delta$-pinning. Namely, in this case we would need to consider $\PP_\Lambda(\cdot\mid |\psi_x|<a\ \forall x\in A\cap A')$ instead of $\PP_{\Lambda\setminus (A\cap A')}$, and the former measure is not Gaussian, so that we cannot apply the Gaussian correlation inequality.

This is not a shortcoming of our proof. Namely, we conjecture that the analogue of \eqref{e:fkglattice} in the case of pinning by a square-well potential is false. We do not have a counterexample for the case of the membrane model. However, we can give an example of a Gaussian measure where the set of pinned points with respect to a square-well potential does not satisfy \eqref{e:fkglattice}.

For this example, let $X_1,X_2$ be independent standard Gaussians, and $N>0$ a large parameter, and define
\[\begin{pmatrix}Y_1\\Y_2\\Y_3\\Y_4\\Y_5\\Y_6\end{pmatrix}=\begin{pmatrix}1&0\\0&1\\N&0\\0&N\\1&1\\1&-1\end{pmatrix}\begin{pmatrix}X_1\\X_2\end{pmatrix}\,.\]
Then $Y$ is a multivariate Gaussian vector. It is degenerate, but one can fix this later by adding some small Gaussian noise to it, so we will ignore that point. Let also $A=\{1,3,5,6\}$ and $A'=\{2,4,5,6\}$.

In this setting \eqref{e:fkglattice} would correspond to
\begin{equation}\label{e:fkg_example}
\PP\left(|Y_i|\le t\ \forall t\in A\cup A'\right)\PP\left(|Y_i|\le t\ \forall t\in A\cap A'\right)\ge\PP\left(|Y_i|\le t\ \forall t\in A\right)\PP\left(|Y_i|\le t\ \forall t\in A'\right)
\end{equation}
for any $t>0$. The probabilities here are equal to the Gaussian measure of certain sets in $\R^2$ (cf. Figure \ref{f:sets}). As $t\to0$, we can approximate this Gaussian measure by the Lebesgue measure, and thereby compute that
\begin{align*}
\lim_{t\to0}\frac{2\pi}{t^2}\PP\left(|Y_i|\le t\ \forall t\in A\cup A'\right)&=\frac{4}{N^2}\,,\\
\lim_{t\to0}\frac{2\pi}{t^2}\PP\left(|Y_i|\le t\ \forall t\in A\cap A'\right)&=2\,,\\
\lim_{t\to0}\frac{2\pi}{t^2}\PP\left(|Y_i|\le t\ \forall t\in A\right)&=\frac{4}{N}-\frac{2}{N^2}\,,\\
\lim_{t\to0}\frac{2\pi}{t^2}\PP\left(|Y_i|\le t\ \forall t\in A'\right)&=\frac{4}{N}-\frac{2}{N^2}\,.
\end{align*}
In particular, for $N$ large and $t$ small \eqref{e:fkg_example} is wrong by a factor arbitrarily close to 2.

\begin{figure}[ht]
\centering
\begin{tikzpicture}
\draw[->,dashed] (-2.2,0)--(2.3,0);
\draw[->,dashed] (0,-2.2)--(0,2.3);

\draw[-] (-0.1,-2.1)--(2.1,0.1);
\draw[-] (-0.1,2.1)--(2.1,-0.1);
\draw[-] (-2.1,-0.1)--(0.1,2.1);
\draw[-] (-2.1,0.1)--(0.1,-2.1);

\draw[-] (-2.1,0.3)--(2.1,0.3);
\draw[-] (-2.1,-0.3)--(2.1,-0.3);
\draw[-] (0.3,-2.1)--(0.3,2.1);
\draw[-] (-0.3,-2.1)--(-0.3,2.1);

\draw[-] (-2.1,2)--(2.1,2);
\draw[-] (-2.1,-2)--(2.1,-2);
\draw[-] (2,-2.1)--(2,2.1);
\draw[-] (-2,-2.1)--(-2,2.1);

\draw[pattern=north west lines,draw=none] (0,2)--(2,0)--(0,-2)--(-2,0)--cycle; 
\draw[pattern=north east lines,draw=none] (0.3,0.3)--(0.3,-0.3)--(-0.3,-0.3)--(-0.3,0.3)--cycle;
\draw[pattern=horizontal lines,draw=none] (0.3,2)--(0.3,-2)--(-0.3,-2)--(-0.3,2)--cycle;
\draw[pattern=vertical lines,draw=none] (2,0.3)--(-2,0.3)--(-2,-0.3)--(2,-0.3)--cycle;

\draw[|<->|] (2.4,-0.3)--(2.4,0.3) node[midway, right]{$\frac{2t}{N}$};
\draw[|<->|] (-0.3,2.4)--(0.3,2.4) node[midway, above]{$\frac{2t}{N}$};
\draw[|<->|] (-2.4,-2)--(-2.4,2) node[midway, left]{$2t$};
\draw[|<->|] (-2,-2.4)--(2,-2.4) node[midway, below]{$2t$};

\end{tikzpicture}
\caption{The sets associated to the probabilities in \eqref{e:fkg_example}. The product of the areas of the large and small square is about half the product of the two areas of the thin rectangles.}\label{f:sets}
\end{figure}
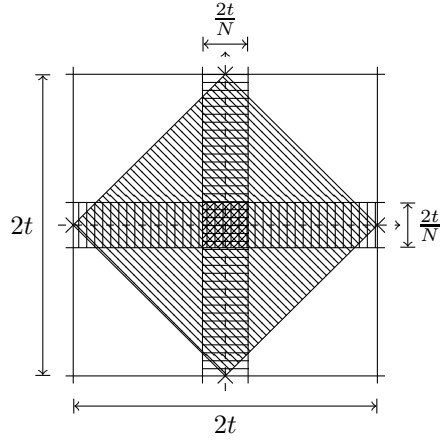

\end{remark}

Theorem \ref{t:fkg} directly implies the existence of a thermodynamic limit of the $\zeta^\e_\Lambda$:

\begin{proof}[Proof of Theorem \ref{t:thermolimit}, first part]
It suffices to check that $\lim_{\Lambda\nearrow\Z^\dd}\zeta^\e_\Lambda(f)$ exists for each bounded $f\colon \Z^\dd\to\R$ that is a local function (i.e. depends only on finitely many points). Each such $f$ is a linear combination of increasing functions, and so it actually suffices to check that $\lim_{\Lambda\nearrow\Z^\dd}\zeta^\e_\Lambda(f)$ exists for each local increasing $f$.

For that purpose note that Theorem \ref{t:fkg} implies that for any $\Lambda\subset\Lambda'\Subset\Z^\dd$ large enough so that $f$ only depends on the points in $\Lambda$, we have $\zeta^\e_\Lambda(f)\ge\zeta^\e_{\Lambda'}(f)$. Thus, $\lim_{R\to\infty}\zeta^\e_{Q_R(0)}(f)$ exists as a limit of a bounded decreasing sequence. Furthermore, for any $\Lambda\Subset\Z^\dd$ with $Q_r(0)\subset\Lambda\subset Q_R(0)$ we have 
\[\zeta^\e_{Q_r(0)}(f)\ge\zeta^\e_\Lambda(f)\ge\zeta^\e_{Q_R(0)}(f)\ge\lim_{R\to\infty}\zeta^\e_{Q_R(0)}(f)\,.\]
Since $\Lambda\nearrow\Z^\dd$ allows us to take $r\to\infty$, we see from this that $\lim_{\Lambda\nearrow\Z^\dd}\zeta^\e_\Lambda(f)$ exists and is equal to $\lim_{R\to\infty}\zeta^\e_{Q_R(0)}(f)$.

Thus, the unique weak limit $\zeta^\e$ exists. Its translation invariance follows from the fact that
\[\zeta^\e(f)=\lim_{\Lambda\nearrow\Z^\dd}\zeta^\e_\Lambda(f)=\lim_{\Lambda\nearrow\Z^\dd}\zeta^\e_{\Lambda+x}(f(\cdot-x))=\zeta^\e(f(\cdot-x))\]
for any $x\in\Z^\dd$.
\end{proof}

\subsection{Estimates on the pinned set}\label{s:est_pinned_set}
We will prove the various domination results of Theorem \ref{t:estimatespinnedset}. We first show some estimates on the variance of the membrane model. We begin with the straightforward proofs of part a) and b), then show part d), and finally part c). See Section \ref{s:main_ideas} for an outline of the proofs.

Let us first give the precise definition of (strong) domination, as in \cite{Bolthausen2001}.
\begin{definition}\label{d:stoch_dom}
Let $\Lambda$ be a finite set, and let $\nu,\nu'$ be two probability measures on $\Pow(\Lambda)$. We say that $\nu$ dominates $\nu'$ if we have
\[\nu(f)\ge\nu'(f)\]
for all increasing functions $f\colon\Pow(\Lambda)\to\R$. 
 We say that $\nu$ strongly dominates $\nu'$, if for all $x\in\Lambda$ and for all $E\subset\Lambda\setminus \{x\}$ we have
 \[\nu(A\ni x\mid A\setminus\{x\}=E)\ge\nu'(A\ni x\mid A\setminus\{x\}=E)\,.\]
\end{definition}
It is easy to see that strong stochastic domination implies stochastic domination, and the latter implies
\[\nu(A\cap E=\varnothing)\le \nu'(A\cap E=\varnothing)\quad\forall E\subset\Lambda\,.\]

Our proof of Theorem \ref{t:estimatespinnedset} is based on the proof of the corresponding result for the gradient model in \cite{Bolthausen2001}. We begin with some useful estimates on the variance of the membrane model.

The first one states the fact that the variance is non-increasing in the size of the pinned set.
\begin{lemma}\label{l:monotonicityvariance}
Let $A\subset A'\subset\Lambda\Subset\Z^\dd$, and let $x\in\Lambda$. Then $G_{\Lambda\setminus A'}(x,x)\le G_{\Lambda\setminus A}(x,x)$.
\end{lemma}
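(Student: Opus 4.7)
The plan is to identify $\PP_{\Lambda\setminus A'}$ as $\PP_{\Lambda\setminus A}$ conditioned on $\psi_y=0$ for all $y\in A'\setminus A$, and then apply the standard fact that conditioning a centred Gaussian vector on a sub-collection of its coordinates being fixed cannot increase the variance of any remaining coordinate.

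More concretely, first I would justify the conditioning interpretation. Using the same approximation of $\delta_0$ by $\frac{1}{2t}\I_{(-t,t)}$ as in \eqref{e:ratiopartsums}, one sees that
\[\PP_{\Lambda\setminus A'}(\cdot)=\lim_{t\to 0}\PP_{\Lambda\setminus A}\!\left(\cdot\ \big|\ |\psi_y|<t\ \forall y\in A'\setminus A\right),\]
since the marginal of $\PP_{\Lambda\setminus A}$ on $(\psi_y)_{y\in A'\setminus A}$ has a continuous positive density at $0$. Equivalently, one can check directly from the explicit Gaussian densities that replacing $\mathrm{d}\psi_y$ by $\delta_0(\mathrm{d}\psi_y)$ for $y\in A'\setminus A$ in $\PP_{\Lambda\setminus A}$ yields $\PP_{\Lambda\setminus A'}$ up to normalisation.

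Second, since $\PP_{\Lambda\setminus A}$ is a centred non-degenerate Gaussian measure on $\R^{\Lambda\setminus A}$, the conditional law of $\psi_x$ given $\psi_y=0$ for $y\in A'\setminus A$ is again Gaussian, with variance equal to the Schur complement
\[G_{\Lambda\setminus A'}(x,x)=G_{\Lambda\setminus A}(x,x)-C_{xY}\,C_{YY}^{-1}\,C_{Yx},\]
where $C_{YY}=(G_{\Lambda\setminus A}(y,y'))_{y,y'\in A'\setminus A}$ and $C_{xY}=(G_{\Lambda\setminus A}(x,y))_{y\in A'\setminus A}$. Since $C_{YY}^{-1}$ is positive semidefinite, the subtracted term is non-negative, giving $G_{\Lambda\setminus A'}(x,x)\le G_{\Lambda\setminus A}(x,x)$.

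There is no real obstacle here; the whole content is the Gaussian conditioning identity together with the now-standard observation, used already in \eqref{e:ratiopartsums}, that $\delta$-pinning is nothing but conditioning on the field being zero at the pinned points. An alternative equally short route is to iterate: it suffices to treat $A'=A\cup\{y\}$ for a single $y\in\Lambda\setminus A$, in which case the identity $G_{\Lambda\setminus(A\cup\{y\})}(x,x)=G_{\Lambda\setminus A}(x,x)-G_{\Lambda\setminus A}(x,y)^2/G_{\Lambda\setminus A}(y,y)$ is immediate from the one-dimensional Gaussian conditioning formula and makes the monotonicity manifest.
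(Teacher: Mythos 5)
Your proof is correct and rests on exactly the mechanism the paper invokes: the identification of $\delta$-pinning with Gaussian conditioning, so that $G_{\Lambda\setminus A'}$ is a Schur complement of $G_{\Lambda\setminus A}$ (this is what the paper's reference to the ``Markov property of the field'' and to \cite[Corollary 3.2]{Bolthausen2017} amounts to). The paper simply cites the result without spelling out the conditioning computation, so yours is the same argument written out in full.
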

\begin{proof}
This follows easily from the Markov property of the field. See e.g. \cite[Corollary 3.2]{Bolthausen2017}.
\end{proof}
The preceding lemma allows us to conclude bounds on the variances.
\begin{lemma}\label{l:estvariance}
Let $\varnothing\neq A\subset\Lambda\Subset\Z^\dd$, and let $x\in\Lambda$. If $\dd\ge5$, we have
\begin{equation}\label{e:estvariance1}
c_\dd\le G_{\Lambda\setminus A}(x,x)\le C_\dd\,.
\end{equation}
If $\dd=4$, we have
\begin{equation}\label{e:estvariance2}
\frac{1}{8\pi^2}\log(1+d(x,A))-C\le G_{\Lambda\setminus A}(x,x)\le \frac{1}{4\pi^2}\log(1+d(x,A))+C\,.
\end{equation}
\end{lemma}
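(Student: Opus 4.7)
The plan is to reduce both bounds to known asymptotics of the discrete Bilaplacian Green's function on the simplest possible domains, using Lemma \ref{l:monotonicityvariance} (together with a truncation argument to compare different $\Lambda$) as the sole probabilistic input: enlarging the pinned set monotonically decreases the variance, so for an upper bound I would discard all pinned points except the one closest to $x$, while for a lower bound I would pin every site outside a small cube around $x$, leaving only that box free to fluctuate.

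For the upper bound, pick $x_0\in \tilde A=A\cup(\Z^\dd\setminus\Lambda)$ with $|x-x_0|_\infty=d_\infty(x,\tilde A)$, and for any finite $\Lambda''\supset\Lambda\cup\{x_0\}$ note that $\PP_\Lambda$ is $\PP_{\Lambda''}$ conditioned on $\psi\equiv 0$ off $\Lambda$. Applying Lemma \ref{l:monotonicityvariance} on $\Lambda''$ with the pinned sets $\{x_0\}\subset (\Lambda''\setminus\Lambda)\cup A$ and letting $\Lambda''\nearrow\Z^\dd$ yields
\[
G_{\Lambda\setminus A}(x,x)\le G_{\Z^\dd\setminus\{x_0\}}(x,x).
\]
For $\dd\ge 5$ this is in turn bounded by the finite constant $G_{\Z^\dd}(x,x)=:C_\dd$. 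For $\dd=4$ one uses the conditional covariance identity
\[
G_{\Lambda''\setminus\{x_0\}}(x,x)=G_{\Lambda''}(x,x)-\frac{G_{\Lambda''}(x,x_0)^2}{G_{\Lambda''}(x_0,x_0)}
\]
together with the classical expansion $G_{Q_R(0)}(x,y)=C(R)-\frac{1}{8\pi^2}\log|x-y|+O(1)$, which comes from the fundamental solution $-\frac{1}{8\pi^2}\log|\cdot|$ of $\Delta^2$ on $\R^4$. Plugging the expansion into the identity and letting $R\to\infty$, the divergent constants $C(R)$ cancel and the coefficient $\frac{1}{8\pi^2}$ is doubled by the squared cross-term, giving the announced bound $\frac{1}{4\pi^2}\log(1+|x-x_0|)+O(1)$.

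For the lower bound set $r=d_\infty(x,\tilde A)$, so that $Q_r(x)\subset\Lambda\setminus A$, and apply Lemma \ref{l:monotonicityvariance} with the enlarged pinned set $A':=A\cup(\Lambda\setminus Q_r(x))\supset A$ to obtain
\[
G_{\Lambda\setminus A}(x,x)\ge G_{Q_r(x)}(x,x).
\]
When $\dd\ge 5$, a further shrinking to $\{x\}$ and an explicit one-dimensional Gaussian calculation give $G_{\{x\}}(x,x)=\frac{1}{2\dd(2\dd+1)}=:c_\dd>0$ (the Hamiltonian restricted to the single free coordinate $\psi_x$ collects $\frac{1}{2}(2\dd)^2\psi_x^2$ from $|\Delta_1\psi_x|^2$ and an extra $\frac{1}{2}\cdot 2\dd\cdot\psi_x^2$ from the $|\Delta_1\psi_{x\pm e_i}|^2$ terms of its $2\dd$ neighbours). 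For $\dd=4$ the required lower bound $G_{Q_r(x)}(x,x)\ge\frac{1}{8\pi^2}\log(1+r)-C$ again follows from the same expansion of the discrete Bilaplacian Green's function on a cube in $\Z^4$ around the fundamental solution $-\frac{1}{8\pi^2}\log|\cdot|$. The main technical obstacle in the whole argument is pinning down the sharp constants $\frac{1}{8\pi^2}$ and $\frac{1}{4\pi^2}$ in the four-dimensional case; both are classical in spirit but require the detailed asymptotics of the discrete Bilaplacian Green's function on cubes and with one pinned point, which should either be cited from the literature or briefly derived by comparison with the continuous fundamental solution.
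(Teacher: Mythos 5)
Your proof is correct and follows essentially the same route as the paper: reduce via Lemma~\ref{l:monotonicityvariance} to a single pinned point (upper bound) or to a cube around $x$ (lower bound), then invoke the logarithmic asymptotics of the discrete Bilaplacian Green's function on boxes, which the paper takes from \cite[Theorem 1.4]{Schweiger2020} and which you should cite rather than rederive from the continuous fundamental solution. The paper compares with $G_{Q_N(x)\setminus\{a\}}(x,x)$ for $N$ large, which is a truncated version of your $G_{\Z^\dd\setminus\{x_0\}}(x,x)$, and your systematic use of $\tilde A$ in place of $A$ is in fact the form in which the lemma is applied elsewhere in the paper (necessary for the lower bound when $x$ is close to $\Z^\dd\setminus\Lambda$).
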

\begin{proof}
We begin with the upper bound in \eqref{e:estvariance2}. Let $a\in A$ be such that $|x-a|=d(x,A)$. Let $N\in\N$. For large enough $N$ we have $\Lambda\subset Q_N(x)$. Now Lemma \ref{l:monotonicityvariance} implies that
\begin{equation}\label{e:estvariance3}
G_{\Lambda\setminus A}(x,x)\le G_{Q_N(x)\setminus \{a\}}(x,x)
\end{equation}
for all $N$ large enough. The right hand side can be computed quite explicitly: We have
\[G_{Q_N(x)\setminus \{a\}}(x,x)=G_{Q_N(x)}(x,x)-\frac{G_{Q_N(x)}(a,x)^2}{G_{Q_N(x)}(a,a)}\]
and by \cite[Theorem 1.4]{Schweiger2020} we have
\[\left|G_{Q_N(x)}(y,y')-\frac{1}{8\pi^2}\log\left(\frac{N}{1+|y-y'|}\right)\right|\le C\]
for all $y,y'\in Q_N(x)$ with $d(y,\partial\Lambda_N)\ge cN$, $d(y',\partial\Lambda_N)\ge cN$.

Using this in \eqref{e:estvariance3} we find for $N$ large enough
\begin{align*}
G_{Q_N(x)\setminus \{a\}}(x,x)&\le\frac{1}{8\pi^2}\left(\log N+C-\frac{\left(\log\left(\frac{N}{1+|x-a|}\right)-C\right)^2}{\log N-C}\right)\\
&\le \frac{1}{4\pi^2}\log(1+|x-a|)+C
\end{align*}
and this implies the upper bound in \eqref{e:estvariance2}. The lower bound is similar, only this time we compare $G_{\Lambda\setminus A}(x,x)$ with $G_{Q_{d(x,A)-1}(x)}(x,x)$.

Finally, the proof of \eqref{e:estvariance1} is similar, using that $G_{Q_N(x)}(x,x)$ is bounded above and below if $\dd\ge5$.
\end{proof}

\begin{proof}[Proof of Theorem \ref{t:estimatespinnedset} a) and b)]
The two results are already proven in \cite[Lemma 3.4]{Bolthausen2017}. Nonetheless, we repeat the short argument: For $x\in\Lambda$, $E\subset\Lambda\setminus \{x\}$ we have
\begin{equation}\label{e:strongdomination}
\begin{aligned}
\zeta_\Lambda^\e(\A\ni x\mid \A\setminus\{x\}=E)&=\frac{\zeta_\Lambda^\e(E\cup\{x\})}{\zeta_\Lambda^\e(E\subset\A\subset E\cup\{x\})}\\
&=\frac{\zeta_\Lambda^\e(E\cup\{x\})}{\zeta_\Lambda^\e(E)+\zeta_\Lambda^\e(E\cup\{x\})}\\
&=\left(1+\frac{Z_{\Lambda\setminus E}}{\e Z_{\Lambda\setminus(E\cup\{x\})}}\right)^{-1}\\
&=\left(1+\frac{\sqrt{2\pi G_{\Lambda\setminus E}(x,x)}}{\e}\right)^{-1}
\end{aligned}
\end{equation}
where the last step follows from \eqref{e:ratiopartsums2}. Now in dimension $\dd\ge5$ we have $c_\dd\le G_{\Lambda\setminus E}(x,x)\le C_\dd$ by Lemma \ref{l:estvariance}, and this implies
\[c_\dd\e\le \zeta_\Lambda^\e(\A\ni x\mid \A\setminus\{x\}=E)\le C_\dd\e\]
for all $\e$ small enough. From this we immediately conclude the strong domination results from both sides, and these easily imply \eqref{e:lowerestpinnedset_dim5} and \eqref{e:upperestpinnedset_dim5}
\end{proof}
\begin{remark}\label{r:strongdomdim4}
When $\dd=4$ the calculation \eqref{e:strongdomination} is still valid, but we do no longer have a uniform upper bound on $G_{\Lambda\setminus E}(x,x)$. Let us point out for future use though that \eqref{e:strongdomination} and Lemma \ref{l:estvariance} imply that
\[\zeta_\Lambda^\e(\A\ni x\mid \A\setminus\{x\}=E)\le C\e\]
and thus the measure $\zeta_\Lambda^\e$ is strongly dominated by the Bernoulli measure on $\Pow(\Lambda)$ with parameter $p_{4,+}':=C\e$.
\end{remark}

\begin{proof}[Proof of Theorem \ref{t:estimatespinnedset} d)]
One could prove \eqref{e:upperestpinnedset_dim4} analogously as in \cite[Section 3.2]{Bolthausen2001}. We, however, give a slightly different proof in the following.

The events $\A\ni x$ for $x\in E$ are decreasing, and so by the FKG property of $\zeta^\e_\Lambda$ we have
\[\zeta^\e_\Lambda(\A\cap E=\varnothing)=\zeta^\e_\Lambda\left(\bigcap_{x\in E}\{\A\notni x\}\right)\ge\prod_{x\in E}\zeta^\e_\Lambda(\A\notni x)\,.\]
Thus, to establish \eqref{e:upperestpinnedset_dim4} it suffices to show 
\[\zeta^\e_\Lambda(\A\notni x)\ge 1-C_\alpha\frac{\e}{|\log\e|^{1/2}}\]
or equivalently
\begin{equation}\label{e:upperestpinned_1}
\zeta^\e_\Lambda(\A\ni x)\le C_\alpha\frac{\e}{|\log\e|^{1/2}}
\end{equation}
where the constant $C_\alpha$ depends only on $\alpha$.

For this we consider the box $Q:=Q_{\min(\e^{-\alpha},\e^{-1/5})}(x)$. We can write
\begin{equation}\label{e:upperestpinned_2}
\begin{aligned}
\zeta^\e_\Lambda(\A\ni x)&=\zeta^\e_\Lambda(\A\cap Q=\{x\})+\zeta^\e_\Lambda(\A\cap Q\supsetneq\{x\})\\
&\le\zeta^\e_\Lambda(\A\ni x\mid\A\cap(Q\setminus\{x\})=\varnothing)+\zeta^\e_\Lambda(\A\cap Q\supsetneq\{x\})\,.
\end{aligned}
\end{equation}
By Remark \ref{r:strongdomdim4} the second summand can be estimated as
\begin{equation}\label{e:upperestpinned_3}
\begin{aligned}
\zeta^\e_\Lambda(\A\cap Q\supsetneq\{x\})&\le p'_{4,+}-p'_{4,+}(1-p'_{4,+})^{|Q|}\\
&=C\e(1-(1-C\e)^{|Q|-1})\\
&\le C\e^2|Q|\\
&\le C\e^2\left(\e^{-1/5}\right)^4\\
&=C\e^{6/5}
\end{aligned}
\end{equation}
whenever $\e$ is small enough. For the first summand we can use the FKG property once more and then proceed as in \eqref{e:strongdomination} to see that
\begin{align*}
\zeta^\e_\Lambda(\A\ni x\mid\A\cap(Q\setminus\{x\})=\varnothing)&\le\zeta^\e_\Lambda(\A\ni x\mid\A\cap(Q\setminus\{x\})=\varnothing,\A\supset\Lambda\setminus Q)\\
&=\zeta^\e_Q(\A\ni x\mid\A\subset\{x\})\\
&=\frac{\zeta^\e_Q(\{x\})}{\zeta^\e_Q(\varnothing)+\zeta^\e_Q(\{x\})}\\
&=\left(1+\frac{Z_Q}{\e Z_{Q\setminus\{x\}}}\right)^{-1}\\
&=\left(1+\frac{\sqrt{2\pi G_Q(x,x)}}{\e}\right)^{-1}\,.
\end{align*}
From Lemma \ref{l:estvariance} we know \[G_Q(x,x)\ge \frac{1}{C}\log\left(1+\min(\e^{-\alpha},\e^{-1/5}\right)\ge\frac{1}{C_\alpha}|\log\e|\]
and thus
\[\zeta^\e_\Lambda(\A\ni x\mid\A\cap(Q\setminus\{x\})\le C_\alpha\frac{\e}{|\log\e|^{1/2}}\,.\]
When we combine this with \eqref{e:upperestpinned_2} and \eqref{e:upperestpinned_3} we obtain \eqref{e:upperestpinned_1}. This completes the proof.
\end{proof}
In this proof the choice of $\e^{-1/5}$ for the halfdiameter of $Q$ might seem arbitrary. Indeed, one could also choose $\e^{-1/4}|\log\e|^{-1/8}$ and obtain the same result. This is still smaller than $\lmic$ which is the actual lengthscale that one expects here. However, because we have to use Remark \ref{r:strongdomdim4} instead of a comparison with $p_{4,+}$, we lose some logarithmic factor and hence cannot use the natural length scale for the size of $Q$. Fortunately, this does not affect the proof as the estimate \eqref{e:upperestpinned_3} shows that the second summand in \eqref{e:upperestpinned_2} is of lower order.

\begin{proof}[Proof of Theorem \ref{t:estimatespinnedset} c)] The following proof follows closely the proof in \cite[Section 3.3]{Bolthausen2001}, which itself is based on \cite{Deuschel2000,Ioffe2000}. However, that proof is a bit hard to follow as one has to refer to all three references. Furthermore, there is a small mistake in \cite{Ioffe2000} that needs to be fixed (cf. Remark \ref{r:gap_IV} below). Thus, we give a complete proof for the case at hand.

\emph{Step 1: Growing microscopic polymers}\\
For reasons that will become clear in the next step we need a procedure to grow microscopic polymers in a controlled way. Thus, we begin with the necessary definitions.

Let $K$ be an odd integer to be fixed later (in \eqref{e:estimateforK}). We consider the polymers in $\Pbox_{K\lmic}$. Let $E\in\Pbox_{K\lmic}$ be such a polymer. Suppose that it has $n$ connected components . We want to define for any multiindex $\underline{k}\in\N^n$ an enlarged polymer $E^{(\underline{k})}\in\Pbox_{K\lmic}$ in such a way that we add $k_i$ boxes to the $i$-th connected component.

To be precise, fix some enumeration of the boxes in $\Qbox_{K\lmic}$ by the natural numbers. Let the connected components of $E$ be $E_1,\ldots,E_n$, named in such a way that the minimal label of a box in $E_i$ increases with $i$.

For $i\in\{1,\ldots,n\}$, $j\in\{0,\ldots,k_i\}$ we define inductively a polymer $E^{(i,j)}\supset E$ as follows. If $j=0$, we let $E^{(i,j)}=E^{(i-1,k_{i-1})}$ (and $E^{(1,0)}=E$). If $j>0$, let $\tilde E_j$ be the connected component of $E^{(i,j-1)}$ that contains $E_j$, let $Q^{(i,j)}\in\Qbox_{K\lmic}$ be the box of smallest index that touches $\tilde E_j$, and let $E^{(i,j)}=E^{(i,j-1)}\cup Q^{(i,j)}$. Finally we let $E^{\underline{k}}=E^{(n,k_n)}$.

Let us note some properties of $E^{\underline{k}}$. First of all, it contains precisely $|\underline{k}|_1:=k_1+\ldots+k_n$ boxes of $\Qbox_{K\lmic}$ more than $E$. In other words,
\begin{equation}\label{e:lowerestpinned_5}
\left|E^{\underline{k}}\right|=|E|+|\underline{k}|_1K^4\lmic^4\,.
\end{equation}
Furthermore, $E^{\underline{k}}$ has at most $n$ connected components. Each $E_i$ is contained in one of the connected components of $E^{\underline{k}}$, and the latter has grown by at least $k_i$ boxes. Also each fixed box in $\Qbox_{K\lmic}$ is eventually contained in $E^{\underline{k}}$ whenever $|\underline{k}|_1$ is large enough.
Let us also note that each connected component of $E$ consists of at least one box. Therefore we have the estimate 
\begin{equation}\label{e:lowerestpinned_6}
n\le\frac{|E|}{K^4\lmic^4}\,.
\end{equation}

\emph{Step 2: Estimate for microscopic polymers}\\
We first prove \eqref{e:lowerestpinnedset_dim4} for the special case that $E$ is a polymer in $\Pbox_{K\lmic}$, where $K$ is a constant as in Step 1. That is, we claim that there is an odd integer $K$ such that for any $E\subset\Lambda$ such that $E\in\Pbox_{K\lmic}$, and for any $\e$ small enough we have
\begin{equation}\label{e:lowerestpinned_1}
\left(1-\frac{\e}{C|\log\e|^{1/2}}\right)^{|E|}\ge\zeta^\e_\Lambda(\A\cap E=\varnothing)\,.
\end{equation}

Suppose that $E$ has $n$ connected components, and consider for $\underline{k}\in\N^n$ the polymers $E^{(\underline{k})}$ constructed in the previous section. For $\underline{l}\in\N^n$ we write $\underline{l}>\underline{k}$ to denote $l_i\ge k_i$ for all $i$ and $l_i> k_i$ for at least one $i$. Recall that $\tilde\A=\A\cup(\Z^\dd\setminus\Lambda)$.

For $|\underline{k}|_1$ large enough we have $E^{(\underline{k})}\not\subset\Lambda$ and therefore $\tilde\A\cap E^{(\underline{k})}\neq\varnothing$ almost surely. Thus
\[\zeta^\e_\Lambda(\A\cap E=\varnothing)=\zeta^\e_\Lambda(\tilde\A\cap E=\varnothing)=\zeta^\e_\Lambda\left(\exists\underline{k}\in\N^n\colon\tilde\A\cap E^{(\underline{k})}=\varnothing,\tilde\A\cap E^{(\underline{l})}\neq\varnothing\ \forall\underline{l}>\underline{k}\right)\]
and so in particular
\begin{equation}\label{e:lowerestpinned_2}
\begin{aligned}
\zeta^\e_\Lambda(\A\cap E=\varnothing)&\le\sum_{\underline{k}\in\N^n}\zeta^\e_\Lambda\left(\tilde\A\cap E^{(\underline{k})}=\varnothing,\tilde\A\cap E^{(\underline{l})}\neq\varnothing\ \forall\underline{l}>\underline{k}\right)\\
&\le\sum_{\underline{k}\in\N^n}\zeta^\e_\Lambda\left(\tilde\A\cap E^{(\underline{k})}=\varnothing\middle|\tilde\A\cap E^{(\underline{l})}\neq\varnothing\ \forall\underline{l}>\underline{k}\right)\,.
\end{aligned}
\end{equation}
Note that this sum is actually a finite sum as for large enough $|\underline{k}|_1$ the conditional probability is equal to 0. Let us estimate the summands in \eqref{e:lowerestpinned_2} separately. We have
\begin{equation}\label{e:lowerestpinned_3}
\begin{aligned}
\zeta^\e_\Lambda\left(\tilde\A\cap E^{(\underline{k})}=\varnothing\middle|\tilde\A\cap E^{(\underline{l})}\neq\varnothing\ \forall\underline{l}>\underline{k}\right)&=\frac{\zeta^\e_\Lambda\left(\tilde\A\cap E^{(\underline{k})}=\varnothing,\tilde\A\cap E^{(\underline{l})}\neq\varnothing\ \forall\underline{l}>\underline{k}\right)}{\zeta^\e_\Lambda\left(\tilde\A\cap E^{(\underline{l})}\neq\varnothing\ \forall\underline{l}>\underline{k}\right)}\\
&=\frac{\sum\limits_{\substack{A\subset\Lambda\setminus E^{(\underline{k})}\\\tilde A\cap E^{(\underline{l})}\neq\varnothing\ \forall\underline{l}>\underline{k}}}\e^{|A|}\frac{Z_{\Lambda\setminus A}}{Z^\e_\Lambda}}{\sum\limits_{\substack{A'\subset\Lambda\\\tilde{A'}\cap E^{(\underline{l})}\neq\varnothing\ \forall\underline{l}>\underline{k}}}\e^{|A'|}\frac{Z_{\Lambda\setminus A'}}{Z^\e_\Lambda}}\\
&=\frac{\sum\limits_{\substack{A\subset\Lambda\setminus E^{(\underline{k})}\\\tilde A\cap E^{(\underline{l})}\neq\varnothing\ \forall\underline{l}>\underline{k}}}\e^{|A|}Z_{\Lambda\setminus A}}{\sum\limits_{B\subset E^{(\underline{k})}}\sum\limits_{\substack{A\subset\Lambda\setminus E^{(\underline{k})}\\\tilde{A}\cap E^{(\underline{l})}\neq\varnothing\ \forall\underline{l}>\underline{k}}}\e^{|A|+|B|}Z_{\Lambda\setminus (A\cup B)}}\\
&=\left(\sum_{B\subset E^{(\underline{k})}}\e^{|B|}\frac{\sum\limits_{\substack{A\subset\Lambda\setminus E^{(\underline{k})}\\\tilde{A}\cap E^{(\underline{l})}\neq\varnothing\ \forall\underline{l}>\underline{k}}}\e^{|A|}Z_{\Lambda\setminus (A\cup B)}}{\sum\limits_{\substack{A\subset\Lambda\setminus E^{(\underline{k})}\\\tilde A\cap E^{(\underline{l})}\neq\varnothing\ \forall\underline{l}>\underline{k}}}\e^{|A|}Z_{\Lambda\setminus A}}\right)^{-1}\\
&\le\left(\sum_{B\subset E^{(\underline{k})}}\e^{|B|}\min\limits_{\substack{A\subset\Lambda\setminus E^{(\underline{k})}\\\tilde{A}\cap E^{(\underline{l})}\neq\varnothing\ \forall\underline{l}>\underline{k}}}\frac{Z_{\Lambda\setminus (A\cup B)}}{Z_{\Lambda\setminus A}}\right)^{-1}
\end{aligned}
\end{equation}
where we have used $\frac{\sum_{i\in I} x_i}{\sum_{i\in I} y_i}\ge\min_{i\in I}\frac{x_i}{y_i}$ in the last step.

Next, we estimate this minimum from below, at least for sufficiently many sets $B$. Let $m=\frac{|E^{(\underline{k})}|}{K^4\lmic^4}$ be the number of boxes in $E^{(\underline{k})}$. We will consider the class of sets $B$ that contain exactly one point in each box of $E^{(\underline{k})}$.

Consider some $A\subset\Lambda\setminus E^{(\underline{k})}$ such that $\tilde{A}\cap E^{(\underline{l})}\neq\varnothing$ for all $\underline{l}>\underline{k}$. The properties of $A$ imply that each connected component of $E^{(\underline{k})}$ touches a box that contains a point of $\tilde A$, as otherwise we could still grow one of the components (by choosing a larger multiindex) without intersecting $\tilde A$. Therefore we can enumerate the boxes of $E^{(\underline{k})}$ as $D_1,\ldots,D_m$ in such a way that each $D_i$ touches a box that contains a point of $\tilde A$ or a box $D_j$ with $j<i$. As mentioned, we consider sets $B=\{b_1,\ldots,b_m\}$ that contain one point $b_i$ in each box $D_i$. Let $B_i=\{b_1,\ldots,b_i\}$ (and $B_0=\varnothing$). We have that
\[\frac{Z_{\Lambda\setminus (A\cup B)}}{Z_{\Lambda\setminus A}}=\prod_{i=1}^m\frac{Z_{\Lambda\setminus (A\cup B_i)}}{Z_{\Lambda\setminus (A\cup B_{i-1}})}\,.\]
Pick some $i\in\{1,\ldots,m\}$. Our construction of the $D_j$ ensures that $D_i$ touches a box containing a point of $\tilde A\cup B_{i-1}$. In particular, $b_i\in D_i$ has distance at most $\sqrt{2^2+1^2+1^2+1^2}K\lmic=\sqrt{7}K\lmic$ from a point in $\tilde A\cup B_{i-1}$. Now, \eqref{e:ratiopartsums2} and Lemma \ref{l:estvariance} imply that
\begin{align*}
\frac{Z_{\Lambda\setminus (A\cup B_i)}}{Z_{\Lambda\setminus (A\cup B_{i-1}})}&=\frac{1}{\sqrt{2\pi G_{\Lambda\setminus(A\cup B_{i-1}})(b_i,b_i)}}\\
&\ge \frac{1}{C\sqrt{\log\left(1+\sqrt{7}K\lmic\right)}}\\
&\ge\frac{1}{C|\log\e|^{1/2}}
\end{align*}
as soon as $\e$ is small enough (depending on $K$). Thus,
\[\frac{Z_{\Lambda\setminus (A\cup B)}}{Z_{\Lambda\setminus A}}\ge\left(\frac{1}{C|\log\e|^{1/2}}\right)^m\,.\]
This estimate holds for all $A\subset\Lambda\setminus E^{(\underline{k})}$ such that $\tilde{A}\cap E^{(\underline{l})}\neq\varnothing$ for all $\underline{l}>\underline{k}$, and all $B$ that contain exactly one point in each box of $E^{(\underline{k})}$. The number of such sets $B$ is $(K^4\lmic^4)^m$, and so \eqref{e:lowerestpinned_3} implies that
\begin{equation}\label{e:lowerestpinned_4}
\begin{aligned}
\zeta^\e_\Lambda\left(\tilde\A\cap E^{(\underline{k})}=\varnothing\middle|\tilde\A\cap E^{(\underline{l})}\neq\varnothing\ \forall\underline{l}>\underline{k}\right)&\le\left((K^4\lmic^4)^m\e^m\left(\frac{1}{C|\log\e|^{1/2}}\right)^m\right)^{-1}\\
&\le\left(\left(2K\frac{|\log\e|^{1/8}}{\e^{1/4}}\right)^4\frac{\e}{C|\log\e|^{1/2}}\right)^{-m}\\
&=\left(\frac{K^4}{\gamma}\right)^{-m}
\end{aligned}
\end{equation}
for a certain constant $\gamma$. We can now choose $K$ as an odd integer such that
\begin{equation}\label{e:estimateforK}
K\ge(e\gamma)^{1/4}\,.
\end{equation}
Then \eqref{e:lowerestpinned_4} in combination with \eqref{e:lowerestpinned_5} implies
\begin{align*}
\zeta^\e_\Lambda\left(\tilde\A\cap E^{(\underline{k})}=\varnothing\middle|\tilde\A\cap E^{(\underline{l})}\neq\varnothing\ \forall\underline{l}>\underline{k}\right)&\le\exp(-m)\\
&=\exp\left(-\frac{|E^{(\underline{k})}|}{K^4\lmic^4}\right)\\
&=\exp\left(-\frac{|E|}{K^4\lmic^4}-|\underline{k}|_1\right)\,.
\end{align*}
Now we can use this result in \eqref{e:lowerestpinned_2} and obtain
\begin{align*}
\zeta^\e_\Lambda(\A\cap E=\varnothing)&\le\sum_{\underline{k}\in\N^n}\exp\left(-\frac{|E|}{K^4\lmic^4}-|\underline{k}|_1\right)\\
&=\exp\left(-\frac{|E|}{K^4\lmic^4}\right)\left(\sum_{k_1=0}^\infty\exp(-k_1)\right)\cdot\ldots\cdot\left(\sum_{k_n=0}^\infty\exp(-k_1)\right)\\
&=\exp\left(-\frac{|E|}{K^4\lmic^4}\right)\left(\frac{e}{e-1}\right)^n\\
&=\exp\left(-\frac{|E|}{K^4\lmic^4}+n(1-\log(e-1))\right)\,.
\end{align*}
Finally, we can recall \eqref{e:lowerestpinned_6} and conclude
\begin{align*}
\zeta^\e_\Lambda(\A\cap E=\varnothing)&\le\exp\left(-\frac{|E|}{K^4\lmic^4}+\frac{|E|}{K^4\lmic^4}(1-\log(e-1))\right)\\
&=\exp\left(-\log(e-1)\frac{|E|}{K^4\lmic^4}\right)\\
&\le\exp\left(-\frac{\e}{C|\log\e|^{1/2}}|E|\right)\\
&\le\left(1-\frac{\e}{C|\log\e|^{1/2}}\right)^{|E|}
\end{align*}
whenever $\e$ is small enough, and the $K$ (that is now fixed) has been absorbed into the constant. This completes the proof of \eqref{e:lowerestpinned_1}.

\emph{Step 3: Density of pinned points on macroscopic scales}\\
We now show that on the length scale $\lmac$ most points of a set $E\subset\Lambda$ are close to a point in $\tilde\A$. To make this precise, we need to make a few definitions. Let $L$ be an odd integer to be fixed later (in \eqref{e:estimateforL_1} and \eqref{e:estimateforL_2}). We consider polymers in $\Pbox_{KL\lmac}$. Observe that $KL\lmac$ is an odd multiple of $K\lmic$, the lengthscale from Step 2. For $E\subset\Lambda$ let
\[S_E=\{Q\in\Qbox_{KL\lmac}\colon Q\cap E\neq\varnothing\}\]
and
\[S_{E,\mathrm{bad}}(\A)=\{Q\in S_E\colon Q\cap\tilde A=\varnothing\}\,.\]
We think of the boxes in $S_{E,\mathrm{bad}}(\A)$ as bad boxes, as they contain points of $E$ but no pinned point. We will show that not too many boxes are bad. Note that $|S_E|\ge\frac{|E|}{(KL)^4\lmac^4}$. Our claim now is that with $K$ as before there is an odd integer $L$ such that for any $E\subset\Lambda$ and any $\e$ small enough we have
\begin{equation}\label{e:lowerestpinned_7}
\zeta^\e_\Lambda\left(|S_{E,\mathrm{bad}}(\A)|>\frac{|E|}{2(KL)^4\lmac^4}\right)\le\left(1-\frac{\e}{C|\log\e|^{1/2}}\right)^{|E|}\,.
\end{equation}
To see this, we use the result from the previous step to estimate
\begin{align*}
&\zeta^\e_\Lambda\left(|S_{E,\mathrm{bad}}(\A)|>\frac{|E|}{2(KL)^4\lmac^4}\right)\\
&\quad=\sum_{\substack{T\subset S_E\\|T|\ge|E|/(2(KL)^4\lmac^4)}}\zeta^\e_\Lambda(S_{E,\mathrm{bad}}(\A)=T)\\
&\quad\le\sum_{\substack{T\subset S_E\\|T|\ge|E|/(2(KL)^4\lmac^4)}}\zeta^\e_\Lambda(S_{E,\mathrm{bad}}(\A)\supset T)\\
&\quad=\sum_{\substack{T\subset S_E\\|T|\ge|E|/(2(KL)^4\lmac^4)}}\zeta^\e_\Lambda\left(\A\cap \bigcup_{Q\in T}Q=\varnothing\right)\\
&\quad\le\sum_{\substack{T\subset S_E\\|T|\ge|E|/(2(KL)^4\lmac^4)}}\left(1-\frac{\e}{C|\log\e|^{1/2}}\right)^{|T|(KL)^4\lmac^4}\\
&\quad=\sum_{j=\lceil|E|/(2(KL)^4\lmac^4)\rceil}^{|S_E|}\binom{|S_E|}{j}\left(1-\frac{\e}{C|\log\e|^{1/2}}\right)^{j(KL)^4\lmac^4}\\
&\quad\le\sum_{j=\lceil|E|/(2(KL)^4\lmac^4)\rceil}^{|S_E|}\binom{|S_E|}{j}\exp\left(-\frac{\e}{C|\log\e|^{1/2}}(KL)^4\lmac^4j\right)\\
&\quad\le\sum_{j=\lceil|E|/(2(KL)^4\lmac^4)\rceil}^{|S_E|}\binom{|S_E|}{j}\exp\left(-\frac{\e}{C|\log\e|^{1/2}}(KL)^4\frac{|\log\e|^{3/2}}{\e} j\right)\\
&\quad=\sum_{j=\lceil|E|/(2(KL)^4\lmac^4)\rceil}^{|S_E|}\binom{|S_E|}{j}\e^{(KL)^4j/\gamma' }
\end{align*}
for a certain constant $\gamma'$. We now want to apply the estimate for binomial sums that is stated in Lemma \ref{l:tailbound} below with $N=|S_E|$, $p=\e^{(KL)^4j/\gamma'}$ and $r=\frac{|E|}{2(KL)^4\lmac^4|S_E|}$. To do so, we need $p\le r\le\frac12$. Because $1\le\frac{|E|}{|S_E|}\le(KL)^4\lmac^4$ we always have $r\le\frac12$, and for $p\le r$ it suffices that $\e^{(KL)^4j/\gamma'}\le\frac{\e}{2(KL)^4|\log\e|^{3/2}}$. To ensure the latter we choose $L$ such that
\begin{equation}\label{e:estimateforL_1}
L>\frac{\gamma'^{1/4}}{K}
\end{equation}
and $\e$ is small enough.
Using Lemma \ref{l:tailbound} we then obtain
\begin{equation}\label{e:lowerestpinned_8}
\zeta^\e_\Lambda\left(|S_{E,\mathrm{bad}}(\A)|>\frac{|E|}{2(KL)^4\lmac^4}\right)\le\left(\frac{p}{r^2}\right)^{r|S_E|}\,.
\end{equation}
We can estimate that
\begin{align*}
\frac{p}{r^2}&=\exp\left(-\frac{(KL)^4|\log\e|}{\gamma'}-2\log\frac{|E|}{2(KL)^4\lmac^4|S_E|}\right)\\
&\le\exp\left(-\frac{(KL)^4|\log\e|}{\gamma'}+2\log\frac{1}{2(KL)^4\lmac^4}\right)\\
&\le\exp\left(-\frac{(KL)^4|\log\e|}{\gamma'}+2|\log\e|+2\log(2(KL)^4)+\log(|\log\e|^{3/2})\right)\,.
\end{align*}
Provided that we choose 
\begin{equation}\label{e:estimateforL_2}
L>\frac{(2\gamma')^{1/4}}{K}
\end{equation}
we can estimate this as
\[\frac{p}{r^2}\le\exp\left(-\frac{|\log\e|}{C}\right)\]
whenever $\e$ is small enough (depending on $K$, $L$ that are now fixed).
Returning to \eqref{e:lowerestpinned_8}, we see that
\begin{align*}
\zeta^\e_\Lambda\left(|S_{E,\mathrm{bad}}(\A)|>\frac{|E|}{2(KL)^4\lmac^4}\right)&\le\exp\left(-\frac{|\log\e|}{C}\frac{|E|}{2(KL)^4\lmac^4|S_E|}|S_E|\right)\\
&\le\exp\left(-\frac{\e}{C|\log\e|^{1/2}}|E|\right)
\end{align*}
which implies \eqref{e:lowerestpinned_7}.

\emph{Step 4: Estimate for arbitrary sets}\\
We now can prove the actual result \eqref{e:lowerestpinnedset_dim4}. So let $E\subset\Lambda$. Using the notation from the previous step, we let \[E_{\mathrm{bad}}(\A)=E\cap\bigcup_{Q\in S_{E,\mathrm{bad}}(\A)}Q\]
be the set of bad points (those which are far from a pinned point). We have the estimate $|E_{\mathrm{bad}}(\A)|\le(KL)^4\lmac^4|S_{E,\mathrm{bad}}(\A)|$ and so the previous step implies that
\[\zeta^\e_\Lambda\left(|E_{\mathrm{bad}}(\A)|>\frac{|E|}{2}\right)\le\zeta^\e_\Lambda\left(|S_{E,\mathrm{bad}}(\A)|>\frac{|E|}{2(KL)^4\lmac^4}\right)\le\left(1-\frac{\e}{C|\log\e|^{1/2}}\right)^{|E|}\,.
\]
We can now write
\begin{align*}
\zeta^\e_\Lambda(\A\cap E=\varnothing)&\le\zeta^\e_\Lambda\left(\A\cap E=\varnothing,|E_{\mathrm{bad}}(\A)|\le\frac{|E|}{2}\right)+\zeta^\e_\Lambda\left(|E_{\mathrm{bad}}(\A)|>\frac{|E|}{2}\right)\\
&\le\zeta^\e_\Lambda\left(\A\cap E=\varnothing,|E_{\mathrm{bad}}(\A)|\le\frac{|E|}{2}\right)+\left(1-\frac{\e}{C|\log\e|^{1/2}}\right)^{|E|}
\end{align*}
and hence we only need to estimate the first term to establish \eqref{e:lowerestpinnedset_dim4}. If $\zeta^\e_\Lambda\left(|E_{\mathrm{bad}}(\A)|\le\frac{|E|}{2}\right)=0$, that term is equal to 0 and we are trivially done. So we can assume otherwise, and estimate
\[\zeta^\e_\Lambda\left(\A\cap E=\varnothing,|E_{\mathrm{bad}}(\A)|\le\frac{|E|}{2}\right)\le\zeta^\e_\Lambda\left(\A\cap E=\varnothing\middle||E_{\mathrm{bad}}(\A)|\le\frac{|E|}{2}\right)\,.\]
Next, we can apply a similar argument as in \eqref{e:lowerestpinned_3} to see that
\begin{equation}\label{e:lowerestpinned_9}
\begin{aligned}
\zeta^\e_\Lambda\left(\A\cap E=\varnothing\middle||E_{\mathrm{bad}}(\A)|\le\frac{|E|}{2}\right)&=\frac{\sum\limits_{\substack{A\subset\Lambda\setminus E\\|E_{\mathrm{bad}}(A)|\le|E|/2}}\e^{|A|}Z_{\Lambda\setminus A}}{\sum\limits_{B\subset E}\sum\limits_{\substack{A\subset\Lambda\setminus E\\|E_{\mathrm{bad}}(A)|\le|E|/2}}\e^{|A|+|B|}Z_{\Lambda\setminus (A\cup B)}}\\
&=\left(\frac{\sum\limits_{\substack{A\subset\Lambda\setminus E\\|E_{\mathrm{bad}}(A)|\le|E|/2}}\sum\limits_{B\subset E}\e^{|A|+|B|}Z_{\Lambda\setminus (A\cup B)}}{\sum\limits_{\substack{A\subset\Lambda\setminus E\\|E_{\mathrm{bad}}(A)|\le|E|/2}}\e^{|A|}Z_{\Lambda\setminus A}}\right)^{-1}\\
&\le\left(\min\limits_{\substack{A\subset\Lambda\setminus E\\|E_{\mathrm{bad}}(A)|\le|E|/2}}\sum_{B\subset E}\e^{|B|}\frac{Z_{\Lambda\setminus (A\cup B)}}{Z_{\Lambda\setminus A}}\right)^{-1}\,.
\end{aligned}
\end{equation}
Note that unlike in \eqref{e:lowerestpinned_3} we interchanged the summations over $A$ and $B$ in an intermediate step, which allows us to have $\min_A\sum_B$ instead of $\sum_B\min_A$ in the result of this calculation.

We can estimate this further by only allowing good points for $B$, that is by estimating
\begin{equation}\label{e:lowerestpinned_10}
\zeta^\e_\Lambda\left(\A\cap E=\varnothing\middle||E_{\mathrm{bad}}(\A)|\le\frac{|E|}{2}\right)\le\left(\min\limits_{\substack{A\subset\Lambda\setminus E\\|E_{\mathrm{bad}}(A)|\le|E|/2}}\sum_{B\subset E\setminus E_{\mathrm{bad}}(A)}\e^{|B|}\frac{Z_{\Lambda\setminus (A\cup B)}}{Z_{\Lambda\setminus A}}\right)^{-1}\,.
\end{equation}
Consider some $A\subset\Lambda\setminus E$, and some $B\subset E\setminus E_{\mathrm{bad}}(A)$. By definition of $E_{\mathrm{bad}}(A)$, each point in $B$ is in the same macroscopic box as a point of $\tilde A$. In particular, each point in $B$ has distance at most $\sqrt{7}KL\lmac$ to a point of $\tilde A$. Thus, if we let $B=\{b_1,\ldots,b_{|B|}\}$, and $B_i=\{b_1,\ldots,b_i\}$ we see as in Step 2 that
\begin{align*}
\frac{Z_{\Lambda\setminus (A\cup B)}}{Z_{\Lambda\setminus A}}&=\prod_{i=1}^{|B|}\frac{Z_{\Lambda\setminus (A\cup B_i)}}{Z_{\Lambda\setminus (A\cup B_{i-1}})}\\
&=\prod_{i=1}^{|B|}\frac{1}{\sqrt{2\pi G_{\Lambda\setminus(A\cup B_{i-1}})}(b_i,b_i)}\\
&\ge\prod_{i=1}^{|B|}\frac{1}{C\sqrt{\log\left(1+\sqrt{7}KL\lmac\right)}}\\
&\ge\left(\frac{1}{C|\log\e|^{1/2}}\right)^{|B|}
\end{align*}
where we used \eqref{e:ratiopartsums2} and Lemma \ref{l:estvariance}. Returning to \eqref{e:lowerestpinned_9} and \eqref{e:lowerestpinned_10}, we obtain
\begin{align*}
&\zeta^\e_\Lambda\left(\A\cap E=\varnothing\middle||E_{\mathrm{bad}}(\A)|\le\frac{|E|}{2}\right)\\
&\quad\le\left(\min\limits_{\substack{A\subset\Lambda\setminus E\\|E_{\mathrm{bad}}(A)|\le|E|/2}}\sum_{B\subset E\setminus E_{\mathrm{bad}}(A)}\e^{|B|}\left(\frac{1}{C|\log\e|^{1/2}}\right)^{|B|}\right)^{-1}\\
&\quad=\left(\min\limits_{\substack{A\subset\Lambda\setminus E\\|E_{\mathrm{bad}}(A)|\le|E|/2}}\sum_{j=0}^{|E\setminus E_{\mathrm{bad}}(A)|}\binom{|E\setminus E_{\mathrm{bad}}(A)|}{j}\left(\frac{\e}{C|\log\e|^{1/2}}\right)^j\right)^{-1}\\
&\quad=\left(\min\limits_{\substack{A\subset\Lambda\setminus E\\|E_{\mathrm{bad}}(A)|\le|E|/2}}\left(1+\frac{\e}{C|\log\e|^{1/2}}\right)^{|E\setminus E_{\mathrm{bad}}(A)|}\right)^{-1}\\
&\quad\le\left(\left(1+\frac{\e}{C|\log\e|^{1/2}}\right)^{|E|/2}\right)^{-1}\\
&\quad\le\left(1-\frac{\e}{C|\log\e|^{1/2}}\right)^{|E|}\,.
\end{align*}
This finally completes the proof.
\end{proof}

\begin{remark}\label{r:gap_IV}
In \cite{Bolthausen2001} a similar argument is used. However, for growing the polymers \cite{Bolthausen2001} refers to \cite{Ioffe2000}, where a construction that is different from ours is used. Unfortunately, the argument from \cite{Ioffe2000} contains a small gap.

The problem is as follows:
Take $\dd\ge2$. In \cite{Ioffe2000} the grown polymer $\tilde E^{\underline{k}}$ is only defined for certain admissible $\underline{k}$. Using our notation, one defines $\tilde E^{\underline{k}}$ by adding $k_i$ layers of cubes in $\Qbox_{K\lmic}$ to $E_i$, i.e. one replaces $E$ by
\[\tilde E^{\underline{k}}:=\bigcup_{i=1}^nE_i+Q_{k_iK\lmic}(0)\,.\]
However, this is only done if for each $i\in\{1,\ldots,n\}$ we have that $E_i+Q_{k_iK\lmic}(0)$ and $\bigcup_{j=1}^{i-1}E_i+Q_{k_iK\lmic}$ are disjoint or $k_i=0$ (and the $\underline{k}$ with this property are called admissible). Now in \cite[p. 398]{Ioffe2000} it is claimed that this construction satisfies 
\begin{equation}\label{e:analogue_of_lowerestpinned_5}
|\tilde E^{\underline{k}}|\ge|E|+|\underline{k}|_1K^\dd\lmic^\dd\,,
\end{equation}
or in other words that we have added at least $|\underline{k}|_1$ boxes. This is not true in general: For example if $L$ is a large odd number and
\begin{align*}
E_1&=\left[-\frac{K\lmic}{2},\frac{K\lmic}{2}\right]\cap\Z^\dd\\
E_2&=\left(\left[-\frac{KL\lmic}{2},\frac{KL\lmic}{2}\right]\setminus\left[-\frac{3K\lmic}{2},\frac{3K\lmic}{2}\right]\right)\cap\Z^\dd
\end{align*}
and $E=E_1\cup E_2$, then for any $k_1\in\left\{1,\frac{L-1}{2}\right\}$ the multiindex $\underline{k}=(k_1,0)$ is admissible, but to obtain $\tilde E^{\underline{k}}$ we only add the $3^\dd-1$ cubes that form the gap between $E_1$ and $E_2$. If $L$ is large enough, we can take $k_1\ge3^\dd$, and we arrive at a contradiction to \eqref{e:analogue_of_lowerestpinned_5}.

Note that this problem is not present in the construction that we used in Step 1 of the proof of Theorem \ref{t:estimatespinnedset} c), as our construction directly ensures that \eqref{e:lowerestpinned_5} holds. The same construction could also be used in \cite{Ioffe2000} to fix the gap there.

Alternatively (as pointed out to the author by Yvan Velenik) one can also fix the gap in \cite{Ioffe2000} by first ordering the $E_k$ in such a way that no $E_i$ completely surrounds an $E_j$ with $i<j$.
\end{remark}

In our proof of Theorem \ref{t:estimatespinnedset} c) we used a tail bound for certain binomial sums. We will use this estimate a few more times in Section \ref{s:est_prob_cutoff}, so we state and prove it separately.
\begin{lemma}\label{l:tailbound}
Let $N\in\N$, and $\frac12\ge r\ge p\ge0$. Then
\begin{equation}\label{e:tailbound}
\sum_{j=\lceil rN\rceil}^N\binom{N}{j}p^j\le\left(\frac{p}{r^2}\right)^{rN}\,.
\end{equation}
\end{lemma}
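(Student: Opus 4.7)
My plan is to apply a standard exponential-Markov (Chernoff-type) bound with an optimal parameter, and then reduce to a one-variable comparison.

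I would first dispose of the trivial case $p=0$ and assume $0 < p \le r \le 1/2$. For any $\lambda \ge 1$, the elementary bound $\I_{j \ge \lceil rN\rceil} \le \lambda^{j-rN}$ followed by the binomial theorem yields
\begin{equation*}
\sum_{j=\lceil rN \rceil}^N \binom{N}{j} p^j \;\le\; \lambda^{-rN}\sum_{j=0}^N \binom{N}{j}(\lambda p)^j \;=\; \lambda^{-rN}(1+\lambda p)^N.
\end{equation*}
The Chernoff-optimal choice is $\lambda = r/(p(1-r))$, which is $\ge 1$ since $p \le r \le 1/2$. For this $\lambda$ one computes $1+\lambda p = 1/(1-r)$ and $\lambda^{-rN} = (p(1-r)/r)^{rN}$, so the bound simplifies to
\begin{equation*}
\sum_{j=\lceil rN \rceil}^N \binom{N}{j} p^j \;\le\; \left(\frac{p(1-r)}{r}\right)^{rN}(1-r)^{-N} \;=\; \left(\frac{p}{r}\right)^{rN}(1-r)^{-(1-r)N}.
\end{equation*}

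To match the claimed right-hand side $(p/r^2)^{rN}$, the remaining task is the scalar inequality $(1-r)^{-(1-r)} \le r^{-r}$ on $(0, 1/2]$, equivalently $f(r) := (1-r)\log(1-r) - r\log r \ge 0$. I would verify this by observing $f(0^+) = 0 = f(1/2)$ and analyzing $f'(r) = -\log(r(1-r)) - 2$, which on $(0, 1/2)$ vanishes exactly at the root of $r(1-r) = e^{-2}$; thus $f$ is positive near $0$ (where $f' > 0$), increases to a maximum, and then decreases back to $0$ at $1/2$, so $f \ge 0$ throughout. Combining, $(p/r)^{rN}(1-r)^{-(1-r)N} \le (p/r)^{rN}\, r^{-rN} = (p/r^2)^{rN}$, which is the claimed bound.

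The only subtle step is the choice of $\lambda$: the naive guess $\lambda = 1/r$ leaves an uncontrolled factor $(1+p/r)^N$ in the regime $p \sim r$, and it is the Chernoff-optimal $\lambda = r/(p(1-r))$ that produces exactly the $(1-r)^{-(1-r)N}$ term whose comparison with $r^{-rN}$ explains the otherwise mysterious $r^{-2rN}$ appearing on the right-hand side of the lemma.
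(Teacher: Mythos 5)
Your proof is correct and follows essentially the same route as the paper: a Chernoff bound $\sum_{j\ge\lceil rN\rceil}\binom{N}{j}p^j\le\lambda^{-rN}(1+\lambda p)^N$ with the optimal parameter (your $\lambda$ equals the paper's $e^t$ with $t=\log\frac{r}{(1-r)p}$), followed by the scalar comparison $(1-r)^{-(1-r)}\le r^{-r}$ on $(0,1/2]$. The only difference is that you actually prove this last scalar inequality by analyzing $f(r)=(1-r)\log(1-r)-r\log r$, whereas the paper merely asserts it — a harmless and indeed welcome addition.
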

This estimate is very similar to standard Chernoff tail bounds for the binomial distribution. A special case was used in \cite[Section 3.3.2]{Bolthausen2001}. For the proof we will follow the proof of the Chernoff tail bound.

\begin{proof}
For any $t\ge0$ we have the estimate
\begin{align*}
\sum_{j=\lceil rN\rceil}^N\binom{N}{j}p^j&\le e^{-trN}\sum_{j=0}^N\binom{N}{j}e^{tj}p^j\\
&\le e^{-trN}(1+e^tp)^N\,.
\end{align*}
The optimal choice for $t$ is $t=\log\left(\frac{r}{(1-r)p}\right)$, and this yields
\[\sum_{j=\lceil rN\rceil}^N\binom{N}{j}p^j\le\left(\frac{(1-r)^{r-1}}{r^r}\right)^Np^{rN}\,.\]
It remains to observe that for $0< r\le\frac12$ one has
\[\frac{(1-r)^{r-1}}{r^r}\le\frac{1}{r^{2r}}\,.\]
\end{proof}

\section{Some inequalities}
In this section we provide some tools that will be used in the next two sections to establish Theorem \ref{t:estimatesfield}, namely a discrete multipolar Hardy-Rellich inequality as well as an interpolation inequality. We begin with the former.

\subsection{A discrete multipolar Hardy-Rellich inequality}

We want to give a quantitative estimate on the strength of the pinning effect on $x\in\Lambda$. More precisely, consider a function $u\colon\Lambda\to\R$ such that $u=0$ on $\tilde A=A\cup(\Z^\dd\setminus\Lambda)$. We want to control a weighted $L^2$-norm of $u$ by the $L^2$-norm of $\nabla_1^2u$. The weight at $x\in\Lambda$ will have to depend on the location of $x$ with respect to $\tilde A$. If $\nabla_1^2u$ is small, then $u$ is (locally) close to an affine function. We need to ensure that this affine function is close to zero near $x$, and for this purpose we need that $u$ is close to 0 at $\dd+1$ points that are well-spread out, i.e. we need that $x$ is close to $\dd+1$ pinned points.

To state our precise result we need some definitions. First we construct $\dd+1$ cones of directions that are well-spread out:
Let $\theta_1,\ldots,\theta_{\dd+1}\in\Sphere^{\dd-1}$ be such that $\theta_i\cdot\theta_j=-\frac{1}{\dd}$ for $i\neq j$ (e.g. take $\left(\theta_i\right)_{i=1}^{\dd+1}$ to be the vertices of a regular $\dd$-dimensional simplex with circumsphere $\Sphere^{\dd-1}$). 

For $\kappa>0$ let $\Theta_i=B_\kappa\left(\theta_i\right)\cap \Sphere^{\dd-1}$. For $\kappa$ small enough we have $\theta_i'\cdot\theta_j'<0$ for all $\theta_i'\in\Theta_i$, $\theta_j'\in\Theta_j$ for $i\neq j$. Fix one such choice of $\kappa$. Finally let $\Xi_i=\left\{y\in\R^\dd\setminus\{0\}\colon\frac{y}{|y|}\in\Theta_i\right\}$ (cf. Figure \ref{f:Xi_i}).

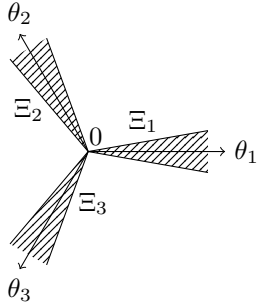
\begin{figure}[h]
\centering
\begin{tikzpicture}
\draw[-] (0,0)--(350:1.6) node (A) {};
\draw[-] (0,0)--(10:1.6) node (B) {};
\draw[-] (0,0)--(110:1.6) node (C) {};
\draw[-] (0,0)--(130:1.6) node (D) {};
\draw[-] (0,0)--(230:1.6) node (E) {};
\draw[-] (0,0)--(250:1.6) node (F) {};

\draw[->] (0,0)--(0:1.8) node[right]{$\theta_1$};
\draw[->] (0,0)--(120:1.8) node[above]{$\theta_2$};
\draw[->] (0,0)--(240:1.8) node[below]{$\theta_3$};

\draw[draw=none] (0,0)--(B) node[midway,above] {$\Xi_1$};
\draw[draw=none] (0,0)--(D) node[midway,left] {$\Xi_2$};
\draw[draw=none] (0,0)--(F) node[midway,right] {$\Xi_3$};

\node at (0.1,0.2) {$0$};
\draw[pattern=north east lines,draw=none] (0,0)--(A.center)--(B.center)--(0,0)--(C.center)--(D.center)--(0,0)--(E.center)--(F.center)--cycle;

\end{tikzpicture}
\caption{The sets $\Xi_i$ for $\dd=2$}\label{f:Xi_i}
\end{figure}

For $x\in\Lambda$ let 
\begin{align*}
	d^{(i)}(x,\tilde A)&=\inf_{a\in \tilde A\cap (x+\Xi_i)}|x-a|_1\,,\\
	d_*(x,\tilde A)&=\max_{i\in\{1,\ldots,\dd+1\}}d^{(i)}(x,\tilde A)
\end{align*}
with the convention that $\inf\varnothing=+\infty$. Thus, for each $x$ there are $\dd+1$ points in $\tilde A$ which are well-spread out around $A$ with distance at most $d_*(x,\tilde A)$.

Then we have the following statement.
\begin{theorem}\label{t:localpoincare}
Let $A\subset \Lambda$ be arbitrary. Let $V\subset\Lambda$ be an arbitrary subset. Let $R\in\N$, $R\ge2$ be a parameter. Suppose that $u\colon\Lambda\to\R$ is such that $u=0$ on $\tilde A$. Then
\begin{equation}\label{e:localpoincare}
\left\|u\I_{d_*(\cdot,\tilde A)\le R}\right\|^2_{L^2(V)}\le C_\dd R^\dd(1+\I_{\dd=4}\log R)\left\|\nabla_1^2u\right\|^2_{L^2(V+Q_R(0))}
\end{equation}
where the constant is independent of $R$, $V$ and $A$.
\end{theorem}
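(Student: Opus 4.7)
\textbf{Setting up the multipolar structure.}
For each $x\in V$ with $d_*(x,\tilde A)\le R$, the definition of $d_*$ furnishes, for every $i\in\{1,\ldots,\dd+1\}$, a pinned point $a_i=a_i(x)\in\tilde A\cap(x+\Xi_i)$ with $t_i(x):=|x-a_i|_1\le R$. The unit vectors $\hat v_i:=(a_i-x)/|a_i-x|$ lie in $\Theta_i$, and since the $\Theta_i$ were constructed to have pairwise strictly negative inner products, the collection $\{\hat v_1,\ldots,\hat v_{\dd+1}\}$ is a \emph{positive basis} of $\R^\dd$. Consequently there exist nonnegative barycentric weights $\lambda_i=\lambda_i(x)\in[0,1]$ with $\sum_i\lambda_i=1$ and $\sum_i\lambda_i(a_i-x)=0$, i.e.\ $x$ lies in the convex hull of $\{a_1,\ldots,a_{\dd+1}\}$. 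A short computation, based on the fact that the obtuse-angle condition on the $\Theta_i$ is uniform in $\kappa$, further yields the quantitative bound $\lambda_i(x)\,t_i(x)\le C_\dd\min_j t_j(x)$.

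\textbf{Taylor expansion to eliminate the affine part.}
For each $i$, a discrete second-order Taylor expansion along a lattice path $P_i(x)$ from $x$ to $a_i$ writes
\[u(a_i)-u(x)-\nabla_1 u(x)\cdot(a_i-x)=R_i(x),\]
where the remainder $R_i(x)$ is a weighted sum of entries of $\nabla_1^2 u$ on $P_i(x)$ satisfying, by discrete Cauchy--Schwarz along the path, $|R_i(x)|^2\le C\,t_i(x)^3\sum_{y\in P_i(x)}|\nabla_1^2 u(y)|^2$. Multiplying by $\lambda_i$, summing over $i$, and using both $u(a_i)=0$ (since $a_i\in\tilde A$) and $\sum_i\lambda_i(a_i-x)=0$, the terms $u(x)$ and $\nabla_1 u(x)\cdot(a_i-x)$ cancel out precisely, leaving
\[u(x)=-\sum_i\lambda_i(x)\,R_i(x).\]
This is exactly where the multipolar hypothesis is used: $\dd+1$ affinely-independent vanishing points of $u$ are the minimum required to simultaneously eliminate the constant and linear (affine) contributions, which otherwise could not be controlled by $\nabla_1^2 u$ alone.

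\textbf{$L^2$ estimate and double counting.}
Squaring and applying Cauchy--Schwarz to the sum over $i$ with weights $\lambda_i$ (recalling $\sum_i\lambda_i=1$) gives the pointwise bound
\[|u(x)|^2\le C_\dd\sum_{i=1}^{\dd+1}\lambda_i(x)\,t_i(x)^3\sum_{y\in P_i(x)}|\nabla_1^2 u(y)|^2.\]
Summing over $x\in V$ with $d_*(x)\le R$ and exchanging summations yields
\[\|u\I_{d_*\le R}\|_{L^2(V)}^2\le C_\dd\sum_{y\in V+Q_R(0)}|\nabla_1^2 u(y)|^2\,W(y),\]
where $W(y)=\sum_{i}\sum_{x\colon y\in P_i(x)}\lambda_i(x)\,t_i(x)^3$. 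For fixed $i$, the constraint $y\in P_i(x)$ forces $x\in y-\Xi_i$ with $|y-x|_1\le t_i(x)\le R$. Using the bound $\lambda_i t_i\le C_\dd\min_j t_j\le C_\dd|y-x|_1$ obtained above, the contribution per $x$ is controlled by $|y-x|_1\cdot t_i(x)^2$, and the sum over $x$ in the radial cone $y-\Xi_i$ with $|y-x|_1\le R$ becomes a converging integral in dimension $\dd\ge 5$ but is logarithmically borderline in $\dd=4$, ultimately giving $W(y)\le C_\dd R^\dd(1+\I_{\dd=4}\log R)$. The logarithmic factor is precisely the well-known critical correction to the four-dimensional Hardy--Rellich inequality.

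\textbf{Main obstacle.}
The delicate part is the tight estimate of $W(y)$: a naive bound would lose additional powers of $R$, and it is only the combination of (i) the positive-basis control $\lambda_i t_i\le C\min_j t_j$, (ii) the fact that the relevant $x$'s populate a thin cone around $y$, and (iii) careful handling of the borderline radial integration in $\dd=4$ that yields the stated $R^\dd(1+\I_{\dd=4}\log R)$ prefactor. Everything else (Taylor expansion, barycentric combination, summation) is routine in comparison.
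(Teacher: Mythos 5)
Your idea of replacing the paper's ``max over directions'' argument with barycentric weights is attractive and the geometric claim underlying it is correct: since the directions $(a_i-x)/|a_i-x|$ lie in the cones $\Theta_i$, which have pairwise strictly negative inner products, if $0$ were outside the convex hull of $\{a_i-x\}$ a separating vector $w$ would give $\dd+2$ vectors $-w, a_1-x,\ldots,a_{\dd+1}-x$ with pairwise negative inner products, impossible in $\R^\dd$. So $x$ does lie in the convex hull of the $a_i$, the identity $u(x)=-\sum_i\lambda_iR_i(x)$ holds, and the bound $\lambda_i(x)t_i(x)\le C_\dd\min_j t_j(x)$ is plausible (though you would need to actually prove it). Up to here your route is genuinely different from the paper, which instead shows $|u(x)|\le\max_i|R_i(x)|$ by noting that at least one of the $a_i-x$ has nonnegative inner product with $\nabla_1u(x)$.

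However, the weight-function bound at the end has a real gap, and I do not see how to close it along your lines. First, you write ``$\lambda_i t_i\le C_\dd\min_j t_j\le C_\dd|y-x|_1$''; the second inequality is false. For $y$ on the path $P_i(x)$ we only know $|y-x|_1\le t_i(x)$, and nothing prevents $|y-x|_1$ from being $1$ while all $t_j(x)$ are of order $R$. Second, and more structurally, even if you grant the false inequality the counting still does not give $R^\dd$: your per-$x$ weight $\lambda_i t_i^3$ is a single number attached to $x$, not a quantity adapted to the position of $y$ along the path. The paper avoids this by applying H\"older with weights $(t_i-s)^{-(\dd-3)}$ and $(t_i-s)^{\dd-1}$ rather than Cauchy--Schwarz; the factor $\sum_s(t_i-s)^{-(\dd-3)}$ produces $C_\dd$ for $\dd\ge5$ and $\log R$ for $\dd=4$, while the residual weight $(t_i-s)^{\dd-1}\approx d^{(i)}(y,\tilde A)^{\dd-1}$ at the point $y=\Psi^{(i)}_x(s)$ is tuned to exactly cancel the counting bound: the paths $P_i(x)$ passing through $y$ all share the same endpoint $a_y^{(i)}$ and thus $x$ ranges over a cone of aperture $\sim1/d^{(i)}(y,\tilde A)$, containing $\lesssim R^\dd/d^{(i)}(y,\tilde A)^{\dd-1}$ lattice points. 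The product of the two $d^{(i)}$-powers is $1$, leaving $R^\dd$. Your weight $t_i^3$ does not couple to $d^{(i)}(y,\tilde A)$ at all, so your $W(y)$ is not bounded by $C_\dd R^\dd(1+\I_{\dd=4}\log R)$; carrying out the sum honestly produces extra powers of $R$. To repair the argument you would need to replace the plain Cauchy--Schwarz on the path by the H\"older splitting, at which point the barycentric weights contribute nothing beyond the trivial $\lambda_i\le1$ and the proof collapses back to the paper's.
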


\begin{proof}
We begin with the case $\dd\ge5$. We fix an enumeration of the points in $\Z^\dd$.

We first establish a pointwise bound for $u\I_{d_*(\cdot,\tilde A)\le R}$. Let $x\in V$ such that $d_*(x,\tilde A)\le R$. For $i\in\{1,\ldots,\dd+1\}$ consider the points in $\tilde A\cap(x+\Xi_i)$ of minimal $l^1$-distance to $x$, and let $a^{(i)}_x$ be the one among those that comes first with respect to our fixed enumeration. By assumption $|x-a^{(i)}_x|_1=d^{(i)}(x,\tilde A)\le R$.

We first claim
\begin{equation}\label{eq:localpoincare1}
\begin{aligned}
	|u(x)|&\le\max_{i\in\{1,\ldots,\dd+1\}}\left|u(a^{(i)}_x)-u(x)-\nabla_1u(x)\cdot\left(a^{(i)}_x-x\right)\right|\,.
\end{aligned}
\end{equation}
Indeed, we can assume $u(x)\ge0$ (the other case is analogous). There is an index $i$ such that $\nabla_1u(x)\cdot\left(a^{(i)}_x-x\right)\ge0$, as otherwise the $\dd+2$ vectors \[\nabla_1u(x),a^{(1)}_x-x,\ldots,a^{(\dd+1)}_x-x\] would have pairwise negative scalar products, while it is easy to see that this is possible in $\R^\dd$ for at most $\dd+1$ vectors. In particular, we have
\[\max_{i\in\{1,\ldots,\dd+1\}}\nabla_1u(x)\cdot\left(a^{(i)}_x-x\right)\ge0\,.\]
By assumption $u(a^{(i)}_x)=0$ and so
\begin{align*}
	u(x)&\le u(x)+\max_{i\in\{1,\ldots,\dd+1\}}\nabla_1u(x)\cdot\left(a^{(i)}_x-x\right)\\
	&=\max_{i\in\{1,\ldots,\dd+1\}}u(x)-u(a^{(i)}_x)+\nabla_1u(x)\cdot\left(a^{(i)}_x-x\right)\\
	&\le \max_{i\in\{1,\ldots,\dd+1\}}\left|u(x)-u(a^{(i)}_x)+\nabla_1u(x)\cdot\left(a^{(i)}_x-x\right)\right|
\end{align*}
which is \eqref{eq:localpoincare1}.

We now want to pick a nearest neighbour path $\Psi^{(i)}_x=\left(\Psi^{(i)}_x(0),\ldots,\Psi^{(i)}_x(d^{(i)}(x,\tilde A))\right)$ such that $\Psi^{(i)}_x(0)=x$, $\Psi^{(i)}_x(d^{(i)}(x,\tilde A))=a^{(i)}_x$. We can pick this path in such a way that all of its points have distance at most $\sqrt{\dd}$ from the straight line connecting $x$ and $a^{(i)}_x$, and such that all but possible the first $\tilde\alpha_\dd$ of its vertices lie inside the widening cone $x+\Xi_i$ (cf. Figure \ref{f:Psi_i}). Here $\tilde\alpha_\dd$ is a constant depending only on $\dd$ and the $\Xi_i$. 

\begin{figure}[ht]
\centering
\begin{tikzpicture}
\draw[-] (0,0)--(10:6) node (A) {};
\draw[-] (0,0)--(30:6) node (B) {};
\draw[step=.3,black,very thin,dotted] (-.3,-.3) grid (6,3.3);

\draw[fill] (0,0) circle (1pt);
\draw[fill] (0.3,0) circle (1pt);
\draw[fill] (0.3,0.3) circle (1pt);
\draw[fill] (0.6,0.3) circle (1pt);
\draw[fill] (0.9,0.3) circle (1pt);
\draw[fill] (1.2,0.3) circle (1pt);
\draw[fill] (1.2,0.6) circle (1pt);
\draw[fill] (1.5,0.6) circle (1pt);
\draw[fill] (1.8,0.6) circle (1pt);
\draw[fill] (1.8,0.9) circle (1pt);
\draw[fill] (2.1,0.9) circle (1pt);
\draw[fill] (2.4,0.9) circle (1pt);
\draw[fill] (2.4,1.2) circle (1pt);
\draw[fill] (2.7,1.2) circle (1pt);
\draw[fill] (2.7,1.5) circle (1pt);
\draw[fill] (3.0,1.5) circle (1pt);
\draw[fill] (3.3,1.5) circle (1pt);
\draw[fill] (3.3,1.8) circle (1pt);
\draw[fill] (3.6,1.8) circle (1pt);
\draw[fill] (3.9,1.8) circle (1pt);
\draw[fill] (3.9,2.1) circle (1pt);
\draw[fill] (4.2,2.1) circle (1pt);
\draw[fill] (4.5,2.1) circle (1pt);
\draw[fill] (4.5,2.4) circle (1pt);
\draw[fill] ([xshift=-2pt,yshift=-2pt]4.8,2.4) rectangle ++(4pt,4pt) node[right] {$a^{(i)}_x$};

\draw[-,thick] (0,0)--(0.3,0)--(0.3,0.3)--(0.6,0.3)--(0.9,0.3)--(1.2,0.3)--(1.2,0.6)--(1.5,0.6)--(1.8,0.6)--(1.8,0.9)--(2.1,0.9)--(2.4,0.9)--(2.4,1.2)--(2.7,1.2)--(2.7,1.5)--(3.0,1.5)--(3.3,1.5)--(3.3,1.8)--(3.6,1.8)--(3.9,1.8)--(3.9,2.1)--(4.2,2.1)--(4.5,2.1)--(4.5,2.4)--(4.8,2.4);

\draw[dashed] (0.1897,-0.3795)--(4.9897,2.0205);
\draw[dashed] (-0.1897,0.3795)--(4.6103,2.7795);
\node at (-.1,0.1) {$x$};
\node at (5.5,1.5) {$x+\Xi_i$};

\end{tikzpicture}
\caption{Choice of the path $\Psi^{(i)}_x$. We require all points to have distance at most $\sqrt{\dd}$ from the straight line between $x$ and $a^{(i)}_x$ (i.e. to be in the dashed strip), and all but the first $\tilde\alpha_\dd$ to be inside the cone $x+\Xi_i$.}\label{f:Psi_i}
\end{figure}
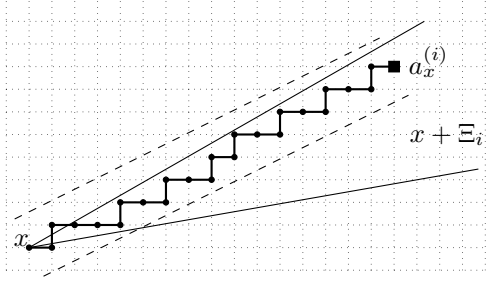

We can now apply a discrete version of the fundamental theorem of calculus along the paths $\Psi^{(i)}_x$ to the function $v:=u(\cdot)-u(x)-\nabla_1u(x)\cdot\left(\cdot-x\right)$. Namely, we know $v(x)=0$ and $\nabla_1v(x)=0$. The point $\Psi^{(i)}_x(1)$ is one of the $2\dd$ neighbours of $x$. If it happens that $\Psi^{(i)}_x(1)\in\{x+e_1,\ldots,x+e_\dd\}$, then $v(\Psi^{(i)}_x(1))=0$ and we can write
\[v(a^{(i)}_x)=\sum_{t=1}^{d^{(i)}(x,A)-1}\sum_{s=1}^t v(\Psi^{(i)}_x(s+1))-2v(\Psi^{(i)}_x(s))+v(\Psi^{(i)}_x(s-1))\,.\]
On the other hand, if $\Psi^{(i)}_x(1)\in\{x-e_1,\ldots,x-e_\dd\}$, we can temporarily add a point $\Psi^{(i)}_x(-1)=2x-\Psi^{(i)}_x(1)$ to our path, so that $v(\Psi^{(i)}_x(-1))=0$, and then write
\[v(a^{(i)}_x)=\sum_{t=0}^{d^{(i)}(x,\tilde A)-1}\sum_{s=0}^{t}v(\Psi^{(i)}_x(s+1))-2v(\Psi^{(i)}_x(s))+v(\Psi^{(i)}_x(s-1))\,.\]
In both cases we can conclude that
\begin{align*}
	\left|u(a^{(i)}_x)-u(x)-\nabla_1u(x)\cdot\left(a^{(i)}_x-x\right)\right|&=\left|v(a^{(i)}_x)\right|\\
	&\le\sum_{t=0}^{d^{(i)}(x,\tilde A)-1}\sum_{s=0}^t \left|\nabla_1^2v(\Psi^{(i)}_x(s))\right|\\
	&=\sum_{t=0}^{d^{(i)}(x,\tilde A)-1}\sum_{s=0}^t\left|\nabla_1^2u(\Psi^{(i)}_x(s))\right|\\
	&=\sum_{s=0}^{d^{(i)}(x,\tilde A)-1}(d^{(i)}(x,\tilde A)-s)\left|\nabla_1^2(\Psi^{(i)}_x(s))\right|
\end{align*}
where we have changed the order of summation in the last step. Thus, \eqref{eq:localpoincare1} implies that
\begin{equation}\label{eq:localpoincare4}
\begin{aligned}
	|u(x)|&\le\max_{i\in\{1,\ldots,\dd+1\}}\left|u(a^{(i)}_x)-u(x)-\nabla_1u(x)\cdot\left(a^{(i)}_x-x\right)\right|\\
	&=\max_{i\in\{1,\ldots,\dd+1\}}\sum_{s=0}^{d^{(i)}(x,\tilde A)-1}(d^{(i)}(x,\tilde A)-s)\left|\nabla_1^2u(\Psi^{(i)}_x(s))\right|\,.
\end{aligned}
\end{equation}

We have this estimate for all $x$ such that $d_*(x,\tilde A)\le R$. Defining $\Psi$ arbitrarily for the other $x$ and summing the square of \eqref{eq:localpoincare4} over $x$, we find
\begin{equation}\label{eq:localpoincare2}
\begin{aligned}
	\sum_{x\in V}|u(x)|^2\I_{d_*(x,\tilde A)\le R}&\le\sum_{x\in V}\I_{d_*(x,\tilde A)\le R}\left(\max_{i\in\{1,\ldots,\dd+1\}}\sum_{s=0}^{d^{(i)}(x,\tilde A)-1}(d^{(i)}(x,\tilde A)-s)\left|\nabla_1^2u(\Psi^{(i)}_x(s))\right|\right)^2\\
	&\le\sum_{i=1}^{\dd+1}\sum_{x\in V}\I_{d_*(x,\tilde A)\le R}\left(\sum_{s=0}^{d^{(i)}(x,\tilde A)-1}(d^{(i)}(x,\tilde A)-s)\left|\nabla_1^2u(\Psi^{(i)}_x(s))\right|\right)^2\,.
\end{aligned}
\end{equation}
Consider a nonzero summand of the outer two sums. Then $d_*(x,\tilde A)\le R$, and (recalling that $\dd\ge5$) we can apply Hölder's inequality to the innermost sum to obtain
\begin{equation}\label{eq:localpoincare3}
\begin{aligned}
&\left(\sum_{s=0}^{d^{(i)}(x,\tilde A)-1}(d^{(i)}(x,\tilde A)-s)\left|\nabla_1^2u(\Psi^{(i)}_x(s))\right|\right)^2\\
&\quad\le\left(\sum_{s=0}^{d^{(i)}(x,\tilde A)-1}\frac{1}{(d^{(i)}(x,\tilde A)-s)^{\dd-3}}\right)\left(\sum_{s=0}^{d^{(i)}(x,\tilde A)-1}(d^{(i)}(x,\tilde A)-s)^{\dd-1}\left|\nabla_1^2u(\Psi^{(i)}_x(s))\right|^2\right)\\
&\quad\le C_\dd\sum_{s=0}^{d^{(i)}(x,\tilde A)-1}(d^{(i)}(x,\tilde A)-s)^{\dd-1}\left|\nabla_1^2u(\Psi^{(i)}_x(s))\right|^2\,.
\end{aligned}
\end{equation}
If $s> \tilde\alpha_\dd$, we know $\Psi^{(i)}_x(s)\in x+\Xi_i$, and hence $\Psi^{(i)}_x(s)+\Xi_i\subset x+\Xi_i$, which implies $d^{(i)}(\Psi^{(i)}_x(s),\tilde A)\ge d^{(i)}(x,\tilde A)-s$. If $s\le\tilde\alpha_\dd$, we can just use the estimate $(d^{(i)}(x,\tilde A)-s)^{\dd-1}\le R^{\dd-1}$. Using this in \eqref{eq:localpoincare3} obtain
\begin{align*}
	&\left(\sum_{s=0}^{d^{(i)}(x,\tilde A)-1}(d^{(i)}(x,\tilde A)-s)\left|\nabla_1^2u(\Psi^{(i)}_x(s))\right|\right)^2\\
	&\quad\le C_\dd\sum_{s=\tilde\alpha_\dd}^{d^{(i)}(x,\tilde A)-1}d^{(i)}(\Psi^{(i)}_x(s),\tilde A)^{\dd-1}\left|\nabla_1^2u(\Psi^{(i)}_x(s))\right|^2+C_\dd\sum_{s=0}^{\tilde\alpha_\dd-1}R^{\dd-1}\left|\nabla_1^2u(\Psi^{(i)}_x(s))\right|^2\\
	&\quad\le C_\dd\sum_{y\in\Psi^{(i)}_x}\left(\I_{|y-x|_1> \tilde\alpha_\dd}d^{(i)}(y,\tilde A)^{\dd-1}+\I_{|y-x|_1\le \tilde\alpha_\dd}R^{\dd-1}\right)\left|\nabla_1^2u(y)\right|^2\,.
\end{align*}

We can insert this into \eqref{eq:localpoincare2} and change the order of summation once more to obtain that
\begin{equation}\label{eq:localpoincare5}
\begin{aligned}
	&\sum_{x\in V}|u(x)|^2\I_{d_*(x,\tilde A)\le R}\\
	&\quad\le C_\dd\sum_{i=1}^{\dd+1}\sum_{x\in V}\I_{d_*(x,\tilde A)\le R}\sum_{y\in\Psi^{(i)}_x}\left(\I_{|y-x|_1> \tilde\alpha_\dd}d^{(i)}(y,\tilde A)^{\dd-1}+\I_{|y-x|_1\le \tilde\alpha_\dd}R^{\dd-1}\right)\left|\nabla_1^2u(y)\right|^2\\
	&\quad\le C_\dd\sum_{i=1}^{\dd+1}\sum_{y\in V+Q_R(0)}\left|\nabla_1^2u(y)\right|^2\sum_{x\colon y\in\Psi^{(i)}_x}\I_{d_*(x,\tilde A)\le R}\left(\I_{|y-x|_1> \tilde\alpha_\dd}d^{(i)}(y,\tilde A)^{\dd-1}+\I_{|y-x|_1\le \tilde\alpha_\dd}R^{\dd-1}\right)\\
	&\quad=C_\dd\sum_{i=1}^{\dd+1}\sum_{y\in V+Q_R(0)}\left|\nabla_1^2u(y)\right|^2\bigg(\left|\left\{x\colon y\in\Psi^{(i)}_x,|y-x|_1> \tilde\alpha_\dd,d_*(x,\tilde A)\le R\right\}\right|d^{(i)}(y,\tilde A)^{\dd-1}\\
	&\qquad\qquad\qquad\qquad\qquad\qquad\qquad\qquad\qquad+\left|\left\{x\colon y\in\Psi^i(x),|y-x|_1\le \tilde\alpha_\dd\right\}\right|R^{\dd-1}\bigg)\,.
\end{aligned}
\end{equation}
The cardinality of the second set here is trivial to estimate and we find $|\{x\colon y\in\Psi^{(i)}(x),|y-x|_1\le \tilde\alpha_\dd\}|\le\tilde\alpha_\dd$. To estimate the cardinality of the first set we need to work a bit. The heuristic here is that the paths $\Psi^{(i)}_x$ are close to straight lines with the same endpoint passing through $y$, so there cannot be too many of them. To make this precise, fix $y$ and consider some $x$ such that $y\in\Psi^{(i)}_x$, $|y-x|_1> \tilde\alpha_\dd$ and $d_*(x,\tilde A)\le R$. Because $|y-x|_1> \tilde\alpha_\dd$, we know that $y\in x+\Xi_i$, and hence $y+\Xi_i\subset x+\Xi_i$. Thus, $a^{(i)}_x$ is one of the candidates for the endpoint of the path $\Psi^{(i)}_y$, and our definition of the paths ensures that we actually have $a^{(i)}_x=a^{(i)}_y$. Because $y\in\Psi^{(i)}_x$, the point $y$ has distance at most $\sqrt{\dd}$ from the straight line connecting $x$ and $a^{(i)}_x=a^{(i)}_y$. This also means that $x$ has distance at most $\sqrt{\dd}$ from the straight line connecting $y$ and $a^{(i)}_y$. Therefore $x$ is contained in some fixed cone with tip $a^{(i)}_y$ and opening angle $\le \frac{C_\dd}{|a^{(i)}_y-y|}=\frac{C_\dd}{d^{(i)}(y,\tilde A)}$. The point $x$ is also contained in the cube around $a^{(i)}_y$ with diameter $2R$, as otherwise $d_*(x,\tilde A)\le d^{(i)}(x,\tilde A)=|x-a^{(i)}_y|_1\ge|x-a^{(i)}_y|_\infty>R$. Thus, $x$ is contained in the intersection of the aforementioned cone with that cube. This intersection contains at most $C_\dd R\left(\frac{R}{d^{(i)}(y,\tilde A)}\right)^{\dd-1}=C_\dd \frac{R^\dd}{(d^{(i)}(y,\tilde A))^{\dd-1}}$ lattice points, and so
\[\left|\left\{x\colon y\in\Psi^{(i)}_x,|y-x|_1> \tilde\alpha_\dd,d_*(x,\tilde A)\le R\right\}\right|\le C_\dd \frac{R^\dd}{(d^{(i)}(y,\tilde A))^{\dd-1}}\,.\]

Returning now to \eqref{eq:localpoincare5}, we find
\begin{align*}
  \sum_{x\in V}|u(x)|^2\I_{d_*(x,\tilde A)\le R}&\le C_\dd\sum_{i=1}^{\dd+1}\sum_{y\in V+Q_R(0)}\left|\nabla_1^2u(y)\right|^2\left(\frac{R^\dd}{d^{(i)}(y,\tilde A)^{\dd-1}}d^{(i)}(y,\tilde A)^{\dd-1}+\tilde\alpha_\dd R^{\dd-1}\right)\\
	&\le C_\dd R^\dd\sum_{i=1}^{\dd+1}\sum_{y\in V+Q_R(0)}\left|\nabla_1^2u(y)\right|^2\\
	&\le C_\dd R^\dd\sum_{y\in V+Q_R(0)}\left|\nabla_1^2u(y)\right|^2\,.
\end{align*}
This completes the proof in the case $\dd\ge5$. The case $\dd=4$ is very similar. The only difference is that in the estimate \eqref{eq:localpoincare3} we no longer have
\[\sum_{s=0}^{d^{(i)}(x,\tilde A)-1}\frac{1}{(d^{(i)}(x,\tilde A)-s)^{\dd-3}}\le C_\dd\]
but instead
\[\sum_{s=0}^{d^{(i)}(x,\tilde A)-1}\frac{1}{d^{(i)}(x,\tilde A)-s}\le C\log(2+d^{(i)}(x,\tilde A))\le C\log R\,.\]
This is the additional factor $\log R$ that appears on the right hand side in \eqref{e:localpoincare}
\end{proof}

Later we will also use a probabilistic quenched version of this estimate.
\begin{lemma}\label{l:localpoincare_prob}
Let $\dd\ge4$. There is an odd integer $N_\dd$ with the following property: Let $\Lambda\Subset\Z^\dd$, and let $x\in\Lambda$, and $k\in\N$. Then if $\e$ is sufficiently small (depending on $\dd$) there is an event $\Omega_{x,k}$ such that $\zeta^\e_\Lambda(\Omega_{x,k})\ge1-\frac{1}{2^{k^\dd}}$ and such that whenever $A\in\Omega_{x,k}$ the following estimate holds: If $u\colon\Z^\dd\to\R$ is a function such that $u=0$ on $\tilde A$, then 
\begin{equation}\label{e:localpoincare_prob}
|u(x)|\le C_\dd\frac{k^{\dd/2}\left(1+\I_{\dd=4}((\log k)^{1/2}+|\log\e|^{3/4})\right)}{\e^{1/2}}\|\nabla_1^2u\|_{L^2(Q_{kN_\dd\lmic}(x))}\,.
\end{equation}

\end{lemma}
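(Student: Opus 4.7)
The plan is to combine the deterministic multipolar Hardy--Rellich inequality of Theorem~\ref{t:localpoincare} with a high-probability lower bound on the number of pinned points sitting in each of the cones $x + \Xi_i$. Fix an odd integer $N_\dd$ depending only on $\dd$, to be chosen at the end of the argument, set $R := kN_\dd\lmic$ (an integer since $\lmic$ is odd), and define
\[\Omega_{x,k} := \{A \subset \Lambda : d_*(x, \tilde A) \leq R\}\,.\]
On $\Omega_{x,k}$, Theorem~\ref{t:localpoincare} applied with $V = \{x\}$ yields immediately
\[|u(x)|^2 \leq C_\dd R^\dd\bigl(1 + \I_{\dd=4}\log R\bigr)\|\nabla_1^2 u\|_{L^2(Q_R(x))}^2\,.\]
Inserting $\lmic^\dd \leq C\e^{-1}$ for $\dd \geq 5$, respectively $\lmic^4 \leq C|\log\e|^{1/2}\e^{-1}$ and $\log R \leq C(\log k + |\log\e|)$ for $\dd = 4$, and taking square roots, produces the bound \eqref{e:localpoincare_prob}.

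The real work is to estimate $\zeta^\e_\Lambda(\Omega_{x,k}^c)$. By the very definition of $d_*$,
\[\{d_*(x, \tilde A) > R\} \subset \bigcup_{i=1}^{\dd+1}\{\tilde A \cap E_i = \varnothing\}\,,\qquad E_i := (x + \Xi_i) \cap \{y \in \Z^\dd : |y - x|_1 \leq R\}\,,\]
and the fixed positive opening of each cone $\Xi_i$ forces $|E_i| \geq c_\dd R^\dd$. If $E_i \not\subset \Lambda$, then $\tilde A \supset \Z^\dd \setminus \Lambda$ forces $\tilde A \cap E_i \neq \varnothing$ deterministically; otherwise $\tilde A \cap E_i = A \cap E_i$, and Theorem~\ref{t:estimatespinnedset}(a) (for $\dd \geq 5$) or (c) (for $\dd = 4$) gives
\[\zeta^\e_\Lambda(A \cap E_i = \varnothing) \leq (1 - p_{\dd,-})^{|E_i|} \leq \exp\bigl(-p_{\dd,-}\,c_\dd R^\dd\bigr)\,.\]
The decisive observation is that $p_{\dd,-}\lmic^\dd$ is bounded below by a positive constant depending only on $\dd$ in both cases, so $p_{\dd,-}R^\dd \geq c'_\dd N_\dd^\dd k^\dd$. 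A union bound over $i \in \{1,\ldots,\dd+1\}$ then gives $\zeta^\e_\Lambda(\Omega_{x,k}^c) \leq (\dd+1)\exp(-c'_\dd N_\dd^\dd k^\dd)$, which is at most $2^{-k^\dd}$ for every $k \geq 1$ as soon as $N_\dd$ is taken as a sufficiently large odd integer depending only on $\dd$.

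The most delicate step, in terms of bookkeeping, is the critical dimension $\dd = 4$: the factor $|\log\e|^{-1/2}$ in $p_{4,-}$ must cancel exactly against the matching factor inside $\lmic^4$ so that the probability exponent is dimensionless in $\e$, and separately the product $\lmic^4 \log R$ coming out of the Hardy--Rellich inequality must collapse to the coefficient $k^4(1+(\log k)^{1/2}+|\log\e|^{3/4})^2$ stated in \eqref{e:localpoincare_prob} after extracting a square root. Both cancellations are natural because $\lmic$ was designed precisely so that the pinning density and the microscopic length scale balance. Beyond these arithmetic checks, the argument is a clean combination of Theorems~\ref{t:localpoincare} and \ref{t:estimatespinnedset}, and requires no new ideas.
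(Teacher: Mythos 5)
Your proposal is correct and essentially reproduces the paper's argument: the same event $\Omega_{x,k}=\{d_*(x,\tilde A)\le kN_\dd\lmic\}$, the same application of Theorem~\ref{t:localpoincare} with $V=\{x\}$, and the same union bound over the $\dd+1$ cones via Theorem~\ref{t:estimatespinnedset}. The only cosmetic difference is that you intersect the cones with the $l^1$-ball while the paper uses the cube $Q_{kN\lmic}(x)$, and that you spell out the split between $E_i\subset\Lambda$ and $E_i\not\subset\Lambda$, where the paper shortcuts via $\zeta^\e_\Lambda(d_*(x,\tilde\A)>R)\le\zeta^\e_\Lambda(d_*(x,\A)>R)$; both are equivalent. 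One small point in your favor: you conclude $(\dd+1)\exp(-c'_\dd N_\dd^\dd k^\dd)\le 2^{-k^\dd}$ directly from the displayed exponent $-c'_\dd N_\dd^\dd k^\dd$, whereas the paper's final step weakens $\exp(-(kN)^\dd/C_\dd)$ to $\bigl((\dd+1)\exp(-N^\dd/C_\dd)\bigr)^k$, which a priori only certifies the rate $2^{-k}$ rather than the stated $2^{-k^\dd}$; your version is the cleaner one and matches the lemma as written.
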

\begin{proof}
We want to apply Theorem \ref{t:localpoincare} with $V=\{x\}$ and $R=kN\lmic$. Then $R^\dd(1+\I_{\dd=4}\log R)\le C_{\dd,N}\frac{k^\dd(1+\I_{\dd=4}(\log k+|\log\e|^{3/2})}{\e}$, and so \eqref{e:localpoincare_prob} follows with the choice $N_\dd=N$ provided that $d_*(x,\tilde \A)\le kN\lmic$. Thus, if we define
\[\Omega_{x,k}=\left\{A\subset\Lambda\colon d_*(x,\tilde A)\le kN\lmic\right\}\]
it remains to choose $N$ in such a way that we can show that $\zeta^\e_\Lambda(\Omega_{x,k})\ge1-\frac{1}{2^k}$.

For an odd integer $N$, let $\Xi_{i,kN\lmic}(x)=(x+\Xi_i)\cap Q_{kN\lmic}(x)$. When $kN\ge N_\dd'$ for some dimensional constant $N_\dd'$ (and so in particular when $N\ge N_\dd'$) the fraction of points in $Q_{kN\lmic}(x)$ that are in $\Xi_{i,kN\lmic}(x)$ is bounded below, i.e. we have $|\Xi_{i,kN\lmic}(x)|\ge\frac{(kN)^\dd\lmic^\dd}{C_\dd}$. On the other hand, $d_*(x,\tilde A)\le kN\lmic$ holds if and only if all $\Xi_{i,kN\lmic}(x)$ contain some point in $\tilde A$. Therefore, using Theorem \ref{t:estimatespinnedset} c) we see
\begin{align*}
1-\zeta^\e_\Lambda(\Omega_{x,k})&=\zeta^\e_\Lambda(d_*(x,\tilde \A)> kN\lmic)\\
&\le\zeta^\e_\Lambda(d_*(x,\A)> kN\lmic)\\
&=\zeta^\e_\Lambda\left(\exists i\in\{1,\ldots,\dd+1\}\colon \A\cap\Xi_{i,kN\lmic}(x)=\varnothing\right)\\
&=\sum_{i=1}^{\dd+1}\zeta^\e_\Lambda\left(\A\cap\Xi_{i,kN\lmic}(x)=\varnothing\right)\\
&\le\sum_{i=1}^{\dd+1}(1-p_{\dd,-})^{|\Xi_{i,kN\lmic}(x)|}\\
&\le\sum_{i=1}^{\dd+1}\exp\left(-p_{\dd,-}\frac{(kN)^\dd\lmic^\dd}{C_\dd}\right)\,.
\end{align*}
For any $\dd\ge4$ we have $p_{\dd,-}\lmic^\dd\ge\frac{1}{C_\dd}$, and so
\begin{align*}
1-\zeta^\e_\Lambda(\Omega_{x,k})&
\le(\dd+1)\exp\left(-\frac{(kN)^\dd}{C_\dd}\right)\\
&\le(\dd+1)\left((\dd+1)\exp\left(-\frac{N^\dd}{C_\dd}\right)\right)^k
\end{align*}
and it suffices to choose $N_\dd\ge N_\dd'$ in such a way that the right hand side is less than $\frac{1}{2^k}$ when $N\ge N_\dd$.

\end{proof}

\subsection{An interpolation inequality}\label{s:inter_ineq}
Let $Q\subset\Lambda$ be a discrete cube of sidelength $R$. In the following section we will need to control $\|\nabla_1u\|_{L^2(Q)}$ by terms involving only $u$ and $\nabla_1^2u$. Usually one would expect to do this by an interpolation inequality of the form 
\[\|\nabla_1u\|^2_{L^2(Q)}\le C_\dd\left(R^2\|\nabla_1^2u\|^2_{L^2(Q)}+\frac{1}{R^2}\|u\|^2_{L^2(Q)}\right)\]
where the factors $R^{\pm2}$ are due to scaling. For our purposes this is not good enough, however, as we do not control $u$ on all of $Q$. So it is crucial for us that a similar inequality still holds when we only control $u$ on a large enough subset of $Q$. Indeed we have the following result.

\begin{lemma}\label{l:interpolationineq}
Let $\dd\in\N$. Let $R$ be an odd integer and let $Q\subset\Lambda$ be a discrete cube of sidelength $R$ (i.e. $Q=Q_R(x_*)$ for some $x_*\in\Z^\dd$), and assume $R\ge 12(\sqrt{\dd})^\dd$. Let $B\subset Q$ such that $|B|\ge\frac12|Q|$. Let $u\colon\Lambda\to\R$. Then we have the estimate
\begin{equation}\label{e:interpolationineq}
\|\nabla_1u\|^2_{L^2(Q)}\le C_\dd\left(R^2\|\nabla_1^2u\|^2_{L^2(Q)}+\frac{1}{R^2}\|\I_{\cdot\in B}u\|^2_{L^2(Q)}\right)\,.
\end{equation}
\end{lemma}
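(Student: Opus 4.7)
The plan is to compare $u$ on $Q$ with its best affine approximation and treat the affine and residual parts separately. Let $\bar u$, $\overline{\nabla_1 u}$, and $\bar x$ denote the discrete averages of $u$, $\nabla_1 u$, and the position over $Q$, and set
\[A(x):=\bar u+\overline{\nabla_1 u}\cdot(x-\bar x).\]
Then $A$ is affine with $\nabla_1 A\equiv\overline{\nabla_1 u}$, so the residual $w:=u-A$ has vanishing mean and vanishing gradient mean on $Q$. Two applications of the standard discrete Poincar\'e inequality for mean-zero functions on cubes (which follows from the summation-by-parts calculus in \cite{Mueller2019}) give
\begin{equation*}
\|\nabla_1 w\|_{L^2(Q)}^2 \le C_\dd R^2\,\|\nabla_1^2 u\|_{L^2(Q)}^2,
\qquad
\|w\|_{L^2(Q)}^2 \le C_\dd R^4\,\|\nabla_1^2 u\|_{L^2(Q)}^2,
\end{equation*}
while for the affine part the elementary Poincar\'e inequality applied to $A$ around its mean yields $\|\nabla_1 A\|_{L^2(Q)}^2\le C_\dd R^{-2}\|A\|_{L^2(Q)}^2$.

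Splitting $\nabla_1 u=\nabla_1 w+\nabla_1 A$ and using the three displayed bounds, the proof of \eqref{e:interpolationineq} reduces to establishing $\|A\|_{L^2(Q)}^2\le C_\dd\|\I_{\cdot\in B} u\|_{L^2(Q)}^2+C_\dd R^4\|\nabla_1^2 u\|_{L^2(Q)}^2$. Since $\|A\|_{L^2(B)}^2\le 2\|\I_{\cdot\in B}u\|_{L^2(Q)}^2+2\|w\|_{L^2(Q)}^2$, this in turn follows from the auxiliary claim that
\begin{equation*}
\|A\|_{L^2(Q)}^2\le C_\dd\|A\|_{L^2(B)}^2\qquad\text{for every affine }A\text{ on }Q,\ \text{whenever }|B|\ge|Q|/2,
\end{equation*}
which is the only place where the hypothesis on $B$ enters, and constitutes the main obstacle.

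I would prove the auxiliary claim by rescaling to a cube of unit side --- the size threshold $R\ge 12(\sqrt\dd)^{\dd-1}\sqrt\dd$ is used to keep the discrete and continuous $L^2$ norms uniformly comparable on the finite-dimensional space of affine functions --- and reducing to the corresponding continuous inequality. Decomposing $A(x)=A(\bar x_B)+a\cdot(x-\bar x_B)$ around the centroid $\bar x_B$ of $B$ kills the cross term, so $\|A\|_{L^2(B)}^2=|B|A(\bar x_B)^2+a^{\mathsf T}M_B a$ with $M_B=\int_B(x-\bar x_B)(x-\bar x_B)^{\mathsf T}\,dx$, and similarly $\|A\|_{L^2(Q)}^2=|Q|A(\bar x)^2+c_\dd |a|^2$ on the unit cube. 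A Chebyshev-type argument provides a lower bound $M_B\succeq c'_\dd I$: if some unit vector $v$ satisfied $v^{\mathsf T}M_B v<\eta$ then at least half of $B$ would lie within $O(\sqrt\eta)$ of a hyperplane, contradicting $|B|\ge\tfrac12$ once $\eta$ is small enough. Combined with $|Q|/|B|\le 2$ and the trivial $A(\bar x)^2\le 2A(\bar x_B)^2+2|a|^2|\bar x-\bar x_B|^2$, this yields the auxiliary claim, and assembling all the pieces concludes \eqref{e:interpolationineq}.
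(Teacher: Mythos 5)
Your proposal takes essentially the same route as the paper: decompose $u$ into the affine function matching the mean of $u$ and of $\nabla_1 u$, apply the discrete Poincar\'e inequality twice to the residual, and control the affine part via a slab/Chebyshev argument that uses $|B|\ge\tfrac12|Q|$. The only cosmetic differences are that the paper bounds the slope $|b|$ directly by $\|\I_{\cdot\in B}v\|_{L^2(Q)}$ on the discrete cube, whereas you state the equivalent norm comparison $\|A\|^2_{L^2(Q)}\le C_\dd\|A\|^2_{L^2(B)}$ and propose to pass through a rescaled continuous unit cube; the latter works but requires a bit more care to justify the discrete-to-continuous comparison (which the paper avoids by staying discrete and using the size threshold only to ensure the slab half-width is at least $\sqrt\dd$).
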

\begin{proof}
By translation invariance we can assume that $Q$ is centred at 0.

We first prove \eqref{e:interpolationineq} with $u$ replaced by an affine function $v$, where $v(x)=b\cdot x+a$ for some $a\in\R$, $b\in\R^\dd$. That is, we want to show
\begin{equation}\label{e:interpolationineq_1}
|b|^2\le C\frac{1}{R^{\dd+2}}\|\I_{\cdot\in B}v\|^2_{L^2(Q)}\,.
\end{equation}
To see this, note first that we can assume $b\neq0$ (else there is nothing to show). Let $\theta=\frac{b}{|b|}\in \Sphere^{\dd-1}$. For $\lambda>0$ consider the set $E=\left\{x\in Q\colon\left|\theta\cdot x+\frac{a}{|b|}\right|\le\lambda\right\}$. The set $E$ is the intersection of $Q$ with a slab of width $2\lambda$, and so for each point $x\in E$ the cube $x+\left[-\frac12,\frac12\right]^\dd$ is contained in $\left[-\frac{R}2,\frac{R}2\right]^\dd$ intersected with a slab of width $2\lambda+\sqrt{\dd}$. Thus, we can estimate the number of points in $E$ as
\[|E|\le(2\lambda+\sqrt{\dd})(R\sqrt{\dd})^{\dd-1}\,.\]
If we assume $\lambda\ge\sqrt{\dd}$, we can bound this by
\[|E|\le3\lambda(R\sqrt{\dd})^{\dd-1}\,.\]
We want to pick $\lambda=\frac{R}{12(\sqrt{\dd})^{\dd-1}}$. This is possible, as $\frac{R}{12(\sqrt{\dd})^{\dd-1}}\ge\sqrt{\dd}$ by our assumption on $R$. Then for our choice of $\lambda$ we see that $|E|\le\frac14R^\dd=\frac14|Q|$.

On the other hand, we know $|B|\ge\frac12|Q|$, and therefore $|B\setminus E|\ge\frac14|Q|$. Now for each $x\in B\setminus E\subset Q\setminus E$ we have $\left|\theta\cdot x+\frac{a}{|b|}\right|>\lambda$ and hence 
\[|v(x)|=|b|\left|\theta\cdot x+\frac{a}{|b|}\right|\ge\lambda|b|\ge \frac{R|b|}{C_\dd}\]
Summing the square of this estimate over all $x\in B\setminus E$, we see that
\[\sum_{x\in B\setminus E}|v(x)|^2\ge\sum_{x\in B\setminus E}\frac{R^2|b|^2}{C_\dd}\ge \frac14|Q|\frac{R^2|b|^2}{C_\dd}\]
which immediately implies \eqref{e:interpolationineq_1}.

Let now $(u)_Q:=\frac{1}{|Q|}\sum_{x\in Q}u(x)$ and $(\nabla_1u)_Q:=\frac{1}{|Q|}\sum_{x\in Q}\nabla_1u(x)$, and define $v(x)=(u)_Q+(\nabla_1u)_Q\cdot x$. Then $v$ is an affine function to which we will be able to apply \eqref{e:interpolationineq_1}, while $u-v$ and $\nabla_1(u-v)$ have average zero over $Q$, which allows using the discrete Poincaré inequality with zero mean. We can thus write

\begin{align*}
\|\nabla_1u\|^2_{L^2(Q)}&\le2\|\nabla_1v\|^2_{L^2(Q)}+2\|\nabla_1(u-v)\|^2_{L^2(Q)}\\
&\le\frac{2}{R^2}\|\I_{\cdot\in B}v\|_{L^2(Q)}^2+C_\dd R^2\|\nabla_1^2(u-v)\|^2_{L^2(Q)}\\
&=\frac{2}{R^2}\|\I_{\cdot\in B}v\|_{L^2(Q)}^2+C_\dd R^2\|\nabla_1^2u\|^2_{L^2(Q)}\\
&\le\frac{C_\dd}{R^2}\|\I_{\cdot\in B}(u-v)\|_{L^2(Q)}^2+\frac{C_\dd}{R^2}\|\I_{\cdot\in B}u\|_{L^2(Q)}^2+C_\dd R^2\|\nabla_1^2u\|^2_{L^2(Q)}\\
&\le\frac{C_\dd}{R^2}\|u-v\|_{L^2(Q)}^2+\frac{C_\dd}{R^2}\|\I_{\cdot\in B}u\|_{L^2(Q)}^2+C_\dd R^2\|\nabla_1^2u\|^2_{L^2(Q)}\\
&\le\frac{C_\dd}{R^2}R^4\|\nabla_1^2u\|_{L^2(Q)}^2+\frac{C_\dd}{R^2}\|\I_{\cdot\in B}u\|_{L^2(Q)}^2+C_\dd R^2\|\nabla_1^2u\|^2_{L^2(Q)}\\
&\le\frac{C_\dd}{R^2}\|\I_{\cdot\in B}u\|_{L^2(Q)}^2+C_\dd R^2\|\nabla_1^2u\|^2_{L^2(Q)}\,.
\end{align*}
This is what we wanted to show.
\end{proof}

\section{\texorpdfstring{Probabilistic decay of the $L^2$-norm for biharmonic functions}{Probabilistic decay of the L2-norm for biharmonic functions}}\label{s:decay_L2}

In this section we will prove a decay estimate for the $L^2$-norm of the Hessian of a discrete biharmonic function. This estimate does not hold for all realizations of $\A$, but we prove that it holds for all but an exceptional set of realizations whose probability decays exponentially.

The precise result is the following. Recall that $\tilde\A=\A\cup(\Z^\dd\setminus\Lambda)$.
\begin{theorem}\label{t:decay_high_prob}
Let $\dd\ge4$. There is an odd integer $\hat N_\dd$ with the following property: let $\Lambda\Subset\Z^\dd$, let $U\in\Pbox_{\hat N_\dd\lmac}$ be a polymer consisting of $n=\frac{|U|}{\hat N_\dd^\dd\lmac^\dd}$ boxes, and $k\in\N$. Then if $\e$ is sufficiently small (depending on $\dd$ only) there is an event $\Omega_{U,k}$ such that $\zeta^\e_\Lambda\left(\Omega_{U,k}\right)\ge1-\frac{n}{2^k}$, and such that whenever $A\in\Omega_{U,k}$ the following estimates hold:
\begin{itemize}
	\item[a)]  If $u\colon\Z^\dd\to\R$ is a function such that $u=0$ on $\tilde A\setminus U$ and $u\Delta_1^2u=0$ on $\Z^\dd\setminus U$, we have the estimate
\begin{equation}\label{e:decay_high_prob_ext}
\left\|\nabla_1^2u\right\|^2_{L^2(\Z^\dd\setminus(U+Q_{2k\hat N_\dd\lmac}(0)))}\le\frac{1}{2^k}\left\|\nabla_1^2u\right\|^2_{L^2((U+Q_{2k\hat N_\dd\lmac}(0))\setminus U)}\,.
\end{equation}
	\item[b)]  If $u\colon\Z^\dd\to\R$ is a function such that $u=0$ on $(U+Q_{2k\hat N_\dd\lmac}(0))\cap\tilde A$ and $u\Delta_1^2u=0$ on $U+Q_{2k\hat N_\dd\lmac}(0)$, we have the estimate
\begin{equation}\label{e:decay_high_prob_int}
\left\|\nabla_1^2u\right\|^2_{L^2(U)}\le\frac{1}{2^k}\left\|\nabla_1^2u\right\|^2_{L^2((U+Q_{2k\hat N_\dd\lmac}(0))\setminus U)}\,.
\end{equation}
\end{itemize}
\end{theorem}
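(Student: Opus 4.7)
The plan is to follow the outline in Section \ref{s:main_ideas}: establish a one-step Widman hole filler estimate and iterate it many times, absorbing the exceptional probability via a Chernoff-type bound. The one-step estimate will say that for a polymer pair $U \subset U'$ such that $U' \setminus U$ contains an annular collar of width $\sim \hat N_\dd \lmac$, and under a good event on $\A$, there exists a cutoff $\eta$ with $\eta = 0$ on $U$, $\eta = 1$ outside $U'$, and crucially $|\nabla_1^2 \eta| \le C \I_{\cdot \in X_R}$, where $X_R$ is the good set from Theorem \ref{t:localpoincare}. The discrete analogue of the identity \eqref{e:intbyparts2}, combined with Cauchy--Schwarz, the multipolar Hardy--Rellich inequality (Theorem \ref{t:localpoincare}) to absorb $\sum |u|^2 |\nabla_1^2 \eta|^2$, and the interpolation inequality (Lemma \ref{l:interpolationineq}, applied on $\lmic$-scale boxes) to absorb the $\sum |\nabla_1 u|^2 |\Delta_1 \eta|$ term, will yield the single-shell decay
\[
\|\nabla_1^2 u\|_{L^2(\Z^\dd \setminus U')}^2 \le c\,\|\nabla_1^2 u\|_{L^2(U' \setminus U)}^2
\]
for some fixed $c < 1$ (and $c$ can be driven below $1/2$ by enlarging $\hat N_\dd$).

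Next I would construct $\eta$ by the multiscale procedure sketched in the introduction: start from a quadratic bump $\eta_*$ supported on the collar, then on each scale $\ell_j = C M^{j^3} \lmic$ flatten it to an affine function on the bad components of that scale. This is possible provided the bad components at scale $\ell_j$ are $\ell_j$-separated, which in turn is encoded in a recursive decomposition of the complement of $X_R$. The construction terminates when no bad component survives past some finite scale $j_{\max}$, and Theorem \ref{t:estimatespinnedset} c) (respectively parts a)/b) for $\dd \ge 5$) bounds the probability of a surviving bad component at scale $\ell_j$ by an expression decaying super-exponentially in $j$. A careful accounting (to be carried out in Sections \ref{s:const_cutoff}--\ref{s:est_prob_cutoff}) will show that the construction succeeds with probability at least $1 - \delta_0$, where $\delta_0$ can be made arbitrarily small by tuning $M$.

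To iterate and prove part a), set $U_j = U + Q_{2 j \hat N_\dd \lmac}(0)$ for $j = 0, \ldots, 2k$ and let $F_j$ be the event that the cutoff construction succeeds on the pair $(U_{j-1}, U_j)$. Each $F_j$ has $\zeta^\e_\Lambda(F_j) \ge 1 - \delta_0$. On the event that at least $k$ of the $F_j$ hold, iterating the one-step estimate on the successful shells gives $\|\nabla_1^2 u\|_{L^2(\Z^\dd \setminus U_{2k})}^2 \le c^k\,\|\nabla_1^2 u\|_{L^2(U_{2k} \setminus U)}^2 \le 2^{-k}\,\|\nabla_1^2 u\|_{L^2(U_{2k} \setminus U)}^2$. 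Define $\Omega_{U,k}$ as this event; by Lemma \ref{l:tailbound} applied to the complement, its failure probability is at most $(C \delta_0)^k$, and a union bound over the $n$ boxes of $U$ (needed to localize the cutoff construction near each face of $\partial U$) yields $n/2^k$ after choosing $\delta_0$ small enough. Part b) follows by the same argument with shells contracting inward toward $U$ and the cutoff swapped accordingly (so $\eta$ is now $0$ outside the outermost shell and $1$ on $U$).

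The main obstacle will be the construction of $\eta$ in the second paragraph: we must keep $|\nabla_1^2 \eta|$ pointwise controlled by $\I_{\cdot \in X_R}$ while simultaneously making $\eta$ grow fast enough to produce a definite gain in the single-shell Widman estimate. The unusual lengthscale $\ell_j = C M^{j^3} \lmic$ reflects the need for each successive scale to accommodate the bad components of the previous scale within a much larger safety buffer, and verifying the probabilistic termination of this multiscale decomposition is where the pinned-set estimates of Theorem \ref{t:estimatespinnedset} enter in an essential way.
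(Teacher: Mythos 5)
The high-level plan (one-step Widman estimate, multiscale cutoff, iterate over many shells) is the paper's, but there is a genuine gap in your iteration step that is not cosmetic.

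You propose to use \emph{deterministic} annuli $U_j = U + Q_{2j\hat N_\dd\lmac}(0)$ and to let $F_j$ be the event that the collar $(U_{j-1},U_j)$ admits a good cutoff, claiming $\zeta^\e_\Lambda(F_j)\ge 1-\delta_0$ uniformly in $j$. This fails. The event $F_j$ requires every $KL\lmac$-box in the $j$-th collar to be non-bad. But the collar of a deterministically inflated polymer grows: starting from a single box, the $j$-th collar contains on the order of $j^{\dd-1}$ macroscopic boxes, so $\zeta^\e_\Lambda(F_j^c)$ grows roughly like $j^{\dd-1}\cdot\delta_0$ with $j$, and for $j$ of order $k$ with $k$ arbitrary this is not small. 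Consequently the premise needed for your Chernoff count of good shells is false, and no amount of shrinking the fixed constant $\delta_0$ repairs this, since $k$ is arbitrary. Separately, even where your $\zeta^\e_\Lambda(F_j)\ge 1-\delta_0$ were true, applying Lemma~\ref{l:tailbound} to the number of failed shells needs a product-type bound on $\zeta^\e_\Lambda\bigl(\bigcap_{j\in S}F_j^c\bigr)$, i.e.\ a negative-correlation estimate across shells; you assert this implicitly but never establish it. In the paper this is Lemma~\ref{l:badmacrocube}, whose proof via the multiscale decomposition and Lemma~\ref{l:badmicrocube} is a substantial piece of the argument.

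The paper resolves both problems at once by making the annuli \emph{random}: in Lemma~\ref{l:existence_annuli} one grows $U_j$ greedily (absorbing all bad boxes connected to $U$ by a path with at most $j$ good boxes), so the successive good collars are thin and adapt to the configuration; the failure event $\Omega_{U,k}^c$ is that this growth escapes $U+Q_{2k\hat N_\dd\lmac}(0)$, which forces a long self-avoiding path of macroscopic boxes with many bad ones. A Peierls-style path count, combined with Lemma~\ref{l:badmacrocube}'s exponential-in-$|T^*|$ bound, then gives $\zeta^\e_\Lambda(\Omega_{U,k}^c)\le n/2^k$, where the factor $n$ comes from the number of starting boxes for the path, not from any localization on faces of $\partial U$ as you suggest. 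Your one-step Widman estimate, the cutoff construction, and the use of Theorem~\ref{t:localpoincare} and Lemma~\ref{l:interpolationineq} match the paper; it is specifically the passage from a single-shell estimate to an exponential bound in $k$ that needs the adaptive annuli and the correlation bound, and your proposal does not supply either.
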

We have already outlined the strategy of the proof in Section \ref{s:main_ideas}. Namely, to prove \eqref{e:decay_high_prob_ext} and \eqref{e:decay_high_prob_int} we want to iterate the Widman hole filler argument $k$ times, that is we need to find $k$ pairs $U_j,U_j'$ on which we can apply it.

In order to make the Widman hole filler argument work, we need to be able to apply Theorem \ref{t:localpoincare} and Lemma \ref{l:interpolationineq}. We can ensure this by finding a cut-off function $\eta_j$ that grows from 0 to 1 in such a way that $\nabla_1^2\eta_j=0$ on those points on which Theorem \ref{t:localpoincare} or Lemma \ref{l:interpolationineq} cannot be applied.

For that purpose pick an odd integer $N$ to be fixed later and decompose $\Z^\dd$ into the boxes in $\Qbox_{N\lmac}$. We will declare some of these boxes to be bad in such a way that on the good (i.e. non-bad) boxes we can construct $\eta$ growing from 0 to 1 on that box and satisfying the conditions on $\nabla_1^2\eta$. If we can show that bad boxes are rare, then with high probability we can find at least $k$ annuli consisting only of good boxes inbetween $U$ and $\Z^\dd\setminus (U+Q_{2kN\lmac}(0)$, and then we can iterate the Widman hole filler argument on these annuli.

In Section \ref{s:const_cutoff} we describe in detail how we choose the bad boxes, and we prove that on the good boxes there exist cut-off functions as required. In Section \ref{s:widman_good_box} we carry out the Widman hole filler argument provided all relevant boxes are good. Finally, in Section \ref{s:est_prob_cutoff} we show that the bad boxes are sparse enough that with sufficiently high probability we can find enough annuli to use the hole filler argument on. Using this result we will complete the proof of Theorem \ref{t:decay_high_prob}.

\subsection{Bad boxes and cut-off functions}\label{s:const_cutoff}
The definition of the bad boxes depends on three odd integers $K$, $L$, $M$, where $K$ is always a multiple of 3, and $M\ge12$. Eventually (in Section \ref{s:est_prob_cutoff}), we will choose them large enough in the order $M$, $K$, $L$ to close the argument. For now we will track all dependencies on $K$, $L$, $M$.
The parameters $K$ and $L$ will play similar roles as in the proof of Theorem \ref{t:estimatespinnedset} c), albeit not quite the same. We hope this will not confuse the reader.

There will be two reasons that lead to a cube $Q\in\Qbox_{KL\lmac}$ being bad. The first one is related to Lemma \ref{l:interpolationineq}. We want to be able to apply that Lemma on each subcube $Q'\in\Qbox_{K\lmac}$ such that $Q'\subset Q$, with $\I_{\cdot\in B}$ being the indicator function from Theorem \ref{t:localpoincare}. So we will define $Q$ to be bad of type II if there is a subcube $Q'\subset Q$ on which the indicator function from Theorem \ref{t:localpoincare} is equal to 0 too often.

The second reason is more complicated. We want to be able to modify an initial guess $\eta_*$ for the cut-off function in such a way that $\nabla_1^2\eta=0$ on those sets on which the indicator function from Theorem \ref{t:localpoincare} is equal to 0 (and we think of those points as bad as well). This is easy if the bad points are very isolated and sparse, as we then can make local adjustments to $\eta_*$ that do not interfere with each other. So we start at scale $\ell_0=K\lmic$ and consider the bad points (or actually the bad boxes in $\Qbox_{K\lmic}$ that contain at least a bad point) and split them into the isolated and the clustered ones. The former ones we ignore for the moment, and the latter ones can be covered by cubes in $\Qbox^\#_{\ell_1}$ such that each cube covers at least two of the small cubes. These are the bad cubes on scale $\ell_1$. Now we can once again split those cubes into the isolated ones (that we ignore for the moment) and those that are clustered and can be covered by cubes on scale $\ell_2$, and we continue like this. This process terminates once at some scale all bad cubes are isolated. Then we can adjust $\eta_*$ on those isolated cubes, and then proceed backwards and apply our adjustment also on the isolated cubes on the smaller scales. We thus call $Q$ bad of type I if this process terminates too late.

We have not mentioned yet how to choose the scales $\ell_j$. There is a trade-off here: on the one hand, the lengthscale should grow fast so that we have enough space around each isolated bad cube on scale $\ell_j$ to adjust $\eta_*$ there. On the other hand we want many of the cubes to be isolated, so that our process terminates soon, and this we can achieve by letting $\ell_j$ not grow too fast. It turns out that a good compromise is
\[\ell_j:=M^{j^3}K\lmic\]
for each $j\ge0$. We give a more detailed explanation of this choice later, in Remark \ref{r:lengthscales}.

\bigskip

Let us now give the precise definitions. Recall that $\Qbox_l=\left\{Q_{l/2}(x)\colon x\in \left(l\Z\right)^\dd\right\}$ and $\Qbox^\#_l=\left\{Q_{l/2}(x)\colon x\in \left(\frac{l}{3}\Z\right)^\dd\right\}$. We consider the microscopic cubes in $\Qbox_{K\lmic}$, the macroscopic cubes in $\Qbox_{KL\lmac}$, and the further sets of boxes $\Qbox_{\ell_0}$ and $\Qbox^\#_{\ell_j}$ for $j\ge1$, associated to $\ell_j=M^{j^3}K\lmic$ for $j\ge0$. The reason that we use $\Qbox^\#_{\ell_j}$ and not $\Qbox_{\ell_j}$ for $j\ge1$ is that we want to ensure that for any two cubes on lengthscale $\ell_{j-1}$ that are sufficiently close there is a cube in $\Qbox^\#_{\ell_j}$ that contains both of them. We assumed $M\ge12$, and so $\ell_{j-1}\le\frac{1}{12}\ell_j$ for each $j\ge1$. We fix for later use an enumeration of the boxes in each $\Qbox^\#_{\ell_j}$.

For $\e$ small enough (depending on $\dd$ only) the sets $x+\Xi_i\cap Q_{K\lmic}(x)$ are non-empty for any $x$ and $i$, and so it is possible that $d_*(x,\tilde A)\le K\lmic$. We consider only such $\e$, and define 
\[S^{(0)}_{K,M,\mathrm{bad}}(A)=\left\{Q\in \Qbox_{K\lmic}\colon \exists x\in Q \text{ with } d_*(x,\tilde A)>K\lmic\right\}\,.\]
Note that $S^{(0)}_{K,M,\mathrm{bad}}(A)\subset \Qbox_{K\lmic}=\Qbox_{\ell_0}\subset\Qbox^\#_{\ell_0}$. The set $S^{(0)}_{K,M,\mathrm{bad}}(A)$ is finite (as cubes far outside of $\Lambda$ will not be bad).

For $j\ge1$ we define $S^{(j)}_{K,M,\mathrm{bad}}(A)\subset\Qbox^\#_{\ell_j}$ inductively as follows:
Given $S^{(j-1)}_{K,M,\mathrm{bad}}(A)\subset\Qbox^\#_{\ell_{j-1}}$ such that the set $S^{(j-1)}_{K,M,\mathrm{bad}}(A)$ is finite, we want to split it into two sets of boxes: $S^{(j-1)}_{K,M,\mathrm{bad},\mathrm{clust}}(A)$ will contain those boxes that are clustered in the sense that there is another bad box at distance $\le\frac{\ell_j}{2}$ that is disjoint from the original box, and $S^{(j-1)}_{K,M,\mathrm{bad},\mathrm{isol}}(A)$ will contain the other boxes. These other boxes are isolated in the sense that all bad boxes that are disjoint from them are far away. Let us make this precise: we define
\begin{align*}
S^{(j-1)}_{K,M,\mathrm{bad},\mathrm{clust}}(A)&=\left\{Q\in S^{(j-1)}_{K,M,\mathrm{bad}}(A)\colon \exists Q'\in S^{(j-1)}_{K,M,\mathrm{bad}}(A)\text{ with }Q'\cap Q=\varnothing,d_\infty(Q,Q')\le\frac{\ell_j}{2}\right\}\,,\\
S^{(j-1)}_{K,M,\mathrm{bad},\mathrm{isol}}(A)&=S^{(j-1)}_{K,M,\mathrm{bad}}(A)\setminus S^{(j-1)}_{K,M,\mathrm{bad},\mathrm{clust}}(A)\,.
\end{align*}
If $S^{(j-1)}_{K,M,\mathrm{bad},\mathrm{clust}}(A)=\varnothing$, we define $S^{(j)}_{K,M,\mathrm{bad}}(A)=\varnothing$. Otherwise, note that if $Q\in S^{(j-1)}_{K,M,\mathrm{bad},\mathrm{clust}}(A)$ and $Q'\in S^{(j-1)}_{K,M,\mathrm{bad}}(A)$ is a witness for this in the sense that $Q'\cap Q=\varnothing$ and $d_\infty(Q,Q')\le\frac{\ell_j}{2}$ then also $Q'\in S^{(j-1)}_{K,M,\mathrm{bad},\mathrm{clust}}(A)$, as $Q$ then is a witness for $Q'$. In particular,
\begin{align*}
&S^{(j-1)}_{K,M,\mathrm{bad},\mathrm{clust}}(A)\\
&\quad=\left\{Q\in S^{(j-1)}_{K,M,\mathrm{bad}}(A)\colon \exists Q'\in S^{(j-1)}_{K,M,\mathrm{bad},\mathrm{clust}}(A)\text{ with }Q'\cap Q=\varnothing,d_\infty(Q,Q')\le\frac{\ell_j}{2}\right\}\,.\end{align*}
Furthermore, if $Q,Q'\in S^{(j-1)}_{K,M,\mathrm{bad},\mathrm{clust}}(A)$ with $Q'\cap Q=\varnothing$ and $d_\infty(Q,Q')\le\frac{\ell_j}{2}$, then $Q$ and $Q'$ are contained in a common box of sidelength at most $\frac{\ell_j}{2}+2\ell_{j-1}\le\frac{2\ell_j}{3}$ (as $\ell_{j-1}\le\frac{1}{12}\ell_j$), and so there is a cube in $\Qbox^\#_{\ell_j}$ containing both of them. This means that we can cover $S^{(j-1)}_{K,M,\mathrm{bad},\mathrm{clust}}(A)$ by finitely many cubes from $\Qbox^\#_{\ell_j}$ in such a way that each cube from the cover contains at least two small cubes from $S^{(j-1)}_{K,M,\mathrm{bad},\mathrm{clust}}(A)$ that are disjoint. We can consider the set of subsets of $\Qbox^\#_{\ell_j}$ with that property,
\begin{align*}
\mathcal{S}^{(j)}_{K,M,\mathrm{bad}}(A)=\bigg\{S\subset \Qbox^\#_{\ell_j}&\colon \bigcup_{Q'\in S^{(j-1)}_{K,M,\mathrm{bad},\mathrm{clust}}(A)}Q'\subset \bigcup_{Q\in S}Q,\\
&\forall Q\in S\ \exists Q',Q''\in S^{(j-1)}_{K,M,\mathrm{bad},\mathrm{clust}}(A)\text{ with }Q'\cap Q''=\varnothing,Q'\cup Q''\subset Q\bigg\}\,.
\end{align*}
The previous discussion implies that if $S^{(j-1)}_{K,M,\mathrm{bad},\mathrm{clust}}(A)$ is non-empty, then also $\mathcal{S}^{(j)}_{K,M,\mathrm{bad}}(A)$ is non-empty. Now we consider the elements of $\mathcal{S}^{(j)}_{K,M,\mathrm{bad}}(A)$ of minimum cardinality, and among those we define $S^{(j)}_{K,M,\mathrm{bad}}(A)$ to be that element that is lexicographically first (with respect to the enumeration of $\Qbox^\#_{\ell_j}$ that we had fixed).

To summarize, $S^{(j)}_{K,M,\mathrm{bad}}(A)$ is a finite subset of $\Qbox^\#_{\ell_j}$ that covers $S^{(j-1)}_{K,M,\mathrm{bad},\mathrm{clust}}(A)$ in such a way that each of its boxes contains two disjoint elements from $S^{(j-1)}_{K,M,\mathrm{bad},\mathrm{clust}}(A)$. 

Finally we can define macroscopic bad boxes. Let $j_*(\e)$ be the largest integer $j$ such that $\ell_j\le\frac{KL\lmac}{8}$ (we assume that $\e$ is small enough so that $j_*(\e)\ge1$). We consider the macroscopic boxes in $\Qbox_{KL\lmac}$, and call a macroscopic box bad of type I if it contains at least one box in $S^{(j_*(\e))}_{K,M,\mathrm{bad}}(A)$, bad of type II if one of its $K\lmac$-subboxes contains many boxes in $S^{(0)}_{K,M,\mathrm{bad}}(A)$, and bad if it is bad of type I or type II. More precisely
\begin{align*}
S^{*,I}_{K,L,M,\mathrm{bad}}(A)&=\left\{Q\in \Qbox_{KL\lmac}\colon \exists q\in S^{(j_*(\e))}_{K,M,\mathrm{bad}}(A)\text{ with }q\cap Q\neq\varnothing \right\}\,,\\
S^{*,II}_{K,L,M,\mathrm{bad}}(A)&=\Bigg\{Q\in \Qbox_{KL\lmac}\colon\exists Q'\in\Qbox_{K\lmac}\text{ with }Q'\subset Q,\\
&\qquad\qquad\left|\left\{q\in S^{(0)}_{K,M,\mathrm{bad}}(A)\colon q\subset Q'\right\}\right|\ge\frac14\left(\frac{\lmac}{\lmic}\right)^\dd\Bigg\}\,,\\
S^*_{K,L,M,\mathrm{bad}}(A)&=S^{*,I}_{K,L,M,\mathrm{bad}}(A)\cup S^{*,II}_{K,L,M,\mathrm{bad}}(A)\,.
\end{align*}
The point of these definitions is that we can apply our construction of a cut-off function on all cubes in $\Qbox_{KL\lmac}\setminus S^{*,I}_{K,L,M,\mathrm{bad}}(A)$, while we can apply Lemma \ref{l:interpolationineq} on every $K\lmac$-subcube of the cubes in $\Qbox_{KL\lmac}\setminus S^{*,II}_{K,L,M,\mathrm{bad}}(A)$. Of course, this is only useful if we show that bad cubes are rare. This will be established in Section \ref{s:est_prob_cutoff}. For now we show that our definition of good boxes fulfills its purpose in the sense that we can construct a cut-off function growing from 0 to 1 on them in such a way that their second derivatives are 0 on the set of microscopic bad boxes.
\begin{lemma}\label{l:existence_cutoff}
Let $\dd\ge1$. Then there is a constant $M_\dd\ge12$ with the following property: Let $K$, $L$, $M$ be odd integers such that $K$ is a multiple of 3, and $M\ge M_\dd$. Then for all $\e$ sufficiently small (depending on $\dd$, $K$) the following holds: Let $U\in\Pbox_{KL\lmac}$ be a polymer. Suppose that none of the $KL\lmac$-boxes touching its boundary (in the $l^\infty$-sense) are bad of type I, i.e. \[\{Q\in\Qbox_{KL\lmac}\colon Q\subset (U+Q_{KL\lmac}(0))\setminus U\}\cap S^{*,I}_{K,L,M,\mathrm{bad}}(A)=\varnothing\]
Then there is a function $\eta\colon\Z^\dd\to\R$ such that
\begin{itemize}
	\item[i)] $\eta(x)=0$ for $x\in U+Q_{2K\lmic}(0)$,
	\item[ii)] $\eta(x)=1$ for $x\in\Z^\dd\setminus(U+Q_{KL\lmac-2K\lmic}(0))$,
	\item[iii)] $|\nabla_1^2\eta(x)|\le\frac{C_\dd}{(KL)^2\lmac^2}\I_{Q_1(x)\cap \bigcup_{Q\in S^{(0)}_{K,M,\mathrm{bad}}(A)}Q=\varnothing}$.
\end{itemize}
Here the constant $C_\dd$ depends only on $\dd$.
\end{lemma}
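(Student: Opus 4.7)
The plan is to construct $\eta$ by starting from a quadratic base profile and then iteratively flattening it over the hierarchy of bad boxes, from the top scale $\ell_{j_*(\e)}$ down to $\ell_0 = K\lmic$. For the base function I take $\eta_*(x) := \varphi(d_\infty(x, U))$, where $\varphi \colon \R \to [0,1]$ is a smooth monotone profile equal to $0$ on $(-\infty, 2K\lmic]$, equal to $1$ on $[KL\lmac - 2K\lmic, \infty)$, with $|\varphi''| \le C_\dd /(KL)^2\lmac^2$. This immediately yields i), ii), and the quantitative Hessian bound in iii), so the actual task is to modify $\eta_*$ locally so that $\nabla_1^2 \eta$ vanishes within $l^\infty$-distance $1$ of every microscopic bad box without breaking the other properties.

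The flattening step at level $j$ produces $\eta^{(j)}$ from $\eta^{(j+1)}$ as follows. For each isolated cube $Q \in S^{(j)}_{K,M,\mathrm{bad},\mathrm{isol}}(A)$, I pick the best-fit affine function $\eta_Q^{\mathrm{aff}}$ for $\eta^{(j+1)}$ on a neighbourhood of $Q$ and set
\[
\eta^{(j)} = \chi_Q \, \eta_Q^{\mathrm{aff}} + (1-\chi_Q)\, \eta^{(j+1)},
\]
where $\chi_Q$ is a discrete cut-off equal to $1$ on $Q + Q_2(0)$, vanishing outside $Q + Q_{R\ell_j}(0)$, with $|\nabla_1 \chi_Q| \le C/\ell_j$ and $|\nabla_1^2 \chi_Q| \le C/\ell_j^2$ for a fixed dimensional constant $R$. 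A discrete Taylor estimate $|\eta^{(j+1)} - \eta_Q^{\mathrm{aff}}| \le C_\dd \ell_j^2 \|\nabla_1^2 \eta^{(j+1)}\|_\infty$ together with the analogous gradient bound and the discrete product rule then gives $\nabla_1^2 \eta^{(j)} = 0$ on $Q + Q_1(0)$ and $|\nabla_1^2 \eta^{(j)}| \le C_\dd \|\nabla_1^2 \eta^{(j+1)}\|_\infty$ on the transition annulus. If $Q$ lies entirely in $U + Q_{2K\lmic}(0)$ or entirely outside $U + Q_{KL\lmac - 2K\lmic}(0)$, then $\eta^{(j+1)}$ is constant on a neighbourhood of $Q$, the affine fit is that same constant, and the modification does nothing — this preserves i) and ii).

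The crux is a geometric non-overlap argument driven by the super-exponential growth $\ell_{j+1}/\ell_j = M^{3j^2+3j+1}$. Within a single level, two isolated cubes are at $l^\infty$-distance exceeding $\ell_{j+1}/2 \gg R \ell_j$, so their transition annuli are pairwise disjoint. Across levels, the definition of isolation at level $j-1$ (distance $> \ell_j/2$ from every other level-$(j-1)$ bad cube) forces an isolated level-$(j-1)$ cube $Q'$ either to stand outside every level-$j$ cube $\tilde Q$ by a margin exceeding $R(\ell_{j-1} + \ell_j)$, or to sit deep inside a region where $\eta^{(j)}$ has already been flattened to an affine function, in which case the level-$(j-1)$ modification is trivial (the best-fit affine is that same affine function). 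Therefore, after choosing $M \ge M_\dd$ large enough in terms of $R$ and $\dd$, each lattice point is affected by at most one non-trivial modification in the whole descent, and the Hessian bound does not accumulate over the $j_*(\e)$ steps.

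For property iii), I trace the ancestry of an arbitrary $Q \in S^{(0)}$ through the hierarchy. The chain $Q \subset Q^{(1)} \subset \ldots$ continues as long as the current ancestor is clustered, and must terminate at some isolated $Q^{(j)} \in S^{(j)}_{K,M,\mathrm{bad},\mathrm{isol}}(A)$ with $j \le j_*(\e)$: indeed, the hypothesis on $U$ rules out any member of $S^{(j_*(\e))}$ intersecting a $KL\lmac$-box of the boundary annulus, and a bad cube whose full Hessian stencil lies in the constant regions needs no flattening at all. The level-$j$ flattening has then made $\eta$ affine on $Q^{(j)} + Q_2(0) \supset Q + Q_2(0)$, so $\nabla_1^2 \eta(x) = 0$ for every $x$ with $Q_1(x) \cap Q \neq \varnothing$, as demanded. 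The main obstacle I anticipate is precisely the non-overlap analysis across levels — a level-$(j-1)$ isolated cube can geometrically sit inside a level-$j$ cube, and the cubic exponent in $\ell_j = M^{j^3} K\lmic$ is calibrated so that, despite this, the nested transition annuli remain compatible and the Hessian estimate survives through all $j_*(\e)$ iterations, while $j_*(\e)$ itself stays only poly-logarithmic in $\e^{-1}$.
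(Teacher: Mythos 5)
Your overall skeleton (base profile on scale $KL\lmac$, then a reverse induction over the hierarchy $\ell_{j_*(\e)} \searrow \ell_0$, flattening $\eta$ on an inner cube around each isolated bad box by replacing it with an affine fit and interpolating on a surrounding annulus) matches the paper's architecture. But the central technical mechanism is different in a way that breaks the argument.

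The gap is in the Hessian accumulation. You interpolate with a cut-off $\chi_Q$ supported on $Q+Q_{R\ell_j}(0)$ for a \emph{fixed} dimensional constant $R$, with $|\nabla_1^2\chi_Q|\lesssim 1/\ell_j^2$; this yields $\|\nabla_1^2\eta^{(j)}\|_\infty\le C_\dd\|\nabla_1^2\eta^{(j+1)}\|_\infty$ with a fixed constant $C_\dd>1$, and you then assert that ``each lattice point is affected by at most one non-trivial modification in the whole descent'', so the constant does not iterate. That assertion is false, and the dichotomy you offer to justify it does not hold. Isolation of a level-$(j-1)$ cube $Q'$ means $d_\infty(Q',Q'')>\ell_j/2$ for every \emph{disjoint} level-$(j-1)$ bad cube $Q''$; a level-$j$ cube $\tilde Q$ has sidelength $\ell_j$, so $Q'$ can sit inside or right next to $\tilde Q$ while remaining isolated (the clustered cubes that caused $\tilde Q$ to form can be at the opposite end of $\tilde Q$, at $l^\infty$-distance between $\ell_j/2$ and $\ell_j$ from $Q'$). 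Nor does $Q'$ then ``sit deep inside an affine region'': the flattened region you produce is only $Q+Q_2(0)$, and the level-$j$ (or higher) ancestors of $\tilde Q$ produce transition annuli in which $\eta$ is not yet affine; $Q'+Q_{R\ell_{j-1}}(0)$ can land there, making the level-$(j-1)$ modification non-trivial and its support overlap a higher-level transition region. Since $j_*(\e)\to\infty$ as $\e\to0$, the worst case is a stack of order $j_*(\e)$ non-trivial modifications at a point, and your bound degrades to $C_\dd^{j_*(\e)}\|\nabla_1^2\eta_*\|_\infty$, which is not $O\big((KL\lmac)^{-2}\big)$ and does not give property iii).

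The paper's resolution is exactly the ingredient your proposal lacks. It accepts that modifications at distinct levels \emph{do} overlap, and instead controls the per-level multiplicative cost by using a logarithmic interpolation (Lemma~\ref{l:construct_affine_correction}): the cut-off is $\tilde\xi(y)\sim(\log R-\log|y|)/(\log R-\log r)$, so the cost of flattening an inner cube of radius $r$ inside an outer cube of radius $R$ is $1+\gamma_\dd/(\log R-\log r)$, not a fixed $C_\dd$. The annulus for a level-$(j-1)$ isolated cube is taken to be of radius $\sim\ell_j/4$ (so $R/r\sim M^{3j^2-3j+1}$), which is permissible because isolation at level $j-1$ gives separation $>\ell_j/2$, and the \emph{within-level} supports are disjoint. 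Then the accumulated factor is $\prod_{j\ge1}\bigl(1+\tfrac{1}{3j^2-3j+1}\bigr)<\infty$, which is where the cubic exponent in $\ell_j=M^{j^3}K\lmic$ is actually used. Your remark that the cubic exponent is ``calibrated so that the nested transition annuli remain compatible'' gestures at the difficulty but offers no mechanism; without the $1+\gamma/\log(\cdot)$ cost, the calibration plays no role and the estimate does not close.

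A secondary issue in the same vein: your flattened inner region $Q+Q_2(0)$ is too tight. The paper flattens on the noticeably larger $Q+Q_{\ell_{j-1}+1}(0)$, which is what makes the induction hypothesis ``$\nabla_1^2\eta^{(j)}=0$ on all $q+Q_1(0)$ with $j_{\mathrm{isol},q}\ge j$'' propagate: a lower-scale bad cube $q$ contained in the modified region is guaranteed to sit well inside the affine zone, not at its edge.

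To repair the proof you would need to replace the fixed-ratio cut-off with a logarithmic one and let the transition annulus scale with $\ell_j/\ell_{j-1}$, then track the multiplicative constant as a product over levels rather than relying on disjointness.
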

Morally, property iii) is $|\nabla_1^2\eta(x)|\le\frac{C_\dd}{(KL)^2\lmac^2}\I_{x\notin\bigcup_{Q\in S^{(0)}_{K,M,\mathrm{bad}}(A)}Q}$. However, the discrete product rule lets translation operators arise, and this is why we require the slightly stronger condition iii) above.

To prove Lemma \ref{l:existence_cutoff}, we will begin with a function $\eta_*$ which satisfies i) and ii), but only $|\nabla_1^2\eta(x)|\le\frac{C_\dd}{(KL)^2\lmac^2}$ instead of iii). Then we will modify $\eta_*$ iteratively to make it affine on larger and larger subsets of $S^{(0)}_{K,M,\mathrm{bad}}(A)$, so that eventually iii) is satisfied as well. The following lemma gives details on how to carry out a single of these modification steps.
\begin{lemma}\label{l:construct_affine_correction}
Let $\dd\ge1$. There is a constant $\gamma_\dd>0$ with the following property: Let $x\in\Z^\dd$, let $r,R$ be positive integers such that $R\ge 16r$. Let $v\colon \Z^\dd\to\R$ be a function. Then there is a function $w\colon \Z^\dd\to\R$ with the following properties:
\begin{itemize}
	\item[i)] $w=0$ on $\Z^\dd\setminus Q_{R-1}(x)$,
	\item[ii)] $\nabla_1^2(v+w)=0$ on $Q_r(x)$,
	\item[iii)] $\|\nabla_1^2(v+w)\|_{L^\infty(Q_R(x))}\le \left(1+\frac{\gamma_\dd}{\log R-\log r}\right)\|\nabla_1^2v\|_{L^\infty(Q_R(x))}$.
\end{itemize}
\end{lemma}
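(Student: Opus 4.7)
The natural construction is to subtract off an affine approximation of $v$ at $x$, using a cut-off with a logarithmic profile. Define the discrete first-order Taylor polynomial
\[a(y) := v(x) + \sum_{i=1}^{\dd} D^1_i v(x) \cdot (y - x)_i,\]
which is affine and hence satisfies $\nabla_1^2 a \equiv 0$. Construct a discrete cut-off $\phi \colon \Z^\dd \to [0,1]$ with a logarithmic profile, obtained for instance by restricting to $\Z^\dd$ a suitably smoothed version of
\[\tilde\phi(y) \;=\; \min\!\left(1,\,\max\!\left(0,\,\frac{\log(R-3) - \log|y-x|_\infty}{\log(R-3) - \log(r+2)}\right)\right),\]
chosen so that $\phi \equiv 1$ on $Q_{r+2}(x)$, $\phi \equiv 0$ off $Q_{R-3}(x)$, and (uniformly in $r,R$ using $R \ge 16r$)
\[|\nabla_1 \phi(y)| \le \frac{C_\dd}{(\log R - \log r)\,\max(|y-x|_\infty,\, r)}, \qquad |\nabla_1^2 \phi(y)| \le \frac{C_\dd}{(\log R - \log r)\,\max(|y-x|_\infty,\, r)^2}.\]
Set $w := \phi \cdot (a - v)$. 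Properties i) and ii) are then immediate: $w$ vanishes off $Q_{R-3}(x) \subset Q_{R-1}(x)$, and on $Q_{r+2}(x)$ we have $v + w = a$, so $\nabla_1^2(v+w) \equiv 0$ on $Q_r(x)$ (the $2$-step buffer being needed because $\nabla_1^2$ involves nearest-neighbour values).

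For iii), the key ingredient is a standard discrete Taylor estimate. Telescoping the differences $D_i^1 v(z) - D_i^1 v(x)$ and $v(y) - a(y)$ along a coordinate-wise nearest-neighbour path from $x$ to $y$ and bounding each step in terms of $\|\nabla_1^2 v\|_{L^\infty(Q_R(x))}$ gives, for every $y \in Q_R(x)$,
\[|(a - v)(y)| \le C_\dd\, |y-x|_\infty^2 \,\|\nabla_1^2 v\|_{L^\infty(Q_R(x))}, \qquad |\nabla_1(a - v)(y)| \le C_\dd\, |y-x|_\infty \,\|\nabla_1^2 v\|_{L^\infty(Q_R(x))}.\]
Now expand $\nabla_1^2(v + w) = \nabla_1^2 v + \nabla_1^2\bigl(\phi(a-v)\bigr)$ via the discrete product rule. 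The term in which both discrete derivatives fall on $a-v$ yields $\phi \cdot \nabla_1^2(a-v) = -\phi\,\nabla_1^2 v$ (up to translations of arguments), which combines with the explicit $\nabla_1^2 v$ to produce $(1-\phi)\nabla_1^2 v$, pointwise bounded by $\|\nabla_1^2 v\|_{L^\infty(Q_R(x))}$. The remaining ``error'' terms are schematically of the form $\nabla_1 \phi \cdot \nabla_1(a-v)$ and $\nabla_1^2 \phi \cdot (a-v)$ (with each factor possibly evaluated at a point translated by $O_\dd(1)$ from $y$). Setting $\rho := \max(r, |y-x|_\infty)$ for $y \in Q_R(x)$, the bounds on $\phi$ and on $a-v$ combine to give each error term $\le \frac{C_\dd}{\log R - \log r}\,\|\nabla_1^2 v\|_{L^\infty(Q_R(x))}$, since the factors of $\rho^{\pm 1}$ and $\rho^{\pm 2}$ cancel. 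Summing the principal contribution and the $O_\dd(1)$ many error contributions produces iii) with $\gamma_\dd$ equal to the sum of the implicit dimensional constants.

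\textbf{Main obstacle.} The only genuine work is the explicit construction of the discrete cut-off $\phi$ with both the stated gradient \emph{and} Hessian bounds: one must smooth the continuous logarithmic profile at the two kinks (near radius $r$ and near radius $R-3$) at the appropriate scales so that second lattice differences still inherit the sharp logarithmic $1/((\log R - \log r)\rho^2)$ decay. The assumption $R \ge 16r$ guarantees $\log R - \log r \ge \log 16 > 0$, so all prefactors are uniform in $r,R$. Once $\phi$ is in hand, the remainder of the argument is routine discrete calculus combined with the Taylor estimates above.
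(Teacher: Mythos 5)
Your proposal takes essentially the same approach as the paper: set $w=\phi\cdot(a-v)$ where $a$ is the discrete first-order Taylor polynomial of $v$ at $x$ and $\phi$ is a logarithmically-profiled cut-off, then bound $\nabla_1^2(v+w)$ via the discrete product rule and discrete Taylor estimates. The paper carries out the cut-off construction you defer by taking a smooth interpolation $\tilde\xi(y)=\chi_{2r}(y)+\bigl(\chi_{R/4}(y)-\chi_{2r}(y)\bigr)\frac{\log R-\log|y|}{\log R-\log r}$ (with $\chi$ a fixed smooth plateau function) and restricting it to the lattice, which yields exactly the derivative bounds you postulate.
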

Note that condition i) ensures that $\nabla_1^2w=0$ on $\Z^\dd\setminus Q_R(x)$.

\begin{proof}
By translation invariance we can assume $x=0$. Suppose for the moment that there is a function $\xi\colon\Z^\dd\to\R$ such that $\xi=1$ on $Q_{r+1}(0)$, $\xi=0$ on $\Z^\dd\setminus Q_{R-1}(0)$ and 
\begin{align*}
	|\xi(y)|&\le 1\,,\\
	|\nabla_1\xi(y)|&\le\frac{C_\dd}{|y|(\log R-\log r)}\,,\\
	|\nabla_1^2\xi(y)|&\le\frac{C_\dd}{|y|^2(\log R-\log r)}
\end{align*}
 for $y\in Q_R(0)$. 

Let $u$ be the affine function $u(y)=v(0)+y\cdot\nabla_1v(0)$. Then we can set $w(y)=\xi(y)(u(y)-v(y))$. This choice of $w$ clearly satisfies i) and ii), and so we only have to check iii). We know that $u(0)-v(0)=0$ and $\nabla_1u(0)-\nabla_1v(0)=0$, and so by discrete Taylor expansion, using that $u$ is affine, we have
\begin{align*}
|\nabla_1u(y)-\nabla_1v(y)|&\le C_\dd|y|\|\nabla_1^2v\|_{L^\infty(Q_R(0))}\\
|u(y)-v(y)|&\le C_\dd|y|^2\|\nabla_1^2v\|_{L^\infty(Q_R(0))}\,.
\end{align*}
Note that $v+w=\xi u+(1-\xi)v$. The discrete product rule allows us to write
\begin{align*}
D^1_iD^1_{-j}(v+w)(y)&=D^1_iD^1_{-j}v(y)\tau^1_i\tau^1_{-j}(1-\xi)(y)+D^1_i(u-v)(y)\tau^1_iD^1_{-j}\xi(y)\\
&\quad+D^1_{-j}(u-v)(y)\tau^1_{-j}D^1_i\xi(y)+(u-v)(y)D^1_iD^1_{-j}\xi(y)
\end{align*}
and thus, using the estimates on $\xi$ and $v-u$,
\begin{align*}
|D^1_iD^1_{-j}(v+w)(y)|&\le |D^1_iD^1_{-j}v(y)|+|D^1_i(u-v)(y)|\max_{|z-y|_\infty\le1}|\nabla_1\xi(z)|\\
&\quad+|D^1_{-j}(u-v)(y)|\max_{|z-y|_\infty\le1}|\nabla_1\xi(z)|+|(u-v)(y)|\max_{|z-y|_\infty\le1}|\nabla_1^2\xi(z)|\\
&\le|D^1_iD^1_{-j}v(y)|+C_\dd|y|\|\nabla_1^2v\|_{L^\infty(Q_R(0))}\frac{1}{|y|(\log R-\log r)}\\
&\quad+C_\dd|y|^2\|\nabla_1^2v\|_{L^\infty(Q_R(0))}\frac{1}{|y|^2(\log R-\log r)}\\
&\le|D^1_iD^1_{-j}v(y)|+C_\dd\|\nabla_1^2v\|_{L^\infty(Q_R(0))}\frac{1}{(\log R-\log r)}\,.
\end{align*}
This immediately implies that $v$ satisfies iii).

It remains to show the existence of $\xi$ with the desired properties. To do so, we choose a function $\chi\in C^\infty(\R^\dd)$ that is 1 on $[-1,1]^\dd$, 0 outside of $[-2,2]^\dd$ such that $0\le\chi\le1$, and for $\rho>0$ define $\chi_\rho=\chi\left(\frac{\cdot}{\rho}\right)$. We define $\tilde\xi\colon\R^\dd\to\R$ by
\[\tilde\xi(y)=\chi_{2r}(y)+\left(\chi_{R/4}(y)-\chi_{2r}(y)\right)\frac{\log R-\log|y|}{\log R-\log r}\,.\]
One can check that $0\le\tilde\xi\le1$, $\tilde\xi=1$ on $[-2r,2r]^\dd\supset Q_{r+1}(0)$, $\tilde\xi=0$ on $\R^\dd\setminus\left[-\frac R2,\frac R2\right]^\dd\supset \Z^\dd\setminus Q_{R-1}R(0)$ as well as
\[|\nabla^k\xi(y)|\le\frac{C_{\dd,k}}{|y|^k(\log R-\log r)}\]
for $k\ge1$. We can now let $\xi$ be the restriction of $\tilde\xi$ to $\Z^\dd$. The estimates on $\tilde\xi$ together with Taylor's theorem then imply the corresponding estimates on $\xi$.
\end{proof}

Before we turn to the proof of Lemma \ref{l:existence_cutoff}, let us investigate the structure of the $S^{(j)}_{K,M,\mathrm{bad}}(A)$ in more detail. Note first that
\[\left|S^{(j)}_{K,M,\mathrm{bad}}(A)\right|\ge\left|S^{(j)}_{K,M,\mathrm{bad},\mathrm{clust}}(A)\right|\ge\left|S^{(j+1)}_{K,M,\mathrm{bad}}(A)\right|+1\]
as each cube of $S^{(j+1)}_{K,M,\mathrm{bad}}(A)$ covers at least two cubes of $S^{(j)}_{K,M,\mathrm{bad},\mathrm{clust}}(A)$, and $S^{(j+1)}_{K,M,\mathrm{bad}}(A)$ is chosen with the smallest possible cardinality. We have already noted that $S^{(j)}_{K,M,\mathrm{bad}}(A)$ is a finite set when $\e$ is small enough. This implies that $S^{(0)}_{K,M,\mathrm{bad}}(A)=\varnothing$ for $j$ sufficiently large.

If $Q\in S^{(j)}_{K,M,\mathrm{bad}}(A)$ for some $j\ge0$, then either $Q\in S^{(j)}_{K,M,\mathrm{bad},\textrm{isol}}(A)$ or $Q\in S^{(j)}_{K,M,\mathrm{bad},\textrm{clust}}(A)$. In the latter case there is at least one $Q'\in S^{(j+1)}_{K,M,\mathrm{bad}}(A)$ such that $Q\subset Q'$. If there is more than one such $Q'$, we choose the one that comes first with respect to the enumeration of $\Qbox^\#_{\ell_j}$ that we had fixed, and call it the parent of $Q$. In this manner, given $Q\in S^{(0)}_{K,M,\mathrm{bad}}(A)$ we can find a sequence \[Q\subset Q^{(1)}\subset \ldots \subset Q^{(j)}\]
such that each cube is the parent of the preceding cube. This sequence is necessarily finite as $S^{(j)}_{K,M,\mathrm{bad}}(A)=\varnothing$ for $j$ sufficiently large. It terminates as soon as we reach a cube $Q^{(j)}\in S^{(j)}_{K,M,\mathrm{bad},\mathrm{isol}}(A)$. We denote that value $j$ by $j_{\textrm{isol},Q}$. In summary, we have a sequence
\begin{equation}\label{e:badboxesondiffscales}
Q=Q^{(0)}\subset Q^{(1)}\subset \ldots \subset Q^{(j_{\textrm{isol},Q})}
\end{equation}
where each cube is the parent of the preceding cube, the $Q^{(j)}$ for $j<j_{\textrm{isol},Q}$ are in $S^{(j)}_{K,M,\mathrm{bad},\mathrm{clust}}(A)$, while $Q^{(j_{\textrm{isol},Q})}\in S^{(j_{\textrm{isol},Q})}_{K,M,\mathrm{bad},\mathrm{isol}}(A)$.
This allows us to find for each cube $Q\in S^{(0)}_{K,M,\mathrm{bad}}(A)$ a lengthscale $\ell_{j_{\textrm{isol},Q}}$ on which its parents become isolated. It will be that lengthscale on which we will ensure that $\eta$ is locally affine on $Q$.

After these preparations we can turn to the proof of the main result of this section, the construction of a cut-off function.

\begin{proof}[Proof of Lemma \ref{l:existence_cutoff}]$ $

\emph{Step 1: Construction of a function satisfying a weaker version of iii)}\\
We assume that $\e$ is small enough so that $\lmic\ge4$, say. Then also $\ell_j\ge4$ for all $j\ge0$.
We first claim that there is a function $\eta_*$ satisfying i) and ii) and such that $|\nabla_1^2\eta_*(x)|\le\frac{C_\dd}{(KL)^2\lmac^2}$. This should be intuitively clear, as we want to interpolate from 0 to 1 on scale $KL\lmac$. One way to make this rigorous is as follows.
Let $\tilde U=U+\left[-\frac12,\frac12\right]^\dd$. Choose a function $\tilde\chi\in C^\infty(\R^\dd)$ that is 1 on $[-1,1]^\dd$, 0 outside of $\left[-\frac97,\frac97\right]^\dd$ and such that $0\le\tilde\chi\le1$. For each $Q\in\Qbox_{KL\lmac}$ such that $Q\subset U$ let $x_Q$ be its centre, and define $\tilde\eta_*\colon\R^\dd\to\R$ by
\[\tilde\eta_*(y)=\prod_{\substack{Q\in \Qbox_{KL\lmac}\\Q\subset U}}\left(1-\tilde\chi\left(\frac{y-x_Q}{\frac78KL\lmac}\right)\right)\,.\]
This function is then equal to 0 on $\tilde U+\left[-\frac{3KL\lmac}{8},\frac{3KL\lmac}{8}\right]^\dd$ and also equal to 1 on $\R^\dd\setminus \left(\tilde U+\left[-\frac{5KL\lmac}{8},\frac{5KL\lmac}{8}\right]^\dd\right)$. Each of the factors in the definition of $\tilde\eta_*$ satisfies
\[\left\|\nabla^k\left(1-\tilde\chi\left(\frac{\cdot-x_Q}{\frac78KL\lmac}\right)\right)\right\|_{L^\infty(\R^\dd)}\le\frac{C_{\dd,k}}{(KL)^k\lmac^k}\]
for $k\ge0$. Furthermore, for each fixed $y\in\R^\dd$ there is a neighbourhood in which at most $3^\dd$ of the factors are non-constant. Thus, if we compute $\nabla^k\tilde\eta_*(y)$ we get contributions only from these at most $3^\dd$ factors. Therefore,
\[\|\nabla^k\tilde\eta_*\|_{L^\infty(\R^\dd)}\le\frac{C_{\dd,k}}{(KL)^k\lmac^k} \]
for $k\ge0$. We can now let $\eta_*$ be the restriction of $\tilde\eta_*$ to $\Z^\dd$. Then Taylor's theorem implies easily that $|\nabla_1^2\eta(x)|\le\frac{C_\dd}{(KL)^2\lmac^2}$. 

Note also that $\eta_*$ is equal to 0 on $U+\left[-\frac{3KL\lmac}{8},\frac{3KL\lmac}{8}\right]$ and equal to 1 on $\Z^\dd\setminus (U+Q_{5KL\lmac/8}(0))$. Therefore, $\nabla_1^2\eta_*$ is equal to 0 except possibly on $V:=(U+Q_{5KL\lmac/8+1}(0)\setminus(U+Q_{3KL\lmac/8-1}(0))$.

\emph{Step 2: Modification of $\eta_*$}\\
Let $q\in S^{(0)}_{K,M,\mathrm{bad}}(A)$ with $q\subset (U+Q_{KL\lmac}(0))\setminus U$, and consider the sequence of cubes \eqref{e:badboxesondiffscales} with $Q=q$. By our assumption none of the macroscopic cubes in $(U+Q_{KL\lmac}(0))\setminus U$ are bad of type I, i.e. there is no $Q'\in S^{(j_*(\e))}_{K,M,\mathrm{bad}}(A)$ intersecting $(U+Q_{KL\lmac}(0))\setminus U$. This means that the sequence \eqref{e:badboxesondiffscales} necessarily terminates before $j=j_*(\e)$. In particular, we have $j_{\textrm{isol},q}<j_*(\e)$. We can now partition the cubes in $S^{(0)}_{K,M,\mathrm{bad}}(A)$ according to the value of $j_{\textrm{isol},q}$, and define for $j\in\{0,\ldots,j_*(\e)-1\}$ the set \[T_{K,M,V,j}(A)=\left\{q\in S^{(0)}_{K,M,\mathrm{bad}}(A)\colon q\subset (U+Q_{KL\lmac}(0))\setminus U,j_{\textrm{isol},q}=j\right\}\,.\]

We will now construct a sequence of functions $\eta^{(j)}$ by reverse induction in such a way that $\nabla_1^2\eta^{(j)}=0$ on $\bigcup_{k\ge j}\bigcup_{q\in T_{K,M,V,k}(A)}(q+Q_1(0))$. Eventually, we will show that the choice $\eta=\eta^{(0)}$ satisfies the properties claimed in the Lemma.

Thus, we start with $\eta^{(j_*(\e))}=\eta_*$. Let also $V^{(j_*(\e))}=V$, and note that $\supp\nabla_1^2\eta^{(j_*(\e))}$ is a subset of $V^{(j_*(\e))}$. Suppose now that for some $j\in\{1,\ldots,j_*(\e)\}$ we have defined $\eta^{(j)}$ and $V^{(j)}$ with $\supp\nabla_1^2\eta^{(j)}\subset V^{(j)}$ and $\eta^{(j)}=\eta_*$ on $\Z^\dd\setminus V^{(j)}$ such that $\nabla_1^2\eta^{(j)}=0$ on $\bigcup_{k\ge j}\bigcup_{q\in T_{K,M,V,k}(A)}(q+Q_1(0))$, and let us define $\eta^{(j-1)}$ and $V^{(j-1)}$.

Since $\supp\nabla_1^2\eta^{(j)}\subset V^{(j)}$, we trivially have $\nabla_1^2\eta^{(j)}=0$ on those cubes that do not intersect $V^{(j)}$, and so there is no need to change $\eta^{(j)}$ there. 
Let 
\[Y^{(j-1)}_{K,M,V}(A)=\left\{Q\in S^{(j-1)}_{K,M,\mathrm{bad},\mathrm{isol}}(A)\colon (Q+Q_1(0))\cap V^{(j)}\neq\varnothing\right\}\]
be the set of cubes where we will adjust $\eta^{(j)}$. By definition, this is a set of cubes on scale $\ell_{j-1}$ that either overlap or are far apart. That is, if $Q\in Y^{(j-1)}_{K,M,V}(A)$, then all $Q'\in S^{(j-1)}_{K,M,\mathrm{bad}}(A)$ either satisfy $Q\cap Q'\neq\varnothing$ or $d_\infty(Q,Q')>\frac{\ell_j}2$. Let $\tilde Y^{(j-1)}_{K,M,V}(A)$ be a subset of $Y^{(j-1)}_{K,M,V}(A)$ of maximum cardinality such that the cubes in $\tilde Y^{(j-1)}_{K,M,V}(A)$ are pairwise disjoint. By construction for each $Q\in Y^{(j-1)}_{K,M,V}(A)$ there is a $Q'\in\tilde Y^{(j-1)}_{K,M,V}(A)$ (possibly equal to $Q$) with $Q\cap Q'\neq\varnothing$. In particular,
\[\bigcup_{Q\in Y^{(j-1)}_{K,M,V}(A)}Q\subset \bigcup_{Q\in \tilde Y^{(j-1)}_{K,M,V}(A)}(Q+Q_{\ell_{j-1}}(0))\,.\]

Let now $Q\in \tilde Y^{(j-1)}_{K,M,V}(A)$. We know that there is no cube $Q'\in S^{(j-1)}_{K,M,\mathrm{bad}}(A)$ that intersects $(Q+Q_{\ell_j/2}(0))\setminus (Q+Q_{\ell_{j-1}}(0))$. This has several implications. An obvious one is that the cubes $Q+Q_{\ell_j/4}(0)$ for $Q\in \tilde Y^{(j-1)}_{K,M,V}(A)$ are pairwise disjoint. Slightly less obviously, we claim that for $Q\in \tilde Y^{(j-1)}_{K,M,V}(A)$ there is no $q\in\bigcup_{k\ge j} T_{K,M,V,k}(A)$ such that $q\cap ((Q+Q_{\ell_j/4+1}(0))\setminus (Q+Q_{\ell_{j-1}}(0)))\neq\varnothing$. Indeed, if $q\in\bigcup_{k\ge j}T_{K,M,V,k}(A)$ then we have $j_{\textrm{isol},q}\ge j$, so the sequence \eqref{e:badboxesondiffscales} contains a parent $q\subset q^{(j-1)}\in S^{(j-1)}_{K,M,\mathrm{bad}}(A)$. Then $q^{(j-1)}$ cannot intersect $(Q+Q_{\ell_j/2}(0))\setminus (Q+Q_{\ell_{j-1}}(0))$, and so the same holds true for $q$.

We would now like to apply Lemma \ref{l:construct_affine_correction} to the function $v=\eta^{(j)}$ and the cubes $Q+Q_{\ell_{j-1}+1}(0)\subset Q+Q_{\ell_j/4}(0)$. For that purpose we fix $M_\dd=8\max\left(16,\exp(\gamma_\dd)\right)$, where $\gamma_\dd$ is the constant from Lemma \ref{l:construct_affine_correction}. If $M\ge M_\dd$, then 
\begin{equation}\label{e:existence_cutoff3}
\frac{\left\lfloor \frac{\ell_{j-1}}{2}+\frac{\ell_j}{4}\right\rfloor}{\left\lceil \frac{\ell_{j-1}}{2}+\ell_{j-1}+1\right\rceil}\ge\frac{\frac{\ell_j}{4}}{2\ell_{j-1}}=\frac{1}{8}\frac{M^{j^3}K\lmic}{M^{(j-1)^3}K\lmic}=\frac{1}{8}M^{3j^2-3j+1}\,.
\end{equation}
The term on the right hand side is in particular bounded below by $\frac M{8}\ge16$, and thus we can indeed apply the lemma. We obtain that there is a function $w_Q$ such that $w_Q=0$ on $\Z^\dd\setminus (Q+Q_{\ell_j/4-1}(0))$, $\nabla_1^2\left(w_Q+\eta^{(j)}\right)$ is zero on $Q+Q_{\ell_{j-1}+1}(0)$, and such that
\begin{equation}\label{e:existence_cutoff4}
\begin{aligned}
&\left\|\nabla_1^2\left(w_Q+\eta^{(j)}\right)\right\|_{L^\infty(\Z^\dd)}\\
&\quad\le\left(1+\frac{\gamma_\dd}{\log \left(\frac{\ell_{j-1}}{2}+\frac{\ell_j}{4}\right)-\log \left(\frac{\ell_{j-1}}{2}+\ell_{j-1}+1\right)}\right)\left\|\nabla_1^2\eta^{(j)}\right\|_{L^\infty(\Z^\dd)}\\
&\quad\le\left(1+\frac{\gamma_\dd}{\log \left(\frac{1}{8}M^{3j^2-3j+1}\right)}\right)\left\|\nabla_1^2\eta^{(j)}\right\|_{L^\infty(\Z^\dd)}\\
&\quad\le\left(1+\frac{1}{3j^2-3j+1}\right)\left\|\nabla_1^2\eta^{(j)}\right\|_{L^\infty(\Z^\dd)}
\end{aligned}
\end{equation}
where we have used \eqref{e:existence_cutoff3} and the fact that $\frac{\gamma_\dd}{\log\left(\frac{M}{8}\right)}\le\frac{\gamma_\dd}{\log\exp(\gamma_\dd)}=1$. We set
\[\eta^{(j-1)}=\eta^{(j)}+\sum_{Q\in \tilde Y^{(j)}_{K,M,V}(A)}w_Q\]
and
\[V^{(j-1)}=V^{(j)}+Q_{\ell_j}(0)\]
and finally we set $\eta=\eta^{(0)}$.

\emph{Step 3: Proof that the $\eta^{(j)}$ are locally affine on the bad cubes with $j_{\textrm{isol},Q}\ge j$}\\
We prove by reverse induction that $\supp \eta^{(j)}\subset V^{(j)}$, $\eta^{(j)}=\eta_*$ on $\Z^\dd\setminus V^{(j)}$ and $\nabla_1^2\eta^{(j)}=0$ on $\bigcup_{k\ge j}\bigcup_{q\in T_{K,M,V,k}(A)}(q+Q_1(0))$. This is obvious for $j=j_*(\e)$, so assume that it holds for some $j\in \{1,\ldots,j_*(\e)\}$.

We claim that then also $\nabla_1^2\eta^{(j-1)}=0$ on $\bigcup_{k\ge j-1}\bigcup_{q\in T_{K,M,V,k}(A)}(q+Q_1(0))$. To see this, let $q\in T_{K,M,V,k}(A)$ for $k=j_{\textrm{isol},q}\ge j-1$. We distinguish the two cases $j_{\textrm{isol},q}\ge j$ and $j_{\textrm{isol},q}=j-1$. 

In the former case by our inductive assumption already $\nabla_1^2\eta^{(j)}=0$ on $q+Q_1(0)$. Furthermore, we have argued that $q$ does not intersect $(Q+Q_{\ell_j/4+1}(0))\setminus (Q+Q_{\ell_{j-1}}(0))$ for any $Q\in \tilde Y^{(j)}_{K,M,V}(A)$. So either $q$ does not intersect $Q+Q_{\ell_j/4+1}(0)$ for any $Q\in \tilde Y^{(j)}_{K,M,V}(A)$, or $q$ is contained in $Q+Q_{\ell_{j-1}}(0)$ for exactly one $Q\in \tilde Y^{(j)}_{K,M,V}(A)$. In the former case, all $\nabla_1^2w_Q$ are equal to 0 on $q+Q_1(0)$, and thus $\nabla_1^2\eta^{(j-1)}=\nabla_1^2\eta^{(j)}=0$ on $q+Q_1(0)$, while in the latter case it holds that $\nabla_1^2\eta^{(j-1)}=\nabla_1^2(\eta^{(j)}+w_Q)$ on $q+Q_1(0)$, and thus by construction of $w_Q$ we have $\nabla_1^2\eta^{(j-1)}=0$ on $q+Q_1(0)$.

It remains to consider the case that $j_{\textrm{isol},q}=j-1$. In that case the sequence \eqref{e:badboxesondiffscales} contains a parent $q\subset q^{(j-1)}\in S^{(j-1)}_{K,M,\mathrm{bad}}(A)$. If $(q^{(j-1)}+Q_1(0))\cap V^{(j)}=\varnothing$, then $\nabla_1^2\eta^{(j)}=0$ on $q^{(j-1)}+Q_1(0)$. Furthermore, by the definition of $Y^{(j-1)}_{K,M,V}(A)$, $q^{(j-1)}$ does not intersect $(Q+Q_{\ell_j/4+1}(0))\setminus (Q+Q_{\ell_{j-1}}(0))$ for any $Q\in \tilde Y^{(j-1)}_{K,M,V}(A)\subset Y^{(j-1)}_{K,M,V}(A)$, and so neither does $q$. Arguing as in the previous case, we find that $\nabla_1^2\eta^{(j-1)}=\nabla_1^2\eta^{(j)}$ on $q+Q_1(0)$. On the other hand, it could be that $(q^{(j-1)}+Q_1(0))\cap V^{(j)}\neq\varnothing$. Then $q^{(j-1)}\in Y^{(j-1)}_{K,M,V}(A)$, and so there is some $Q\in\tilde Y^{(j-1)}_{K,M,V}(A)$ with $q\subset q^{(j-1)}\subset Q+Q_{\ell_{j-1}}(0)$. Then it holds once again that holds that $\nabla_1^2\eta^{(j-1)}=\nabla_1^2(\eta^{(j)}+w_Q)=0$ on $q+Q_1(0)$.

This proves that $\nabla_1^2\eta^{(j-1)}=0$ on $\bigcup_{k\ge j-1}\bigcup_{q\in T_{K,M,V,k}(A)}(q+Q_1(0))$. Furthermore, each $Q\in\tilde Y^{(j-1)}_{K,M,V}(A)$ is contained in $V^{(j)}+Q_{2(\ell_{j-1}+2)}(0)$, so the support of the associated $w_Q$ is contained in $V^{(j)}+Q_{2(\ell_{j-1}+2)+\ell_j/4-1}(0)$. Thus, $\nabla_1^2\eta^{(j-1)}$ is supported in $V^{(j)}+Q_{2(\ell_{j-1}+2)+\ell_j/4}(0)\subset V^{(j)}+Q_{\ell_j}(0)=V^{(j-1)}$, and $\eta^{(j)}=\eta_*$ on $\Z^\dd\setminus V^{(j-1)}$. This completes the induction.

\emph{Step 4: Proof that $\eta$ satisfies i), ii) and iii)}\\
We define $\eta=\eta^{(0)}$. In the previous step we have shown that $\nabla_1^2\eta$ is supported in $V^{(0)}$ and $\eta=\eta_*$ on $\Z^\dd\setminus V^{(0)}$. We have \begin{align*}
V^{(0)}&=V+Q_{\ell_0}(0)+\ldots+Q_{\ell_{j_*(\e)}}(0)\\
&\subset V+Q_{2\ell_{j_*(\e)}}(0)\subset V+Q_{KL\lmac/4}(0)\subset (U+Q_{7KL\lmac/8+1}(0))\setminus(U+Q_{KL\lmac/8-1}(0))\,.
\end{align*}
Thus, $\eta=\eta_*=0$ on $U+Q_{KL\lmac/8-1}(0)$, $\eta=\eta_*=1$ on $\Z^\dd\setminus \left(U+Q_{7KL\lmac/8+1}(0)\right)$. This means that $\eta$ satisfies i) and ii) as soon as $\e$ is small enough (depending on $\dd$ and $K$).

In Step 3 we have also seen that $\eta=\eta^{(0)}$ satisfies $\nabla_1^2\eta=0$ on $\bigcup_{k\ge 0}\bigcup_{q\in T_{K,M,V,k}(A)}(q+Q_1(0))=\bigcup_{q\in S^{(0)}_{K,M,\mathrm{bad}}(A)}(q+Q_1(0))$. Thus, to show that $\eta$ also satisfies iii) we only have to check that $\|\nabla_1^2\eta\|_{L^\infty(\Z^\dd)}\le \frac{C_\dd}{(KL)^2\lmac^2}$.

To see this, note the supports of the functions $\nabla_1^2w_Q$ for $Q\in \tilde Y^{(j-1)}_{K,M,V}(A)$  are disjoint. Thus, \eqref{e:existence_cutoff4} implies the bound
\[\left\|\nabla_1^2\eta^{(j-1)}\right\|_{L^\infty(\Z^\dd)}\le\left(1+\frac{1}{3j^2-3j+1}\right)\left\|\nabla_1^2\eta^{(j)}\right\|_{L^\infty(\Z^\dd)}
\]
for $j\ge1$. Iterating this, we find that
\begin{equation}\label{e:existence_cutoff5}
\begin{aligned}
\left\|\nabla_1^2\eta\right\|_{L^\infty(\Z^\dd)}&\le\left(\prod_{j=1}^{j_*(\e)}\left(1+\frac{1}{3j^2-3j+1}\right)\right)\left\|\nabla_1^2\eta_*\right\|_{L^\infty(\Z^\dd)}\\
&\le\left(\prod_{j=1}^{\infty}\left(1+\frac{1}{3j^2-3j+1}\right)\right)\frac{C_\dd}{(KL)^2\lmac^2}\\
&\le\frac{C_\dd}{(KL)^2\lmac^2}
\end{aligned}
\end{equation}
where in the last step we have used that $\frac{1}{3j^2-3j+1}$ is summable and hence $\prod_{j=1}^{\infty}\left(1+\frac{1}{3j^2-3j+1}\right)<\infty$. This completes the proof.
\end{proof}

\subsection{Decay estimates on good domains}\label{s:widman_good_box}

Now that we have a construction of a cut-off function at our disposal, we can execute the Widman hole filler argument.

\begin{lemma}\label{l:decay_one_annulus}
Let $\dd\ge1$ and let $M_\dd$ be the constant from Lemma \ref{l:existence_cutoff}. Let $K$, $L$, $M$ be odd integers such that $K$ is a multiple of 3 and $M\ge M_\dd$. Then for all $\e$ sufficiently small (depending on $\dd$, $K$) the following holds. Let $U\in\Pbox_{KL\lmac}$ be a polymer. Suppose that none of the $KL\lmac$-boxes touching $U$ (in the $l_\infty$-sense) are bad of type I or II, i.e. \[\{Q\in\Qbox_{KL\lmac}\colon Q\subset (U+Q_{KL\lmac}(0))\setminus U\}\cap S^*_{K,L,M,\mathrm{bad}}(A)=\varnothing\,.\]
Then the following holds.
\begin{itemize}
	\item[a)]  If $u\colon\Z^\dd\to\R$ is a function such that $u=0$ on $\tilde A\setminus U$ and $u\Delta_1^2u=0$ on $\Z^\dd\setminus U$, we have the estimate
\begin{equation}\label{e:decay_one_annulus_ext}
\hspace{-9pt}
\left\|\nabla_1^2u\right\|^2_{L^2(\Z^\dd\setminus(U+Q_{KL\lmac}(0)))}\le\left(\frac{C_\dd K^{\dd-4}\left(1+\I_{\dd=4}\log K\right)}{L^2}+\frac14\right)\left\|\nabla_1^2u\right\|^2_{L^2((U+Q_{KL\lmac}(0))\setminus U))}\,.
\end{equation}
	\item[b)]  If $u\colon\Z^\dd\to\R$ is a function such that $u=0$ on $\left(U+Q_{KL\lmac}(0)\right)\cap\tilde A$ and $u\Delta_1^2u=0$ on $U+Q_{KL\lmac}(0)$, we have the estimate
\begin{equation}\label{e:decay_one_annulus_int}
\left\|\nabla_1^2u\right\|^2_{L^2(U)}\le\left(\frac{C_\dd K^{\dd-4}\left(1+\I_{\dd=4}\log K\right)}{L^2}+\frac14\right)\left\|\nabla_1^2u\right\|^2_{L^2((U+Q_{KL\lmac}(0))\setminus U))}\,.
\end{equation}
\end{itemize}
\end{lemma}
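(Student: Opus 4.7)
The plan is to execute the Widman hole filler argument outlined in Section \ref{s:main_ideas}; I will sketch it for part a), since part b) will follow symmetrically by replacing $\eta$ with $1-\eta$ and using that the boundary condition in b) forces $u$ to vanish at \emph{all} pinned points of $U+Q_{KL\lmac}(0)$.

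First, since no $KL\lmac$-box touching $U$ is bad of type I, Lemma \ref{l:existence_cutoff} produces a cut-off $\eta$ with $\eta\equiv 0$ on a neighbourhood of $U$, $\eta\equiv 1$ outside $U+Q_{KL\lmac}(0)$, and $|\nabla_1^2\eta|\le C_\dd/(KL\lmac)^2$ supported only at points $x$ where every adjacent microscopic $K\lmic$-box is good---so in particular $d_*(x,\tilde A)\le K\lmic$ there. The hypotheses $u\Delta_1^2 u\equiv 0$ on $\Z^\dd\setminus U$ and $\eta\equiv 0$ on $U$ imply $\sum_x\Delta_1^2 u(x)\,\eta(x)u(x)=0$. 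Discrete summation by parts together with the discrete product rule---the translation operators produce lower-order terms that can be absorbed into constants---then yields the discrete analog of \eqref{e:intbyparts2}:
\begin{equation*}
\sum_x\eta(x)\,|\nabla_1^2 u(x)|^2 \;=\; \sum_x|\nabla_1 u(x)|^2\,\Delta_1\eta(x)\;-\;\sum_x u(x)\,\nabla_1^2 u(x):\nabla_1^2\eta(x).
\end{equation*}

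Next I would bound the two right-hand terms. For the second, Cauchy-Schwarz with weight $\tfrac14$ produces $\tfrac14\|\nabla_1^2 u\|^2_{L^2((U+Q_{KL\lmac}(0))\setminus U)}$ plus a multiple of $\sum_x|u(x)|^2|\nabla_1^2\eta(x)|^2$. On the support of $\nabla_1^2\eta$ we have $d_*(\cdot,\tilde A)\le K\lmic$, and all the relevant pinned points lie outside $U$ thanks to the $2K\lmic$-separation property of $\eta$, so Theorem \ref{t:localpoincare} applied with $R=K\lmic$ bounds this last sum by a constant multiple of $(K\lmic)^\dd(1+\I_{\dd=4}\log K)(KL\lmac)^{-4}\|\nabla_1^2 u\|^2$ on a $K\lmic$-enlargement of the annulus. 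For the first term I would instead use $|\Delta_1\eta|\le C_\dd/(KL\lmac)^2$ together with the interpolation inequality (Lemma \ref{l:interpolationineq}) applied on each $K\lmac$-subcube $Q'$ of the annulus; the type-II condition forces the good set $B\subset Q'$ (the union of the good microscopic subboxes of $Q'$) to have $|B|\ge\tfrac34|Q'|\ge\tfrac12|Q'|$, so Lemma \ref{l:interpolationineq} gives $\|\nabla_1 u\|^2_{L^2(Q')}\le C_\dd(K\lmac)^2\|\nabla_1^2 u\|^2_{L^2(Q')}+C_\dd(K\lmac)^{-2}\|u\I_B\|^2_{L^2(Q')}$, and Theorem \ref{t:localpoincare} applied to the last norm (using $B\subset\{d_*(\cdot,\tilde A)\le K\lmic\}$) turns it into $C_\dd(K\lmic)^\dd(1+\I_{\dd=4}\log K)\|\nabla_1^2 u\|^2$ on a slightly enlarged cube. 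Summation over $Q'$ and multiplication by $(KL\lmac)^{-2}$ then produces a smooth contribution $C_\dd L^{-2}\|\nabla_1^2 u\|^2$ and a rough contribution $C_\dd K^{\dd-4}(1+\I_{\dd=4}\log K)L^{-2}\|\nabla_1^2 u\|^2$. Collecting these bounds and recalling $\eta\equiv 1$ outside $U+Q_{KL\lmac}(0)$ yields \eqref{e:decay_one_annulus_ext}.

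The main delicate point will be the bookkeeping of prefactors: the $L^{-2}$ appearing in the statement arises from the interaction of the interpolation factor $(K\lmac)^2$ with the cut-off scaling $(KL\lmac)^{-2}$ in the gradient term, and one has to verify that the Hardy-Rellich contribution from $|u|^2|\nabla_1^2\eta|^2$---which is actually of the smaller order $K^{\dd-4}L^{-4}$---fits inside this budget by using the identity $\lmic^\dd\lmac^{-4}=O(1)$ (up to a $1/|\log\e|$ saving when $\dd=4$).
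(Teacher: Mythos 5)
Your proposal follows the paper's own argument step for step: cut-off from Lemma \ref{l:existence_cutoff}, discrete integration by parts to get the analogue of \eqref{e:intbyparts2}, Cauchy--Schwarz on the $u\,\nabla_1^2u:\nabla_1^2\eta$ term to produce the $\tfrac14\|\nabla_1^2 u\|^2$ budget plus a Hardy--Rellich piece, Lemma \ref{l:interpolationineq} on each $K\lmac$-subcube (using the type-II goodness for $|B|\ge\tfrac12|q|$) for the $|\nabla_1 u|^2\Delta_1\eta$ piece, Theorem \ref{t:localpoincare} with $R=K\lmic$ to close both, and finally the identity $\lmic^\dd(1+\I_{\dd=4}\log\lmic)\le C_\dd\lmac^4$ to absorb the $\e$-dependence. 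You also correctly identify the subtle point that the $a^{(i)}_x$ needed in Theorem \ref{t:localpoincare} lie outside $U$ because $V$ is a $K\lmic$-distance away from $U$, so $u$ does indeed vanish at them under the hypotheses of part a); and that part b) follows with $\hat\eta=1-\eta$. Two small imprecisions that do not affect the outcome: the discrete identity you display should carry a factor $\tfrac12$ (and the Laplacian of $\eta$ actually appears together with a translate, as in \eqref{e:decay_one_annulus_1}), and the Hardy--Rellich factor from Theorem \ref{t:localpoincare} is $1+\I_{\dd=4}\log(K\lmic)$, not $1+\I_{\dd=4}\log K$; the extra $\log\lmic$ is exactly what is eaten by the $1/|\log\e|$ ratio $\lmic^4/\lmac^4$ in $\dd=4$, which you in fact note at the end.
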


\begin{proof}
The proof proceeds as outlined in Section \ref{s:main_ideas}. We begin with the proof of part a); the proof of part b) will be very similar.

\emph{Step 1: Discrete integration by parts}\\
We know that none of the cubes in $(U+Q_{KL\lmac}(0))\setminus U$ is bad of type I. Thus, we can apply Lemma \ref{l:existence_cutoff}. Let $\eta$ be the cut-off function that we obtain from that Lemma.

We now carry out the discrete analogue of the calculation that lead to \eqref{e:intbyparts3}. Namely we see that
\begin{equation}\label{e:decayl2annuli_1}
\begin{aligned}
0&=(\Delta_1^2u,\eta u)_{L^2(\Z^\dd)}\\
&=\sum_{i,j=1}^\dd\left(D^1_iD^1_{-j}u,D^1_iD^1_{-j}\left(\eta u\right)\right)_{L^2(\Z^\dd)}\\
&=\sum_{i,j=1}^\dd\left(D^1_iD^1_{-j}u,\left(uD^1_iD^1_{-j}\eta+D^1_iu\tau^1_iD^1_{-j}\eta+D^1_{-j}u\tau^1_{-j}D^1_i\eta+D^1_iD^1_{-j}u\tau^1_i\tau^1_{-j}\eta\right)\right)_{L^2(\Z^\dd)}\\
&=\sum_{i,j=1}^\dd\sum_{x\in\Z^\dd}\left|D^1_iD^1_{-j}u(x)\right|^2\tau^1_i\tau^1_{-j}\eta(x)+\sum_{i,j=1}^\dd\sum_{x\in\Z^\dd} u(x)D^1_iD^1_{-j}u(x)D^1_iD^1_{-j}\eta(x)\\
&\quad+\sum_{i,j=1}^\dd\sum_{x\in\Z^\dd} D^1_iD^1_{-j}u(x)D^1_iu(x)\tau^1_iD^1_{-j}\eta(x)+\sum_{i,j=1}^\dd\sum_{x\in\Z^\dd} D^1_iD^1_{-j}u(x)D^1_{-j}u(x)\tau^1_{-j}D^1_i\eta(x)\,.
\end{aligned}
\end{equation}
Consider the third term in this sum. We can apply summation by parts here and obtain
\begin{align*}
&\sum_{i,j=1}^\dd\sum_{x\in\Z^\dd} D^1_iD^1_{-j}u(x)D^1_iu(x)\tau^1_iD^1_{-j}\eta(x)\\
&\quad=-\sum_{i,j=1}^\dd\sum_{x\in\Z^\dd} D^1_iu(x)D^1_j\left(D^1_iu(x)\tau^1_iD^1_{-j}\eta(x)\right)\\
&\quad=-\sum_{i,j=1}^\dd\sum_{x\in\Z^\dd} D^1_iu(x)D^1_iu(x)\tau^1_iD^1_jD^1_{-j}\eta(x)-\sum_{i,j=1}^\dd\sum_{x\in\Z^\dd} D^1_iu(x)D^1_iD^1_ju(x)\tau^1_i\tau^1_jD^1_{-j}\eta(x)\\
&\quad=-\sum_{i,j=1}^\dd\sum_{x\in\Z^\dd} |D^1_iu(x)|^2\tau^1_iD^1_jD^1_{-j}\eta(x)-\sum_{i,j=1}^\dd\sum_{x\in\Z^\dd} D^1_iu(x)D^1_iD^1_{-j}u(x)\tau^1_iD^1_{-j}\eta(x)
\end{align*}
where in the last step we changed the index of summation from $x$ to $\tau^1_{-j}x$. This implies
\[\sum_{i,j=1}^\dd\sum_{x\in\Z^\dd} D^1_iD^1_{-j}u(x)D^1_iu(x)\tau^1_iD^1_{-j}\eta(x)=-\frac12\sum_{i,j=1}^\dd\sum_{x\in\Z^\dd} |D^1_iu(x)|^2\tau^1_iD^1_jD^1_{-j}\eta(x)\,.\]
Similarly, we find for the fourth summand in \eqref{e:decayl2annuli_1} that
\[\sum_{i,j=1}^\dd\sum_{x\in\Z^\dd} D^1_iD^1_{-j}u(x)D^1_{-j}u(x)\tau^1_{-j}D^1_i\eta(x)=-\frac12\sum_{i,j=1}^\dd\sum_{x\in\Z^\dd} |D^1_{-j}u(x)|^2\tau^1_{-j}D^1_iD^1_{-i}\eta(x)\,.\]
If we use the last two equalities in \eqref{e:decayl2annuli_1}, we arrive at
\begin{align*}
&\sum_{i,j=1}^\dd\sum_{x\in\Z^\dd}\left|D^1_iD^1_{-j}u(x)\right|^2\tau^1_i\tau^1_{-j}\eta(x)\\
&\quad=\frac12\sum_{i,j=1}^\dd\sum_{x\in\Z^\dd} |D^1_iu(x)|^2\tau^1_iD^1_jD^1_{-j}\eta(x)+\frac12\sum_{i,j=1}^\dd\sum_{x\in\Z^\dd} |D^1_{-j}u(x)|^2\tau^1_{-j}D^1_iD^1_{-i}\eta(x)\\
&\qquad-\sum_{i,j=1}^\dd\sum_{x\in\Z^\dd} u(x)D^1_iD^1_{-j}u(x)D^1_iD^1_{-j}\eta(x)\,.
\end{align*}
Here, in the second summand on the right-hand side, we can shift the summation from $x$ to $x+e_j$ and then interchange the indices $i$ and $j$ to see that
\begin{equation}\label{e:decay_one_annulus_1}
\begin{aligned}
&\sum_{i,j=1}^\dd\sum_{x\in\Z^\dd}\left|D^1_iD^1_{-j}u(x)\right|^2\tau^1_i\tau^1_{-j}\eta(x)\\
&\quad=\frac12\sum_{i,j=1}^\dd\sum_{x\in\Z^\dd} |D^1_iu(x)|^2\tau^1_iD^1_jD^1_{-j}\eta(x)+\frac12\sum_{i,j=1}^\dd\sum_{x\in\Z^\dd} |D^1_iu(x)|^2D^1_jD^1_{-j}\eta(x)\\
&\qquad-\sum_{i,j=1}^\dd\sum_{x\in\Z^\dd} u(x)D^1_iD^1_{-j}u(x)D^1_iD^1_{-j}\eta(x)\\
&\quad=\frac12\sum_{i,j=1}^\dd\sum_{x\in\Z^\dd} |D^1_iu(x)|^2\left(\tau^1_iD^1_jD^1_{-j}\eta(x)+D^1_jD^1_{-j}\eta(x)\right)-\sum_{i,j=1}^\dd\sum_{x\in\Z^\dd} u(x)D^1_iD^1_{-j}u(x)D^1_iD^1_{-j}\eta(x)\,.
\end{aligned}
\end{equation}
This is the discrete analogue of \eqref{e:intbyparts2}. To continue, we can use the Cauchy-Schwarz inequality on the second term on the right hand side of \eqref{e:decay_one_annulus_1} and obtain
\begin{equation}\label{e:decay_one_annulus_2}
\begin{aligned}
&\sum_{i,j=1}^\dd\sum_{x\in\Z^\dd}\left|D^1_iD^1_{-j}u(x)\right|^2\tau^1_i\tau^1_{-j}\eta(x)\\
&\quad\le\frac12\sum_{i,j=1}^\dd\sum_{x\in\Z^\dd} |D^1_iu(x)|^2\left(\tau^1_iD^1_jD^1_{-j}\eta(x)+D^1_jD^1_{-j}\eta(x)\right)+\sum_{i,j=1}^\dd\sum_{x\in\Z^\dd} |u(x)|^2\left|D^1_iD^1_{-j}\eta(x)\right|^2\\
&\qquad+\frac14\sum_{i,j=1}^\dd\sum_{x\in\Z^\dd} \left|D^1_iD^1_{-j}u(x)\right|^2\I_{\nabla_1^2\eta(x)\neq 0}
\end{aligned}
\end{equation}
which is the discrete analogue of \eqref{e:intbyparts3}. Next, we use the properties of $\eta$. First, recall that $\eta=1$ on $\Z^\dd\setminus(U+Q_{KL\lmac-2K\lmic}(0))$, and $\eta=0$ on $U+Q_{2K\lmic}(0)$. Therefore certainly $\tau^1_i\tau^1_{-j}\eta=1$ on $\Z^\dd\setminus(U+Q_{KL\lmac}(0))$, and $\nabla_1^2\eta=0$ on $\Z^\dd\setminus V$, where $V:=(U+Q_{KL\lmac-K\lmic}(0))\setminus(U+Q_{K\lmic}(0))$, which we can use to bound the left-hand side and the third term on the right-hand side of \eqref{e:decay_one_annulus_2}. We also know that 
\[|\nabla_1^2\eta(x)|\le\frac{C_\dd}{(KL)^2\lmac^2}\I_{Q_1(x)\cap \bigcup_{Q\in S^{(0)}_{K,M,\mathrm{bad}}(A)}Q=\varnothing}\]
which implies that
\[|\tau^1_{\pm k}D^1_iD^1_{-j}\eta(x)|\le\frac{C_\dd}{(KL)^2\lmac^2}\I_{x\notin\bigcup_{Q\in S^{(0)}_{K,M,\mathrm{bad}}(A)}Q}\]
for any $i,j,k\in\{1,\ldots,\dd\}$. We can use this for the first and second term on the right-hand side of \eqref{e:decay_one_annulus_2}. Putting everything together, we see that
\begin{equation}\label{e:decay_one_annulus_3}
\begin{aligned}
&\left\|\nabla_1^2u\right\|^2_{L^2(\Z^\dd\setminus(U+Q_{KL\lmac}(0)))}\\
&\quad=\sum_{i,j=1}^\dd\sum_{x\in\Z^\dd\setminus(U+Q_{KL\lmac}(0))}\left|D^1_iD^1_{-j}u(x)\right|^2\\
&\quad\le\frac{C_\dd}{(KL)^2\lmac^2}\sum_{i=1}^\dd\sum_{x\in V} |D^1_iu(x)|^2\I_{x\notin\bigcup_{Q\in S^{(0)}_{K,M,\mathrm{bad}}(A)}Q}\\
&\qquad+\frac{C_\dd}{(KL)^4\lmac^4}\sum_{x\in V} |u(x)|^2\I_{x\notin\bigcup_{Q\in S^{(0)}_{K,M,\mathrm{bad}}(A)}Q}+\frac14\sum_{i,j=1}^\dd\sum_{x\in V} \left|D^1_iD^1_{-j}u(x)\right|^2\\
&\quad=\frac{C_\dd}{(KL)^2\lmac^2}\left\|\nabla_1u\I_{\cdot\notin\bigcup_{Q\in S^{(0)}_{K,M,\mathrm{bad}}(A)}Q}\right\|^2_{L^2(V)}+\frac{C_\dd}{(KL)^4\lmac^4}\left\|u\I_{\cdot\notin\bigcup_{Q\in S^{(0)}_{K,M,\mathrm{bad}}(A)}Q}\right\|^2_{L^2(V)}\\
&\qquad+\frac14\left\|\nabla_1^2u\right\|^2_{L^2(V)}\,.
\end{aligned}
\end{equation}

\emph{Step 2: Use of Poincaré and interpolation inequalities}\\
We continue by estimating the first term on the right hand side of \eqref{e:decay_one_annulus_3}. To do so, we want to apply Lemma \ref{l:interpolationineq} on each of the $K\lmac$-boxes in $(U+Q_{KL\lmac}(0))\setminus U$. Note that $(U+Q_{KL\lmac}(0))\setminus U\in\Pbox_{KL\lmac}$, i.e. it is the disjoint union of some cubes in $\Qbox_{KL\lmac}$. Let $Q'$ be one such cube. It is the disjoint union of $L^\dd$ cubes in $\Qbox_{K\lmac}$. Let $q$ be one of them, and let $B_q:=q\cap V\setminus\bigcup_{Q\in S^{(0)}_{K,M,\mathrm{bad}}(A)}Q$. We claim that $|B_q|\ge\frac12|q|=\frac12 K^\dd\lmac^\dd$. To see this, note that 
\[|q\setminus V|\le2\dd K^{\dd-1}\lmac^{\dd-1}K\lmic=2\dd K^\dd\lmac^{\dd-1}\lmic\,.\]
Furthermore, by assumption $Q'$ is not bad of type II. Therefore, each of its $K\lmac$-subcubes contains at most $\frac14\left(\frac{\lmac}{\lmic}\right)^\dd$ cubes in $S^{(0)}_{K,M,\mathrm{bad}}(A)$, i.e. 
\[\left|q\cap\bigcup_{Q\in S^{(0)}_{K,M,\mathrm{bad}}(A)}Q\right|\le\frac14\left(\frac{\lmac}{\lmic}\right)^\dd K^\dd\lmic^\dd=\frac14K^\dd\lmac^\dd=\frac14|q|\,.\]
 Therefore,
\[|B_{q}|\ge|q|-|q\setminus V|-\left|q\cap\bigcup_{Q\in S^{(0)}_{K,M,\mathrm{bad}}(A)}Q\right|\ge|q|-\frac14|q|-2\dd K^\dd\lmac^{\dd-1}\lmic\ge\frac12|q|\]
whenever $\e$ is small enough (in terms of $\dd$, $K$) so that $2K^\dd\lmic\le\frac14\lmac$. Thus, we can apply Lemma \ref{l:interpolationineq} on $q$ with $B=B_q$ and obtain

\begin{align*}
&\left\|\nabla_1u\I_{\cdot\notin\bigcup_{Q\in S^{(0)}_{K,M,\mathrm{bad}}(A)}Q}\right\|^2_{L^2(V\cap q)}\\
&\quad=\left\|\nabla_1u\I_{\cdot\in V\setminus\bigcup_{Q\in S^{(0)}_{K,M,\mathrm{bad}}(A)}Q}\right\|^2_{L^2(q)}\\
&\quad\le C_\dd K^2\lmac^2\left\|\nabla_1^2u\right\|^2_{L^2(V\cap q)}+\frac{C_\dd}{K^2\lmac^2}\left\|u\I_{\cdot\in V\setminus\bigcup_{Q\in S^{(0)}_{K,M,\mathrm{bad}}(A)}Q}\right\|^2_{L^2(q)}\,,
\end{align*}
and summing this over all $Q'$ and all $q\subset Q'$ we see that
\begin{align*}
&\left\|\nabla_1u\I_{\cdot\notin\bigcup_{Q\in S^{(0)}_{K,M,\mathrm{bad}}(A)}Q}\right\|^2_{L^2(V)}\\
&\quad\le C_\dd K^2\lmac^2\left\|\nabla_1^2u\right\|^2_{L^2(V)}+\frac{C_\dd}{K^2\lmac^2}\left\|u\I_{\cdot\notin\bigcup_{Q\in S^{(0)}_{K,M,\mathrm{bad}}(A)}Q}\right\|^2_{L^2(V)}\,.
\end{align*}
Using this estimate in \eqref{e:decay_one_annulus_3} we arrive at
\begin{equation}\label{e:decay_one_annulus_4}
\begin{aligned}
&\left\|\nabla_1^2u\right\|^2_{L^2(\Z^\dd\setminus(U+Q_{KL\lmac}(0)))}\\
&\quad\le\frac{C_\dd}{L^2}\left\|\nabla_1^2u\right\|^2_{L^2(V)}+\frac{C_\dd}{K^4L^2\lmac^4}\left\|u\I_{\cdot\notin\bigcup_{Q\in S^{(0)}_{K,M,\mathrm{bad}}(A)}Q}\right\|^2_{L^2(V)}\\
&\qquad+\frac{C_\dd}{(KL)^4\lmac^4}\left\|u\I_{\cdot\notin\bigcup_{Q\in S^{(0)}_{K,M,\mathrm{bad}}(A)}Q}\right\|^2_{L^2(V)}+\frac14\left\|\nabla_1^2u\right\|^2_{L^2(V)}\\
&\quad\le\left(\frac{C_\dd}{L^2}+\frac14\right)\left\|\nabla_1^2u\right\|^2_{L^2(V)}+\frac{C_\dd}{K^4L^2\lmac^4}\left\|u\I_{\cdot\notin\bigcup_{Q\in S^{(0)}_{K,M,\mathrm{bad}}(A)}Q}\right\|^2_{L^2(V)}
\end{aligned}
\end{equation}
where we used $L\ge1$ in the last step. 

Theorem \ref{t:localpoincare} with $R=K\lmic$ allows to bound

\begin{equation}\label{e:decay_one_annulus_5}
\begin{aligned}
\left\|u\I_{\cdot\notin\bigcup_{Q\in S^{(0)}_{K,M,\mathrm{bad}}(A)}Q}\right\|^2_{L^2(V)}&\le C_\dd K^\dd\lmic^\dd\left(1+\I_{\dd=4}\log(K\lmic)\right)\left\|\nabla_1^2u\right\|^2_{L^2(V+Q_{K\lmic}(0))}\\
&\le C_\dd K^\dd\lmic^\dd\left(1+\I_{\dd=4}\log(K\lmic)\right)\left\|\nabla_1^2u\right\|^2_{L^2((U+Q_{KL\lmac}(0))\setminus U)}\,.
\end{aligned}
\end{equation}
Now one easily checks that for any $\dd\ge4$ we have $\lmic^\dd\left(1+\I_{\dd=4}\log\lmic\right)\le C_\dd\lmac^4$. Thus, combining \eqref{e:decay_one_annulus_4} and \eqref{e:decay_one_annulus_5} we see that
\begin{align*}
&\left\|\nabla_1^2u\right\|^2_{L^2(\Z^\dd\setminus(U+Q_{KL\lmac}(0)))}\\
&\le\left(\frac{C_\dd}{L^2}+\frac14\right)\left\|\nabla_1^2u\right\|^2_{L^2((U+Q_{KL\lmac}(0))\setminus U)}+\frac{C_\dd K^{\dd-4}\left(1+\I_{\dd=4}\log K\right)}{L^2}\left\|\nabla_1^2u\right\|^2_{L^2((U+Q_{KL\lmac}(0))\setminus U)}\\
&\le\left(\frac{C_\dd K^{\dd-4}\left(1+\I_{\dd=4}\log K\right)}{L^2}+\frac14\right)\left\|\nabla_1^2u\right\|^2_{L^2((U+Q_{KL\lmac}(0))\setminus U)}
\end{align*}
which is \eqref{e:decay_one_annulus_int}.

\emph{Step 3: Proof of part b)}\\
We proceed completely analogously as in Step 1 and 2. The only difference is that we work with $\hat\eta:=1-\eta$ instead of $\eta$. The assumptions for $\nabla_1^2\eta$ carry over to $\nabla_1^2\hat\eta$, and so the proof carries over.

\end{proof}
Lemma \ref{l:decay_one_annulus} has the following straightforward corollary:
\begin{lemma}\label{l:decay_many_annuli}
Let $\dd\ge1$ and let $M_\dd$ be the constant from Lemma \ref{l:existence_cutoff}. Let $K$ be an odd multiple of 3 and $M\ge M_\dd$. Then there is a constant $L_{\dd,K}$ depending on $\dd$ and $K$ only such that for any odd integers $L\ge L_{\dd,K}$, $M\ge M_\dd$ and for all $\e$ sufficiently small (depending on $\dd$, $K$) the following holds: Let $U_0,\ldots,U_k\in\Pbox_{KL\lmac}$ be polymers. Suppose that for each $j\in\{0,\ldots,k-1\}$ we have $U_j+Q_{KL\lmac}(0)\subset U_{j+1}$ and \[\{Q\in\Qbox_{KL\lmac}\colon Q\subset (U_j+Q_{KL\lmac}(0))\setminus U\}\cap S^*_{K,L,M,\mathrm{bad}}(A)=\varnothing\,.\]
Then the following holds:
\begin{itemize}
	\item[a)]  If $u\colon\Z^\dd\to\R$ is a function such that $u=0$ on $\tilde \A\setminus U_0$ and $u\Delta_1^2u=0$ on $\Z^\dd\setminus U_0$, we have the estimate
\begin{equation}\label{e:decay_many_annuli_ext}
\left\|\nabla_1^2u\right\|^2_{L^2(\Z^\dd\setminus U_k)}\le\frac{1}{2^k}\left\|\nabla_1^2u\right\|^2_{L^2((U_0+Q_{KL\lmac}(0))\setminus U_0)}\,.
\end{equation}
	\item[b)]  If $u\colon\Z^\dd\to\R$ is a function such that $u=0$ on $U_k\cap\tilde \A$ and $u\Delta_1^2u=0$ on $U_k$, we have the estimate
\begin{equation}\label{e:decay_many_annuli_int}
\left\|\nabla_1^2u\right\|^2_{L^2(U_0)}\le\frac{1}{2^k}\left\|\nabla_1^2u\right\|^2_{L^2((U_{k-1}+Q_{KL\lmac}(0))\setminus U_{k-1})}\,.
\end{equation}
\end{itemize}

\end{lemma}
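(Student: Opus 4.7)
The plan is to iterate Lemma~\ref{l:decay_one_annulus} across the $k$ nested annuli. First, choose $L_{\dd,K}$ large enough (depending only on $\dd$ and $K$) so that for every odd $L\ge L_{\dd,K}$ one has
\[
\frac{C_\dd K^{\dd-4}(1+\I_{\dd=4}\log K)}{L^2}+\frac14\le\frac12\,,
\]
where $C_\dd$ is the constant from Lemma~\ref{l:decay_one_annulus}. With this choice, each single application of Lemma~\ref{l:decay_one_annulus} produces a contraction by at least a factor of $1/2$, which will yield $1/2^k$ after $k$ iterations.

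For part a), set $\mathrm{Ann}_j=(U_j+Q_{KL\lmac}(0))\setminus U_j$ and $\mathrm{Out}_j=\Z^\dd\setminus(U_j+Q_{KL\lmac}(0))$. First I would check that Lemma~\ref{l:decay_one_annulus}~a) applies at every scale $j$: the hypothesis $u=0$ on $\tilde\A\setminus U_j$ and $u\Delta_1^2u=0$ on $\Z^\dd\setminus U_j$ follow immediately from $U_0\subset U_j$; and the assumption on absence of bad boxes around $U_j$ is exactly the hypothesis of the present lemma. Using the nesting $U_j+Q_{KL\lmac}(0)\subset U_{j+1}$, we get $\mathrm{Ann}_{j+1}\subset\mathrm{Out}_j$. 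Hence iterating from $j=k-1$ downwards,
\begin{align*}
\left\|\nabla_1^2 u\right\|^2_{L^2(\Z^\dd\setminus U_k)}
&\le\left\|\nabla_1^2 u\right\|^2_{L^2(\mathrm{Out}_{k-1})}
\le\tfrac12\left\|\nabla_1^2 u\right\|^2_{L^2(\mathrm{Ann}_{k-1})}\\
&\le\tfrac12\left\|\nabla_1^2 u\right\|^2_{L^2(\mathrm{Out}_{k-2})}
\le\tfrac1{2^2}\left\|\nabla_1^2 u\right\|^2_{L^2(\mathrm{Ann}_{k-2})}
\le\cdots\le\tfrac1{2^k}\left\|\nabla_1^2 u\right\|^2_{L^2(\mathrm{Ann}_0)}\,,
\end{align*}
which is \eqref{e:decay_many_annuli_ext}.

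For part b), I would run the same iteration but in the opposite direction using Lemma~\ref{l:decay_one_annulus}~b). The hypotheses $u=0$ on $(U_j+Q_{KL\lmac}(0))\cap\tilde\A$ and $u\Delta_1^2u=0$ on $U_j+Q_{KL\lmac}(0)$ at each scale $j\le k-1$ follow from the assumptions at scale $k$ since $U_j+Q_{KL\lmac}(0)\subset U_{j+1}\subset\cdots\subset U_k$. Using $\mathrm{Ann}_j\subset U_{j+1}$, the iteration reads
\[
\left\|\nabla_1^2 u\right\|^2_{L^2(U_0)}
\le\tfrac12\left\|\nabla_1^2 u\right\|^2_{L^2(\mathrm{Ann}_0)}
\le\tfrac12\left\|\nabla_1^2 u\right\|^2_{L^2(U_1)}
\le\cdots\le\tfrac1{2^k}\left\|\nabla_1^2 u\right\|^2_{L^2(\mathrm{Ann}_{k-1})}\,,
\]
which is \eqref{e:decay_many_annuli_int}.

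There is no real obstacle beyond keeping the set inclusions straight; the only substantive input is the requirement that $L\ge L_{\dd,K}$ to absorb the $K$-dependent prefactor into the $1/2$ contraction constant, and the smallness of $\e$ needed is the one inherited from Lemma~\ref{l:decay_one_annulus}.
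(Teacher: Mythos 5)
Your proof is correct and is exactly the argument the paper has in mind; the paper simply states that one chooses $L$ large enough to make the contraction factor from Lemma~\ref{l:decay_one_annulus} at most $\tfrac12$ and iterates across the $k$ annuli. Your version usefully spells out the set inclusions ($\mathrm{Ann}_{j+1}\subset\mathrm{Out}_j$ for part a), $\mathrm{Ann}_j\subset U_{j+1}$ for part b)) and the verification that the hypotheses of the one-annulus lemma propagate to each $U_j$, but the substance is identical.
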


\begin{proof}
We choose $L$ large enough so that the prefactors on the right-hand side in \eqref{e:decay_one_annulus_ext} and \eqref{e:decay_one_annulus_int} become less than $\frac12$, and then apply Lemma \ref{l:decay_one_annulus} iteratively on each $U_j$.
\end{proof}

\subsection{Sparsity of bad boxes}\label{s:est_prob_cutoff}
In order to conclude Theorem \ref{t:decay_high_prob} from Lemma \ref{l:decay_many_annuli} it remains to show that with sufficiently high probability we can find sets $U_j$ as in that Lemma. For that purpose we need to show that bad cubes are sparse enough.

In fact, we will show that we can make the probability of a cube in $Q\in\Qbox_{KL\lmac}$ being bad arbitrarily small. We even have a slightly stronger result, namely that for each finite $T^*\subset \Qbox_{KL\lmac}$ we can control the probability that all cubes in $T^*$ are bad.
\begin{lemma}\label{l:badmacrocube}
Let $\dd\ge4$, let $p>0$ be arbitrary. Let $M\ge12$ be an odd integer. Then there is $K_{\dd,M,p}$ depending on $\dd$, $M$ and $p$ only with the following property: let $K\ge K_{\dd,M,p}$ be an odd multiple of 3, let $L$ be an odd integer, let $\e$ be small enough (depending on $\dd$, $L$, $M$ and $p$), and let $T^*$ be an arbitrary finite subset of $\Qbox_{KL\lmac}$. Then
\begin{align}
\zeta^\e_\Lambda\left(T^*\subset S^{*,I}_{K,L,M,\mathrm{bad}}(\A)\right)&\le p^{|T^*|}\label{e:badmacrocube_typeI}\,,\\
\zeta^\e_\Lambda\left(T^*\subset S^{*,II}_{K,L,M,\mathrm{bad}}(\A)\right)&\le p^{|T^*|}\label{e:badmacrocube_typeII}\,,\\
\zeta^\e_\Lambda\left(T^*\subset S^{*}_{K,L,M,\mathrm{bad}}(\A)\right)&\le 2(4p)^{\frac{|T^*|}{2}}\label{e:badmacrocube}\,.
\end{align}
\end{lemma}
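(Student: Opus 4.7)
Bound \eqref{e:badmacrocube} follows from \eqref{e:badmacrocube_typeI} and \eqref{e:badmacrocube_typeII}: if every $Q\in T^*$ is bad then at least $\lceil|T^*|/2\rceil$ of them are bad of a common type, so
\[\zeta^\e_\Lambda(T^*\subset S^{*}_{K,L,M,\mathrm{bad}}(\A))\le2\binom{|T^*|}{\lceil|T^*|/2\rceil}p^{\lceil|T^*|/2\rceil}\le 2\cdot 2^{|T^*|}p^{|T^*|/2}=2(4p)^{|T^*|/2}.\]
So it remains to prove the two single-type bounds. I would attack both with the same scheme, based on the empty-set consequence of Theorem~\ref{t:estimatespinnedset}, namely $\zeta^\e_\Lambda(\A\cap E=\varnothing)\le(1-p_{\dd,-})^{|E|}$ for any $E\subset\Lambda$ (from part~(a) if $\dd\ge5$, from part~(c) if $\dd=4$), combined with two structural reductions. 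First, sparsify $T^*$ via a $2^\dd$-coloring of the macro-grid into a subcollection $T^{**}\subset T^*$ of pairwise non-adjacent macro-cubes with $|T^{**}|\ge|T^*|/2^\dd$. Since "$Q$ is bad of type I or II" depends only on pinning within a neighborhood of $Q$ of diameter at most $KL\lmac/4$, and non-adjacent macro-cubes have such neighborhoods at pairwise positive $l^\infty$-distance (for $\e$ small), writing "$Q$ bad" as a union over witness configurations of empty-set events $\{\A\cap E_{Q,\mathrm{wit}}=\varnothing\}$ with the $E_{Q,\mathrm{wit}}$ in these neighborhoods and plugging into the empty-set bound, the product factorizes to give
\[\zeta^\e_\Lambda\Bigl(\bigcap_{Q\in T^*}\{Q\text{ bad}\}\Bigr)\le\tilde\rho^{|T^{**}|}\le\tilde\rho^{|T^*|/2^\dd},\quad\text{where }\tilde\rho:=\sum_{\mathrm{wit}}(1-p_{\dd,-})^{|E_{Q,\mathrm{wit}}|},\]
so it suffices to prove the single-$Q$ witness-sum bound $\tilde\rho\le p^{2^\dd}$.

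The decisive ingredient is a scan-statistic estimate on $\rho:=\zeta^\e_\Lambda(q\text{ bad})$ for $q\in\Qbox_{K\lmic}$, \emph{uniformly in $\e$}: $\rho\le(\dd+1)K^\dd\exp(-c_\dd K^\dd)$. The naive union over the $(\dd+1)K^\dd\lmic^\dd$ anchor/direction witnesses $(x,j)\mapsto C_{x,j}:=(x+\Xi_j)\cap\{y\colon|y-x|_1\le K\lmic\}$ carries an offending $\lmic^\dd$-factor that breaks fixed-$K$ estimates as $\e\to0$. To remove it, I would partition $q$ into its $K^\dd$ subcubes $q'$ of side $\lmic$ and observe that $\exists x\in q'\colon C_{x,j}\cap\tilde\A=\varnothing$ implies the intersection $\underline C_{q',j}:=\bigcap_{x\in q'}C_{x,j}=\{a\colon a-q'\subset\Xi_j\cap\{y\colon|y|_1\le K\lmic\}\}$ is also empty. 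A short geometric computation, using that $\mathrm{diam}(q')\sim\lmic\ll K\lmic$ so eroding the cone by $q'$ only trims a boundary layer whose thickness is controlled by the angular opening $\kappa$ of $\Xi_j$, gives $|\underline C_{q',j}|\ge c_\dd K^\dd\lmic^\dd$ for $K\ge K_0(\dd)$. Since $p_{\dd,-}\lmic^\dd$ is a dimensional constant, the empty-set bound gives $\zeta^\e_\Lambda(\underline C_{q',j}\cap\tilde\A=\varnothing)\le\exp(-c'_\dd K^\dd)$, and a union over the $K^\dd$ subcubes and the $\dd+1$ directions yields the claimed $\rho$-bound with no $\lmic$-dependence. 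The same argument applied on a spacing-$3$ sublattice of micros (so that the $\underline C$-sets lie in disjoint $K\lmic$-neighborhoods) gives $\zeta^\e_\Lambda(\text{all micros in }S\text{ bad})\le\rho^{|S|}$, i.e.\ effective Bernoulli independence across well-separated micros.

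The two bounds then follow. For \eqref{e:badmacrocube_typeII}, applying Lemma~\ref{l:tailbound} to the sublattice bad-micro counts and unioning over the $3^\dd$ sublattices and the $L^\dd$ subcubes $Q'\subset Q$ gives
\[\tilde\rho_{II}\le L^\dd\cdot 3^\dd(16\rho)^{N/(4\cdot 3^\dd)},\quad N:=(\lmac/\lmic)^\dd,\]
which is $\le p^{2^\dd}$ once $K$ is large enough (depending on $\dd$) that $16\rho\le1/2$, and $\e$ is small enough that $N\gtrsim|\log p|+\log L$. For \eqref{e:badmacrocube_typeI}, unrolling the inductive construction of $S^{(j_*(\e))}$ shows that if $\bar Q\in S^{(j_*(\e))}$ meets $Q$, then $\bar Q$ contains a binary tree of clustered boxes of depth $j_*(\e)$ whose $2^{j_*(\e)}$ leaves are pairwise disjoint bad micros; the number of such trees inside a fixed $\bar Q$ is at most $\prod_{j=1}^{j_*(\e)}\bigl(C_\dd(\ell_j/\ell_{j-1})^{2\dd}\bigr)^{2^{j_*(\e)-j}}=\exp(C_\dd\dd\log M\cdot 2^{j_*(\e)})$, using $\log(\ell_j/\ell_{j-1})=(3j^2-3j+1)\log M$ together with the summability of $(3j^2-3j+1)2^{-j}$. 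Sparsifying the leaves to spacing $3$ gives per-tree probability $\le\exp(-c_\dd K^\dd 2^{j_*(\e)}/3^\dd)$, and combining with the $\le C(L\lmac/\lmic)^\dd$ positions of $\bar Q$ intersecting $Q$ yields
\[\tilde\rho_I\le C(L\lmac/\lmic)^\dd\exp\bigl(2^{j_*(\e)}(C_\dd\dd\log M-c_\dd K^\dd/3^\dd)\bigr),\]
which is $\le p^{2^\dd}$ once $K^\dd\ge C'_\dd 3^\dd\dd\log M$ (this fixes the $M$-dependence of $K_{\dd,M,p}$) and $\e$ is small enough that $2^{j_*(\e)}\to\infty$ defeats the polynomial prefactor. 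The hardest step is the scan-statistic bound itself, where the intersection trick is essential to kill the $\lmic^\dd$-factor from the witness union; the other delicate point is the type-I tree count, where the specific scale choice $\ell_j=M^{j^3}K\lmic$ in the multiscale construction is precisely tuned so that $\sum(3j^2-3j+1)2^{-j}$ converges, keeping the combinatorial cost at $\exp(O(2^{j_*(\e)}))$, which is then beaten by the per-leaf scan-statistic probability for $K$ large compared to $\log M$.
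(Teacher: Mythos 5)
Your proposal is correct and, after unfolding the auxiliary Lemma \ref{l:badmicrocube} (which the paper proves separately), it follows essentially the same route as the paper's proof. The scan-statistic estimate via the cone-intersection sets $\Xi_i(q)=\bigcap_{y\in q}(y+\Xi_i)$ to kill the $\lmic^\dd$ witness factor is precisely the paper's key device in the proof of Lemma \ref{l:badmicrocube}a); the type-I bound by unrolling the binary tree of clustered boxes and tracking the combinatorial cost against $\sum_j(3j^2-3j+1)2^{-j}<\infty$ reproduces the paper's recursion with exponent $s_j\le30\cdot2^j$; and the type-II bound by Chernoff (Lemma \ref{l:tailbound}) applied on a sparse sublattice of micro-cubes inside each $K\lmac$-subcube matches Step~1 of the paper's proof. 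The differences are bookkeeping rather than substance: you sparsify $T^*$ at the macroscopic scale (losing a power $1/2^\dd$ there), whereas the paper observes that the $K\lmac$-subcubes $Q'_Q$ chosen inside distinct macro-cubes are already disjoint and pushes the sparsification entirely into the microscopic Lemma~\ref{l:badmicrocube}; you use spacing-$3$ sublattices where the paper uses a $2^\dd$-coloring; and your derivation of \eqref{e:badmacrocube} picks only the extremal binomial term $\binom{|T^*|}{\lceil|T^*|/2\rceil}\le2^{|T^*|}$ rather than invoking Lemma~\ref{l:tailbound}, which is a valid shortcut giving the same $2(4p)^{|T^*|/2}$.
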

In order to prove this Lemma, we will have go to into the definition of the bad cubes of type I and II, and, in particular, we will have to understand how rare cubes in $S^{(j)}_{K,M,\mathrm{bad}}(\A)$ are. This is quantified in the following Lemma.

\begin{lemma}\label{l:badmicrocube}
Let $\dd\ge4$. There is a constant $K_\dd'$ with the following property: Let $\e$ be small enough (depending on $\dd$ only) so that the conclusion of Theorem \ref{t:estimatespinnedset} holds. Let $M\ge12$ be an odd integer and assume that $K\ge K_\dd'$ is an odd multiple of 3. Let $j\ge0$. Then we have the following estimates.
\begin{itemize}
\item[a)] If $j=0$ and $T^{(0)}\subset\Qbox_{\ell_0}$ is a finite subset, then
\begin{equation}\label{e:badmicrocube_j=0}
\zeta^\e_\Lambda\left(T^{(0)}\subset S^{(0)}_{K,M,\mathrm{bad}}(\A)\right)\le\left(K^{\frac{\dd}{2^\dd}}(\dd+1)^{\frac{1}{2^\dd}}\exp\left(-\frac{K^\dd}{C_\dd}\right)\right)^{|T^{(0)}|}\,.
\end{equation}
\item[b)] If $j>0$ and $T^{(j)}\subset\Qbox^\#_{\ell_j}$ is a finite subset such that the elements of $T^{(j)}$ are pairwise disjoint, then
\begin{equation}\label{e:badmicrocube_j>0}
\zeta^\e_\Lambda\left(T^{(j)}\subset S^{(j)}_{K,M,\mathrm{bad}}(\A)\right)\le\left(3^{(2^j-2)\dd}M^{s_j\dd}\left(K^{\frac{\dd}{2^\dd}}(\dd+1)^{\frac{1}{2^\dd}}\exp\left(-\frac{K^\dd}{C_\dd}\right)\right)^{2^j}\right)^{|T^{(j)}|}
\end{equation}
where $s_j:=j^3+\sum_{m=0}^jm^32^{j-m}$.
\end{itemize}
\end{lemma}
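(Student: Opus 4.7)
The proof is by induction on $j$. The base case $j=0$ combines a geometric discretization with a $2^\dd$-colouring to exploit spatial separation, while the inductive step $j\ge 1$ is a union bound over pairs of sub-cubes witnessing clustering.

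\textbf{Base case ($j=0$).} The event $Q\in S^{(0)}_{K,M,\mathrm{bad}}(\A)$ means that $\tilde\A\cap C(x,i)=\varnothing$ for some $x\in Q$ and $i$, where $C(x,i):=(x+\Xi_i)\cap B^{l^1}_{K\lmic}(x)$. A naive union bound over $x\in Q$ gives $K^\dd\lmic^\dd(\dd+1)$ choices, producing a fatal $\lmic^\dd$ factor. To remove it, I would cover $Q$ by a net $X_Q$ of spacing $\lmic$ with $|X_Q|\le C_\dd K^\dd$, and show by elementary geometry that $Q\in S^{(0)}_{K,M,\mathrm{bad}}(\A)$ implies the existence of $x'\in X_Q$ and $i$ such that the truncated, angularly shrunken cone
\[
\tilde C(x',i):=(x'+\Xi_i^{1/2})\cap B^{l^1}_{K\lmic/2}(x')\setminus B^{l^1}_{c_\dd\lmic}(x')
\]
is free of $\tilde\A$; here $\Xi_i^{1/2}$ has half the angular radius of $\Xi_i$ and $c_\dd\asymp\dd/\sin(\kappa/2)$ guarantees that the angular perturbation from replacing $x\in Q$ by its nearest net point $x'$ (with $|x-x'|_\infty\le\lmic/2$) is at most $\kappa/2$. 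For $K\ge K_\dd'$ the truncated cone still has $\ge c'_\dd K^\dd\lmic^\dd$ lattice points. Next I would $2^\dd$-colour $\Qbox_{\ell_0}$ by the parity of each centre coordinate divided by $K\lmic$: within one colour class the centres are at $l^\infty$-distance $\ge 2K\lmic$, so the $x'_Q$ are at $l^\infty$-distance $\ge K\lmic$ pairwise, and each point of $\Z^\dd$ lies in at most $2^\dd$ of the balls $B^{l^1}_{K\lmic/2}(x'_Q)$.

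For $T$ in one colour class, union bound over $(x'_Q,i_Q)_{Q\in T}$ ($(C_\dd K^\dd(\dd+1))^{|T|}$ terms) plus Theorem~\ref{t:estimatespinnedset}(c) applied to $\bigcup_Q\tilde C(x'_Q,i_Q)$ (of size $\ge c''_\dd K^\dd\lmic^\dd|T|$ by the multiplicity bound), together with $p_{\dd,-}\lmic^\dd\ge1/C_\dd$, yields $\zeta^\e_\Lambda(T\subset S^{(0)}_{\mathrm{bad}})\le(K^\dd(\dd+1)\exp(-K^\dd/C_\dd))^{|T|}$. For arbitrary $T^{(0)}$, pigeonhole gives a colour class $T_*$ with $|T_*|\ge|T^{(0)}|/2^\dd$; the base $q:=K^\dd(\dd+1)\exp(-K^\dd/C_\dd)$ is $<1$ for $K$ large, so
\[
\zeta^\e_\Lambda(T^{(0)}\subset S^{(0)}_{\mathrm{bad}})\le \zeta^\e_\Lambda(T_*\subset S^{(0)}_{\mathrm{bad}})\le q^{|T_*|}\le q^{|T^{(0)}|/2^\dd}=(q^{1/2^\dd})^{|T^{(0)}|},
\]
which after absorbing numerical constants into $C_\dd$ matches \eqref{e:badmicrocube_j=0}.

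\textbf{Inductive step ($j\ge1$).} Assume \eqref{e:badmicrocube_j>0} at level $j-1$. If $Q\in T^{(j)}\subset S^{(j)}_{K,M,\mathrm{bad}}(\A)$, then by construction of $S^{(j)}$ there exist disjoint $Q'_Q,Q''_Q\in S^{(j-1)}_{K,M,\mathrm{bad}}(\A)$ with $Q'_Q\cup Q''_Q\subset Q$. The number of sub-cubes of $\Qbox^\#_{\ell_{j-1}}$ fitting in a cube of side $\ell_j$ is at most $(3\ell_j/\ell_{j-1})^\dd=3^\dd M^{\dd(3j^2-3j+1)}$, so the number of ordered pairs per $Q$ is at most $3^{2\dd}M^{2\dd(3j^2-3j+1)}$. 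Union-bounding over these choices, the set $T^{(j-1)}=\bigcup_{Q\in T^{(j)}}\{Q'_Q,Q''_Q\}$ has $|T^{(j-1)}|=2|T^{(j)}|$ with pairwise disjoint elements, and the inductive hypothesis gives
\begin{align*}
\zeta^\e_\Lambda(T^{(j)}\subset S^{(j)}_{\mathrm{bad}})&\le\bigl(3^{2\dd}M^{2\dd(3j^2-3j+1)}\bigr)^{|T^{(j)}|}\Bigl(3^{(2^{j-1}-2)\dd}M^{s_{j-1}\dd}(\cdots)^{2^{j-1}}\Bigr)^{2|T^{(j)}|}\\
&=\Bigl(3^{(2^j-2)\dd}M^{(2(3j^2-3j+1)+2s_{j-1})\dd}(\cdots)^{2^j}\Bigr)^{|T^{(j)}|},
\end{align*}
which matches \eqref{e:badmicrocube_j>0} because the recursion $s_j=2(3j^2-3j+1)+2s_{j-1}$ with $s_0=0$ admits the claimed closed form $s_j=j^3+\sum_{m=0}^jm^32^{j-m}$ by a direct induction.

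\textbf{Main obstacle.} The delicate point is the base-case geometric reduction: the inner truncation, the outer truncation, and the cone shrinkage must be tuned simultaneously so that (i) a bad event for $x\in Q$ implies a bad event at a nearby $x'\in X_Q$, and (ii) the truncated cone still has $\gtrsim K^\dd\lmic^\dd$ lattice points; the $2^\dd$-colouring is also the source of the unusual $\dd/2^\dd$ exponent appearing in the conclusion. A subsidiary accounting issue is the transition from the base case ($3^0$ factor) to level $j=1$ (naive count would give $3^{2\dd}$); the mismatch, as well as the prefactors $(\dd+1)^{1/2^\dd}$ and $C_\dd^{1/2^\dd}$ arising in the base case, are absorbed into the exponential $\exp(-K^\dd/C_\dd)$ by enlarging the latter $C_\dd$ slightly, which is permissible for $K\ge K_\dd'$ large. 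Boundary effects (cones protruding outside $\Lambda$) only decrease the probability of being bad since $\Z^\dd\setminus\Lambda\subset\tilde\A$ and thus pose no obstruction.
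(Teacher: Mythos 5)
Your proposal is correct and follows essentially the same approach as the paper: discretize the choice of base point into $O(K^\dd)$ candidates with shrunken truncated cones, apply a $2^\dd$-colouring to enforce disjointness before invoking Theorem~\ref{t:estimatespinnedset} (the paper partitions each $\ell_0$-box into its $K^\dd$ microboxes of side $\lmic$ and uses the cones $\Xi_i(q)=\bigcap_{y\in q}(y+\Xi_i)$, which is your net construction in slightly different clothing), and for $j\ge1$ a union bound over ordered pairs of disjoint bad sub-cubes. The one place the paper is slightly more careful is the step $j=0\to j=1$: since $S^{(0)}$ lives in $\Qbox_{\ell_0}$ rather than $\Qbox^\#_{\ell_0}$, the ordered-pair count there is $M^{2\dd}$ with no factor of $3^{2\dd}$, which is why the paper handles $j=1$ as a separate base case before inducting from $j\ge 2$; your unified count $3^{2\dd}M^{2\dd(3j^2-3j+1)}$ overcounts at $j=1$ and propagates an extra $3^{\dd 2^{j-1}}$ through the recursion, which, as you note, can be absorbed by enlarging $C_\dd$ in the exponential but does not reproduce the stated formula exactly.
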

Here part a) is rather easy to show. If $Q\in S^{(0)}_{K,M,\mathrm{bad}}(\A)$, then there are too few pinned points around $Q$, and the probability for that can be estimated using Theorem \ref{t:estimatesfield}. The crucial point is that by choosing $K$ large this probability can be made arbitrarily small.

Part b) then follows by induction. Each cube in $S^{(j)}_{K,M,\mathrm{bad}}(\A)$ contains at least two cubes in $S^{(j-1)}_{K,M,\mathrm{bad}}(\A)$, and so if the latter cubes are rare, an union bound will show that the former cubes will be rare as well.

\begin{proof} We show first part a), then part b) for $j=1$, and then use that result to start an induction that will yield part b) for $j>1$ as well.

\emph{Step 1: Proof of part a)}\\
This is similar to the proof of Lemma \ref{l:localpoincare_prob}. However, we want a uniform estimate over the cubes in $T^{(0)}$, and so we need to be more careful.

Let $Q\in T^{(0)}$, and let $q\in\Qmic$ be such that $q\subset Q$. Suppose that $q$ has centre $x\in\Z^\dd$. For $i\in\{1,\ldots,\dd+1\}$ consider the sets $\Xi_i(q)=\bigcap_{y\in q}(y+\Xi_i)$ and $\Xi_{i,K\lmic/2}(q)=\Xi_i(q)\cap Q_{K\lmic/2}(x)$.
\begin{figure}[ht]
\centering
\begin{tikzpicture}
\path[use as bounding box] (0,0) rectangle (7,3.5);
\clip (0,0) rectangle (7,3.5);
\draw (0,0) rectangle (1,1);
\coordinate (A) at (0,1);
\coordinate (B) at (1,0);
\draw[dashed] (0,0)--++(10:10);
\draw[dashed] (0,0)--++(30:10);
\draw[dashed] (0,1)--++(10:10) node (C) {};
\draw[dashed] (0,1)--++(30:10);
\draw[dashed] (1,0)--++(10:10);
\draw[dashed] (1,0)--++(30:10) node (D) {};
\draw[dashed] (1,1)--++(10:10);
\draw[dashed] (1,1)--++(30:10);
\draw[name path=A--C,draw=none] (A)--(C.center);
\draw[name path=B--D,draw=none] (B)--(D.center);
\path[name intersections={of=A--C and B--D,by=E}];
\draw[pattern=north east lines,draw=none] (C.center)--(D.center)--(E)--cycle;
\node at (0.4,0.5) {$q$};
\node at (6.6,1.9) {$\Xi_i(q)$};
\end{tikzpicture}
\caption{A set $\Xi_i(q)$, given as the intersection of $x+\Xi_i$ for $x\in q$.}\label{f:Xi_i(q)}
\end{figure}
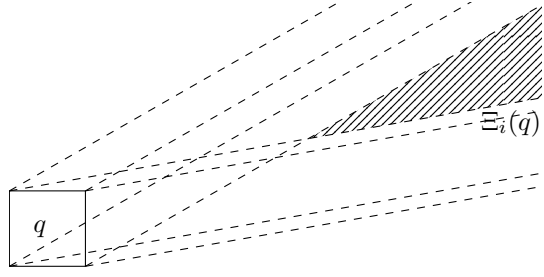

 The set $\Xi_i(q)$ is an intersection of translates of the same cone, where the tips of the cone range over the set $q$ of diameter $\le\sqrt{\dd}\lmic$ (cf. Figure \ref{f:Xi_i(q)}). As soon as $K\ge K_\dd'$ for some dimensional constant $K_\dd'$ the fraction of points in $Q_{K\lmic/2}(x)$ that are in $\Xi_{i,K\lmic/2}(q)$ is bounded below. We fix such a $K_\dd'$. In other words, we have the estimate
\begin{equation}\label{e:badmicrocube_1}
|\Xi_{i,K\lmic/2}(q)|\ge\frac{1}{C_\dd}(K\lmic)^\dd
\end{equation}
for $K\ge K_\dd'$. Furthermore, if $\Xi_{i,K\lmic/2}(q)\cap\tilde \A\neq\varnothing$ then $d^{(i)}(y,\tilde \A)\le \frac K2\lmic+\frac12\lmic\le K\lmic$ for all $y\in q$.

The preceding discussion implies that if $\Xi_{i,K\lmic/2}(q)\cap\tilde \A\neq\varnothing$ for all $i$ then $d_*(y,\A)\le K\lmic$ for all $y\in q$. Thus, if $\Xi_{i,K\lmic/2}(q)\cap\tilde \A\neq\varnothing$ holds for all $i$ and all $q\subset Q$ then $Q\notin S^{(0)}_{K,M,\mathrm{bad}}(\A)$. Using this we can write
\begin{align*}
&\zeta^\e_\Lambda\left(T^{(0)}\subset S^{(0)}_{K,M,\mathrm{bad}}(\A)\right)\\
&\quad=\zeta^\e_\Lambda\left(\forall Q\in T^{(0)}\ \exists q_Q\in\Qmic\ \exists i_Q\in\{1,\ldots,\dd+1\}\text{ with }q_Q\subset Q,\Xi_{i_Q,K\lmic/2}(q_Q)\cap\tilde\A=\varnothing\right)\\
&\quad=\zeta^\e_\Lambda\bigg(\forall Q\in T^{(0)}\ \exists q_Q\in\Qmic\ \exists i_Q\in\{1,\ldots,\dd+1\}\text{ with }q_Q\subset Q\\
&\quad\quad\quad\quad\quad\quad\text{ such that }\bigcup_{Q\in T^{(0)}}\Xi_{i_Q,K\lmic/2}(q_Q)\cap\tilde\A=\varnothing\bigg)\,.
\end{align*}
We can estimate this probability by summing over all choices $\underline q=\left(q_Q\right)_{Q\in T^{(0)}}\in(\Qmic)^{T^{(0)}}$ and $\underline{i}=\left(i_Q\right)_{Q\in T^{(0)}}\in\{1,\ldots,\dd+1\}^{T^{(0)}}$ to find
\begin{align*}
\zeta^\e_\Lambda\left(T^{(0)}\subset S^{(0)}_{K,M,\mathrm{bad}}(\A)\right)&\le\sum_{\substack{\underline{q}\in(\Qmic)^{T^{(0)}}\\q_Q\subset Q\ \forall Q\in T^{(0)}}}\sum_{\underline{i}\in\{1,\ldots,\dd+1\}^{T^{(0)}}}\zeta^\e_\Lambda\left(\bigcup_{Q\in T^{(0)}}\Xi_{i_Q,K\lmic/2}(q_Q)\cap\tilde\A=\varnothing\right)\,.
\end{align*}
Assume for the moment that the elements of $T^{(0)}$ are well-separated in the sense that for any $Q,Q'\in T^{(0)}$ with $Q\neq Q'$ we have $d_\infty(Q,Q')\ge K\lmic$. Because $\Xi_{i_Q,K\lmic/2}(q_Q)$ is a subset of $q_Q+Q_{K\lmic/2}(0)$, it is a subset of the cube with sidelength $2K\lmic$ concentric to $Q$. In particular, by our temporary assumption, if $Q\neq Q'$, then $\Xi_{i_Q,K\lmic/2}(q_Q)$ and $\Xi_{i_{Q'},K\lmic/2}(q_{Q'})$ are disjoint. Thus, \eqref{e:badmicrocube_1} implies
\[\left|\bigcup_{Q\in T^{(0)}}\Xi_{i_Q,K\lmic/2}(q_Q)\right|\ge\frac{|T^{(0)}|}{C_\dd}(K\lmic)^\dd\,.\]
Using Theorem \ref{t:estimatespinnedset} we now see
\begin{align*}
\zeta^\e_\Lambda\left(T^{(0)}\subset S^{(0)}_{K,M,\mathrm{bad}}(\A)\right)&\le\sum_{\substack{\underline{q}\in(\Qmic)^{T^{(0)}}\\q_Q\subset Q\ \forall Q\in T^{(0)}}}\sum_{\underline{i}\in\{1,\ldots,\dd+1\}^{T^{(0)}}}(1-p_{\dd,-})^{\left|\bigcup_{Q\in T^{(0)}}\Xi_{i_Q,K\lmic/2}(q_Q)\right|}\\
&\le\sum_{\substack{\underline{q}\in(\Qmic)^{T^{(0)}}\\q_Q\subset Q\ \forall Q\in T^{(0)}}}\sum_{\underline{i}\in\{1,\ldots,\dd+1\}^{T^{(0)}}}\exp\left(-p_{\dd,-}\frac{|T^{(0)}|(K\lmic)^\dd}{C_\dd}\right)\\
&\le\sum_{\substack{\underline{q}\in(\Qmic)^{T^{(0)}}\\q_Q\subset Q\ \forall Q\in T^{(0)}}}\sum_{\underline{i}\in\{1,\ldots,\dd+1\}^{T^{(0)}}}\exp\left(-p_{\dd,-}\frac{|T^{(0)}|(K\lmic)^\dd}{C_\dd}\right)\,.
\end{align*}
In any dimension $\dd\ge4$ we have $p_{\dd,-}\lmic^\dd\ge\frac{1}{C_\dd}$. We also have $(K^\dd)^{|T^{(0)}|}$ choices for $\underline{q}$, and $(\dd+1)^{|T^{(0)}|}$ choices for $\underline{i}$, and so
\begin{equation}\label{e:badmicrocube_2}
\begin{aligned}
\zeta^\e_\Lambda\left(T^{(0)}\subset S^{(0)}_{K,M,\mathrm{bad}}(\A)\right)&\le(K^\dd)^{|T^{(0)}|}(\dd+1)^{|T^{(0)}|}\exp\left(-\frac{|T^{(0)}|K^\dd}{C_\dd}\right)\,.
\end{aligned}
\end{equation}

This estimate was derived under the assumption that $T^{(0)}$ is such that for any $Q,Q'\in T^{(0)}$ with $Q\neq Q'$ we have $d_\infty(Q,Q')\ge K\lmic$. In general, this will not be the case. However, we can partition $T^{(0)}$ into $2^\dd$ subsets $T^{(0)}_i$ for $i\in\{1,\ldots,2^\dd\}$ such that for any $i$ and any $Q,Q'\in T^{(0)}_i$ with $Q\neq Q'$ we have $d_\infty(Q,Q')\ge K\lmic$. At least one of these subsets, say $T^{(0)}_{i_*}$, will contain at least $\frac{|T^{(0)}|}{2^\dd}$ boxes. Then we can apply the estimate \eqref{e:badmicrocube_2} to $T^{(0)}_{i_*}$ and obtain
\begin{align*}
\zeta^\e_\Lambda\left(T^{(0)}\subset S^{(0)}_{K,M,\mathrm{bad}}(\A)\right)&\le\zeta^\e_\Lambda\left(T^{(0)}_{i_*}\subset S^{(0)}_{K,M,\mathrm{bad}}(\A)\right)\\
&\le(K^\dd)^{\frac{|T^{(0)}|}{2^\dd}}(\dd+1)^{\frac{|T^{(0)}|}{2^\dd}}\exp\left(-\frac{|T^{(0)}|K^\dd}{2^\dd C_\dd}\right)\\
&\le\left(K^{\frac{\dd}{2^\dd}}(\dd+1)^{\frac{1}{2^\dd}}\exp\left(-\frac{K^\dd}{C_\dd}\right)\right)^{|T^{(0)}|}
\end{align*}
which is \eqref{e:badmicrocube_j=0}.

\emph{Step 2: Proof of part b) for $j=1$}\\
We want to prove \eqref{e:badmicrocube_j>0} by induction on $j$. In principle, we would want to use \eqref{e:badmicrocube_j=0} as the base case. However, that statement is for $\Qbox_{\ell_0}$ instead of $\Qbox^\#_{\ell_0}$, and so we first derive \eqref{e:badmicrocube_j>0} for $j=1$ from \eqref{e:badmicrocube_j=0} and then use this assertion to start our induction.

Let $Q\in T^{(1)}$. By construction $Q\in S^{(1)}_{K,M,\mathrm{bad}}(\A)$ if and only if there are at least two disjoint cubes $q,q'\in S^{(0)}_{K,M,\mathrm{bad},\mathrm{clust}}(\A)\subset S^{(0)}_{K,M,\mathrm{bad}}(\A)$ such that $q,q'\subset Q$, and so
\begin{align*}
&\zeta^\e_\Lambda\left(T^{(1)}\subset S^{(1)}_{K,M,\mathrm{bad}}(\A)\right)\\
&\quad=\zeta^\e_\Lambda\left(\forall Q\in T^{(1)}\ \exists q_Q,q'_Q\in\Qbox_{\ell_0}\text{ with }q_Q\cap q_Q'=\varnothing,q_Q\cup q_Q'\subset Q,\{q_Q,q_Q'\}\subset S^{(0)}_{K,M,\mathrm{bad}}(\A)\right)\\
&\quad=\zeta^\e_\Lambda\Bigg(\forall Q\in T^{(1)}\ \exists q_Q,q'_Q\in\Qbox_{\ell_0}\text{ with }q_Q\cap q_Q'=\varnothing,q_Q\cup q_Q'\subset Q\\
&\qquad\qquad\qquad\text{ such that }\bigcup_{Q\in T^{(1)}}\{q_Q,q_Q'\}\subset S^{(0)}_{K,M,\mathrm{bad}}(\A)\Bigg)\,.
\end{align*}
As in Step 1 we can estimate this expression by the sum over all possibilities for $q_Q,q'_Q$ to find
\[\zeta^\e_\Lambda\left(T^{(1)}\subset S^{(1)}_{K,M,\mathrm{bad}}(\A)\right)\le\sum_{\substack{\underline{q},\underline{q'}\in(\Qbox_{\ell_0})^{T^{(1)}}\\q_Q\cap q_Q'=\varnothing,q_Q\cup q_Q'\subset Q\ \forall Q\in T^{(1)}}}\zeta^\e_\Lambda\left(\bigcup_{Q\in T^{(1)}}\{q_Q,q_Q'\}\subset S^{(0)}_{K,M,\mathrm{bad}}(\A)\right)\,.
\]
By assumption the elements of $T^{(1)}$ are pairwise disjoint. Therefore, all $q_Q$ and $q_Q'$ are pairwise distinct. Hence, the set $\bigcup_{Q\in T^{(1)}}\{q_Q,q_Q'\}$ has cardinality $2|T^{(1)}|$, and so \eqref{e:badmicrocube_j=0} implies
\begin{align*}
\zeta^\e_\Lambda\left(T^{(1)}\subset S^{(1)}_{K,M,\mathrm{bad}}(\A)\right)&\le\sum_{\substack{\underline{q},\underline{q'}\in(\Qbox_{\ell_0})^{T^{(1)}}\\q_Q\cap q_Q'=\varnothing,q_Q\cup q_Q'\subset Q\ \forall Q\in T^{(1)}}}\left(K^{\frac{\dd}{2^\dd}}(\dd+1)^{\frac{1}{2^\dd}}\exp\left(-\frac{K^\dd}{C_\dd}\right)\right)^{2|T^{(1)}|}\\
&\le\left(\left(\frac{\ell_1}{\ell_0}\right)^{2\dd}\right)^{|T^{(1)}|}\left(K^{\frac{\dd}{2^\dd}}(\dd+1)^{\frac{1}{2^\dd}}\exp\left(-\frac{K^\dd}{C_\dd}\right)\right)^{2|T^{(1)}|}\\
&=M^{2\dd|T^{(1)}|}\left(K^{\frac{\dd}{2^\dd}}(\dd+1)^{\frac{1}{2^\dd}}\exp\left(-\frac{K^\dd}{C_\dd}\right)\right)^{2|T^{(1)}|}\\
&=\left(M^{2\dd}\left(K^{\frac{\dd}{2^\dd}}(\dd+1)^{\frac{1}{2^\dd}}\exp\left(-\frac{K^\dd}{C_\dd}\right)\right)^2\right)^{|T^{(1)}|}
\end{align*}
which is \eqref{e:badmicrocube_j>0} (as $s_1=2$).

\emph{Step 3: Proof of part b) for $j>1$}\\
We proceed by induction on $j$, using the result from Step 2 as the base case. That is, we assume that \eqref{e:badmicrocube_j>0} holds for $j-1$, and we want to conclude that it also holds for $j$. The argument for this is analogous to the previous step. The only difference is that now the smaller cubes $q,q'$ live in $\Qbox^\#_{\ell_{j-1}}$ instead of $\Qbox_{\ell_{j-1}}$, and so the number of possible $\underline{q},\underline{q'}$ is now larger. Arguing as in Step 2, we obtain, using the assumption that the elements of $|T^{(j)}|$ are pairwise disjoint, that
\begin{equation}\label{e:badmicrocube_3}
\begin{aligned}
&\zeta^\e_\Lambda\left(T^{(j)}\subset S^{(j)}_{K,M,\mathrm{bad}}(\A)\right)\\
&\quad\le\sum_{\substack{\underline{q},\underline{q'}\in(\Qbox^\#_{\ell_{j-1}})^{T^{(j)}}\\q_Q\cap q_Q'=\varnothing,q_Q\cup q_Q'\subset Q\ \forall Q\in T^{(j)}}}\zeta^\e_\Lambda\left(\bigcup_{Q\in T^{(j)}}\{q_Q,q_Q'\}\subset S^{(j-1)}_{K,M,\mathrm{bad}}(\A)\right)\\
&\quad\le\left(\left(\frac{3\ell_j}{\ell_{j-1}}\right)^{2\dd}\right)^{|T^{(j)}|}\left(3^{(2^{j-1}-2)\dd}M^{s_{j-1}\dd}\left(K^{\frac{\dd}{2^\dd}}(\dd+1)^{\frac{1}{2^\dd}}\exp\left(-\frac{K^\dd}{C_\dd}\right)\right)^{2^{j-1}}\right)^{2|T^{(j)}|}\\
&\quad=\left(3^{2\dd+2(2^{j-1}-2)\dd}M^{2(j^3-(j-1)^3)\dd+2s_{j-1}\dd}\left(K^{\frac{\dd}{2^\dd}}(\dd+1)^{\frac{1}{2^\dd}}\exp\left(-\frac{M^\dd}{C_\dd}\right)\right)^{2^j}\right)^{|T^{(j)}|}\,.
\end{aligned}
\end{equation}
It remains to observe that $2\dd+2(2^{j-1}-2)\dd=(2^j-2)\dd$ and
\begin{align*}
2(j^3-(j-1)^3)\dd+2s_{j-1}\dd&=2\dd\left(j^3-(j-1)^3+(j-1)^3+\sum_{m=0}^{j-1}m^32^{j-1-m}\right)\\
&=\dd\left(2j^3+\sum_{m=0}^{j-1}m^32^{j-m}\right)\\
&=\dd\left(j^3+\sum_{m=0}^jm^32^{j-m}\right)=s_j\dd\,.
\end{align*}

\end{proof}

Now we can turn to the proof of Lemma \ref{l:badmacrocube}.

\begin{proof}[Proof of Lemma \ref{l:badmacrocube}]$ $\\
\emph{Step 1: Proof of \eqref{e:badmacrocube_typeII}}\\ 
Our main tool will be Lemma \ref{l:badmicrocube} a), and the argument is similar to the one in Step 3 of the proof of Theorem \ref{t:estimatespinnedset} c). We will choose $K_{\dd,M,p}\ge K_\dd'$ so that Lemma \ref{l:badmicrocube} can be applied.
 
Proceeding as in the proof of Lemma \ref{l:badmicrocube} we can estimate
\begin{align*}
&\zeta^\e_\Lambda\left(T^*\subset S^{*,II}_{K,L,M,\mathrm{bad}}(\A)\right)\\
&\quad=\zeta^\e_\Lambda\left(\forall Q\in T^*\exists Q'_Q\in\Qbox_{K\lmac}\text{ with }Q'_Q\subset Q,\left|\left\{q\in S^{(0)}_{K,M,\mathrm{bad}}(\A)\colon q\subset Q'_Q\right\}\right|\ge\frac14\left(\frac{\lmac}{\lmic}\right)^\dd\right)\\
&\quad\le\sum_{\substack{\underline{Q'}\in(\Qbox_{\ell_{j_*(\e)}})^{T^*}\\Q'_Q\subset Q\ \forall Q\in T^*}}\zeta^\e_\Lambda\left(\forall Q\in T^*\ \left|\left\{q\in S^{(0)}_{K,M,\mathrm{bad}}(\A)\colon q\subset Q'_Q\right\}\right|\ge\frac14\left(\frac{\lmac}{\lmic}\right)^\dd\right)\\
&\quad\le\sum_{\substack{\underline{Q'}\in(\Qbox_{\ell_{j_*(\e)}})^{T^*}\\Q'_Q\subset Q\ \forall Q\in T^*}}\zeta^\e_\Lambda\left(\left|\left\{q\in S^{(0)}_{K,M,\mathrm{bad}}(\A)\colon q\subset \bigcup_{Q\in T^*}Q'_Q\right\}\right|\ge\frac14\left(\frac{\lmac}{\lmic}\right)^\dd|T^*|\right)\,.
\end{align*}
Let \[\tilde T_{\underline{Q'}}=\{q\in\Qbox_{\ell_0}\colon q\subset\bigcup_{Q\in T^*}Q'_Q\}\] and note that $\left|\tilde T_{\underline{Q'}}\right|=\left(\frac{\lmac}{\lmic}\right)^\dd|T^*|$. Using \eqref{e:badmicrocube_j=0} we can now continue to estimate
\begin{align*}
\zeta^\e_\Lambda\left(T^*\subset S^{*,II}_{K,L,M,\mathrm{bad}}(\A)\right)&\le\sum_{\substack{\underline{Q'}\in(\Qbox_{\ell_{j_*(\e)}})^{T^*}\\Q'_Q\subset Q\ \forall Q\in T^*}}\zeta^\e_\Lambda\left(\left|\tilde T_{\underline{Q'}}\cap S^{(0)}_{K,M,\mathrm{bad}}(\A)\right|\ge\frac14\left|\tilde T_{\underline{Q'}}\right|\right)\\
&=\sum_{\substack{\underline{Q'}\in(\Qbox_{\ell_{j_*(\e)}})^{T^*}\\Q'_Q\subset Q\ \forall Q\in T^*}}\sum_{\substack{T\subset \tilde T_{\underline{Q'}}\\|T|\ge|\tilde T_{\underline{Q'}}|/4}}\zeta^\e_\Lambda\left(\tilde T_{\underline{Q'}}\cap S^{(0)}_{K,M,\mathrm{bad}}(\A)=T\right)\\
&\le\sum_{\substack{\underline{Q'}\in(\Qbox_{\ell_{j_*(\e)}})^{T^*}\\Q'_Q\subset Q\ \forall Q\in T^*}}\sum_{\substack{T\subset \tilde T_{\underline{Q'}}\\|T|\ge|\tilde T_{\underline{Q'}}|/4}}\zeta^\e_\Lambda\left(T\subset S^{(0)}_{K,M,\mathrm{bad}}(\A)\right)\\
&\le\sum_{\substack{\underline{Q'}\in(\Qbox_{\ell_{j_*(\e)}})^{T^*}\\Q'_Q\subset Q\ \forall Q\in T^*}}\sum_{\substack{T\subset \tilde T_{\underline{Q'}}\\|T|\ge|\tilde T_{\underline{Q'}}|/4}}\left(K^{\frac{\dd}{2^\dd}}(\dd+1)^{\frac{1}{2^\dd}}\exp\left(-\frac{K^\dd}{C_\dd}\right)\right)^{|T|}\\
&=\sum_{\substack{\underline{Q'}\in(\Qbox_{\ell_{j_*(\e)}})^{T^*}\\Q'_Q\subset Q\ \forall Q\in T^*}}\sum_{i=\lceil|\tilde T_{\underline{Q'}}|/4\rceil}^{|\tilde T_{\underline{Q'}}|}\binom{|\tilde T_{\underline{Q'}}|}{i}\left(K^{\frac{\dd}{2^\dd}}(\dd+1)^{\frac{1}{2^\dd}}\exp\left(-\frac{K^\dd}{C_\dd}\right)\right)^{i}\\
&=(L^\dd)^{|T^*|}\sum_{i=\lceil N/4\rceil}^{N}\binom{N}{i}\left(K^{\frac{\dd}{2^\dd}}(\dd+1)^{\frac{1}{2^\dd}}\exp\left(-\frac{K^\dd}{C_\dd}\right)\right)^{i}
\end{align*}
where we have abbreviated $N:=|\tilde T_{\underline{Q'}}|=\left(\frac{\lmac}{\lmic}\right)^\dd|T^*|$.
Let $p_{\dd,K}:=K^{\frac{\dd}{2^\dd}}(\dd+1)^{\frac{1}{2^\dd}}\exp\left(-\frac{K^\dd}{C_\dd}\right)$. Clearly $\lim_{K\to\infty}p_{\dd,K}=0$, so we can pick $K_{\dd,M,p}$ (at this point independently of $M$) large enough such that for $K\ge K_{\dd,M,p}$ we have $p_{\dd,K}\le\frac1{32}$. Then Lemma \ref{l:tailbound} with that choice of $p$ implies that
\begin{align*}
\zeta^\e_\Lambda\left(T^*\subset S^{*,II}_{K,L,M,\mathrm{bad}}(\A)\right)&\le(L^\dd)^{|T^*|}\left(\frac12\right)^{\frac N4}\\
&=\left(L^\dd\left(\frac12\right)^{\frac14\left(\frac{\lmac}{\lmic}\right)^\dd}\right)^{|T^*|}\,.
\end{align*}
For $\e$ small enough (depending on $\dd$, $L$, and $p$) the term in brackets is less than $p$, and we obtain \eqref{e:badmacrocube_typeII}.

\emph{Step 2: Proof of \eqref{e:badmacrocube_typeI}}\\
We can use Lemma \ref{l:badmicrocube} b). Given $Q\in \Qbox_{KL\lmac}$, there are at most \[\left(\left\lceil 3\frac{KL\lmac}{\ell_{j_*(\e)}}\right\rceil+3\right)^\dd\le\left(\frac{4KL\lmac}{\ell_{j_*(\e)}}\right)^\dd\]
cubes in $\Qbox^\#_{\ell_{j_*(\e)}}$ that intersect $Q$ (we used that $\frac{KL\lmac}{\ell_{j_*(\e)}}\ge8$). We can now proceed as in the proof of Lemma \ref{l:badmicrocube} and obtain
\begin{align*}
&\zeta^\e_\Lambda\left(T^*\subset S^{*,I}_{K,L,M,\mathrm{bad}}(\A)\right)\\
&\quad=\zeta^\e_\Lambda\left(\forall Q\in T^*\ \exists q_Q\in\Qbox^\#_{\ell_{j_*(\e)}}\text{ with }q_Q\cap Q\neq\varnothing,q_Q\in S^{(j_*(\e))}_{K,M,\mathrm{bad}}(\A)\right)\\
&\quad=\zeta^\e_\Lambda\left(\forall Q\in T^*\ \exists q_Q\in\Qbox^\#_{\ell_{j_*(\e)}}\text{ with }q_Q\cap Q\neq\varnothing\text{ such that }\bigcup_{Q\in T^*}\{q_Q\}\subset S^{(j_*(\e))}_{K,M,\mathrm{bad}}(\A)\right)\\
&\quad\le\sum_{\substack{\underline{q}\in(\Qbox^\#_{\ell_{j_*(\e)}})^{T^*}\\q_Q\cap Q\neq\varnothing\ \forall Q\in T^*}}\zeta^\e_\Lambda\left(\bigcup_{Q\in T^*}\{q_Q\}\subset S^{(j_*(\e))}_{K,M,\mathrm{bad}}(\A)\right)\,.
\end{align*}
If we assume for the moment that none of the cubes in $T^*$ are $l^\infty$-neighbours, then the $q_Q$ are pairwise distinct, and so $\bigcup_{Q\in T^*}\{q_Q\}$ has cardinality $|T^*|$. Now \eqref{e:badmicrocube_j>0} implies
\begin{equation}\label{e:badmacrocube_1}
\begin{aligned}
&\zeta^\e_\Lambda\left(T^*\subset S^{*,I}_{K,L,M,\mathrm{bad}}(\A)\right)\\
&\quad\le\sum_{\substack{\underline{q}\in(\Qbox^\#_{\ell_{j_*(\e)}})^{T^*}\\q_Q\cap Q\neq\varnothing\ \forall Q\in T^*}}\left(3^{(2^{j_*(\e)}-2)\dd}M^{s_{j_*(\e)}\dd}\left(K^{\frac{\dd}{2^\dd}}(\dd+1)^{\frac{1}{2^\dd}}\exp\left(-\frac{K^\dd}{C_\dd}\right)\right)^{2^{j_*(\e)}}\right)^{|T^*|}\\
&\quad\le\left(\left(\frac{4KL\lmac}{\ell_{j_*(\e)}}\right)^\dd\right)^{|T^*|}\left(3^{(2^{j_*(\e)}-2)\dd}M^{s_{j_*(\e)}\dd}\left(K^{\frac{\dd}{2^\dd}}(\dd+1)^{\frac{1}{2^\dd}}\exp\left(-\frac{K^\dd}{C_\dd}\right)\right)^{2^{j_*(\e)}}\right)^{|T^*|}\,.
\end{aligned}
\end{equation}
To remove the assumption that none of the the elements of $T^*$ are neighbours we proceed as in Step 1 of the proof of Lemma \ref{l:badmicrocube} and find a subset $T^*_{i_*}$ of $T^*$ of cardinality at least $\frac{|T^*|}{2^\dd}$ for which this is the case. Using \eqref{e:badmacrocube_1} for $T^*_{i_*}$ instead of $T^*$ we arrive at
\begin{equation}\label{e:badmacrocube_2}
\begin{aligned}
&\zeta^\e_\Lambda\left(T^*\subset S^{*,I}_{K,L,M,\mathrm{bad}}(\A)\right)&\\
&\quad\le\zeta^\e_\Lambda\left(T^*_{i_*}\subset S^{*,I}_{K,L,M,\mathrm{bad}}(\A)\right)\\
&\quad\le\left(\left(\frac{4KL\lmac}{\ell_{j_*(\e)}}\right)^\dd\right)^{\frac{|T^*|}{2^\dd}}\left(3^{(2^{j_*(\e)}-2)\dd}M^{s_{j_*(\e)}\dd}\left(K^{\frac{\dd}{2^\dd}}(\dd+1)^{\frac{1}{2^\dd}}\exp\left(-\frac{K^\dd}{C_\dd}\right)\right)^{2^{j_*(\e)}}\right)^{\frac{|T^*|}{2^\dd}}\,.
\end{aligned}
\end{equation}
It remains to bound the right hand side in \eqref{e:badmacrocube_2}. We begin by bounding $s_j$ from above. We have
\[s_j=j^3+\sum_{m=0}^jm^32^{j-m}=2^j\left(\frac{j^3}{2^j}+\sum_{m=0}^j\frac{m^3}{2^m}\right)\le2^j\left(4+\sum_{m=0}^\infty\frac{m^3}{2^m}\right)=30\cdot2^j\]
and so \eqref{e:badmacrocube_2} implies that
\begin{align*}
\zeta^\e_\Lambda\left(T^*\subset S^{*,I}_{K,M,\mathrm{bad}}(\A)\right)&\le\left(\left(\frac{4KL\lmac}{\ell_{j_*(\e)}}\left(3M^{30}K^{\frac{1}{2^\dd}}(\dd+1)^{\frac{1}{\dd2^\dd}}\exp\left(-\frac{K^\dd}{C_\dd}\right)\right)^{2^{j_*(\e)}}\right)^{\frac{\dd}{2^\dd}}\right)^{|T^*|}\\
&=\left(\left(\frac{4KL\lmac}{\ell_{j_*(\e)}}p_{\dd,K,M}^{2^{j_*(\e)}}\right)^{\frac{\dd}{2^\dd}}\right)^{|T^*|}
\end{align*}
where we have abbreviated $p_{\dd,K,M}=3M^{30}K^{\frac{1}{2^\dd}}(\dd+1)^{\frac{1}{\dd2^\dd}}\exp\left(-\frac{K^\dd}{C_\dd}\right)$. Note for each fixed $M$ and $\dd$ we have $\lim_{K\to\infty}p_{\dd,K,M}=0$, and so we can pick $K_{\dd,M,p}$ such that $p_{\dd,K,M}\le\frac12$ for $K\ge K_{\dd,M,p}$.

For these choices of $K$ we then know that
\[\zeta^\e_\Lambda\left(T^*\subset S^{*,I}_{K,L,M,\mathrm{bad}}(\A)\right)\le \left(\left(\frac{4KL\lmac}{\ell_{j_*(\e)}}2^{-2^{j_*(\e)}}\right)^{\frac{\dd}{2^\dd}}\right)^{|T^*|}\]
and we only need to show that 
\begin{equation}\label{e:badmacrocube_3}
\frac{4KL\lmac}{\ell_{j_*(\e)}}2^{-2^{j_*(\e)}}\le p^{\frac{2^\dd}{\dd}}
\end{equation}
when $\e$ is small enough. To show this, we need to bound $j_*(\e)$ from below. By definition, $j_*(\e)$ is equal to the largest integer $j$ such that $\ell_j\le\frac{KL\lmac}{8}$. In particular, $\ell_{j_*(\e)+1}>\frac{KL\lmac}{8}$, i.e. $M^{(j_*(\e)+1)^3}K\lmic>\frac{KL\lmac}8$. Estimating $(j_*(\e)+1)^3\le(2j_*(\e))^3$, we conclude
\[j_*(\e)\ge\frac12\sqrt[3]{\log_M\frac{L\lmac}{8\lmic}}\,.\]
Let us also abbreviate $X_{\dd,\e,L}=\frac{L\lmac}{8\lmic}$, and observe that $\lim_{\e\to0}X_{\dd,\e,L}=\infty$. For $t$ sufficiently large we have $\frac12\sqrt[3]{\log_Mt}\ge\log_2\log_2(t^2)$. This means that for $X_{\dd,\e,L}$ sufficiently large (i.e. $\e$ sufficiently small) we have $j_*(\e)\ge\frac12\sqrt[3]{\log_M X_{\dd,\e,L}}\ge\log_2\log_2(X_{\dd,\e,L}^2)$. Using this and the rather crude estimate $\ell_{j_*(\e)}\ge\ell_{j_0}=K\lmic$ for the denominator, we find
\[\frac{4KL\lmac}{\ell_{j_*(\e)}}2^{-2^{j_*(\e)}}\le 32X_{\dd,\e,L}2^{-2^{\log_2\log_2(X_{\dd,\e,L}^2)}}\le \frac{32X_{\dd,\e,L}}{X_{\dd,\e,L}^2}=\frac{32}{X_{\dd,\e,L}}\]
for $\e$ sufficiently small. This clearly implies that \eqref{e:badmacrocube_3} holds for $\e$ sufficiently small, which is \eqref{e:badmacrocube_typeII}.

\emph{Step 3: Proof of \eqref{e:badmacrocube}}\\
We can assume without loss of generality that $p\le\frac14$, as otherwise the estimate is trivial. Using \eqref{e:badmacrocube_typeI} and \eqref{e:badmacrocube_typeII} we see that
\begin{align*}
&\zeta^\e_\Lambda\left(T^*\subset S^{*}_{K,L,M,\mathrm{bad}}(\A)\right)\\
&\quad\le\sum_{T^*_{I}\cup T^*_{II}=T^*}\zeta^\e_\Lambda\left(T^*_{I}\subset S^{*,I}_{K,L,M,\mathrm{bad}}(\A),T^*_{II}\subset S^{*,II}_{K,L,M,\mathrm{bad}}(\A)\right)\\
&\quad\le\sum_{\substack{T^*_{I}\subset T^*\\|T^*_{I}|\ge|T^*|/2}}\zeta^\e_\Lambda\left(T^*_{I}\subset S^{*,I}_{K,L,M,\mathrm{bad}}(\A)\right)+\sum_{\substack{T^*_{II}\subset T^*\\|T^*_{II}|\ge|T^*|/2}}\zeta^\e_\Lambda\left(T^*_{II}\subset S^{*,II}_{K,L,M,\mathrm{bad}}(\A)\right)\\
&\quad\le2\sum_{i=\lceil |T^*|/2\rceil}^{|T^*|}\binom{|T^*|}{i}p^i\\
&\quad\le2(4p)^{\frac{|T^*|}{2}}
\end{align*}
where we have used Lemma \ref{l:tailbound} in the last step.
\end{proof}

Using Lemma \ref{l:badmacrocube} we can now estimate the probability that we find sets $U_j$ as in Lemma \ref{s:est_prob_cutoff}.

\begin{lemma}\label{l:existence_annuli}
Let $\dd\ge4$, and $\Lambda\Subset\Z^\dd$. Let $M\ge12$ be an odd integer. Then there is $K_{\dd,M}$ depending on $\dd$, $M$ only with the following property: Let $K\ge K_{\dd,M}$ be an odd multiple of 3, let $L$ be an odd integer. Let $U\in\Pbox_{KL\lmac}$ be a polymer consisting of $n=\frac{|U|}{(KL)^\dd\lmac^\dd}$ boxes. Let $k\ge0$ be an integer and let $\Omega_{U,k}$ be the event that there exist $U_0,\ldots,U_k\in\Pbox_{KL\lmac}$ such that $U\subset U_0$, for $j\in\{0,\ldots,k-1\}$ we have $U_j+Q_{KL\lmac}(0)\subset U_{j+1}$, $U_k\subset U+Q_{2kKL\lmac}(0)$, and \[\{Q\in\Qbox_{KL\lmac}\colon Q\subset (U_j+Q_{KL\lmac}(0))\setminus U\}\cap S^*_{K,L,M,\mathrm{bad}}(\A)=\varnothing\,.\]
Then, if $\e$ is small enough (depending on $K$, $L$ and $\dd$), we have
\begin{equation}\label{e:existence_annuli}
\zeta^\e_\Lambda(\Omega_{U,k})\ge 1-\frac{n}{2^k}\,.
\end{equation}
\end{lemma}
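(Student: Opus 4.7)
The plan is to reduce the event $\Omega_{U,k}^c$ to a combinatorial event about bad macroscopic cubes in a collection of $2k$ concentric shells around $U$, and then to estimate its probability using the negative-correlation bounds of Lemma~\ref{l:badmacrocube} together with a Chernoff-type tail bound in the spirit of Lemma~\ref{l:tailbound}.

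Introduce the onion layers $V_i := U + Q_{iKL\lmac}(0)$ for $i = 0, 1, \ldots, 2k$ and the shells $S_i := V_i \setminus V_{i-1}$, viewed as collections of macroscopic cubes in $\Qbox_{KL\lmac}$, and say that $S_i$ is \emph{good} if $S_i \cap S^*_{K,L,M,\mathrm{bad}}(\A) = \varnothing$. The key structural observation is that if at least $k$ of the $2k$ shells $S_1, \ldots, S_{2k}$ are good, then $\Omega_{U,k}$ holds. Indeed, given indices $1 \leq b_1 < b_2 < \ldots < b_k \leq 2k$ of $k$ good shells, the polymers $U_j := V_{b_{j+1}-1}$ for $j = 0, \ldots, k-1$ together with $U_k := V_{b_k}$ satisfy $U \subset U_0$, the nesting $U_j + Q_{KL\lmac}(0) = V_{b_{j+1}} \subset V_{b_{j+2}-1} = U_{j+1}$ (using $b_{j+2} \geq b_{j+1}+1$), and $U_k \subset V_{2k}$; moreover the relevant annulus at step $j$ reduces to the good shell $S_{b_{j+1}}$. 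Consequently $\Omega_{U,k}^c$ is contained in the event $E_{k+1}$ that at least $k+1$ of the shells $S_1, \ldots, S_{2k}$ are bad.

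To estimate $\zeta^\e_\Lambda(E_{k+1})$, fix a small $p > 0$ (to be chosen depending only on $\dd$) and take $K \geq K_{\dd, M, p}$ so that Lemma~\ref{l:badmacrocube} applies with parameter $p$. For each $T \subset \{1, \ldots, 2k\}$ with $|T| = k+1$ and each tuple of witnesses $(Q_i)_{i\in T}$ with $Q_i \in S_i$, the cubes $\{Q_i\}_{i \in T}$ are pairwise disjoint (lying in different shells), so Lemma~\ref{l:badmacrocube} yields
\[\zeta^\e_\Lambda\bigl(\{Q_i\}_{i\in T} \subset S^*_{K,L,M,\mathrm{bad}}(\A)\bigr) \leq 2(4p)^{(k+1)/2}.\]
A union bound over $T$ and over the witnesses expresses the resulting estimate on $\zeta^\e_\Lambda(E_{k+1})$ in terms of the elementary symmetric polynomial $e_{k+1}(|S_1|, \ldots, |S_{2k}|)$. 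The main obstacle is extracting the correct linear-in-$n$ scaling of the final bound, since a direct Maclaurin-type estimate produces a factor $n^{k+1}$ rather than the required $n$. I plan to resolve this in the spirit of Step~3 of the proof of Theorem~\ref{t:estimatespinnedset}~c), by reorganising the sum according to the total number of bad cubes in $V_{2k} \setminus U$ (a set of at most $n\,(4k+1)^\dd$ macroscopic cubes) and invoking Lemma~\ref{l:tailbound} with its parameter $r$ calibrated so that the exponential factor $p^{(k+1)/2}$ absorbs both the binomial prefactor $\binom{|V_{2k} \setminus U|}{k+1}$ and the extraneous powers of $n$. Selecting $p = p_\dd$ small enough and then $K_{\dd, M}$ correspondingly completes the argument, with the admissible range of $\e$ inherited from the hypotheses of Lemma~\ref{l:badmacrocube}.
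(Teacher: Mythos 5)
Your structural observation is correct: if $V_i := U + Q_{iKL\lmac}(0)$ and $S_i := V_i \setminus V_{i-1}$, and at least $k$ of the shells $S_1,\ldots,S_{2k}$ are free of bad cubes, then the nested sets $U_j := V_{b_{j+1}-1}$, $U_k := V_{b_k}$ (with $b_1 < \cdots < b_k$ the indices of good shells) do realise the event $\Omega_{U,k}$. So $\Omega_{U,k}^c$ is indeed contained in the event $E_{k+1}$ that at least $k+1$ of the shells $S_1,\ldots,S_{2k}$ contain a bad cube.

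However, the probability estimate you then need does not hold, and this is a genuine gap, not merely a counting inefficiency. The problem is that the event ``$S_i$ contains a bad cube'' has probability of order $p\,|S_i|$, and each shell contains up to $C_\dd\, n\, k^{\dd-1}$ cubes. Even under the most favourable (product-measure) modelling of badness, the best available bound on $\zeta^\e_\Lambda(E_{k+1})$ is of order $\binom{2k}{k+1}\bigl(C_\dd\, n\, k^{\dd-1}\sqrt{p}\bigr)^{k+1}$, which cannot be made $\le n/2^k$ uniformly in $n$: for fixed $p$ and $k$ the bound blows up as $n\to\infty$. Your proposed rescue (reorganising by the total number of bad cubes in $V_{2k}\setminus U$ and invoking Lemma~\ref{l:tailbound}) does not help either: with $N = |V_{2k}\setminus U| \le C_\dd n k^\dd$ and threshold $k+1$ the parameter $r = (k+1)/N$ shrinks like $1/n$, so the hypothesis $r\ge p$ of Lemma~\ref{l:tailbound} fails once $n$ is large, and even formally $(p/r^2)^{rN}$ produces a factor $n^{2(k+1)}$, not $n$. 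The underlying obstacle is that your decomposition into \emph{concentric} shells is too rigid: the complement event $E_{k+1}$ is an essentially global event whose probability scales poorly with the size of $U$, because a single bad cube anywhere in a large shell suffices to spoil it.

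The paper's proof avoids this by allowing the annuli $U_j$ to be \emph{arbitrary} polymers, constructed greedily by absorbing all cubes reachable from $U$ by paths containing at most $j$ non-bad cubes. The failure event then becomes the existence of a long $l^\infty$-path of cubes from $U$ to $\Z^\dd\setminus(U+Q_{2k\hat N_\dd\lmac}(0))$ with too few non-bad cubes. A union bound over such paths yields exactly one factor of $n$ (for the choice of starting cube in $U$) times a geometric factor in the path length, which is then dominated by the decay $(4p)^{(l-k)/2}$ coming from Lemma~\ref{l:badmacrocube}. In other words, the paper's approach localises the bad cubes along a path, so that each additional bad cube costs only a bounded combinatorial factor, whereas in your shell decomposition each additional bad shell costs a factor of order $n$. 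To repair your argument you would have to replace the concentric shells by a construction that can route around bad regions, which essentially forces you back to the greedy-polymer approach.
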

\begin{proof}
Let $p>0$ be a constant to be chosen later (depending on $\dd$ only). We pick $K_{\dd,M}\ge K_{\dd,M,p}$ with the $K_{\dd,M,p}$ from Lemma \ref{l:badmacrocube} so that this lemma can be applied.

We try to define the $U_j$ using a greedy algorithm. That is, we define $U_j$ as the union of all cubes $Q\in \Qbox_{KL\lmac}$ that can be connected to $U$ by a non-selfintersecting $l^\infty$-path of cubes in $\Qbox_{KL\lmac}$ that contains at most $j$ non-bad cubes. More precisely, $Q\in \Qbox_{KL\lmac}$ is a subset of $U$ if and only if there are $l\ge0$ and $Q^{(0)}=Q,Q^{(1)},\ldots, Q^{(l)}\subset U\in\Qbox_{KL\lmac}$ pairwise disjoint, with $d_\infty(Q^{(i)}, Q^{(i+1)})\le1$ for all $i\in\{0,\ldots,l-1\}$, such that at most $j$ of $Q^{(0)},Q^{(1)},\ldots ,Q^{(l-1)}$ are not in $S^*_{K,L,M,\mathrm{bad}}(\A)$.

This definition ensures that all $l^\infty$-neighbouring cubes to $U_j$ are not in $S^*_{K,L,M,\mathrm{bad}}(\A)$. So one sees that the $U_j$ satisfies all the conditions from $\Omega_{U,k}$ except that we do not yet know whether $U_k\subset U+Q_{2kKL\lmac}(0)$. 
This means that
\begin{equation}\label{e:existence_annuli1}
1-\zeta^\e_\Lambda(\Omega_{U,k})\le\zeta^\e_\Lambda\left(U_k\not\subset U+Q_{2kKL\lmac}(0)\right)
\end{equation}
and so it suffices to estimate the latter probability.

To do so, we define $\Pi_{U,k}$ to be the set of non-selfintersecting $l^\infty$-nearest neighbour paths $\Psi=\left(Q^{(0)}=Q,Q^{(1)},\ldots, Q^{(l)}\right)$ of cubes, that connect a cube $Q$ outside of $Q_{2kKL\lmac}(0)$ with $Q^{(l)}\subset U$. For $\Psi=\left(Q^{(0)}=Q,Q^{(1)},\ldots ,Q^{(l)}\right)$ let $\tilde\Psi=\left\{Q^{(0)},Q^{(1)},\ldots ,Q^{(l)}\right\}$ be the set of cubes in $\Psi$, and let $|\Psi|=|\tilde\Psi|=l+1$ be the number of cubes in it.

If $U_k\not\subset U+Q_{2kKL\lmac}(0)$, then there is some $\Psi\in\Pi_{U,k}$ that contains at most $k$ cubes within $Q^{(0)},\ldots, Q^{(l-1)}$ (and thus at most $k+1$ cubes within the cubes in $\Psi$) that are not bad. 
 Because $\Psi$ connects $U$ with a cube outside of $U+Q_{2kKL\lmac}(0)$, we have $|\Psi|\ge2k+2$. We can now continue \eqref{e:existence_annuli1} by using a union bound over all $\Psi\in\Pi_{U,k}$, and later over all bad subsets of $\tilde\Psi$, and obtain using Lemma \ref{l:badmacrocube} that
\begin{align*}
1-\zeta^\e_\Lambda(\Omega_{U,k})&\le\zeta^\e_\Lambda\left(\exists\Psi\in\Pi_{U,k}\colon\left|\tilde\Psi\setminus S^*_{K,L,M,\mathrm{bad}}(\A)\right|\le k+1\right)\\
&\le\sum_{\Psi\in\Pi_{U,k}}\zeta^\e_\Lambda\left(\left|\tilde\Psi\setminus S^*_{K,L,M,\mathrm{bad}}(\A)\right|\le k+1\right)\\
&=\sum_{\Psi\in\Pi_{U,k}}\zeta^\e_\Lambda\left(\exists T^*_\Psi\subset \tilde\Psi\colon T^*_\Psi\subset S^*_{K,L,M,\mathrm{bad}}(\A),|T^*_\Psi|\ge|\Psi|-k-1\right)\\
&\le\sum_{\Psi\in\Pi_{U,k}}\sum_{\substack{T^*_\Psi\subset\tilde\Psi\\|T^*_\Psi|\ge|\Psi|-k-1}}\zeta^\e_\Lambda\left(T^*_\Psi\subset S^*_{K,L,M,\mathrm{bad}}(\A)\right)\\
&\le\sum_{\Psi\in\Pi_{U,k}}\sum_{\substack{T^*_\Psi\subset\tilde\Psi\\|T^*_\Psi|\ge|\Psi|-k-1}}2(4p)^{\frac{|T^*_\Psi|}{2}}\\
&\le\sum_{\Psi\in\Pi_{U,k}}2(4p)^{\frac{|\Psi|-k-1}{2}}\,.
\end{align*}
We can reorganize this expression by summing over the lengths of $\Psi$. Recall that this length needs to be at least $2k+2$, and note that there are at most $n(2\dd)^l$ paths in $\Pi_{U,k}$ of length $l+1$. Thus, we obtain
\begin{align*}
1-\zeta^\e_\Lambda(\Omega_{U,k})&\le\sum_{l=2k+1}^\infty\sum_{\substack{\Psi\in\Pi_{U,k}\\|\tilde\Psi|=l+1}}2(4p)^{\frac{l-k}2}\\
&\le\sum_{l=2k+1}^\infty n(2\dd)^l2(4p)^{\frac{l-k}2}\\
&=\frac{2n}{(2\sqrt{p})^k}\sum_{l=2k+1}^\infty(4\dd\sqrt{p})^l\,.
\end{align*}
We choose $p\le\frac{1}{144\dd^2}$, so that $4\dd\sqrt{p}\le\frac13$. Then, in particular, the series on the right-hand side converges, and we can continue
\begin{align*}
1-\zeta^\e_\Lambda(\Omega_{U,k})&\le\frac{2n}{(2\sqrt{p})^k}\frac{(4\dd\sqrt{p})^{2k+1}}{1-4\dd\sqrt{p}}\\
&\le\frac{n(4\dd\sqrt{p})^{2k}}{(2\sqrt{p})^k}\\
&=n(8\dd^2\sqrt{p})^k\,.
\end{align*}
We finalize our choice of $p$ as $p=\frac{1}{256\dd^4}$. Then \eqref{e:existence_annuli} follows. 
\end{proof}

\begin{proof}[Proof of Theorem \ref{t:decay_high_prob}]
We want to combine Lemma \ref{l:decay_many_annuli} and Lemma \ref{l:existence_annuli}. That is, we first choose $M$ so large that Lemma \ref{l:decay_many_annuli} can be applied. Then we choose $K$ large enough that Lemma \ref{l:existence_annuli} can be applied. Then Lemma \ref{l:decay_many_annuli} applies for sufficiently large $L$. We choose $\hat N_\dd=KL$, and note that Lemma \ref{l:existence_annuli} implies the bound on the probability of $\Omega_{U,k}$. 

It remains to check that \eqref{e:decay_many_annuli_ext} and \eqref{e:decay_many_annuli_int} imply \eqref{e:decay_high_prob_ext} and \eqref{e:decay_high_prob_int} if $\Omega_{U,k}$ holds. This follows from the observation that $(U_0+Q_{\hat N_\dd\lmac}(0))\setminus U_0\subset (U+Q_{2k\hat N_\dd\lmac}(0))\setminus U$ and $(U_{k-1}+Q_{\hat N_\dd\lmac}(0))\setminus U_{k-1}\subset (U+Q_{2k\hat N_\dd\lmac}(0))\setminus U$.
\end{proof}

\begin{remark}\label{r:lengthscales}
Let us comment on why the lengthscales $\ell_j=M^{j^3}K\lmic$ are a natural choice. For the construction in Step 2 of the proof of Lemma \ref{l:existence_cutoff} we need that $\log\frac{\ell_j}{\ell_{j+1}}$ is summable as otherwise we could not bound $\|\nabla_1^2\eta\|_{L^\infty}$ in \eqref{e:existence_cutoff5}. This means that $\ell_j$ needs to grow rather fast (e.g., $\ell_j=M^{j^2}K\lmic$ would not be fast enough). On the other hand, for the estimate on the probability of bad cubes of type I in Lemma \ref{l:badmacrocube} we need that the exponent $s_j\dd$ of $M$ in \eqref{e:badmicrocube_j>0} is at most $C_\dd2^j$. This exponent arises from the combinatorial factors $\left(\frac{\ell_j}{\ell_{j-1}}\right)^{2\dd}$ in \eqref{e:badmicrocube_3}. This means that $\ell_j$ cannot grow too fast (e.g., $\ell_j=M^{2^j}K\lmic$ would be too fast).

Fortunately, both requirements are compatible, and in fact, our choice $\ell_j=M^{j^3}K\lmic$ satisfies both of them.
\end{remark}

\section{Pathwise bounds on the field}
We can now turn to the proof of Theorem \ref{t:estimatesfield} and of the second part of Theorem \ref{t:thermolimit}. Before we actually give the proofs, however, we state and prove various quenched estimates for $G_{\Lambda\setminus A}$ that hold for all $A$, or at least up to exponentially small probability in $A$. The main tool for that will be Theorem \ref{t:decay_high_prob}.

We prove those estimates in Section \ref{s:quenched}. Then, in Sections \ref{s:annealed} and \ref{s:thermo_limit_field} we use them to deduce Theorem \ref{t:estimatesfield} and the second part of Theorem \ref{t:thermolimit}, respectively.
\subsection{Quenched estimates on the Green's function}\label{s:quenched}
We write $G_{\Lambda,y}$ for $G_\Lambda(\cdot,y)$. 
We have the following straightforward result for $G_\Lambda$. This is essentially the same as \cite[Lemma 4.2]{Schweiger2020}.
\begin{lemma}\label{l:identity_G}
Let $\Lambda\Subset\Z^\dd$ and $x,y\in\Z^\dd$. Then
\begin{equation}\label{e:identity_G_1}
G_\Lambda(x,y)=\left(\nabla_1^2G_{\Lambda,x},\nabla_1^2G_{\Lambda,y}\right)_{L^2(\Z^\dd)}\,.
\end{equation}
Furthermore, we have
\begin{equation}\label{e:identity_G_2}
|G_\Lambda(x,y)|\le \sqrt{G_\Lambda(x,x)G_\Lambda(y,y)}\,.
\end{equation}
\end{lemma}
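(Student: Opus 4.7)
The plan is to prove the identity \eqref{e:identity_G_1} by summation by parts, and then deduce \eqref{e:identity_G_2} as an immediate consequence via Cauchy--Schwarz.

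For \eqref{e:identity_G_1} I will start from the defining property of the Green's function: since $G_\Lambda$ is the Green's function of the discrete Bilaplacian on $\Lambda$ with zero Dirichlet boundary conditions outside $\Lambda$, the function $G_{\Lambda,y}$ satisfies $\Delta_1^2G_{\Lambda,y}=\delta_y$ on $\Lambda$ and $G_{\Lambda,y}\equiv 0$ on $\Z^\dd\setminus\Lambda$. Because $G_{\Lambda,x}$ also vanishes outside the finite set $\Lambda$, every sum below is finite, so no decay-at-infinity issues arise. I then write
\[G_\Lambda(x,y)=G_{\Lambda,x}(y)=\bigl(G_{\Lambda,x},\Delta_1^2 G_{\Lambda,y}\bigr)_{L^2(\Z^\dd)}\]
and apply summation by parts twice, using the elementary identity $(D^1_iu,v)_{L^2(\Z^\dd)}=-(u,D^1_{-i}v)_{L^2(\Z^\dd)}$ together with the fact that the discrete partial derivatives commute (both of which are collected in \cite{Mueller2019}). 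This transforms the right-hand side into $\sum_{i,j}(D^1_iD^1_{-j}G_{\Lambda,x},D^1_iD^1_{-j}G_{\Lambda,y})_{L^2(\Z^\dd)}=(\nabla_1^2G_{\Lambda,x},\nabla_1^2G_{\Lambda,y})_{L^2(\Z^\dd)}$, which is \eqref{e:identity_G_1}. I expect no obstacle here; the only point to watch is bookkeeping of the signs and verifying that, after the two summations by parts, the full Hessian inner product (and not merely $(\Delta_1G_{\Lambda,x},\Delta_1G_{\Lambda,y})$) appears, which holds because $\sum_{i,j}D^1_iD^1_{-j}D^1_iD^1_{-j}=\sum_{i,j}(D^1_iD^1_{-i})(D^1_jD^1_{-j})=\Delta_1^2$.

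For \eqref{e:identity_G_2}, I will simply apply the Cauchy--Schwarz inequality to \eqref{e:identity_G_1} and recognize, via the $x=y$ case of \eqref{e:identity_G_1}, that $\|\nabla_1^2G_{\Lambda,x}\|^2_{L^2(\Z^\dd)}=G_\Lambda(x,x)$. Equivalently one may observe that $G_\Lambda(x,y)=\E_\Lambda(\psi_x\psi_y)$ and apply Cauchy--Schwarz directly to the random variables $\psi_x,\psi_y$ under $\PP_\Lambda$, giving the same conclusion without reference to \eqref{e:identity_G_1}.
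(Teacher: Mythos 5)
Your approach matches the paper's: both derive \eqref{e:identity_G_1} by pairing $G_{\Lambda,\cdot}$ against $\Delta_1^2 G_{\Lambda,\cdot}$ and summing by parts (using that both functions vanish off the finite set $\Lambda$, so all sums are finite), and both note that \eqref{e:identity_G_2} follows either by Cauchy--Schwarz from \eqref{e:identity_G_1} or directly from the covariance interpretation.

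One slip in your final sentence, precisely at the ``bookkeeping'' you flagged: the written operator identity $\sum_{i,j}D^1_iD^1_{-j}D^1_iD^1_{-j}=\sum_{i,j}(D^1_iD^1_{-i})(D^1_jD^1_{-j})$ is false. By commutativity the left-hand side equals $\bigl(\sum_i (D^1_i)^2\bigr)\bigl(\sum_j (D^1_{-j})^2\bigr)$, and since $(D^1_i)^2=\tau^1_iD^1_iD^1_{-i}$ this is a translated version of $\Delta_1^2$, not $\Delta_1^2$ itself. What the two summations by parts actually give is
\[
\left(D^1_iD^1_{-j}u,\,D^1_iD^1_{-j}v\right)_{L^2}
=\left(u,\,D^1_jD^1_{-i}D^1_iD^1_{-j}v\right)_{L^2},
\]
and summing over $i,j$ yields $\sum_{i,j}(D^1_jD^1_{-j})(D^1_{-i}D^1_i)=\Delta_1^2$, where one now uses $D^1_{-i}D^1_i=D^1_iD^1_{-i}$. (Alternatively, use $D^1_iD^1_{-j}=\tau^1_iD^1_{-i}D^1_{-j}$ and invariance of the $\ell^2$ pairing under $\tau^1_i$.) The conclusion is correct; the displayed operator identity is not, so replace it with one of these derivations.
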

\begin{proof}
For \eqref{e:identity_G_1} we calculate
\[
G_\Lambda(x,y)=\left(\I_{\cdot=x},G_{\Lambda,y}\right)_{L^2(\Z^\dd)}=\left(\Delta_1^2G_{\Lambda,x},G_{\Lambda,y}\right)_{L^2(\Z^\dd)}=\left(\nabla_1^2G_{\Lambda,x},\nabla_1^2G_{\Lambda,y}\right)_{L^2(\Z^\dd)}\,.
\]
The estimate \eqref{e:identity_G_2} follows directly from the interpretation of $G_\Lambda$ as a covariance. Alternatively, we can use \eqref{e:identity_G_1} together with the Cauchy-Schwarz inequality to estimate
\[
|G_\Lambda(x,y)|=\left|\left(\nabla_1^2G_{\Lambda,x},\nabla_1^2G_{\Lambda,y}\right)_{L^2(\Z^\dd)}\right|\le\left\|\nabla_1^2G_{\Lambda,x}\right\|_{L^2(\Z^\dd)}\left\|\nabla_1^2G_{\Lambda,y}\right\|_{L^2(\Z^\dd)}=\sqrt{G_\Lambda(x,x)G_\Lambda(y,y)}\,.
\]
\end{proof}
Next, we establish some quenched tail estimates on $G_{\Lambda\setminus \A}(x,x)$. If $\dd\ge5$, then there are deterministic bounds on $G_{\Lambda\setminus \A}(x,x)$ by \eqref{e:estvariance1}, so this is only interesting if $\dd=4$. 
\begin{lemma}\label{l:var_quenched}
If $\dd=4$, there is a constant $\tilde\gamma>0$ such that if $\Lambda\Subset\Z^d$, $x\in\Lambda$ and $\e$ is small enough (depending on $\dd$ only) then for any $t\ge\tilde\gamma$ we have
\begin{equation}
\zeta^\e_\Lambda\left(G_{\Lambda\setminus\A}(x,x)\le t\right)\ge1-\exp\left(-\frac{\e\exp(16\pi^2(t-\tilde\gamma))}{C|\log\e|^\frac12}\right)\label{e:var_quenched_upp}\,,
\end{equation}
and for $\alpha>0$, $x\in\Lambda$ with $d(x,\Z^\dd\setminus\Lambda)\ge\e^{-\alpha}$ and any $0\le t\le \frac{1}{8\pi^2}\log\left(1+d(x,\Z^\dd\setminus\Lambda)-\e^{-\alpha}\right)-\tilde\gamma$ we have
\begin{equation}
\zeta^\e_\Lambda\left(G_{\Lambda\setminus\A}(x,x)\le t\right)\le1-\exp\left(-\frac{C_\alpha\e\exp(32\pi^2(t+\tilde\gamma))}{|\log\e|^\frac12}\right)\label{e:var_quenched_low}
\end{equation}
for some constant $C$.

Furthermore, if $\dd\ge4$, $k\in\N$, and $y\in\Lambda$ there are constants $\tilde\gamma_\dd$ such that
\begin{equation}\label{e:event_var_bounded}
\zeta^\e_\Lambda\left(G_{\Lambda\setminus \A}(y,y)\le \I_{\dd=4}\frac{\log k+|\log\e|}{16\pi^2}+\tilde\gamma_\dd\right)\ge1-\frac{1}{2^k}\,.
\end{equation}
\end{lemma}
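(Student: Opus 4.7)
The overall strategy is to convert the deterministic a priori variance bounds of Lemma \ref{l:estvariance} into tail estimates using the stochastic control on the set $\A$ supplied by Theorem \ref{t:estimatespinnedset}. All three claims rest on the same dictionary: $G_{\Lambda\setminus A}(x,x)$ is logarithmic in $d(x,A)$, so upper-tail estimates on $G$ correspond to lower-tail estimates on the distance from $x$ to the nearest pinned point, and vice versa. Since both directions of this distance are controlled by $\zeta^\e_\Lambda(\A\cap Q_R(x) = \varnothing)$, we only need to feed the appropriate radius $R = R(t)$ into parts c) and d) of Theorem \ref{t:estimatespinnedset}.

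For \eqref{e:var_quenched_upp} I would set $R = \exp(4\pi^2(t-\tilde\gamma))$, choosing $\tilde\gamma$ large enough to absorb the additive constant from the upper bound of \eqref{e:estvariance2} as well as a factor $\sqrt{\dd}$. On the event $\A\cap Q_R(x)\neq\varnothing$ one has $\A\neq\varnothing$ and $d(x,\A)\leq\sqrt{\dd}R$, so Lemma \ref{l:estvariance} yields $G_{\Lambda\setminus\A}(x,x)\leq\tfrac{1}{4\pi^2}\log(1+\sqrt{\dd}R) + C\leq t$. The complementary event is controlled by Theorem \ref{t:estimatespinnedset} c):
\[
\zeta^\e_\Lambda(\A\cap Q_R(x)=\varnothing)\leq(1-p_{4,-})^{|Q_R(x)|}\leq\exp\!\left(-\frac{c_4\e R^4}{C|\log\e|^{1/2}}\right),
\]
and substituting $R^4=\exp(16\pi^2(t-\tilde\gamma))$ gives the claimed bound.

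For \eqref{e:var_quenched_low} I would invert this reasoning. The lower bound in \eqref{e:estvariance2} shows that $\{G\leq t\}\subset\{\A\cap Q_R(x)\neq\varnothing\}$ for $R=\exp(8\pi^2(t+\tilde\gamma))$ (with $\tilde\gamma$ absorbing the additive constant). The hypothesis $t\leq\tfrac{1}{8\pi^2}\log(1+d(x,\Z^\dd\setminus\Lambda)-\e^{-\alpha})-\tilde\gamma$ has been arranged precisely so that $Q_R(x)$ sits at distance $\geq\e^{-\alpha}$ from $\Z^\dd\setminus\Lambda$, which is the hypothesis of Theorem \ref{t:estimatespinnedset} d). That theorem produces $\zeta^\e_\Lambda(\A\cap Q_R(x)=\varnothing)\geq(1-p_{4,+,\alpha})^{|Q_R(x)|}$; combining this with the elementary inequality $1-(1-p)^n\leq1-\exp(-2pn)$ for $p\leq\tfrac12$, the bound $|Q_R(x)|\leq CR^4$, and $R^4=\exp(32\pi^2(t+\tilde\gamma))$ gives \eqref{e:var_quenched_low}.

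For \eqref{e:event_var_bounded} the case $\dd\geq5$ is deterministic: by Lemma \ref{l:monotonicityvariance} one has $G_{\Lambda\setminus\A}(y,y)\leq G_\Lambda(y,y)$, which is bounded by the whole-space Green's function $G_{\Z^\dd}(y,y)\leq C_\dd$ in the supercritical regime, so $\tilde\gamma_\dd:=C_\dd$ works. For $\dd=4$ I would apply \eqref{e:var_quenched_upp} with $t=\frac{\log k+|\log\e|}{16\pi^2}+\tilde\gamma_4$; substitution shows the failure probability equals $\exp\bigl(-\tfrac{c_4 k}{C|\log\e|^{1/2}}\,e^{16\pi^2(\tilde\gamma_4-\tilde\gamma)}\bigr)$, and picking $\tilde\gamma_4$ sufficiently large makes this at most $2^{-k}$. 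The main obstacle here is bookkeeping rather than conceptual: the $|\log\e|^{1/2}$ prefactor coming from $p_{4,-}$ forces $\tilde\gamma_4$ to grow as $\tfrac{\log|\log\e|}{32\pi^2}$, a slow correction absorbed into $\tilde\gamma_4$ so that the claimed form is recovered. Throughout, the main point to get right is matching the exponential scales so that the $\e$-dependence of $p_{4,-}$ cancels correctly against $R^4$.
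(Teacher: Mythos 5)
Your proposal matches the paper's proof step for step: translate the variance bounds of Lemma \ref{l:estvariance} into distance thresholds for $d(x,\A)$ (resp.\ $d(x,\tilde\A)$) and apply Theorem \ref{t:estimatespinnedset} c) and d) to the cube $Q_{R(t)}(x)$. Your remark on \eqref{e:event_var_bounded} is a genuine catch: the $|\log\e|^{1/2}$ in $p_{4,-}$ yields a failure probability $\exp(-k/(C|\log\e|^{1/2}))$, which exceeds $2^{-k}$ for small $\e$, so an extra $\tfrac{\log|\log\e|}{32\pi^2}$ is needed in the threshold --- the paper's concluding phrase ``if $\e$ is small enough'' actually points the wrong way, and strictly speaking the resulting $\tilde\gamma_4$ is not an $\e$-free constant as the statement suggests, though since this term is $\ll|\log\e|$ none of the downstream uses (e.g.\ Lemma \ref{l:cov_quenched}) are affected.
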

\begin{proof}
We begin with \eqref{e:var_quenched_upp}. This follows easily from Lemma \ref{l:estvariance} and Theorem \ref{t:estimatespinnedset} c). Indeed, if $x\in\A$ then $G_{\Lambda\setminus\A}(x,x)=0$, while if $x\notin\A$ we know from \eqref{e:estvariance1} that 
\[G_{\Lambda\setminus\A}(x,x)\le \frac{1}{4\pi^2}\log(1+d(x,\tilde\A))+C\le \frac{1}{4\pi^2}\log(d(x,\tilde\A))+C\le \frac{1}{4\pi^2}\log(d(x,\A))+C\,.\]
So, there is a constant $\tilde\gamma'$ such that $G_{\Lambda\setminus\A}(x,x)> t$ for $t\ge\tilde\gamma'$ implies $d(x,\A)\ge\exp(4\pi^2(t-\tilde\gamma'))$. Using \eqref{e:lowerestpinnedset_dim4} we can estimate that
\begin{align*}
\zeta^\e_\Lambda\left(G_{\Lambda\setminus\A}(x,x)\le t\right)&\ge\zeta^\e_\Lambda\left(d(x,\A)\le\exp(4\pi^2(t-\tilde\gamma'))\right)\\
&=1-\zeta^\e_\Lambda\left(\A\cap Q_{\exp(4\pi^2(t-\tilde\gamma'))}(x)=\varnothing\right)\\
&\ge1-(1-p_{4,-})^{|Q_{\exp(4\pi^2(t-\tilde\gamma'))}(x)|}\\
&\ge1-\exp\left(-\frac{p_{4,-}\exp(4\pi^2(t-\tilde\gamma'))^4}{C}\right)\\
&\ge1-\exp\left(-\frac{\e\exp(16\pi^2(t-\tilde\gamma'))}{C|\log\e|^\frac12}\right)
\end{align*}
which is \eqref{e:var_quenched_upp}, if we choose $\tilde\gamma\ge\tilde\gamma'$.

The argument for \eqref{e:var_quenched_low} is similar. We have that
\[G_{\Lambda\setminus\A}(x,x)\ge \frac{1}{8\pi^2}\log(1+d(x,\tilde\A))-C\ge \frac{1}{8\pi^2}\log(d(x,\tilde\A))-C\]
if $x\notin\A$ and $G_{\Lambda\setminus\A}(x,x)=0$ if $x\in\A$. So there is a constant $\tilde\gamma''$ such that $G_{\Lambda\setminus\A}(x,x)\le t$ implies $d(x,\tilde\A)\le\exp(8\pi^2(t+\tilde\gamma''))$. Choosing $\tilde\gamma\ge\tilde\gamma''$ our assumption $t\le \frac{1}{8\pi^2}\log\left(1+d(x,\Z^\dd\setminus\Lambda)-\e^{-\alpha}\right)-\tilde\gamma$ ensures that $\exp(8\pi^2(t+\gamma''))\le1+d(x,\Z^\dd\setminus\Lambda)-\e^{-\alpha}$. This means that $Q_{\exp(8\pi^2(t+\tilde\gamma''))}(x)$ still has distance at least $\e^{-\alpha}$ from $\Z^\dd\setminus\Lambda$ (and in particular $d(x,\tilde\A)<d(x,\Z^\dd\setminus\Lambda)$, so that $d(x,\tilde\A)=d(x,\A)$). Thus, we can apply \eqref{e:upperestpinnedset_dim4} and obtain
\begin{align*}
\zeta^\e_\Lambda\left(G_{\Lambda\setminus\A}(x,x)\le t\right)&\le\zeta^\e_\Lambda\left(d(x,\A)\le\exp(8\pi^2(t+\tilde\gamma''))\right)\\
&=1-\zeta^\e_\Lambda\left(\A\cap Q_{\exp(8\pi^2(t+\tilde\gamma''))}(x)=\varnothing\right)\\
&\le1-\exp\left(-\frac{C_\alpha\e\exp(32\pi^2(t+\tilde\gamma''))}{|\log\e|^\frac12}\right)
\end{align*}
This is \eqref{e:var_quenched_low}, if we choose $\tilde\gamma\ge\tilde\gamma''$.

Regarding \eqref{e:event_var_bounded}, note that if $\dd\ge5$ this is a trivial consequence of \eqref{e:estvariance1}, while if $\dd=4$ we can consider the choice $t=\frac{\log k+|\log\e|}{16\pi^2}+\tilde\gamma$ in \eqref{e:var_quenched_upp} to obtain
\begin{align*}
	\zeta^\e_\Lambda\left(G_{\Lambda\setminus\A}(y,y)\le \frac{\log k+|\log\e|}{16\pi^2}+\tilde\gamma\right)&\ge1-\exp\left(-\frac{\e\exp(16\pi^2\frac{\log k+|\log\e|}{16\pi^2})}{C|\log\e|^\frac12}\right)\\
&\ge1-\exp\left(-\frac{k}{C|\log\e|^\frac12}\right)
\end{align*}
and the right-hand side is at least $1-\frac{1}{2^k}$ if $\e$ is small enough.
\end{proof}

Next, we prove quenched bounds on the covariance.
\begin{lemma}\label{l:cov_quenched}
Let $\dd\ge4$, $\Lambda\Subset\Z^\dd$, and $x,y\in\Lambda$. Then, if $\e$ is small enough (depending on $\dd$), we have
\begin{equation}\label{e:cov_quenched}
\zeta^\e_\Lambda\left(|G_{\Lambda\setminus\A}(x,y)|\le C_\dd\exp\left(-\frac{|x-y|_\infty}{C_\dd\lmac}\right)\frac{1+\I_{\dd=4}|\log\e|^{5/4}}{\e^{1/2}}\right)\ge 1-\exp\left(-\frac{|x-y|_\infty}{C_\dd\lmac}\right)
\end{equation}
for some constant $C_\dd$.
\end{lemma}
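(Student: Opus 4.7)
The starting point is the identity \eqref{e:identity_G_1} from Lemma \ref{l:identity_G}, which expresses $G_{\Lambda\setminus\A}(x,y)$ as an $L^2$ inner product of $\nabla_1^2 G_{\Lambda\setminus\A,x}$ and $\nabla_1^2 G_{\Lambda\setminus\A,y}$ on $\Z^\dd$. The plan is to split this inner product across a partition $\Z^\dd = E_x \sqcup E_y$ separating $x$ from $y$, apply Cauchy--Schwarz to each piece, and use the exterior decay estimate of Theorem \ref{t:decay_high_prob} a), applied once with pole at $x$ and once with pole at $y$, to show that the ``wrong-side'' $L^2$ norm of each Hessian is exponentially small in $|x-y|_\infty/\lmac$.

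Set $k := \max\!\bigl(1,\lfloor |x-y|_\infty/(8\hat N_\dd\lmac)\rfloor\bigr)$, and let $U_x,U_y$ be the unique single boxes of $\Pbox_{\hat N_\dd\lmac}$ containing $x$ and $y$ respectively. Put $E_y := U_y + Q_{2k\hat N_\dd\lmac}(0)$ and $E_x := \Z^\dd\setminus E_y$; the choice of $k$ guarantees that $U_x + Q_{2k\hat N_\dd\lmac}(0) \subset E_x$. For each $z\in\{x,y\}$ the function $u := G_{\Lambda\setminus\A,z}$ satisfies the hypotheses of Theorem \ref{t:decay_high_prob} a) with polymer $U_z$: it vanishes on $\tilde\A$ (hence on $\tilde\A\setminus U_z$), and its Bilaplacian $\delta_z$ is supported at the single point $z\in U_z$, so $u\Delta_1^2 u=0$ on $\Z^\dd\setminus U_z$. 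Intersecting the two decay events $\Omega_{U_x,k},\Omega_{U_y,k}$ from Theorem \ref{t:decay_high_prob} with the two variance-control events from \eqref{e:event_var_bounded} at $x$ and $y$, we obtain an event $\Omega^\star$ with $\zeta^\e_\Lambda(\Omega^\star)\geq 1-4\cdot 2^{-k}$.

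On $\Omega^\star$, splitting \eqref{e:identity_G_1} across $E_x,E_y$ and applying Cauchy--Schwarz on each piece gives
\[
|G_{\Lambda\setminus\A}(x,y)| \leq \|\nabla_1^2 G_{\Lambda\setminus\A,x}\|_{L^2(E_y)}\,\|\nabla_1^2 G_{\Lambda\setminus\A,y}\|_{L^2(E_y)} + \|\nabla_1^2 G_{\Lambda\setminus\A,x}\|_{L^2(E_x)}\,\|\nabla_1^2 G_{\Lambda\setminus\A,y}\|_{L^2(E_x)}.
\]
Bounding the annular $L^2$ norm on the right-hand side of Theorem \ref{t:decay_high_prob} a) by the trivial full-space norm $\sqrt{G_{\Lambda\setminus\A}(z,z)}$ yields $\|\nabla_1^2 G_{\Lambda\setminus\A,y}\|_{L^2(E_x)}^2 \leq 2^{-k} G_{\Lambda\setminus\A}(y,y)$ and analogously $\|\nabla_1^2 G_{\Lambda\setminus\A,x}\|_{L^2(E_y)}^2 \leq 2^{-k} G_{\Lambda\setminus\A}(x,x)$, while the remaining two factors are dominated by the corresponding full-space norms $\sqrt{G_{\Lambda\setminus\A}(z,z)}$. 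Consequently
\[
|G_{\Lambda\setminus\A}(x,y)| \leq 2\cdot 2^{-k/2}\sqrt{G_{\Lambda\setminus\A}(x,x)\,G_{\Lambda\setminus\A}(y,y)}.
\]
Substituting the variance bound from \eqref{e:event_var_bounded} (bounded for $\dd\geq 5$, of order $|\log\e|+\log k$ for $\dd=4$) and the definition of $k$ produces the claimed upper bound after adjusting $C_\dd$; the failure probability $4\cdot 2^{-k}$ similarly absorbs into $\exp(-|x-y|_\infty/(C_\dd\lmac))$. For the trivial regime where $|x-y|_\infty\lesssim\lmac$ and the probability lower bound in \eqref{e:cov_quenched} is vacuous, the claim follows directly from \eqref{e:identity_G_2} and Lemma \ref{l:estvariance}, combined with the event \eqref{e:event_var_bounded}. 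The main obstacle is purely cosmetic bookkeeping: matching the polymer geometry to $|x-y|_\infty$, verifying disjointness of the two exterior-decay annuli, and absorbing dimensional constants and a stray $\log k$ factor (dominated by $2^{-k/4}$) into the loose $\e^{-1/2}(1+\I_{\dd=4}|\log\e|^{5/4})$ prefactor.
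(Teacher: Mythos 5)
Your argument is correct, and it takes a genuinely different route from the paper's. The paper applies the exterior decay estimate of Theorem \ref{t:decay_high_prob}~a) only once, with the polymer $U$ centred at $y$, to get $\left\|\nabla_1^2 G_{\Lambda\setminus A,y}\right\|_{L^2}$ exponentially small on the far side; it then converts this $L^2$-smallness near $x$ into a \emph{pointwise} bound on $G_{\Lambda\setminus A,y}(x)$ using the probabilistic multipolar Hardy--Rellich estimate (Lemma \ref{l:localpoincare_prob}) on the event $\Omega_{x,k}$. You instead exploit the symmetry of the kernel: you split the $L^2$ inner product in \eqref{e:identity_G_1} across the sets $E_x,E_y$, apply Cauchy--Schwarz to each piece, and then invoke the exterior decay estimate \emph{twice} --- once with pole at $x$, once with pole at $y$ --- so that in each product one of the two factors is exponentially small on its ``wrong'' side. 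This avoids Lemma \ref{l:localpoincare_prob} entirely. Your route is arguably cleaner, since no reduction to a pointwise estimate is needed, and it gives a slightly sharper prefactor: the factor $\e^{-1/2}$ in the claimed bound, which in the paper's proof comes from the $R^\dd\sim k^\dd\lmic^\dd\sim k^\dd/\e$ loss in the Hardy--Rellich inequality, does not appear in your estimate --- you obtain $|G_{\Lambda\setminus\A}(x,y)|\le C_\dd 2^{-k/2}(1+\I_{\dd=4}(\log k + |\log\e|))$ and simply absorb this into the (looser) stated bound.

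Two small points worth tightening if you were to write this up. First, your choice $k=\max\bigl(1,\lfloor |x-y|_\infty/(8\hat N_\dd\lmac)\rfloor\bigr)$ guarantees the disjointness of $U_x+Q_{2k\hat N_\dd\lmac}(0)$ and $U_y+Q_{2k\hat N_\dd\lmac}(0)$ only when $|x-y|_\infty\ge 8\hat N_\dd\lmac$, so the ``trivial regime'' should be stated precisely as $|x-y|_\infty< 8\hat N_\dd\lmac$ (or some slightly larger multiple of $\hat N_\dd\lmac$, to also cover the small values of $k$ for which $4\cdot 2^{-k}\ge 1$ and the probability bound is vacuous); you handle this correctly via \eqref{e:identity_G_2} and \eqref{e:event_var_bounded}, but the threshold should be named. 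Second, as you note, the failure probability $4\cdot 2^{-k}$ absorbs into $\exp(-|x-y|_\infty/(C_\dd\lmac))$ only for $k$ large enough; for the remaining bounded range of $k$ one again falls back on the trivial-regime argument with a suitably enlarged $C_\dd$. Both are routine.
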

\begin{proof}
By translating $\Lambda$ and $\A$ we can assume $y=0$. This ensures in particular that $y$ is in the centre of a box in $\Qbox_{l}$ for any $l$. Let $U=Q_{\hat N_\dd\lmac}(0)$ with the $\hat N_\dd$ from Theorem \ref{t:decay_high_prob}, and consider for now the case that $|x-y|_\infty\ge8\hat N_\dd\lmac$. Let $k=\left\lceil \frac{|x-y|_\infty}{8\hat N_\dd\lmac}\right\rceil$, and note that $k\le\frac{|x-y|_\infty}{2\hat N_\dd\lmac}$.

Assume that $A\in\Omega_{U,k}$ with the $\Omega_{U,k}$ from Theorem \ref{t:decay_high_prob}. Then that theorem (applied to $G_{\Lambda\setminus A,y}$) and \eqref{e:identity_G_1} imply that
\begin{equation}\label{e:cov_quenched1}
\begin{aligned}
\left\|\nabla_1^2G_{\Lambda\setminus A,y}\right\|^2_{L^2(\Z^\dd\setminus(U+Q_{2k\hat N_\dd\lmac}(0)))}&\le\frac{1}{2^k}\left\|\nabla_1^2G_{\Lambda\setminus A,y}\right\|^2_{L^2(\Z^\dd\setminus U)}\\
&=\frac{1}{2^k}G_{\Lambda\setminus A}(y,y)\,.
\end{aligned}
\end{equation}

Furthermore, suppose that $A\in\Omega_{x,k}$ with the $\Omega_{x,k}$ from Lemma \ref{l:localpoincare_prob}. Then we can conclude
\begin{equation}\label{e:cov_quenched2}
\begin{aligned}
|G_{\Lambda\setminus A}(x,y)|^2&=|G_{\Lambda\setminus A,y}(x)|^2\\
&\le C_\dd \frac{k^{\dd}\left(1+\I_{\dd=4}(\log k+|\log\e|^{3/2})\right)}{\e}\|\nabla_1^2G_{\Lambda\setminus A,y}\|^2_{L^2(Q_{kN_\dd\lmic}(x))}\,.
\end{aligned}
\end{equation}
For $\e$ small enough (depending on $\dd$) we have $N_\dd\lmic\le\hat N_\dd\lmac$. Then $U+Q_{2k\hat N_\dd\lmac}(0)=Q_{(4k+1)\hat N_\dd\lmac/2}(0)$ and $Q_{kN_\dd\lmic}(x)$ are disjoint, and so we can combine \eqref{e:cov_quenched1} and \eqref{e:cov_quenched2} into
\begin{equation}\label{e:cov_quenched3}
\begin{aligned}
|G_{\Lambda\setminus A}(x,y)|^2&\le C_\dd \frac{k^{\dd}\left(1+\I_{\dd=4}(\log k+|\log\e|^{3/2})\right)}{\e}\|\nabla_1^2G_{\Lambda\setminus A,y}\|^2_{L^2(Q_{kN_\dd\lmic}(x))}\\
&\le C_\dd \frac{k^{\dd}\left(1+\I_{\dd=4}(\log k+|\log\e|^{3/2})\right)}{\e}\left\|\nabla_1^2G_{\Lambda\setminus A,y}\right\|^2_{L^2(\Z^\dd\setminus(U+Q_{2k\hat N_\dd\lmac}(0)))}\\
&\le C_\dd \frac{k^{\dd}\left(1+\I_{\dd=4}(\log k+|\log\e|^{3/2})\right)}{2^k\e}G_{\Lambda\setminus A}(y,y)\,.
\end{aligned}
\end{equation}

Next, let $\tilde\Omega_{y,k}$ be the event from \eqref{e:event_var_bounded}. If $A\in\tilde\Omega_{y,k}$, then \eqref{e:event_var_bounded} and \eqref{e:cov_quenched3} imply
\begin{equation}\label{e:cov_quenched4}
\begin{aligned}
|G_{\Lambda\setminus A}(x,y)|^2&\le C_\dd \frac{k^{\dd}\left(1+\I_{\dd=4}(\log k+|\log\e|^{3/2})\right)}{2^k\e}\left(1+\I_{\dd=4}(\log k+|\log\e|)\right)\\
&\le C_\dd\left(\frac34\right)^k\frac{1+\I_{\dd=4}|\log\e|^{5/2}}{\e}\\
&\le C_\dd\exp\left(-\log\frac34\frac{|x-y|_\infty}{4\hat N_\dd\lmac}\right)\frac{1+\I_{\dd=4}|\log\e|^{5/2}}{\e}\\
&\le C_\dd\exp\left(-\frac{|x-y|_\infty}{C_\dd\lmac}\right)\frac{1+\I_{\dd=4}|\log\e|^{5/2}}{\e}\,.
\end{aligned}
\end{equation}
This estimate holds if $A\in\Omega_{U,k}\cap\Omega_{x,k}\cap\tilde\Omega_{y,k}$. But that probability is easy to bound:
\[\zeta^\e_\Lambda\left(\Omega_{U,k}\cap\Omega_{x,k}\cap\tilde\Omega_{y,k}\right)\ge1-\frac{1}{2^k}-\frac{1}{2^{k^\dd}}-\frac{1}{2^k}\ge1-\exp\left(-\frac{k}{C_\dd}\right)\ge1-\exp\left(-\frac{|x-y|_\infty}{C_\dd\lmac}\right)\,.\]
Therefore we have shown that the set of $A$ for which \eqref{e:cov_quenched4} holds has measure at least $1-\exp\left(-\frac{|x-y|_\infty}{C_\dd\lmac}\right)$, and this implies \eqref{e:cov_quenched}.

It remains to consider the case that $|x-y|_\infty<8\hat N_\dd\lmac$. In that case we need to show
\[
\zeta^\e_\Lambda\left(|G_{\Lambda\setminus\A}(x,y)|\le C_\dd\frac{1+\I_{\dd=4}|\log\e|^{5/4}}{\e^{1/2}}\right)\ge c_\dd\,.
\]
This follows immediately from \eqref{e:event_var_bounded} and \eqref{e:identity_G_2}.
\end{proof}

We also need to quantify that for a large domain $\Lambda$ the covariances far inside $\Lambda$ depend only weakly on the precise shape of $\Lambda$.
\begin{lemma}\label{l:diff_covs_quenched}
Let $\dd\ge4$, $\Lambda'\subset\Lambda\Subset\Z^\dd$. Let $\e$ be small enough (depending on $\dd$ only). Suppose that $r$, $R$ are integers with $2\hat N_\dd\lmac\le r$, $8r\le R$ and $Q_R(0)\subset\Lambda'$.  we have
\begin{equation}\label{e:diff_covs_quenched}
\begin{aligned}
&\zeta^\e_\Lambda\left(\max_{x,y\in Q_r(0)}\left|G_{\Lambda\setminus\A}(x,y)-G_{\Lambda'\setminus\A}(x,y)\right|\le C_\dd\exp\left(-\frac{R-r}{C_\dd\lmac}\right)\frac{1+\I_{\dd=4}|\log\e|^{5/4}}{\e^{1/2}}\right)\\
&\quad\ge1-C_\dd r^\dd\exp\left(-\frac{R-r}{C_\dd\lmac}\right)\,.
\end{aligned}
\end{equation}
\end{lemma}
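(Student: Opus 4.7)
Fix $y \in Q_r(0)$ and define $h = G_{\Lambda\setminus A,y} - G_{\Lambda'\setminus A,y}$. Both Green's functions vanish on $\tilde A = A\cup(\Z^\dd\setminus\Lambda)$ (indeed $G_{\Lambda'\setminus A,y}$ vanishes even on the larger set $A\cup(\Z^\dd\setminus\Lambda')$), so $h=0$ on $\tilde A$. Moreover $h\,\Delta_1^2 h = 0$ on $\Lambda'$: this is trivial on $A$, while on $\Lambda'\setminus A$ both Green's functions satisfy $\Delta_1^2\cdot = \delta_y$, so $\Delta_1^2 h = 0$ there (including at $y$ itself by cancellation). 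This puts $h$ in position to feed into both the interior decay estimate of Theorem \ref{t:decay_high_prob} b) and the multipolar Hardy-Rellich estimate of Lemma \ref{l:localpoincare_prob}.

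Set $k = \lfloor (R-r)/(4\hat N_\dd \lmac) \rfloor$, which satisfies $k \ge 1$ since $R - r \ge 7r \ge 7\hat N_\dd \lmac$. For $\e$ small enough that $N_\dd \lmic \le \hat N_\dd \lmac$, let $U \in \Pbox_{\hat N_\dd \lmac}$ be the smallest polymer containing $Q_{r + kN_\dd\lmic}(0)$. Then $Q_r(0) \subset U$, every $x \in Q_r(0)$ satisfies $Q_{kN_\dd\lmic}(x) \subset U$, and $U' := U + Q_{2k\hat N_\dd\lmac}(0) \subset Q_R(0) \subset \Lambda'$. Denote $n := |U|/(\hat N_\dd\lmac)^\dd$; a crude count gives $n \le C_\dd((r/\lmac)^\dd + k^\dd)$.

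Let $\Omega^*$ be the intersection of $\Omega_{U,k}$ from Theorem \ref{t:decay_high_prob}, the events $\Omega_{x,k}$ from Lemma \ref{l:localpoincare_prob} for all $x \in Q_r(0)$, and the events underlying \eqref{e:event_var_bounded} for all $y \in Q_r(0)$. A union bound, using $|Q_r(0)| \le C_\dd r^\dd$ and $1/2^{k^\dd} \le 1/2^k$ for $\dd \ge 4$, yields failure probability at most $C_\dd(r^\dd + k^\dd)/2^k$, which after absorbing polynomial factors into the exponential (increasing the constant inside the exponent if necessary) is bounded by $C_\dd r^\dd \exp(-(R-r)/(C_\dd\lmac))$.

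On $\Omega^*$, Theorem \ref{t:decay_high_prob} b) applied to $h$ gives
\[\|\nabla_1^2 h\|^2_{L^2(U)} \le 2^{-k}\|\nabla_1^2 h\|^2_{L^2(U'\setminus U)} \le 2^{-k} \|\nabla_1^2 h\|^2_{L^2(\Z^\dd)}.\]
By \eqref{e:identity_G_1}, Lemma \ref{l:monotonicityvariance} and \eqref{e:event_var_bounded}, the full $L^2$-norm is at most $4\,G_{\Lambda\setminus A}(y,y) \le C_\dd(1 + \I_{\dd=4}(\log k + |\log\e|))$. Then Lemma \ref{l:localpoincare_prob} applied at $x$ (using $Q_{kN_\dd\lmic}(x)\subset U$) gives
\[|h(x)|^2 \le C_\dd\,\frac{k^\dd\bigl(1+\I_{\dd=4}(\log k + |\log\e|^{3/2})\bigr)}{\e}\,\|\nabla_1^2 h\|^2_{L^2(U)} \le C_\dd\,\frac{k^\dd\bigl(1+\I_{\dd=4}|\log\e|^{5/2}\bigr)}{2^k\,\e}.\]
Substituting $k \asymp (R-r)/\lmac$ and absorbing the polynomial factor $k^\dd$ into the exponential yields the pointwise estimate claimed in \eqref{e:diff_covs_quenched}. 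The main technical obstacle is a two-scale compatibility: one needs $U$ large enough that $Q_{kN_\dd\lmic}(x)\subset U$ so Lemma \ref{l:localpoincare_prob} produces a bound controllable by $\|\nabla_1^2 h\|_{L^2(U)}$, while $U'$ must still fit inside $Q_R(0)$ to apply Theorem \ref{t:decay_high_prob} b); this is possible precisely because $N_\dd\lmic \ll \hat N_\dd\lmac$ for small $\e$.
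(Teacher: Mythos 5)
Your proof is correct, and it takes a genuinely simpler route than the paper's. The paper's argument is a three-stage decay chain: it first applies the exterior estimate of Theorem~\ref{t:decay_high_prob}~a) separately to $G_{\Lambda\setminus A,y}$ and $G_{\Lambda'\setminus A,y}$ (with respect to the pinned set $\tilde A$) to show that $\|\nabla_1^2 H_{A,y}\|^2_{L^2}$ outside $U+Q_{2k\hat N_\dd\lmac}(0)$ is exponentially small, and only then applies the interior estimate~b) to $H_{A,y}$ on the enlarged polymer $U+Q_{2k\hat N_\dd\lmac}(0)$ to import that smallness into the inner region where Lemma~\ref{l:localpoincare_prob} is to be used. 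You short-circuit this: by choosing $U$ large enough at the outset to contain every $Q_{kN_\dd\lmic}(x)$, a single application of~b) to $h$ suffices, with the annulus term bounded crudely by $\|\nabla_1^2 h\|^2_{L^2(\Z^\dd)}\le 2\,G_{\Lambda\setminus A}(y,y)+2\,G_{\Lambda'\setminus A}(y,y)\le 4\,G_{\Lambda\setminus A}(y,y)$ via~\eqref{e:identity_G_1} and monotonicity. This drops one application of the decay theorem and one of the four good events (the paper's $\Omega_{U+Q_{2k\hat N_\dd\lmac}(0),k}$), at the cost of a factor $2^{-k}$ instead of $2^{-2k}$ in the raw decay rate, which is immaterial for the stated conclusion. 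Your verification that $h\Delta_1^2 h=0$ on $\Lambda'$ (including the cancellation of the delta at $y$) is the right way to see that~b) applies directly to $h$ on $U'\subset\Lambda'$; the paper instead phrases its hypotheses for~a) in terms of each Green's function separately, which is why it also needs the parenthetical remark that $G_{\Lambda'\setminus A,y}$ is not biharmonic on all of $\Lambda\setminus(A\cup U)$. Your arithmetic for fitting $U'$ inside $Q_R(0)$ and for the union bound is sound; both arguments give the stated probability bound after absorbing $k^\dd/2^k$ and $(r/\lmac)^\dd$ into $C_\dd r^\dd\exp(-(R-r)/(C_\dd\lmac))$.
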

\begin{proof}
The idea is that $H_y:=G_{\Lambda\setminus\A,y}-G_{\Lambda'\setminus\A,y}$ is biharmonic in $Q_R(0)$. We will use Theorem \ref{t:decay_high_prob} a) to conclude that the $L^2$-norm of $\nabla_1^2H$ outside of $Q_{R/2}(0)$ is exponentially small, and then use Theorem \ref{t:decay_high_prob} b) to conclude that the $L^2$-norm of $\nabla_1^2H$ in $Q_r(0)$ is exponentially small. Of course these estimates hold not for all realizations of $\A$, but we will estimate that they hold for sufficiently many.

Let $\tilde r=\left(\left\lceil \frac{r}{\hat N_\dd\lmac}\right\rceil+\frac12\right)\hat N_\dd\lmac$ and $\tilde R=\left(\left\lfloor \frac{R}{\hat N_\dd\lmac}\right\rfloor-\frac12\right)\hat N_\dd\lmac$. Then $r\le\tilde r\le 2r$, $\frac R2\le\tilde R\le R$. We let $U=Q_{\tilde r}(0)$ and note that $U\in\Pbox_{\hat N_\dd\lmac}$ is a polymer consisting of $\left(\frac{2\tilde r}{\hat N_\dd\lmac}\right)^\dd$ boxes in $\Qbox_{\hat N_\dd\lmac}$. Let $k=\left\lfloor \frac{\tilde R-\tilde r}{4\hat N_\dd\lmac}\right\rfloor$ and note that $k\ge\frac{R-r}{C_\dd\lmac}$.
Theorem \ref{t:decay_high_prob} a) implies that on the event $\Omega_{U,k}$ we have
\begin{align*}
\left\|\nabla_1^2G_{\Lambda\setminus A,y}\right\|^2_{L^2(\Z^\dd\setminus(U+Q_{2k\hat N_\dd\lmac}(0)))}&\le\frac{1}{2^k}\left\|\nabla_1^2G_{\Lambda\setminus A,y}\right\|^2_{L^2(\Z^\dd\setminus U)}\\
&\le\frac{1}{2^k}\left\|\nabla_1^2G_{\Lambda\setminus A,y}\right\|^2_{L^2(\Z^\dd)}\\
&=\frac{1}{2^k}G_{\Lambda\setminus A}(y,y)
\end{align*}
as $G_{\Lambda\setminus A,y}=0$ on $\tilde A\setminus U$ and $G_{\Lambda\setminus A,y}\Delta_1^2G_{\Lambda\setminus A,y}=0$ on $\Z^\dd\setminus U$. 

Analogously we have 
\[
\left\|\nabla_1^2G_{\Lambda'\setminus A,y}\right\|^2_{L^2(\Z^\dd\setminus(U+Q_{2k\hat N_\dd\lmac}(0)))}\le\frac{1}{2^k}G_{\Lambda'\setminus A}(y,y)
\]
as $G_{\Lambda\setminus A,y}=0$ on $\tilde A\setminus U$ and $G_{\Lambda'\setminus A,y}\Delta_1^2G_{\Lambda'\setminus A,y}=0$ on $\Z^\dd\setminus U$ (even though $G_{\Lambda'\setminus A,y}$ is not biharmonic everywhere on $\Lambda\setminus(A\cup U)$).

If we define $H_{A,y}=:G_{\Lambda\setminus A,y}-G_{\Lambda'\setminus A,y}$, the preceding two estimates imply that
\begin{equation}\label{e:diff_covs_quenched1}
\begin{aligned}
&\left\|\nabla_1^2H_{A,y}\right\|^2_{L^2(\Z^\dd\setminus(U+Q_{2k\hat N_\dd\lmac}(0)))}\\
&\quad\le2\left\|\nabla_1^2G_{\Lambda\setminus A,y}\right\|^2_{L^2(\Z^\dd\setminus(U+Q_{2k\hat N_\dd\lmac}(0)))}+2\left\|\nabla_1^2G_{\Lambda'\setminus A,y}\right\|^2_{L^2(\Z^\dd\setminus(U+Q_{2k\hat N_\dd\lmac}(0)))}\\
&\quad\le\frac{1}{2^{k-1}}\left(G_{\Lambda\setminus A}(y,y)+G_{\Lambda'\setminus A}(y,y)\right)\,.
\end{aligned}
\end{equation}
The polymer $U+Q_{2k\hat N_\dd\lmac}(0)$ consists of $\left(\frac{\tilde r}{2\hat N_\dd\lmac}+4k\right)^\dd$ boxes in $\Qbox_{\hat N_\dd\lmac}$. The function $H_{A,y}$ satisfies $H_{A,y}\Delta_1^2H_{A,y}=0$ on $U+Q_{2k\hat N_\dd\lmac}(0)\subset Q_R(0)$ as the two singularities cancel out. So we can apply Theorem \ref{t:decay_high_prob} b) and obtain on the event $\Omega_{U+Q_{2k\hat N_\dd\lmac}(0),k}$ that
\begin{equation}\label{e:diff_covs_quenched2}
\left\|\nabla_1^2H_{A,y}\right\|^2_{L^2(U+Q_{2k\hat N_\dd\lmac}(0))}\le\frac{1}{2^k}\left\|\nabla_1^2H_{A,y}\right\|^2_{L^2((U+Q_{4k\hat N_\dd\lmac}(0))\setminus(U+Q_{2k\hat N_\dd\lmac}(0)))}\,.
\end{equation}

Furthermore, we can introduce the event $\tilde\Omega_{y,k}$ as in \eqref{e:event_var_bounded}. By definition we have 
\begin{equation}\label{e:diff_covs_quenched3}
G_{\Lambda\setminus A}(y,y)\le C_\dd\left(1+\I_{ \dd=4}(\log k+|\log\e|)\right)
\end{equation}
on that event. We claim that on the event $\tilde\Omega_{y,k}$ we also have
\begin{equation}\label{e:diff_covs_quenched4}
G_{\Lambda'\setminus A}(y,y)\le C_\dd\left(1+\I_{ \dd=4}(\log k+|\log\e|)\right)\,.
\end{equation}
Indeed, if $\dd\ge5$ this is once more a trivial consequence of \eqref{e:estvariance1}, while if $\dd=4$ we can use \eqref{e:estvariance2} to estimate
\begin{align*}G_{\Lambda'\setminus A}(y,y)&\le \frac{1}{4\pi^2}\log(1+d(x,(A\cup(\Z^\dd\setminus\Lambda')))+C\\
&\le\frac{1}{4\pi^2}\log(1+d(x,(A\cup(\Z^\dd\setminus\Lambda)))+C\\
&\le G_{\Lambda\setminus A}(y,y)+C
\end{align*}
so that \eqref{e:diff_covs_quenched4} is a consequence of \eqref{e:diff_covs_quenched3}.

Finally, if $A\in\Omega_{x,k}$ with the event $\Omega_{x,k}$ from Lemma \ref{l:localpoincare_prob}, we have
\begin{equation}\label{e:diff_covs_quenched5}
|H_{A,y}(x)|^2\le C_\dd \frac{k^\dd\left(1+\I_{\dd=4}(\log k+|\log\e|^{3/2})\right)}{\e}\|\nabla_1^2H_{A,y}(x)\|^2_{L^2(Q_{kN_\dd\lmic}(x))}\,.
\end{equation}
We choose $\e$ small enough so that $N_\dd\lmic\le 2\hat N_\dd\lmac$. Then, in particular, $Q_{kN_\dd\lmic}(x)\subset U+Q_{kN_\dd\lmic}(0)\subset U+Q_{2kN_\dd\lmac}(0)$.

Now we can combine the estimates we have just collected. More precisely, assume that $A\in \Omega_{U,k}\cap \Omega_{U+Q_{2k\hat N_\dd\lmac}(0),k}\cap \tilde\Omega_{y,k}\cap \Omega_{x,k}$. Then we can use \eqref{e:diff_covs_quenched1}, \eqref{e:diff_covs_quenched2}, \eqref{e:diff_covs_quenched3}, \eqref{e:diff_covs_quenched4} and \eqref{e:diff_covs_quenched5} to obtain
\begin{equation}\label{e:diff_covs_quenched6}
\begin{aligned}
&\left|G_{\Lambda\setminus A}(x,y)-G_{\Lambda'\setminus A}(x,y)\right|^2\\
&\quad=|H_{A,y}(x)|^2\\
&\quad\le C_\dd \frac{k^\dd\left(1+\I_{\dd=4}(\log k+|\log\e|^{3/2})\right)}{\e}\|\nabla_1^2H_{A,y}\|^2_{L^2(Q_{kN_\dd\lmic}(x))}\\
&\quad\le C_\dd \frac{k^\dd\left(1+\I_{\dd=4}(\log k+|\log\e|^{3/2})\right)}{\e}\left\|\nabla_1^2H_{A,y}\right\|^2_{L^2(U+Q_{2k\hat N_\dd\lmac}(0))}\\
&\quad\le C_\dd \frac{k^\dd\left(1+\I_{\dd=4}(\log k+|\log\e|^{3/2})\right)}{2^k\e}\left\|\nabla_1^2H_{A,y}\right\|^2_{L^2((U+Q_{4k\hat N_\dd\lmac})\setminus(U+Q_{2k\hat N_\dd\lmac}(0)))}\\
&\quad\le C_\dd \frac{k^\dd\left(1+\I_{\dd=4}(\log k+|\log\e|^{3/2})\right)}{2^k\e}\left\|\nabla_1^2H_{A,y}\right\|^2_{L^2(\Z^\dd\setminus(U+Q_{2k\hat N_\dd\lmac}(0)))}\\
&\quad\le C_\dd \frac{k^\dd\left(1+\I_{\dd=4}(\log k+|\log\e|^{3/2})\right)}{2^{2k-1}\e}\left(G_{\Lambda\setminus A}(y,y)+G_{\Lambda'\setminus A}(y,y)\right)\\
&\quad\le C_\dd \frac{k^\dd\left(1+\I_{\dd=4}(\log k+|\log\e|^{3/2})\right)}{2^{2k-1}\e}\left(1+\I_{\dd=4}(\log k+|\log\e|)\right)\\
&\quad\le C_\dd\left(\frac12\right)^k\frac{1+\I_{\dd=4}|\log\e|^{5/2}}{\e}\\
&\quad\le C_\dd\exp\left(-\frac{R-r}{C_\dd\lmac}\right)\frac{1+\I_{\dd=4}|\log\e|^{5/2}}{\e}\,.
\end{aligned}
\end{equation}
From \eqref{e:diff_covs_quenched6} we see that on the event \[\Omega:=\Omega_{U,k}\cap \Omega_{U+Q_{2k\hat N_\dd\lmac}(0),k}\cap \bigcap_{y\in Q_r(0)}\tilde\Omega_{y,k}\cap \bigcap_{x\in Q_r(0)}\Omega_{x,k}\] we have the desired estimate.
So it only remains to bound the probability of $\Omega$ from below. For this we use a union bound to see
\begin{align*}
\zeta^\e_\Lambda(\Omega)&\ge 1-\left(\frac{2\tilde r}{\hat N_\dd\lmac}\right)^\dd\frac{1}{2^k}-\left(\frac{2\tilde r}{\hat N_\dd\lmac}+4k\right)^\dd\frac{1}{2^k}-(2r+1)^\dd\frac{1}{2^{k^\dd}}-(2r+1)^\dd\frac{1}{2^k}\\
&\ge 1-C_\dd r^\dd\exp\left(-\frac{k}{C_\dd}\right)\\
&\ge 1-C_\dd r^\dd\exp\left(-\frac{R-r}{C_\dd\lmac}\right)\,.
\end{align*}
This completes the proof.
\end{proof}

\subsection{Estimates on variance and covariance}\label{s:annealed}
\begin{proof}[Proof of Theorem \ref{t:estimatesfield}]
We first prove part a) and then part b).

\emph{Step 1: Estimates on the variance}\\
We have that
\begin{equation}\label{e:decomp_variance}
\E^\e_\Lambda(\psi_x^2)=\sum_{A\subset\Lambda}\zeta^\e_\Lambda(A)\E_{\Lambda\setminus A}(\psi_x^2)=\sum_{A\subset\Lambda}\zeta^\e_\Lambda(A)G_{\Lambda\setminus A}(x,x)\,.
\end{equation}
Thus, \eqref{e:est_var_d>4} follows immediately from \eqref{e:estvariance1}. For \eqref{e:est_var_d=4} we use Lemma \ref{l:var_quenched}. Indeed, using Fubini's theorem and \eqref{e:var_quenched_upp} we can rewrite \eqref{e:decomp_variance} as
\begin{align*}
\E^\e_\Lambda(\psi_x^2)&=\int_0^\infty\zeta^\e_\Lambda\left(G_{\Lambda\setminus \A}(x,x)\ge t\right)\ud t\\
&\le \int_{\tilde\gamma}^\infty\zeta^\e_\Lambda\left(G_{\Lambda\setminus\A}(x,x)\ge t\right)\ud t+\tilde\gamma\\
&\le \int_{\tilde\gamma}^\infty\exp\left(-\frac{\e\exp(16\pi^2(t-\tilde\gamma))}{C|\log\e|^\frac12}\right)\ud t+\tilde\gamma\\
&\le \int_0^\infty\exp\left(-\frac{\e\exp(16\pi^2t)}{C|\log\e|^\frac12}\right)\ud t+C\\
&\le \int_0^{\frac{|\log\e|}{16\pi^2}+\frac{\log|\log\e|}{32\pi^2}}1\ud t+\int_{\frac{|\log\e|}{16\pi^2}+\frac{\log|\log\e|}{32\pi^2}}^\infty\exp\left(-\frac{\e\exp(16\pi^2t)}{C|\log\e|^\frac12}\right)\ud t+C\\
&\le \frac{|\log\e|}{16\pi^2}+\frac{\log|\log\e|}{32\pi^2}+\int_{0}^\infty\exp\left(-\frac{\exp(16\pi^2t)}{C}\right)\ud t+C\\
&\le \frac{|\log\e|}{16\pi^2}+\frac{\log|\log\e|}{32\pi^2}+C\\
&\le \frac{|\log\e|}{16\pi^2}+C\log|\log\e|
\end{align*}
for $\e$ small enough, which establishes the upper bound in \eqref{e:est_var_d=4}. For the lower bound we argue similarly using \eqref{e:var_quenched_low} and obtain
\begin{align*}
\E^\e_\Lambda(\psi_x^2)&=\int_0^\infty\zeta^\e_\Lambda\left(G_{\Lambda\setminus \A}(x,x)\ge t\right)\ud t\\
&\ge \int_0^{\frac{1}{8\pi^2}\log\left(1+d(x,\Z^\dd\setminus\Lambda)-\e^{-\alpha}\right)-\tilde\gamma}\exp\left(-\frac{C_\alpha\e\exp(32\pi^2(t+\tilde\gamma))}{|\log\e|^\frac12}\right)\ud t-C\\
&\ge \int_0^{\min\left(\frac{1}{8\pi^2}\log\left(1+d(x,\Z^\dd\setminus\Lambda)-\e^{-\alpha}\right),\frac{|\log\e|}{32\pi^2}-\frac{\log|\log\e|}{64\pi^2}\right)-\tilde\gamma}\exp\left(-\frac{C_\alpha}{|\log\e|}\right)\ud t-C\,.
\end{align*}
The assumption $d(x,\Z^\dd\setminus\Lambda)\ge \e^{-\alpha}+\e^{-1/4}$ ensures that the second term in the minimum here is smaller than the first, and so we see that indeed
\begin{align*}
\E^\e_\Lambda(\psi_x^2)&\ge\frac{|\log\e|}{32\pi^2}-\frac{\log|\log\e|}{64\pi^2}\left(1-\frac{C_\alpha}{|\log\e|}\right)\\
&\ge\frac{|\log\e|}{32\pi^2}-C_\alpha\log|\log\e|\,.
\end{align*}

\emph{Step 2: Estimates on the covariance}\\
As in \eqref{e:decomp_covariance} we have
\begin{equation}\label{e:decomp_covariance}
|\E^\e_\Lambda(\psi_x\psi_y)|=\left|\sum_{A\subset\Lambda}\zeta^\e_\Lambda(A)\E_{\Lambda\setminus A}(\psi_x\psi_y)\right|\le \sum_{A\subset\Lambda}\zeta^\e_\Lambda(A)|G_{\Lambda\setminus A}(x,y)|\,.
\end{equation}
From Lemma \ref{l:cov_quenched} we know
\[\zeta^\e_\Lambda\left(|G_{\Lambda\setminus\A}(x,y)|\le C_\dd\exp\left(-\frac{|x-y|_\infty}{C_\dd\lmac}\right)\frac{1+\I_{\dd=4}|\log\e|^{5/4}}{\e^{1/2}}\right)\ge 1-\exp\left(-\frac{|x-y|_\infty}{C_\dd\lmac}\right)\,.\]
Abbreviate the event described here by $\Omega$. The decomposition \eqref{e:decomp_covariance} implies
\begin{equation}\label{e:est_covariance}
\begin{aligned}
|\E^\e_\Lambda(\psi_x\psi_y)|&\le\sum_{\substack{A\subset\Lambda\\A\in\Omega}}C_\dd\exp\left(-\frac{|x-y|_\infty}{C_\dd\lmac}\right)\frac{1+\I_{\dd=4}|\log\e|^{5/4}}{\e^{1/2}}+\sum_{\substack{A\subset\Lambda\\A\notin\Omega}}\zeta^\e_\Lambda(A)|G_{\Lambda\setminus A}(x,y)|\\
&\le C_\dd\exp\left(-\frac{|x-y|_\infty}{C_\dd\lmac}\right)\frac{1+\I_{\dd=4}|\log\e|^{5/4}}{\e^{1/2}}+\sum_{\substack{A\subset\Lambda\\A\notin\Omega}}\zeta^\e_\Lambda(A)|G_{\Lambda\setminus A}(x,y)|
\end{aligned}
\end{equation}
and so we only need to bound $|G_{\Lambda\setminus A}(x,y)|$ on the rare event $\Omega^c$. 

If $\dd\ge5$, we can use the bound \[G_{\Lambda\setminus A}(x,y)\le\max\left(G_{\Lambda\setminus A}(x,x),G_{\Lambda\setminus A}(y,y)\right)\le C_\dd\] that follows from \eqref{e:estvariance1} and \eqref{e:identity_G_2} to conclude from \eqref{e:est_covariance} that
\begin{align*}
|\E^\e_\Lambda(\psi_x\psi_y)|&\le C_\dd\exp\left(-\frac{|x-y|_\infty}{C_\dd\lmac}\right)\frac{1}{\e^{1/2}}+C_\dd\zeta^\e_\Lambda(\Omega^c)\\
&\le C_\dd\exp\left(-\frac{|x-y|_\infty}{C_\dd\lmac}\right)\frac{1}{\e^{1/2}}+C_\dd\exp\left(-\frac{|x-y|_\infty}{C_\dd\lmac}\right)\\
&\le \frac{C_\dd}{\e^{1/2}}\exp\left(-\frac{|x-y|_\infty}{C_\dd\lmac}\right)
\end{align*}
which implies \eqref{e:est_cov_d>4}.

If $\dd=4$, the estimate is slighty more complicated, as $G_{\Lambda\setminus A}(x,y)$ is no longer uniformly bounded. Instead we use Lemma \ref{l:var_quenched} to deduce a tail bound on $G_{\Lambda\setminus A}(x,y)$. Note first that if $x=y$ then \eqref{e:est_cov_d=4} follows from \eqref{e:est_var_d=4}, and so we can assume $x\neq y$. By \eqref{e:identity_G_2} and \eqref{e:var_quenched_upp} we have for any $t\ge \tilde\gamma$ that 
\begin{align*}
\zeta^\e_\Lambda\left(|G_{\Lambda\setminus \A}(x,y)|\ge t\right)&\le\zeta^\e_\Lambda\left(\max\left(G_{\Lambda\setminus \A}(x,x),G_{\Lambda\setminus \A}(y,y)\right)\ge t\right)\\
&\le\zeta^\e_\Lambda\left(G_{\Lambda\setminus \A}(x,x)\ge t\right)+\zeta^\e_\Lambda\left(G_{\Lambda\setminus \A}(y,y)\ge t\right)\\
&\le2\exp\left(-\frac{\e\exp(16\pi^2(t-\tilde\gamma))}{C|\log\e|^\frac12}\right)\,.
\end{align*}
We can now use Fubini's theorem to estimate the second summand in \eqref{e:est_covariance} as
\begin{equation}\label{e:est_covariance2}
\begin{aligned}
&\sum_{\substack{A\subset\Lambda\\A\notin\Omega}}\zeta^\e_\Lambda(A)|G_{\Lambda\setminus A}(x,y)|\\
&\quad\le\int_{0}^\infty\zeta^\e_\Lambda\left(|G_{\Lambda\setminus \A}(x,y)|\ge t,\A\notin\Omega\right)\ud t\\
&\quad\le\int_{0}^\infty\min\left(\zeta^\e_\Lambda\left(|G_{\Lambda\setminus \A}(x,y)|\ge t\right),\zeta^\e_\Lambda(\Omega^c)\right)\ud t\\
&\quad\le\int_{\tilde\gamma}^\infty\min\left(\zeta^\e_\Lambda\left(|G_{\Lambda\setminus \A}(x,y)|\ge t\right),\zeta^\e_\Lambda(\Omega^c)\right)\ud t+\int_0^{\tilde\gamma}\zeta^\e_\Lambda(\Omega^c)\ud t\\
&\quad\le\int_{0}^\infty\min\left(2\exp\left(-\frac{\e\exp(16\pi^2t)}{C|\log\e|^\frac12}\right),\exp\left(-\frac{|x-y|_\infty}{C\lmac}\right)\right)\ud t+\tilde\gamma\exp\left(-\frac{|x-y|_\infty}{C\lmac}\right)\,.
\end{aligned}
\end{equation}
To estimate the remaining integral, note that for $a,b<1$ we have $a=\exp(-b\exp(16\pi^2t))$ for $t=t_*:=\frac{1}{16\pi^2}(\log|\log a|+|\log b|)$ and so
\begin{align*}
\int_0^\infty\min(a,\exp(-b\exp(16\pi^2t)))\ud t&=\int_0^{t_*}a\ud t+\int_{t_*}^\infty\exp(-b\exp(16\pi^2t))\ud t\\
&=t_*a+\int_0^\infty\exp(-b\exp(16\pi^2t_*)\exp(16\pi^2t))\ud t\\
&\le t_*a+\int_0^\infty\exp(-b\exp(16\pi^2t_*)(1+16\pi^2t))\ud t\\
&=t_*a+\exp(-b\exp(16\pi^2t_*))\int_0^\infty\exp(-16\pi^2tb\exp(16\pi^2t_*))\ud t\\
&=t_*a+\frac{\exp(-b\exp(16\pi^2t_*))}{16\pi^2b\exp(16\pi^2t_*)}\\
&=t_*a+\frac{a}{16\pi^2|\log a|}\\
&=\frac{1}{16\pi^2}\left(a\log|\log a|+a|\log b|+\frac{a}{|\log a|}\right)\,.\\
&\le Ca(\log|\log a|+|\log b|)
\end{align*}
With the choices $a=\exp\left(-\frac{|x-y|_\infty}{C_\dd\lmac}\right)$ and $b=\frac{\e}{C|\log\e|^{1/2}}$ we then obtain from \eqref{e:est_covariance2} that
\begin{align*}
\sum_{\substack{A\subset\Lambda\\A\notin\Omega}}\zeta^\e_\Lambda(A)|G_{\Lambda\setminus A}(x,y)|&\le C\exp\left(-\frac{|x-y|_\infty}{C\lmac}\right)\left(\log\frac{|x-y|_\infty}{C\lmac}+\left|\log\frac{\e}{C|\log\e|^{1/2}}\right|+1\right)\\
&\le C\exp\left(-\frac{|x-y|_\infty}{C\lmac}\right)\left(\log\frac{|x-y|_\infty}{\lmac}+\log\frac{|\log\e|^{1/2}}{\e}+1\right)\\
&\le C\exp\left(-\frac{|x-y|_\infty}{C\lmac}\right)\left(\log|x-y|_\infty-\log|\log\e|+1\right)
\end{align*}
where we have used that $4\log\frac{1}{\lmac}+\log\frac{|\log\e|^{1/2}}{\e}=-\log|\log\e|$. Finally we can return to \eqref{e:est_covariance} and obtain
\begin{align*}
	|\E^\e_\Lambda(\psi_x\psi_y)|&\le C\exp\left(-\frac{|x-y|_\infty}{C_\dd\lmac}\right)\frac{|\log\e|^{5/4}}{\e^{1/2}}+C\exp\left(-\frac{|x-y|_\infty}{C\lmac}\right)\left(\log|x-y|_\infty-\log|\log\e|+1\right)\\
	&\le \frac{C|\log\e|^{5/4}}{\e^{1/2}}\left(1+\frac{\e^{1/2}|x-y|_\infty}{|\log\e|^{5/4}}\right)\exp\left(-\frac{|x-y|_\infty}{C\lmac}\right)\\
	&\le \frac{C|\log\e|^{5/4}}{\e^{1/2}}\left(1+\frac{|x-y|_\infty}{\lmac}\right)\exp\left(-\frac{|x-y|_\infty}{C\lmac}\right)\\
	&\le \frac{C|\log\e|^{5/4}}{\e^{1/2}}\exp\left(-\frac{|x-y|_\infty}{C\lmac}\right)
\end{align*}
which implies \eqref{e:est_cov_d=4}.

Finally, the estimates \eqref{e:est_mass_d>4} and \eqref{e:est_mass_d=4} are straightforward consequences of \eqref{e:est_cov_d>4} and \eqref{e:est_cov_d=4}, respectively.
\end{proof}
\subsection{Existence of the thermodynamic limit of the field}\label{s:thermo_limit_field}
It remains to prove the existence of the thermodynamic limit of the pinned field. This is significantly more difficult than the existence of the thermodynamic limit of the set of pinned points, as we do not have correlation inequalities for the field or a random walk representation. Instead we show by hand that the exponential decay of correlations implies convergence of $\E^\e_\Lambda(f)$ for any bounded local $f$.
\begin{proof}[Proof of Theorem \ref{t:thermolimit}, second part]
As in the proof of the first part it suffices to check that the limit $\lim_{\Lambda\nearrow\Z^\dd}\E^\e_\Lambda(f)$ exists for any bounded local function $f\colon \Z^\dd\to\R$. Our tail estimates on $\PP^\e_\Lambda$ easily imply boundedness of $\E^\e_\Lambda(f)$, so if the limit exists it is finite.

So let a local function $f$ be given. Suppose that $f$ only depends on the values of $\psi$ in $Q_r(0)$ for some $r$. We can assume that $r\ge \hat N_\dd$. Let $R\in\N$ with $R\ge 8r$. We set $\Lambda'=Q_R(0)$. Let $\Omega$ be the event described in \eqref{e:diff_covs_quenched}. Let also $k\in\N$ and consider the event $\tilde\Omega_{0,k}$ from \eqref{e:event_var_bounded} (with $y=0$). Note that if $A\in\tilde\Omega_{0,k}$ we have
\[G_{\Lambda\setminus A}(0,0)\le C_\dd\left(1+\I_{ \dd=4}(\log k+|\log\e|)\right)\,.\]
Similarly as for \eqref{e:diff_covs_quenched4}, we see that this implies for any $x\in Q_r(0)$
\[G_{\Lambda\setminus A}(x,x)\le C_\dd\left(1+\I_{ \dd=4}(\log r+\log k+|\log\e|)\right)=:X_{\dd,\e,k,r}\]
and in combination with \eqref{e:identity_G_2} also
\[\max_{x,y\in Q_r(0)}\left|G_{\Lambda\setminus A}(x,y)\right|\le X_{\dd,\e,k,r}\,.
\]

We can now write
\begin{equation}\label{e:thermolimit2}
\E^\e_\Lambda(f)=\sum_{A\subset\Lambda}\E_{\Lambda\setminus A}(f)\zeta^\e_\Lambda(A)=\sum_{\substack{A\subset\Lambda\\A\in\Omega\cap\tilde\Omega_{0,k}}}\E_{\Lambda\setminus A}(f)\zeta^\e_\Lambda(A)+\sum_{\substack{A\subset\Lambda\\A\notin\Omega\cap\tilde\Omega_{0,k}}}\E_{\Lambda\setminus A}(f)\zeta^\e_\Lambda(A)\,.
\end{equation}
The second summand here is an error term that is easy to estimate. We have
\begin{equation}\label{e:thermolimit3}
\begin{aligned}
\left|\sum_{\substack{A\subset\Lambda\\A\notin(\Omega\cap\tilde\Omega_{0,k})}}\E_{\Lambda\setminus A}(f)\zeta^\e_\Lambda(A)\right|&\le\|f\|_{L^\infty(\Z^\dd)}\sum_{\substack{A\subset\Lambda\\A\notin\Omega\cap\tilde\Omega_{0,k}}}\zeta^\e_\Lambda(A)\\
&\le\|f\|_{L^\infty(\Z^\dd)}\left(\zeta^\e_\Lambda(\Omega^c)+\zeta^\e_\Lambda(\tilde\Omega_{0,k}^c)\right)\\
&\le \left(C_\dd r^\dd\exp\left(-\frac{R-r}{C_\dd\lmac}\right)+\frac{1}{2^k}\right)\|f\|_{L^\infty(\Z^\dd)}
\end{aligned}
\end{equation}
and note that the right-hand side tends to 0 as $k,R\to\infty$, uniformly in $\Lambda\supset Q_R(0)$.

Next, we begin to analyse the main term in \eqref{e:thermolimit2}, i.e. the first summand. $\PP_{\Lambda\setminus A}$ is the law of a multivariate Gaussian measure. Thus, $\E_{\Lambda\setminus A}(f)$ depends only on the variances and covariances of that measure. Because of the locality of $f$ it depends only on those variances and covariances where the sites are in $Q_r(0)$. In particular, $\E(f)$ is a continuous function of $\left(G_{\Lambda\setminus A}(x,y)\right)_{x,y\in Q_r(0)}\in\R^{Q_r(0)\times Q_r(0)}$. If we restrict it to the compact set $[-X_{\dd,\e,k,r},X_{\dd,\e,k,r}]^{Q_r(0)\times Q_r(0)}$, it is uniformly continuous.

From Lemma \ref{l:diff_covs_quenched} we know that for $A\in\Omega$
\[\left|\left(G_{\Lambda\setminus A}(x,y)\right)_{x,y\in Q_r(0)}-\left(G_{Q_R(0)\setminus A}(x,y)\right)_{x,y\in Q_r(0)}\right|_{\infty}\le C_\dd\exp\left(-\frac{R-r}{C_\dd\lmac}\right)\frac{1+\I_{\dd=4}|\log\e|^{5/4}}{\e^{1/2}}\]
and the right-hand side tends to 0 as $R\to\infty$. Moreover, we have for $A\in\tilde\Omega_{0,k}$ that 
\[\left(G_{\Lambda\setminus A}(x,y)\right)_{x,y\in Q_r(0)}\in [-X_{\dd,\e,k,r},X_{\dd,\e,k,r}]^{Q_r(0)\times Q_r(0)}\,.\]
Thus, the uniform continuity of $\E(f)$ implies that there is a function $\omega_{\dd,\e,f,k,r}(R)$ (independent of $\Lambda$) with $\lim_{R\to\infty}\omega_{\dd,\e,f,k,r}(R)=0$ such that for all $A\in\Omega\cap\tilde\Omega_{0,k}$
\[\left|\E_{\Lambda\setminus A}(f)-\E_{Q_R(0)\setminus A}(f)\right|\le \omega_{\dd,\e,f,k,r}(R)\,.\]
This implies for the first summand in \eqref{e:thermolimit2} that
\begin{equation}\label{e:thermolimit4}
\begin{aligned}
&\left|\sum_{\substack{A\subset\Lambda\\A\in\Omega\cap\tilde\Omega_{0,k}}}\E_{\Lambda\setminus A}(f)\zeta^\e_\Lambda(A)-\sum_{A\subset\Lambda}\E_{Q_R(0)\setminus A}(f)\zeta^\e_\Lambda(A)\right|\\
&\quad\le\left|\sum_{\substack{A\subset\Lambda\\A\in\Omega\cap\tilde\Omega_{0,k}}}\E_{\Lambda\setminus A}(f)\zeta^\e_\Lambda(A)-\sum_{\substack{A\subset\Lambda\\A\in\Omega\cap\tilde\Omega_{0,k}}}\E_{Q_R(0)\setminus A}(f)\zeta^\e_\Lambda(A)\right|+\left|\sum_{\substack{A\subset\Lambda\\A\notin\Omega\cap\tilde\Omega_{0,k}}}\PP_{Q_R(0)\setminus A}(f)\zeta^\e_\Lambda(A)\right|\\
&\quad\le\sum_{\substack{A\subset\Lambda\\A\in\Omega\cap\tilde\Omega_{0,k}}}\omega_{\dd,\e,f,k,r}(R)\zeta^\e_\Lambda(A)+\|f\|_{L^\infty(\Z^\dd)}\left(\zeta^\e_\Lambda(\Omega^c)+\zeta^\e_\Lambda(\tilde\Omega_{0,k}^c)\right)\\
&\quad\le \omega_{\dd,\e,f,k,r}(R)+\left(C_\dd r^\dd\exp\left(-\frac{R-r}{C_\dd\lmac}\right)+\frac{1}{2^k}\right)\|f\|_{L^\infty(\Z^\dd)}
\end{aligned}
\end{equation}
where we have estimated the error term the same way as in \eqref{e:thermolimit3}.

We also know that
\begin{equation}\label{e:thermolimit5}
\begin{aligned}
\sum_{A\subset\Lambda}\E_{Q_R(0)\setminus A}(f)\zeta^\e_\Lambda(A)&=\sum_{A'\subset Q_R(0)}\E_{Q_R(0)\setminus A}(f)\sum_{A''\subset\Lambda\setminus Q_R(0)}\zeta^\e_\Lambda(A'\cup A'')\\
&=\sum_{A'\subset Q_R(0)}\E_{Q_R(0)\setminus A'}(f)\zeta^\e_\Lambda(\A\cap Q_R(0)=A')\,.
\end{aligned}
\end{equation}
Putting \eqref{e:thermolimit2}, \eqref{e:thermolimit3}, \eqref{e:thermolimit4}, \eqref{e:thermolimit5} together, we find
\begin{equation}\label{e:thermolimit6}
\begin{aligned}
&\left|\E^\e_\Lambda(f)-\sum_{A'\subset Q_R(0)}\E_{Q_R(0)\setminus A'}(f)\zeta^\e_\Lambda(\A\cap Q_R(0)=A')\right|\\
&\quad\le\omega_{\dd,\e,f,k,r}(R)+2\left(C_\dd r^\dd\exp\left(-\frac{R-r}{C_\dd\lmac}\right)+\frac{1}{2^k}\right)\|f\|_{L^\infty(\Z^\dd)}\,.
\end{aligned}
\end{equation}
We now want to take the limits $\Lambda\nearrow\Z^\dd$, $R\to\infty$, $k\to\infty$ in that order. For that purpose, note that the weak convergence of $\zeta^\e_\Lambda$ to $\zeta^\e$ implies that $\lim_{\Lambda\nearrow\Z^\dd}\zeta^\e_\Lambda(\A\cap Q_R(0)=A')=\zeta^\e(\A\cap Q_R(0)=A')$, and so \eqref{e:thermolimit6} implies
\[\limsup_{k\to\infty}\limsup_{R\to\infty}\limsup_{\Lambda\nearrow\Z^\dd}\left|\E^\e_\Lambda(f)-\sum_{A'\subset Q_R(0)}\E_{Q_R(0)\setminus A'}(f)\zeta^\e(\A\cap Q_R(0)=A')\right|=0\,.\]
From this we see that \[\lim_{\Lambda\nearrow\Z^\dd}\E^\e_\Lambda(f)=\lim_{R\to\infty}\sum_{A'\subset Q_R(0)}\E_{Q_R(0)\setminus A'}(f)\zeta^\e(\A\cap Q_R(0)=A')\]
and that in particular both limits exist. This is what we wanted to show.

\end{proof}

\nocite{Bolthausen2001,Bolthausen2017,Velenik2006,Sakagawa2018, Deuschel2000,Ioffe2000,Bolthausen1999,Hoefer2018,Widman1971, Cioranescu1997,Marchenko2006,Sakagawa2012,Niethammer2006}

\paragraph{Acknowledgements}
The author wants to thank Simon Buchholz, Richard M. Höfer, Stefan Müller and Juan J.L. Velázquez for inspiring discussions, Yvan Yelenik for some comments on Remark \ref{r:gap_IV}, and Stefan Müller also for suggesting various improvements to the present paper.

The author was supported by the DFG (German Research Foundation) via the Hausdorff Center for Mathematics (GZ 2047/1, Projekt-ID 3906$ $85813) and via the SFB 1060 "The Mathematics of Emergent Effects" (Projekt-ID 21150$ $4053), and also by the German National Academic Foundation.

A preliminary version of this article will appear in the author's PhD thesis \cite{Schweiger2020b}.

\bibliographystyle{alpha_edited2}
\bibliography{PinningMembrane}

\end{document}